\newtheorem{theorem}{Theorem}[section]
\newtheorem{question}{Question}
\newtheorem{othertheorem}[theorem]{Theorem}
\newtheorem{proposition}[theorem]{Proposition}
\newtheorem{lemma}[theorem]{Lemma}
\theoremstyle{definition}
\newtheorem{definition}[theorem]{Definition}
\theoremstyle{definition}
\newtheorem{remark}[theorem]{Remark}
\newcommand{\mr}[1]{\mathrm{#1}}
\newcommand{\mscr}[1]{\mathscr{#1}}
\newcommand{\xib}{\overline{\xi}}
\newcommand{\HFKH}{\widehat{\mathrm{HFK}}}
\newcommand{\CFKH}{\widehat{\mathrm{CFK}}}
\newcommand{\HFKM}{\mathrm{HFK}^-}
\newcommand{\HFKP}{\mathrm{HFK}^+}
\newcommand{\CFKP}{\mathrm{CFK}^+}
\newcommand{\CFKM}{\mathrm{CFK}^-}
\newcommand{\HFKI}{\mathrm{HFK}^\infty}
\newcommand{\CFKI}{\mathrm{CFK}^\infty}
\newcommand{\HFH}{\widehat{\mathrm{HF}}}
\newcommand{\CFH}{\widehat{\mathrm{CF}}}
\newcommand{\CF}{\mathrm{CF}}
\newcommand{\CFA}{\widehat{\mathrm{CFA}}}
\newcommand{\CFD}{\widehat{\mathrm{CFD}}}
\newcommand{\CFDM}{\mathrm{CFD}^-}
\newcommand{\CFDP}{\mathrm{CFD}^+}
\newcommand{\BSD}{\widehat{\mathrm{BSD}}}
\newcommand{\BSA}{\widehat{\mathrm{BSA}}}
\newcommand{\BSDA}{\widehat{\mathrm{BSDA}}}
\newcommand{\spinc}{\mathrm{Spin}^{\mathbb{C}}}
\newcommand{\id}{\mathrm{Id}}
\newcommand{\PM}{\partial^-}
\newcommand{\PH}{\widehat{\partial}}
\newcommand{\STH}{\widehat{\mathscr{T}}}
\newcommand{\ST}{\mathscr{T}}
\newcommand{\SLH}{\widehat{\mathscr{L}}}
\newcommand{\SL}{\mathscr{L}}
\newcommand{\EH}{\mathrm{EH}}
\newcommand{\EHL}{\underrightarrow{\mathrm{EH}}}
\newcommand{\EHIL}{\underleftarrow{\mathrm{EH}}}
\newcommand{\SFH}{\mathrm{SFH}}
\newcommand{\SFC}{\mathrm{SFC}}
\newcommand{\SFHL}{\underrightarrow{\mathrm{SFH}}}
\newcommand{\SFHIL}{\underleftarrow{\mathrm{SFH}}}
\newcommand{\tb}{\mathrm{tb}}
\newcommand{\tw}{\mathrm{tw}}
\newcommand{\rot}{\mathrm{r}}
\def\x{\mathbf{x}}
\def\y{\mathbf{y}}
\def\brho{\mathbf{\rho}}
\def\s{\mathfrak{s}}
\newcommand{\A}{\mathcal{A}}
\newcommand{\B}{\mathcal{B}}
\newcommand{\C}{\mathcal{C}}
\newcommand{\HD}{\mathcal{H}}
\newcommand{\THD}{\widetilde{\mathcal{H}}}
\newcommand{\W}{\mathcal{W}}
\newcommand{\D}{\mathcal{D}}
\newcommand{\KF}{K_{\mathrm{fill}}}
\newcommand{\Y}{\mathcal{Y}}
\newcommand{\SF}{\mathcal{F}}
\newcommand{\SZ}{\mathcal{Z}}
\newcommand{\SI}{\mathcal{I}}
\newcommand{\SG}{\mathcal{G}}
\newcommand{\T}{\mathcal{T}}
\newcommand{\TT}{\mathbb{T}}
\newcommand{\TF}{\mathcal{T}_{\mathrm{fill}}}
\newcommand{\SH}{\mathcal{H}}
\newcommand{\F}{\mathbb{F}}
\newcommand{\Q}{\mathbb{Q}}
\newcommand{\R}{\mathbb{R}}
\newcommand{\Z}{\mathbb{Z}}
\newcommand{\I}{\mathbb{I}}
\newcommand{\balpha}{\boldsymbol{\alpha}}
\newcommand{\bbeta}{\boldsymbol{\beta}}
\newcommand{\ba}{\boldsymbol{a}}
\newcommand{\BZ}{\boldsymbol{Z}}
\begin{document}
%%%%%%%%%%%%%%%%%%%%%%%%%%%%%%%%%%%%%%%%%%%%%%%%%%%%%%%

\thispagestyle{empty}

\title[Legendrian and transverse invariants]{Sutured Floer homology and invariants of Legendrian and transverse knots}
\author{John B. Etnyre}
\address{School of Mathematics \\ Georgia Institute of Technology}
\email{etnyre@math.gatech.edu}
\urladdr{\href{http://www.math.gatech.edu/~etnyre}{http://www.math.gatech.edu/\~{}etnyre}}
\author{David Shea Vela-Vick}
\address{Department of Mathematics \\ Louisiana State University}
\email{shea@math.lsu.edu}
\urladdr{\href{https://www.math.lsu.edu/~shea}{https://www.math.lsu.edu/\~{}shea}}
\author{Rumen Zarev}
\address{Department of Mathematics \\ University of California, Berkeley}
\email{rzarev@math.berkeley.edu}
\urladdr{\href{http://math.berkeley.edu/~rzarev}{http://math.berkeley.edu/\~{}rzarev}}

\keywords{Legendrian knots, Transverse knots, Heegaard Floer homology}
\subjclass[2010]{57M27; 57R58}
\maketitle

%%%%%%%%%%%%%%%%%%%%%%%%%%%%%%%%%%%%%%%%%%%%%%%%%%%%%%%

%%%%%%%%%%%%%%%%%%%%%%%%%%%%%%%%%%%%%%%%%%%%%%%%%%%%%%%
\begin{abstract}
	Using contact-geometric techniques and sutured Floer homology, we present an alternate formulation of the minus and plus version of knot Floer homology.  We further show how natural constructions in the realm of contact geometry give rise to much of the formal structure relating the various versions of Heegaard Floer homology.  In addition, to a Legendrian or transverse knot $K \subset (Y,\xi)$, we associate distinguished classes $\EHL(K) \in \HFKM(-Y,K)$ and $\EHIL(K) \in \HFKP(-Y,K)$, which are each invariant under Legendrian or transverse isotopies of $K$.  The distinguished class $\EHL$ is shown to agree with the Legendrian/transverse invariant defined by Lisca, Ozsv\'ath, Stipsicz, and Szab\'o despite a strikingly dissimilar definition. While our definitions and constructions only involve sutured Floer homology and contact geometry, the identification of our invariants with known invariants uses bordered sutured Floer homology to make explicit computations of maps between sutured Floer homology groups.
\end{abstract}
%%%%%%%%%%%%%%%%%%%%%%%%%%%%%%%%%%%%%%%%%%%%%%%%%%%%%%%

\tableofcontents

%%%%%%%%%%%%%%%%%%%%%%%%%%%%%%%%%%%%%%%%%%%%%%%%%%%%%%%
\section{Introduction} % (fold)
\label{sec:intro}
%%%%%%%%%%%%%%%%%%%%%%%%%%%%%%%%%%%%%%%%%%%%%%%%%%%%%%%

In \cite{Ju2} Juh\'asz defined the sutured Heegaard Floer homology $\SFH(Y, \Gamma)$ of a balanced sutured manifold $(Y,\Gamma)$ and immediately observed that if $Y(K) = \overline{Y \backslash \nu(K)}$ was the complement of an open tubular neighborhood of a knot $K$ in the manifold $Y$ and $\Gamma_\mu$ was the union of two meridional curves, then $\SFH(Y(K), \Gamma_\mu)$ was isomorphic to the knot Floer homology of $K$, $\HFKH(Y,K)$. A primary aim in this paper is to show how to recover more of the knot Floer homology package from the sutured theory. More specifically, we will show that given a knot $K \subset Y$ we can define its Heegaard Floer-theoretic invariants purely in terms of sutured Floer homology, contact geometry, and certain direct and inverse limits. These invariants share many properties of the knot Floer homology package and in the second part of the paper, using border sutured homology, we show how to identify these limit invariants with the plus and minus knot Floer homologies. 

The original motivation for the present study is found in the work of Stipsicz and V\'ertesi who first established a connection between the Legendrian knot invariant defined by Honda, Kazez, and Mati\'c \cite{HKM2} and the Legendrian/transverse invariant defined by Lisca, Ozsv\'ath, Stipsicz and Szab\'o \cite{LOSS}, hereafter referred to as the LOSS invariant.  Their work naturally gives rise to an alternate, and more geometric characterization of the LOSS hat invariant.  We show here that the correspondence first established by Stipsicz and V\'ertesi fits into a much broader picture encompassing the more general LOSS minus invariant.

Accomplishing the broad goals described in the paragraphs above requires precise computations of the Honda-Kazez-Mati\'c gluing maps for sutured Floer homology in a multitude of nontrivial situations.  To date, only elementary computations, typically relying on formal properties of the HKM gluing maps have been performed.  Such precision is achieved through tools and techniques originating in bordered Floer homology \cite{LOT1} and, specifically, bordered sutured Floer homology \cite{Za2,Za3}, as developed by the third author.

We note that contact geometry plays a key role in our results, adding to a steady stream of evidence that there exist deep connections linking contact geometry and Heegaard Floer theory.  On one hand, Heegaard Floer invariants have proven powerful tools for studying contact-geometric phenomena.  They were instrumental in Lisca and Stipsicz's classification of Seifert-fibered spaces admitting tight contact structures, and have featured prominently in the study of transversally non-simple knot and link types.  In the other direction, contact structures have conspicuously appeared in solutions to several problems in Heegaard Floer theory.  In addition to appearing in Honda, Kazez, and Mati\'c's definition of a Heegaard Floer gluing map, Juh\'asz uses them in an essential way in his construction of cobordism maps for sutured Floer homology. The proof of our results hint at what might be behind this connection: when considering relatively simple manifolds, the rigidity of the algebraic structure in bordered sutured Floer homology, coupled with known properties of contact structures and their induced gluing maps can sometime uniquely determine a given situation. 

In the remainder of the introduction, we provide a more thorough discussion of the geometric and algebraic objects under consideration and statements of the main theorems to be proved in subsequent sections. Here, as in there rest of the paper, we will focus our attention on the direct limit invariants and only sketch the ideas behind the inverse limit invariants since their definition and the proofs of their properties parallel those of the direct limit invariants quite closely.

%%%%%%%%%%%%%%%%%%%%%%%%%%%%%%%%%%%%%%%%%%%%%%%%%%%%%%%
\subsection{Limit Invariants} % (fold)
\label{sub:limit_invariants}
%%%%%%%%%%%%%%%%%%%%%%%%%%%%%%%%%%%%%%%%%%%%%%%%%%%%%%%

Let $K$ be a knot in a closed 3--manifold $Y$. We denote the knot compliment $\overline{Y(K)\backslash \nu(K)}$. We consider a sequence of pairs of longitudinal sutures $\Gamma_i$ on $\partial Y(K)$ that ``converge'' the union of two meridional curves $\Gamma_\mu$ on $\partial Y(K)$. More precisely the curves that make up $\Gamma_{i+1}$ differ form those that make up $\Gamma_i$ by subtracting a meridian. We can think of $(Y(K),\Gamma_{i})$ as a subset of $(Y(K),\Gamma_{i+1})$ so that $B_i = \overline{(Y(K),\Gamma_{i+1}) \backslash (Y(K),\Gamma_{i})}$ is $T^2\times [0,1]$. On $T^2\times[0,1]$ there are (up to fixing the characteristic foliations on the boundary) two contact structures $\xi_+$ and $\xi_-$ for which the boundary is convex and the dividing curves agree with the sutures. 

In \cite{HKM3}, Honda, Kazez and Mati\'c defined a gluing map for sutured Floer homology.  Loosely speaking, if $(M,\Gamma)$ is a sutured manifold which sits as a submanifold of $(M',\Gamma')$, then a contact structure $\xi$ on the complement $M' - M$ which is compatible with $\Gamma$ and $\Gamma'$ induces a map
 \[
 	\phi_{\xi}: \SFH(-M,-\Gamma) \to \SFH(-M',-\Gamma').
 \]
Thus, using the contact structure $\xi_-$ on on $T^2\times [0,1]$, we have the induced gluing map
\[
	\phi_-: \SFH(-Y(K),-\Gamma_i)\to \SFH(-Y(K),-\Gamma_{i+1}),
\]
for each $i$. 

Taking the directed limit of the above sequence of groups and maps yields our primary object of study, the {\it sutured limit homology} of $K$
\[
	\SFHL(-Y,K) = \varinjlim \SFH(-Y(K_i),-\Gamma_{i}).
\]

Now, considering the contact structure $\xi_+$ on $B_i$, we obtain maps
\[
	\psi_+: \SFH(-Y(K),-\Gamma_i)\to \SFH(-Y(K),-\Gamma_{i+1}).
\]
Using simple facts concerning contact structures on thickened tori we will show that they induce a well-defined map
\[
	\Psi: \SFHL(-Y,K) \to \SFHL(-Y,K).
\]
Thus, the group $\SFHL(-Y,K)$ can be given the structure of an $\F[U]$-module, were $U$ acts by $\Psi$.  

We further show that the $\F[U]$-module $\SFHL(-Y,K)$ is endowed with two natural absolute gradings, which are reminiscent of the usual absolute Alexander and Maslov gradings in knot Floer homology.

In Section~\ref{sec:limit_is_knot}, we prove the following theorem characterizing $\SFHL(-Y,K)$.
\begin{theorem}\label{thm:lim_to_minus}
	Let $K \subset Y$ be a smoothly embedded null-homologous knot. There exists a isomorphism of graded $\F[U]$-modules
	\[
		I_-: \SFHL(-Y,L) \to \HFKM(-Y,L).
	\]
\end{theorem}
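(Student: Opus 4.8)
The plan is to prove Theorem~\ref{thm:lim_to_minus} by establishing a chain-level statement and then passing to homology, commuting the direct limit past $H_*$. The first step is to pin down each term of the direct system. Using the null-homologous hypothesis to fix the Seifert longitude, I would build a bordered sutured Heegaard diagram for the complement $Y(K)$ equipped with the longitudinal sutures $\Gamma_i$, starting from a doubly pointed diagram for $K\subset Y$, and then recover $\SFC(-Y(K),-\Gamma_i)$ by pairing the resulting type-$A$ module with the bordered sutured module of the solid torus that caps $\partial Y(K)$ off to $-Y$. The upshot should be an identification of $\SFH(-Y(K),-\Gamma_i)$ with the homology of an explicit finite subquotient complex $Q_i$ of $\CFKI(-Y,K)$ --- a ``box'' in the $(i,j)$-plane whose size grows with $i$ as one subtracts a meridian --- carrying the induced Alexander and Maslov gradings. (When $\Gamma_i$ is replaced by $\Gamma_\mu$ one recovers Juh\'asz's isomorphism $\SFH(-Y(K),-\Gamma_\mu)\cong\HFKH(-Y,K)$ as a degenerate case.)

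The technical heart of the argument --- and the step I expect to be the main obstacle --- is the computation of the Honda--Kazez--Mati\'c gluing map $\phi_-$ induced by the negative basic slice $\xi_-$ on $T^2\times[0,1]$, together with the map $\psi_+$ induced by $\xi_+$. No formal property of the HKM maps suffices here; instead I would compute directly in bordered sutured Floer homology: (a) write down a bordered sutured Heegaard diagram for $(T^2\times[0,1],\xi_\pm)$ together with the generator carrying its contact class, (b) extract the associated type-$DA$ bimodule and the distinguished element it determines, and (c) evaluate the induced map on the complexes $Q_i$ as an $A_\infty$ tensor product. The bimodules involved are small, so their gradings and $\spinc$ constraints, combined with standard facts about tight contact structures on thickened tori and the behaviour of their gluing maps under stacking, should force $\phi_-$ to be the natural inclusion-type map $Q_i\to Q_{i+1}$ of the direct system and $\psi_+$ to be a chain-level representative of multiplication by $U$, up to an overall automorphism that one normalizes. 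The delicate point is excluding the a priori possibility that the true gluing map differs from the expected one by a nontrivial filtered automorphism of $Q_{i+1}$; this is precisely where the rigidity of the bordered sutured algebra, together with the self-compatibility of the basic slices under composition, is indispensable.

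Granting the two steps above, the theorem follows with little further work. Direct limits are exact, so $\SFHL(-Y,K)=\varinjlim_i\SFH(-Y(K),-\Gamma_i)\cong H_*\bigl(\varinjlim_i Q_i\bigr)$, and by construction $\varinjlim_i Q_i$ is quasi-isomorphic to $\CFKM(-Y,K)$; hence $I_-$ is an isomorphism of $\F$-modules. Since $U=\Psi$ acts on the left through $\psi_+$, which corresponds under the identifications to multiplication by $U$, the isomorphism is one of $\F[U]$-modules. Finally, the two absolute gradings on $\SFHL(-Y,K)$ constructed in the previous sections are additive under the gluing maps and are determined by the relative $\spinc$-decomposition up to an overall shift; fixing that shift by a single model computation (for instance the unknot in $S^3$) and tracking gradings through the bordered sutured identification shows that they agree with the Alexander and Maslov gradings on $\HFKM(-Y,K)$, upgrading $I_-$ to an isomorphism of doubly graded $\F[U]$-modules. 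The same computational package set up in step two is what later allows one to identify $\EHL(K)$ with the LOSS invariant.
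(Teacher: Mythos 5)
Your proposal captures the essential architecture of the paper's argument: decompose $(-Y(K),-\Gamma_i)$ along a parametrized torus into the knot complement and a small ``twist'' piece, compute the type-$D$ module of the twist piece, pin down the HKM gluing maps using the smallness of the bordered pieces together with non-vanishing of contact classes, and then commute $H_*$ past the direct limit (which you correctly observe follows from exactness of $\varinjlim$; the paper cites B\"okstedt--Neeman for the analogous statement in the homotopy category). Two points, however, need correcting. First, the filling piece you should pair $\BSA(-Y(K))$ with is \emph{not} ``the solid torus that caps $\partial Y(K)$ off to $-Y$'' --- gluing in a solid torus produces a closed manifold and would recover $\CFH(-Y)$, not $\SFC(-Y(K),-\Gamma_i)$. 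The correct object is the bordered \emph{sutured} thickened punctured torus $\T_i$, which has one parametrized boundary component $\SF_T$ and one honest sutured boundary carrying $\Gamma_i$; its type-$D$ module $K_i$ is what plays the role of your cap. Second, your proposed intermediate identification of $\SFH(-Y(K),-\Gamma_i)$ with the homology of an explicit subquotient ``box'' $Q_i\subset\CFKI(-Y,K)$ is both unnecessary and would require its own nontrivial argument: the paper never passes through such an identification, instead showing directly that $\varinjlim_n K_n\cong K^-$ as type-$D$ modules over the torus algebra, and then applying the pairing theorem once to conclude $\SFHL(-Y,K)\cong H_*\bigl(\BSA(-Y(K))\boxtimes K^-\bigr)\cong\HFKM(-Y,K)$. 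Working at the level of $K_n$ (and factoring the gluing maps through the even smaller bypass disk modules, which have rank at most two) is what makes the uniqueness-of-nontrivial-idempotent-respecting-maps argument watertight; attempting to impose rigidity directly on the full complexes $Q_i$, whose size depends on the knot, is exactly where your worry about unwanted filtered automorphisms would become difficult to dispose of.
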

\begin{remark}
	Marco Golla \cite{Golla} has obtained results similar to Theorems \ref{thm:lim_to_minus} and \ref{thm:lim_to_minus_leg} for Legendrian knots in the standard contact $3$-sphere via an alternate characterization of the maps induced on sutured Floer homology by bypass attachments. This characterization involves holomorphic triangle counts originally developed by Rasmussen.  Golla is further able to show, in the $S^3$ setting, that the HKM invariant of a Legendrian knot $K$ is determined by the pair of LOSS invariants $\{\SL(K),\SL(-K)\}$ of $K$ and its orientation reverse $-K$. Examples discussed in Section~\ref{sec:examples} show this is not true in general contact manifolds. 
\end{remark}

Each $(Y(K),\Gamma_i)$ can be viewed as a subset of $(Y(K),\Gamma_\mu)$ so that $(Y(K),\Gamma_\mu) \backslash (Y(K),\Gamma_i)$ is $T^2\times [0,1]$. As above, there are two possible tight contact structures $\xi_+$ and $\xi_-$ on $T^2\times[0,1]$ with convex boundary realizing $\Gamma\cup \Gamma_i$ as dividing sets.  Choosing $\xi_-$, the HKM gluing map gives 
\[
 	\phi'_{SV}: \SFH(-Y(K),-\Gamma_i)\to \SFH(-Y(K),-\Gamma_\mu).
\] 
In \cite{Ju}, $\SFH(-Y(K),-\Gamma_\mu)$ was canonically identified with $\HFKH(-Y,K)$. Using this identification we obtain the map
\[
\phi_{SV}:\SFH(-Y(K),-\Gamma_i)\to \HFKH(-Y,K),
\]
we use the subscript $SV$ as these maps were originally defined by Stipsicz and V\'ertesi in \cite{StV}. 

We can again appeal to facts about decompositions of contact structures on thickened tori to show the maps $\phi_{SV}$  induce a map
\[
	\Phi_{SV} : \SFHL(-Y,K) \to \HFKH(-Y,K).
\]
With respect to the isomorphism given in Theorem~\ref{thm:lim_to_minus}, we have the following characterization of $\Phi_{SV}$.
\begin{theorem}\label{thm:SV_map}
	Let $K \subset Y$ be a null-homologous knot type, $I_-: \SFHL(-Y,K) \to \HFKM(-Y,K)$ the isomorphism given by Theorem~\ref{thm:lim_to_minus}, and $p_*: \HFKM(-Y,K) \to \HFKH(-Y,K)$ the map induced on homology by setting the formal variable $U$ equal to zero at the chain level.  The following diagram commutes.
\begin{center}
\begin{tikzpicture}	[->,>=stealth',auto,thick]
	\node (a) at (0,0){$\SFHL(-Y,K)$} ;
	\node (b) at (4,0) {$\HFKM(-Y,K)$} ;
	\node (e) at (2,-2) {$\HFKH(-Y,K)$} ;

	\draw (a) edge node[above] {$I_-$} (b);
	\draw (a) edge node[left] {$\Phi_{SV}$} (e);
	\draw (b) edge node[right] {$p_*$} (e);
\end{tikzpicture}
\end{center}
\end{theorem}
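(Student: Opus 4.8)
The plan is to reduce the statement to a computation at the level of bordered sutured Floer homology, reusing the framework built in the proof of Theorem~\ref{thm:lim_to_minus}. First I would record that $\Phi_{SV}$ is well defined and that it is enough to check the triangle one level at a time: by the composition law for Honda--Kazez--Mati\'c gluing maps applied to the decomposition of the contact structure $\xi_-$ on the $T^2\times[0,1]$ between $\Gamma_i$ and $\Gamma_\mu$ into the pieces between consecutive $\Gamma$'s, one has $\phi_{SV}^{(i+1)}\circ\phi_-^{(i)}=\phi_{SV}^{(i)}$, so the maps $\phi_{SV}^{(i)}$ do descend to $\Phi_{SV}$ on $\SFHL(-Y,K)=\varinjlim\SFH(-Y(K),-\Gamma_i)$. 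Writing $j_i\colon\SFH(-Y(K),-\Gamma_i)\to\SFHL(-Y,K)$ for the structure maps and $\lambda_i:=I_-\circ j_i\colon\SFH(-Y(K),-\Gamma_i)\to\HFKM(-Y,K)$ for the level-$i$ inclusion identified by Theorem~\ref{thm:lim_to_minus}, the universal property of the direct limit reduces the claim to the identity
\[
	\phi_{SV}^{(i)}\;=\;p_*\circ\lambda_i\colon \SFH(-Y(K),-\Gamma_i)\longrightarrow \HFKH(-Y,K)
\]
for every $i$.

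Next I would recast all three maps via the bordered sutured pairing theorem. Fix a bordered sutured structure on the knot complement and let $D_i$, $D_\mu$ be the $\BSD$-type modules of the bordered sutured pieces realizing the suture configurations $\Gamma_i$ and $\Gamma_\mu$, so that $\SFH(-Y(K),-\Gamma_i)=H_*(\BSA(Y(K))\boxtimes D_i)$ and $\HFKH(-Y,K)=H_*(\BSA(Y(K))\boxtimes D_\mu)$; let $D_\infty=\varinjlim D_i$ be the (in general infinitely generated) module with $\HFKM(-Y,K)=H_*(\BSA(Y(K))\boxtimes D_\infty)$. From the proof of Theorem~\ref{thm:lim_to_minus} one extracts: (i) $\lambda_i$ is induced by $\boxtimes$ with the canonical inclusion type-$D$ morphism $\iota_i\colon D_i\hookrightarrow D_\infty$; (ii) under the ``hat-from-minus'' relation at the bordered level ($D_\mu$ is the reduction of $D_\infty$ obtained by setting $U=0$, the hat boundary algebra being the corresponding quotient of the minus one), $p_*$ is induced by $\boxtimes$ with the tautological quotient morphism $q\colon D_\infty\to D_\mu$; and (iii) $\phi_{SV}^{(i)}$ is induced by $\boxtimes$ with a type-$D$ morphism $q_i\colon D_i\to D_\mu$, namely the morphism that, under the bordered sutured model of the HKM gluing map developed in the first part of the paper, computes the map attached to the contact structure $\xi_-$ on the thickened torus between $\Gamma_i$ and $\Gamma_\mu$. (Here one must keep the arc diagrams, $\spinc$-structures, and the two absolute gradings aligned across $D_i$, $D_\infty$, $D_\mu$; this is bookkeeping, not difficulty.)

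With these identifications the theorem becomes the purely algebraic assertion that
\[
	q_i\;\simeq\;q\circ\iota_i\colon D_i\longrightarrow D_\mu
\]
as type-$D$ morphisms, up to chain homotopy. I would prove this by the same rigidity mechanism used elsewhere in the paper: the space of homotopy classes of type-$D$ morphisms $D_i\to D_\mu$ in the grading that $q_i$ must occupy is very small, and inside it $q_i$ is pinned down by a combination of (a) it is the gluing map of a tight contact structure, forcing it to be nonzero on the appropriate generator; (b) compatibility with the $\phi_-$ maps, i.e. $q_i\simeq q_{i+1}\circ(\text{structure map }D_i\to D_{i+1})$, which ties consecutive levels together and lets one bootstrap from a convenient value of $i$; and (c) the Maslov/Alexander grading shift, which $q\circ\iota_i$ realizes and which $q_i$ must match. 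A direct computation of $q_i$ from the explicit bordered sutured Heegaard diagram for $\xi_-$ on $T^2\times[0,1]$ used in the proof of Theorem~\ref{thm:lim_to_minus} then confirms $q_i=q\circ\iota_i$ on the nose. Passing back through the pairing theorem and taking the limit over $i$ gives $\Phi_{SV}=p_*\circ I_-$.

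\emph{The main difficulty} is exactly ingredient (iii)/the homotopy $q_i\simeq q\circ\iota_i$: computing the HKM gluing map of $\xi_-$ on the thickened torus between the longitudinal sutures $\Gamma_i$ and the meridional sutures $\Gamma_\mu$ and recognizing it as the algebraic ``set $U=0$'' morphism. This is precisely the kind of explicit gluing-map computation that the bordered sutured techniques of the first half of the paper are designed to handle, and it is where the contact-geometric rigidity input is essential. As a partial sanity check one can also verify directly the ``kernel half'' of the statement, namely $\Phi_{SV}\circ\Psi=0$ (equivalently $\phi_{SV}^{(i+1)}\circ\psi_+^{(i)}=0$), which follows from the composition law together with the behavior of the stacked contact structure $\xi_+\cup\xi_-$ on $T^2\times[0,1]$; this recovers $\ker\Phi_{SV}\supseteq U\cdot\SFHL(-Y,K)$, but the full identification still requires the morphism-level computation above. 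By contrast, the reduction to this statement and the direct-limit bookkeeping, while fiddly, are routine once the machinery of Theorem~\ref{thm:lim_to_minus} is in place.
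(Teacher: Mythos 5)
Your proposal is correct in outline and follows the same overall strategy as the paper's proof: reduce via the bordered sutured pairing theorem to an identity of type-$D$ morphisms $K_n \to K_\infty$ (your $q_i\colon D_i\to D_\mu$), identify the limit of the gluing morphisms with the algebraic ``set $U=0$'' quotient, and then box-tensor and take homology. Your direct-limit bookkeeping via the universal property is also correct.

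Where your approach differs, and where you should be a bit more careful, is in \emph{where} the rigidity argument is applied. You propose pinning down $q_i\colon K_n\to K_\infty$ directly by rigidity at that level. But the space of homotopy classes of type-$D$ morphisms $K_n\to K_\infty$ is not one-dimensional: after imposing the chain-map condition and modding out by null-homotopies, one is left with a three-parameter family (roughly, the coefficient of $b_n\mapsto I_1\otimes x$ together with the two algebra elements $\rho_1,\rho_{123}\in I_2\A(\W_T)I_1$ that could appear in the image of $a$). Nontriviality alone does not cut this down to a point. Your ingredient (b) --- compatibility with the $\phi_-$ maps, i.e.\ $q_i\simeq q_{i+1}\circ\eta_{n,i}$ --- is therefore essential and does the real work: one checks that $\rho_3\cdot\rho_{23}=0$ in $\A(\W_T)$ forces the $a$-coefficient to vanish, and propagating the $b_n$-coefficient forward plus nontriviality then forces it to be $I_1$. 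So your (a)+(b) suffices, but it is not ``the space is very small'' on its own; the compatibility constraint does substantial work and you would need to make it precise. The paper avoids this entirely by localizing the rigidity step to the seed annulus module $M_0\to M_\infty$, where the space of nonzero idempotent-compatible type-$D$ morphisms is literally one-dimensional (since $I_2\A(\W_A)I_1=\langle\rho_2\rangle$), and then propagating by box-tensoring with the explicitly computed bimodules $C_m$ and $N$ (Lemmas~\ref{lem:SV_ann_map}--\ref{lem:SV_twisttor_map}). That localization is the cleaner route to the same computation your sketch needs; the two arguments are, of course, compatible, and your fallback to a direct diagram computation is exactly what the paper carries out.
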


There is an additional natural geometric operation one can perform to the sequence $(Y(K),\Gamma_i)$.  Specifically, one can consider the effect of attaching a meridional contact 2-handle to the boundary of each $(Y(K), \Gamma_i)$.  As a sutured manifold, this space is equal to $Y(1)$ in the language of Juh\'asz \cite{Ju} and its sutured Floer homology can be naturally identified with $\HFH(-Y)$.  By considering the HKM gluing maps associated to this sequence of contact 2-handle attachments, for each $i$, we obtain a sequence of maps $\phi_{\mathrm{2h}}:\SFH(-Y(K),-\Gamma_i)\to \HFH(-Y)$ that induce a map
\[
	\Phi_{2h}: \SFHL(-Y,K) \to \HFH(-Y).
\]
With respect to the isomorphism given in Theorem~\ref{thm:lim_to_minus}, we have the following characterization of $\Phi_{2h}$.
\begin{theorem}\label{thm:2handle}
	Let $K \subset Y$ be a null-homologous knot type, $I_-: \SFHL(-Y,K) \to \HFKM(-Y,K)$ the isomorphism given by Theorem~\ref{thm:lim_to_minus}, and $\pi_*: \HFKM(-Y,K) \to \HFH(-Y)$ the map induced on homology by setting the formal variable $U$ equal to the identity at the chain level.  The following diagram commutes.
\begin{center}
\begin{tikzpicture}	[->,>=stealth',auto,thick]
	\node (a) at (0,0){$\SFHL(-Y,K)$} ;
	\node (b) at (4,0) {$\HFKM(-Y,K)$} ;
	\node (e) at (2,-2) {$\HFH(-Y)$} ;

	\draw (a) edge node[above] {$I_-$} (b);
	\draw (a) edge node[left] {$\Phi_{\mathrm{2h}}$} (e);
	\draw (b) edge node[right] {$\pi_*$} (e);
\end{tikzpicture}
\end{center}
\end{theorem}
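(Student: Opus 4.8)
The goal is to establish the commutativity $\pi_*\circ I_- = \Phi_{\mathrm{2h}}$, and the plan runs parallel to the proof of Theorem~\ref{thm:SV_map}, with the specialization $U=0$ there replaced by $U=1$. The first point is that $\Phi_{\mathrm{2h}}$ is compatible with the $\F[U]$-module structure in the strong sense that $\Phi_{\mathrm{2h}}\circ\Psi = \Phi_{\mathrm{2h}}$. This will follow from the finite-stage identity $\phi_{\mathrm{2h}}\circ\psi_+ = \phi_{\mathrm{2h}}$, which in turn is the contact-topological statement that the $T^2\times[0,1]$ layer carrying $\xi_+$, once a meridional contact $2$-handle is attached along its outer boundary, is contactomorphic rel its inner boundary to the bare meridional $2$-handle; in bypass language, the meridional $2$-handle absorbs the $\xi_+$-basic slice, just as it absorbs the $\xi_-$-basic slices implicit in the defining direct system (the latter absorption being what makes $\Phi_{\mathrm{2h}}$ well defined in the first place). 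Granting this, functoriality of the Honda--Kazez--Mati\'c gluing maps shows that $\Phi_{\mathrm{2h}}$ descends, through $I_-$, to a homomorphism $\overline\Phi\colon\HFKM(-Y,K)/(U-1)\HFKM(-Y,K)\to\HFH(-Y)$. On the other hand the chain map ``set $U=1$'' identifies $U\cdot x$ with $x$, so $\pi_*$ likewise factors as $\overline\pi\colon\HFKM(-Y,K)/(U-1)\HFKM(-Y,K)\to\HFH(-Y)$; since $\HFKM(-Y,K)$ decomposes over $\F[U]$ into a free part and a $U$-torsion part, on each of which $U-1$ acts injectively, $\overline\pi$ is in fact an isomorphism. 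As $q\circ I_-$ is surjective (with $q$ the quotient map), it therefore remains to prove $\overline\Phi = \overline\pi$.

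I would do this by reusing the bordered sutured computation behind Theorem~\ref{thm:lim_to_minus}. There each $\SFH(-Y(K),-\Gamma_i)$ is computed by pairing a fixed bordered sutured module for the knot exterior---with its torus boundary parametrized so that the Alexander and Maslov gradings are visible---against a standard bordered sutured piece recording the sutures $\Gamma_i$, and the connecting maps $\phi_-$ and the $U$-action maps $\psi_+$ are induced by explicit bordered sutured bimodule morphisms, the latter realizing multiplication by $U$ on the colimit $\CFKM(-Y,K)$. The new ingredient is the bordered sutured avatar of the meridional contact $2$-handle: attaching it glues on a standard $2$-handle-and-cap piece, so $\phi_{\mathrm{2h}}$ is computed by pairing the knot-exterior module with the morphism this handle induces between the relevant type-$D$ (or type-$A$) modules. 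One then checks that, assembled over the direct system, this collection of pairings realizes precisely the chain-level specialization $U\mapsto 1$ from $\CFKM(-Y,K)$ to $\CFH(-Y)$. Passing to homology yields $\Phi_{\mathrm{2h}} = \pi_*\circ I_-$, which is the theorem.

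The main obstacle is this last identification: one must pin the contact $2$-handle morphism down concretely enough to see that, after gluing, it implements evaluation at $U=1$ rather than some other homomorphism out of $\HFKM(-Y,K)/(U-1)\HFKM(-Y,K)$. Unlike in Theorem~\ref{thm:SV_map}, the grading constraints are of limited help here, since $U\mapsto 1$ collapses the Alexander and Maslov gradings; the decisive input is instead the rigidity of the torus algebra emphasized throughout the paper---in the relevant idempotent and grading classes only a handful of morphisms exist, and the qualitative properties of contact handle maps (they are induced by genuine tight contact structures, are natural, and are nonzero on the relevant $\EH$-classes) isolate the correct one. A secondary, bookkeeping-heavy point is the verification of the contact lemma $\phi_{\mathrm{2h}}\circ\psi_+=\phi_{\mathrm{2h}}$ with the right normalizations of the characteristic foliations on the $T^2\times[0,1]$ layers, but this is of the same nature as the thickened-torus manipulations already used to define $\Psi$ and $\Phi_{\mathrm{2h}}$.
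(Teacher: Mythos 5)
Your proposal is correct and follows essentially the same strategy as the paper's proof. The paper likewise decomposes $(-Y(K),-\Gamma_n)$ using the bordered sutured pieces from Section~\ref{sub:comp_mod}, computes the Type-$D$ morphism induced by the contact $2$-handle on the elementary annular piece $\A_0$ by appealing to uniqueness of the nontrivial idempotent-compatible morphism together with non-vanishing of the associated contact gluing map (Lemma~\ref{lem:2han_ann0}), propagates this via box tensor products with the bimodules $C_n$ and $N$ to the twisted annuli and thickened punctured tori (Lemmas~\ref{lem:2han_ann} and~\ref{lem:2han_tor}), and then reads off that the induced limit map on $\underrightarrow{K}$ sends every $\delta_i$ to $I_1 \otimes x$, which is precisely the $U=1$ specialization.

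Two remarks on points of difference. First, your preliminary reduction to the quotient $\HFKM(-Y,K)/(U-1)\HFKM(-Y,K)$ is valid --- your argument that $\overline\pi$ is an isomorphism is correct, via the long exact sequence associated to $0\to\CFKM\xrightarrow{\ U-1\ }\CFKM\to\CFH\to 0$ together with injectivity of $U-1$ on $\HFKM$ (it is a unit on the torsion part and injective on the free part) --- but it does not shorten the argument. Since many homomorphisms out of that quotient exist, establishing $\overline\Phi=\overline\pi$ still requires the full bordered sutured computation, which the paper carries out directly; the factorization $\Phi_{2h}\circ\Psi = \Phi_{2h}$ then falls out of Lemma~\ref{lem:2han_tor} rather than being proved in advance. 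Second, your contact lemma $\phi_{2h}\circ\psi_+ = \phi_{2h}$ is the positive-sign twin of the contactomorphism used in the paper's proof of Proposition~\ref{pro:lim_hat_closed_comm} (which handles $\phi_{2h}\circ\phi_- = \phi_{2h}$); both follow from the observation that a basic slice of either sign, capped off by a meridional contact $2$-handle, yields the standard tight solid torus with a ball removed. You have correctly identified that the decisive input is the rigidity of the torus algebra (there is a unique nontrivial Type-$D$ morphism $M_0\to\A(\W_A)\otimes K_{2h}$ in the relevant idempotents) rather than grading bookkeeping, which is exactly how the paper pins down $\phi_{2h}$.
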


% subsection limit_invariants (end)
%%%%%%%%%%%%%%%%%%%%%%%%%%%%%%%%%%%%%%%%%%%%%%%%%%%%%%%

%%%%%%%%%%%%%%%%%%%%%%%%%%%%%%%%%%%%%%%%%%%%%%%%%%%%%%%
\subsection{Legendrian and Transverse Invariants} % (fold)
\label{sub:int_lnt_invts}
%%%%%%%%%%%%%%%%%%%%%%%%%%%%%%%%%%%%%%%%%%%%%%%%%%%%%%%

In 2007, Honda, Kazez, and Mati\'c defined an invariant of Legendrian knots taking values in sutured Floer homology \cite{HKM2}.  Given a Legendrian knot $K \subset (Y,\xi)$, the Honda-Kazez-Mati\'c invariant --- henceforth referred to as the HKM invariant --- is obtained via the following construction.  First, remove an open standard neighborhood of $K$ from $(Y,\xi)$ and denote the resulting space $(Y(K),\xi_K)$.  The HKM invariant is then equal to the contact invariant
\[
	\EH(K) = \EH(Y(K),\xi_K) \in \SFH(-Y(K),-\Gamma_K),
\]
where the set of sutures $\Gamma_K$ is equal to the natural dividing set obtained on torus boundary $\partial Y(K)$.

Later, in 2008, Lisca, Ozsv\'ath, Stipsicz, and Szab\'o defined an alternate invariant of both Legendrian and transverse knots taking values in knot Floer homology.  Given a null-homologous Legendrian knot $K \subset (Y,\xi)$, the Lisca-Ozsv\'ath-Stipsicz-Szab\'o invariant --- henceforth referred to as the LOSS invariant --- is obtained via an open book decomposition adapted to the knot $K$.  Ultimately, their construction yields two invariants
\[
	\SL(K) \in \HFKM(-Y,K) \;\;\;\;\; \text{and} \;\;\;\;\; \SLH(K) \in \HFKH(-Y,K),
\]
which take values in either the minus or hat version of knot Floer homology.

The LOSS invariants possess several features which distinguish them from the HKM invariants.  First, they take values in knot Floer homology and come in two flavors, ``minus'' and ``hat''.  More strikingly, unlike the HKM invariants, the LOSS invariants are unchanged by negative Legendrian stabilization.  This implies that $\SL$ and $\SLH$ define transverse invariants through a process known as Legendrian approximation.

A connection between the HKM and LOSS invariants was discovered by Stipsicz and V\'ertesi in \cite{StV}.  Given a null-homologous Legendrian knot $K \subset (Y,\xi)$, Stipsicz and V\'ertesi identify a natural contact geometric construction which ultimately yields a map
\[
	\phi_{SV}: \SFH(-Y(K),-\Gamma_{K}) \to \HFKH(-Y,K),
\]
for which the image of $\EH(K)$ is $\SLH(K)$. 

The map $\phi_{SV}$ is precisely the one discussed in the previous subsection. More specifically, the Stipsicz-V\'ertesi map is obtained by attaching a contact $T^2 \times I$ layer to the boundary of $(Y(K),\xi_K)$ to obtain a space which we denote $(Y(K),\xib_K)$.  The contact structure on $T^2 \times I$ is chosen in a way which is compatible with negative Legendrian stabilization and which results in a pair of meridional dividing curves along the boundary of the resulting space.  Applying the HKM gluing map gives an identification between $\EH(L)$ and the contact invariant $\EH(Y(K),\xib_K) \in \SFH(-Y(K),-\Gamma_\mu)$.  Since the new dividing set on $\partial Y(K)$ consists precisely of two meridional curves, the sutured Floer homology group $\SFH(-Y(K),-\Gamma_\mu)$ is isomorphic to $\HFKH(-Y,K)$.  An explicit computation using open book decompositions then provides the desired identification between $\EH(Y(K),\xib_K)$ and $\SLH(K)$.

This alternate view of the LOSS hat invariant --- as the contact invariant of a space associated to a given Legendrian or transverse knot --- is quite useful in practice.  It frequently allows one to interpolate between geometric properties of Legendrian and transverse knots and algebraic properties of the LOSS hat invariant.  For instance, this perspective was instrumental in the first and second author's result that Giroux torsion layers are necessarily intersected by the binding of any open book supporting the ambient contact structre \cite{EV}.  The above discussion motivates one to consider a refinement of the Stipsicz-V\'ertesi construction which retains more geometric information associated to a given Legendrian or transverse knot.

If $K \subset (Y,\xi)$ is a Legendrian knot, we denote by $K_i$ the $i^{th}$ negative stabilization of $K$. Let $(Y(K),\xi_i)$ denote the complement of an open standard neighborhood of $K_i$. Note that the boundary of $(Y(K),\xi_i)$ is convex, and, with the appropriate choice of initial longitude, we can identify the dividing set with $\Gamma_i$ from the previous subsection.  Work of Etnyre and Honda \cite{EH} shows that the complement $(Y(K),\xi_{i+1})$ is obtain from $(Y(K),\xi_i)$ by attaching a negatively signed basic slice $(T^2 \times I,\xi_-)$ to the boundary of $(Y(K),\xi_i)$.
Thus, the collection 
\[
	\{\EH(K_i)\in \SFH(-Y(K), -\Gamma_i)\}
\]
of HKM invariants satisfies $\phi_-(\EH(K_i))=\EH(K_{i+1})$ and hence yields an element
\[
	\EHL(K) \in \SFHL(-Y,K),
\]
which defines an invariant of the Legendrian knot $K$.  By construction, the invariant $\EHL(K)$ remains unchanged under negative stabilizations of the Legendrian knot $K$.  Therefore, through the process of Legendrian approximations, we see that $\EHL$ defines an invariant of transverse knots.  In what follows, we shall refer to these as the LIMIT invariants of Legendrian and transverse knots.

With respect to the isomorphism $I_-$ promised by Theorem~\ref{thm:lim_to_minus}, we have the following alternate characterization of the LIMIT invariant $\EHL$.

\begin{theorem}\label{thm:lim_to_minus_leg}
	Let $K \subset (Y,\xi)$ be a null-homologous Legendrian knot.  Under the isomorphism
\[
	I_-: \SFHL(-Y,K) \to \HFKM(-Y,K)
\]
given by Theorem~\ref{thm:lim_to_minus}, the Legendrian invariants $\EHL(K)$ and $\SL(K)$ are identified.
\end{theorem}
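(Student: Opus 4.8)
The plan is to reduce the identification $\EHL(K) = I_-^{-1}(\SL(K))$ to an already-established case at the ``hat'' level, namely the Stipsicz--V\'ertesi identification $\phi_{SV}(\EH(K)) = \SLH(K)$, and then bootstrap it up to the minus level using the three compatibility theorems (Theorems~\ref{thm:lim_to_minus}, \ref{thm:SV_map}, \ref{thm:2handle}) together with the functorial properties of the LOSS package. First I would recall the two structural facts about the LOSS invariant $\SL(K) \in \HFKM(-Y,K)$ that make this possible: under the map $p_* \colon \HFKM(-Y,K) \to \HFKH(-Y,K)$ setting $U = 0$, one has $p_*(\SL(K)) = \SLH(K)$, and under the map $\pi_* \colon \HFKM(-Y,K) \to \HFH(-Y)$ setting $U = 1$, the image of $\SL(K)$ is the Ozsv\'ath--Szab\'o contact class $c(\xi) \in \HFH(-Y)$ (this is part of the original LOSS construction via an adapted open book). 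On the sutured side, by Theorem~\ref{thm:SV_map} the composite $p_* \circ I_-$ equals $\Phi_{SV}$, and by Theorem~\ref{thm:2handle} the composite $\pi_* \circ I_-$ equals $\Phi_{2h}$. So it suffices to compute $\Phi_{SV}(\EHL(K))$ and $\Phi_{2h}(\EHL(K))$ and match them with $\SLH(K)$ and $c(\xi)$ respectively, and then argue that this pinned-down pair of images, together with grading considerations, forces $I_-(\EHL(K)) = \SL(K)$ on the nose.

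The computation of $\Phi_{SV}(\EHL(K))$ is where the Stipsicz--V\'ertesi result enters directly. The element $\EHL(K)$ is represented by the compatible family $\{\EH(K_i)\}$, and $\Phi_{SV}$ is induced by the family of HKM gluing maps $\phi_{SV} \colon \SFH(-Y(K),-\Gamma_i) \to \HFKH(-Y,K)$ coming from gluing in the contact $T^2 \times I$ that converts the longitudinal sutures $\Gamma_i$ to the meridional sutures $\Gamma_\mu$. The key point is that this $T^2 \times I$ layer is exactly the one Stipsicz--V\'ertesi glue onto the complement of the $i$-th negative stabilization $K_i$: concatenating it with the basic slices that take $K$ to $K_i$ reconstructs the single Stipsicz--V\'ertesi layer attached to $(Y(K),\xi_K)$ itself, by the classification of tight contact structures on thickened tori (Etnyre--Honda). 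Functoriality of the HKM gluing map under stacking contact structures then gives $\phi_{SV}(\EH(K_i)) = \EH(Y(K),\xib_K)$ for every $i$, which Stipsicz--V\'ertesi identify with $\SLH(K)$. Hence $\Phi_{SV}(\EHL(K)) = \SLH(K)$. An entirely parallel argument, replacing the meridional $T^2 \times I$ by the meridional contact $2$-handle attachment that produces $Y(1)$, shows $\Phi_{2h}(\EHL(K)) = c(\xi)$, using that the HKM gluing/contact-class formalism sends the EH class of a contact $2$-handle filling to the Ozsv\'ath--Szab\'o contact invariant of the closed-up manifold (Juh\'asz's description of $\SFH$ of $Y(1)$ and naturality of the contact gluing map).

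Having shown $\Phi_{SV}(\EHL(K)) = \SLH(K) = p_*(\SL(K)) = p_*(I_-(\EHL(K)))$ and $\Phi_{2h}(\EHL(K)) = c(\xi) = \pi_*(\SL(K)) = \pi_*(I_-(\EHL(K)))$, I would finish by upgrading the equality of images to an equality of classes in $\HFKM(-Y,K)$. This is the step where the difference of the two sides lives in $\ker p_* \cap \ker \pi_*$, and the argument has to rule that out. The cleanest route is to match the two elements cell by cell using the bordered sutured computation that underlies Theorem~\ref{thm:lim_to_minus}: the isomorphism $I_-$ is built from an explicit identification of the direct limit of the bypass/basic-slice maps $\phi_-$ with the $U$-tower structure of $\CFKM$, and the basic slices realizing negative stabilization are precisely the bypass moves computed there; so $\EHL(K)$ is represented in the limit by the stable image of the HKM class $\EH(K)$, and the LOSS construction identifies $\SL(K)$ with the analogous stable cycle in an adapted open book. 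I expect this last identification --- tracking $\EHL(K)$ through the bordered-sutured model of $I_-$ and recognizing the result as the LOSS cycle rather than merely as a class with the right $p_*$- and $\pi_*$-images --- to be the main obstacle, since it requires a concrete chain-level comparison between the open-book picture used to define $\SL$ and the sutured/contact picture used to define $\EHL$, of exactly the kind that the Stipsicz--V\'ertesi hat-level computation handles but which must now be carried out over $\F[U]$. If a direct chain-level match proves too delicate, an alternative is to invoke an injectivity statement: the pair of maps $(p_*,\pi_*)$ may fail to be injective in general, but on the cyclic $\F[U]$-submodule generated by $\EHL(K)$ (equivalently, on the $U$-tower it generates, using the Alexander/Maslov gradings on $\SFHL$ established in the previous subsection) the images $\SLH(K)$ and $c(\xi)$ together with the $\F[U]$-module structure determined by $\Psi$ do suffice to pin down the generator, completing the proof.
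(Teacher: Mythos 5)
Your reduction to the hat-level Stipsicz--V\'ertesi statement is the right first move, and the computations $\Phi_{SV}(\EHL(K)) = \SLH(K)$ and $\Phi_{2h}(\EHL(K)) = c(\xi)$ are both correct (these are Theorems~\ref{thm:ehl_sv} and~\ref{thm:leg_ehl_eh} in the paper, proved exactly by the functoriality-of-gluing argument you give). The problem is the final step, and you have essentially flagged it yourself: after this reduction all you know is that $I_-(\EHL(K))$ and $\SL(K)$ have the same images under $p_*$ and $\pi_*$ and live in the same Alexander/Maslov bigrading, and this does \emph{not} in general determine a class in $\HFKM(-Y,K)$.

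Concretely, $\ker p_* = \operatorname{Im}(U)$ from the long exact sequence associated to $0 \to \CFKM \xrightarrow{U} \CFKM \to \CFKH \to 0$, and every $U$-torsion class lies in $\ker \pi_*$ because $\pi_*(t) = \pi_*(U^k t) = 0$. So $\operatorname{Im}(U) \cap (U\text{-torsion}) \subset \ker p_* \cap \ker \pi_*$, and this intersection is nonzero whenever the torsion submodule has an element $t$ with $Ut \ne 0$, which happens for many knots. Restricting to a single bigrading does not fix this, because the possible discrepancy $d = I_-(\EHL(K)) - \SL(K)$ automatically lives in the right bigrading. Your proposed fallback --- an injectivity statement on the cyclic $\F[U]$-submodule generated by $\EHL(K)$ --- does not address this either, since you have no a priori reason to believe $\SL(K)$ lies in that particular cyclic submodule; you are comparing two classes whose cyclic submodules need not coincide.

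The route you call ``cleanest'' is in fact what the paper does, but it requires actual work rather than a formal observation. The paper constructs a pair of sutured Heegaard diagrams, one adapted to the HKM invariant via the Stipsicz--V\'ertesi partial open book, the other adapted to the LOSS invariant via the LOSS doubly-pointed diagram, and modifies them so they literally coincide outside a thickened-torus (respectively solid-torus) neighborhood of $\partial Y(K)$. It then computes, in the bordered sutured type-$A$ module $\BSA(\HD_K)_{\A(\W_T)}$ for the common piece, the two operations $m_2(\,\cdot\,,\pi_3)$ and $m_3(\,\cdot\,,\pi_3,\pi_2)$ that arise when pairing with the negative-stabilization maps $\eta_{n,i}$ from Lemma~\ref{lem:twist_bptor}, using positivity-of-domains arguments in the Heegaard surface. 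That computation (Lemma~\ref{lem:leg_neg_stab1} in the paper) tracks the generator $(b,x_1,\dots,x_n)\otimes a$ representing $\EH(K)$ forward along the limit maps and shows it stabilizes to $(x_0,\dots,x_n)\otimes b_i$, which is exactly the LOSS cycle. Without some explicit chain-level identification of this kind --- and it cannot be derived from the gluing-map formalism alone --- the proof is incomplete.
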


Knowing that ``LIMIT$=$LOSS'' allows one to combine the intrinsic advantages of either invariant when attempting to solve a given problem.  In a similar spirit, the second author, in joint work with Baldwin and V\'ertesi \cite{BVV}, showed that the Legendrian and transverse invariants defined in \cite{LOSS} agree with the combinatorial (GRID) invariants of Legendrian and transverse knots defined by Ozsv\'ath, Szab\'o, and Thurston in \cite{OST}.  Thus, one can view Theorem~\ref{thm:lim_to_minus_leg} as the final chapter in a story relating the various Legendrian and transverse invariants defined within the sphere of Heegaard Floer theory.

% subsection legendrian_and_transverse_invariants (end)
%%%%%%%%%%%%%%%%%%%%%%%%%%%%%%%%%%%%%%%%%%%%%%%%%%%%%%%

%%%%%%%%%%%%%%%%%%%%%%%%%%%%%%%%%%%%%%%%%%%%%%%%%%%%%%%
\subsection{Sutured Inverse Limit Invariants} % (fold)
\label{sub:sut_inverse_limit}
%%%%%%%%%%%%%%%%%%%%%%%%%%%%%%%%%%%%%%%%%%%%%%%%%%%%%%%
The sutured limit invariants are defined by taking a sequence of tori in a knot complement with sutures that limit to meridional sutures through negative longitudinal sutures. If instead one goes beyond the meridional slope and limits through positive longitudinal sutures one can define an inverse limit invariant 
\[
	\SFHIL(-Y,K).
\]
The details of the construction are very similar to those above and presented in Section~\ref{sub:top_inverselimits}. Arguments analogous to the ones we use in proving the theorems above will prove the following relations with Heegaard-Floer knot invariants. 
\begin{theorem}\label{thm:lim_to_plus}
	Let $K \subset Y$ be a smoothly embedded null-homologous knot. There exists a isomorphism of bigraded $\F[U]$-modules
\[
		I_+: \SFHIL(-Y,L) \to \HFKP(-Y,L).
\]
\end{theorem}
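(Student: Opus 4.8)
The plan is to mirror the proof of Theorem~\ref{thm:lim_to_minus}, replacing the direct limit by an inverse limit throughout. First I would make the construction precise: one fixes a sequence of pairs of longitudinal sutures $\Gamma_i^+$ whose slopes approach the meridional slope from the positive side, arranged so that each $(Y(K),\Gamma_{i+1}^+)$ is obtained from $(Y(K),\Gamma_i^+)$ by removing a tight $T^2\times I$ collar; the associated HKM gluing maps then run $\SFH(-Y(K),-\Gamma_{i+1}^+)\to\SFH(-Y(K),-\Gamma_i^+)$ and assemble into an inverse system whose limit is $\SFHIL(-Y,K)$. The same arguments about factoring tight contact structures on thickened tori that were used for $\SFHL$ show that the complementary contact structures $\xi_+$ induce a well-defined endomorphism, hence an $\F[U]$-module structure, on $\SFHIL(-Y,K)$, and that the two absolute gradings pass to the limit. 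I would also record at this stage that in each fixed bigrading the groups $\SFH(-Y(K),-\Gamma_i^+)$ are finite dimensional and the bonding maps are eventually isomorphisms, so that $\varprojlim^1$ vanishes and the inverse limit is computed bigrading by bigrading; this is what makes the identification with a genuine (rather than completed) knot Floer group possible.

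The substance of the proof is an explicit computation via bordered sutured Floer homology, exactly as in Theorem~\ref{thm:lim_to_minus}. I would compute each $\SFH(-Y(K),-\Gamma_i^+)$ by the bordered sutured pairing theorem, writing it as the homology of the tensor product of the bordered sutured invariant of the knot complement $Y(K)$ with that of the solid torus equipped with the sutures corresponding to $\Gamma_i^+$. Invoking Lipshitz--Ozsv\'ath--Thurston's dictionary between $\CFD$ of a knot complement and the knot Floer complex $\CFKI(Y,K)$, this realizes $\SFH(-Y(K),-\Gamma_i^+)$ as the homology of a finite truncation of $\CFKI(Y,K)$ living on the ``positive'' side of the $i$-filtration --- the relevant model is a subquotient of the form $C\{i\ge 0,\ j\le s_i\}$, in contrast to the $C\{i\le 0\}$-type truncations that appeared on the negative side. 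The remaining point is to identify the bonding maps of the inverse system with the evident surjections between these truncations; granting that, the inverse limit is the homology of $C\{i\ge 0\}=\CFKP(Y,K)$, which yields the isomorphism $I_+$. Compatibility of $I_+$ with the $U$-action and with the two gradings then follows by running the same model computations that identified $\Psi$ and the gradings on $\SFHL$.

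The hard part will be the same one that made Theorem~\ref{thm:lim_to_minus} nontrivial: computing the HKM gluing maps on the nose, not merely up to their formal properties, which is precisely where bordered sutured Floer homology does the work. On the positive side there is an additional bookkeeping hazard --- one must be careful with the orientation and slope conventions so as to confirm that ``going beyond the meridional slope'' genuinely corresponds to the $C\{i\ge 0\}$ half of $\CFKI$ rather than a shifted or dualized copy (equivalently, that positive stabilization and the would-be plus analog of $\EHL$ behave as expected), and not to inadvertently build the completion of $\HFKP$ instead of $\HFKP$ itself. A useful check along the way is the long exact sequence relating $\HFKH$, $\HFKM$, and $\HFKP$: the bypass exact triangles relating the sutured groups across the meridional slope ought, in the limit, to reproduce this sequence compatibly with the isomorphism $I_-$ of Theorem~\ref{thm:lim_to_minus} and the map $\Phi_{SV}$ of Theorem~\ref{thm:SV_map}.
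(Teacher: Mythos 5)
Your proposal follows essentially the same route as the paper's sketch: decompose the sutured knot complements $(-Y(K),-\Gamma_i^+)$ along a punctured torus into the bordered knot-complement piece and a thickened-torus piece, compute the Type-$D$ modules for the positive-slope pieces and the gluing maps between them via bordered sutured Floer homology, observe that the inverse limit of these modules recovers the bordered model $K^+$ of $\CFKP$, and conclude by the pairing theorem. You are right --- and more careful than the paper's brief sketch --- to flag that, unlike the direct-limit case where one has B\"okstedt--Neeman and the commutativity of colimits with $\boxtimes$ for free, the inverse-limit steps require a Mittag--Leffler/finiteness-in-each-bigrading observation to commute $\varprojlim$ past $H_*$ and $\boxtimes$ and to rule out accidentally producing a completion of $\HFKP$.
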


\begin{theorem}\label{thm:SV_map2}
	Let $K \subset Y$ be a Legendrian representative of a null-homologous knot type, $I_+: \SFHL(-Y,K) \to \HFKM(-Y,K)$ the isomorphism given by Theorem~\ref{thm:lim_to_plus}, and $\iota_*: \HFKH(-Y,K)\to \HFKP(-Y,K)$ the map induced on homology by the inclusion of complexes.  Then there is a natural geometrically defined map $\Phi_{\mathrm{dSV}}$ so that the following diagram commutes
\begin{center}
\begin{tikzpicture}	[->,>=stealth',auto,thick]
	\node (b) at (0,0){$\SFHIL(-Y,K)$} ;
	\node (c) at (4,0) {$\HFKP(-Y,K)$} ;
	\node (a) at (2,-2) {$\HFKH(-Y,K)$} ;

	\draw (b) edge node[above] {$I_+$} (c);
	\draw (a) edge node[left] {$\Phi_{dSV}$} (b);
	\draw (a) edge node[right] {$\iota_*$} (c);
\end{tikzpicture}.
\end{center}
\end{theorem}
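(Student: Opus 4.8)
The plan is to run the argument behind Theorem~\ref{thm:SV_map} in reverse. There the Stipsicz--V\'ertesi gluing maps were assembled into a map \emph{out of} the direct limit $\SFHL(-Y,K)$ and identified with the ``set $U=0$'' map $p_*$; here I will assemble the dual family of gluing maps into a map \emph{into} the inverse limit $\SFHIL(-Y,K)$ and identify it with the inclusion $\iota_*$.

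First I would construct $\Phi_{\mathrm{dSV}}$. Fix a Legendrian representative of $K$ and pass to the knot complement with meridional dividing set, $(Y(K),\Gamma_\mu)$. For each stage $\Gamma_j$ of the positive-longitudinal sequence used in Section~\ref{sub:top_inverselimits} to define $\SFHIL(-Y,K)$, the complement $(Y(K),\Gamma_\mu)$ sits inside $(Y(K),\Gamma_j)$ with a $T^2\times I$ collar between them; equip that collar with the tight contact structure appearing in the definition of the bonding maps of $\SFHIL(-Y,K)$ and let $\sigma_j$ be the induced Honda--Kazez--Mati\'c gluing map. Under Juh\'asz's identification $\SFH(-Y(K),-\Gamma_\mu)\cong\HFKH(-Y,K)$ this becomes a map $\sigma_j\colon\HFKH(-Y,K)\to\SFH(-Y(K),-\Gamma_j)$. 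The key structural point is compatibility with the bonding maps: stacking the $\Gamma_\mu$-to-$\Gamma_{j+1}$ collar onto the $\Gamma_{j+1}$-to-$\Gamma_j$ collar recovers the $\Gamma_\mu$-to-$\Gamma_j$ collar as a contact manifold, so the composition law for HKM gluing maps forces the $\sigma_j$ to commute with the bonding maps; as in the $\xi_-$ analysis behind $\SFHL$, this rests on Honda's classification of tight contact structures on $T^2\times I$. Hence the $\sigma_j$ assemble into a map $\Phi_{\mathrm{dSV}}\colon\HFKH(-Y,K)\to\SFHIL(-Y,K)$; tracking the grading contributions of the basic slices shows it is homogeneous of the stated degree, and it is readily seen to be independent of the choices involved.

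Next I would identify $I_+\circ\Phi_{\mathrm{dSV}}$ with $\iota_*$. Here I would use that the isomorphism $I_+$ of Theorem~\ref{thm:lim_to_plus} is built --- exactly as $I_-$ is built in Theorem~\ref{thm:lim_to_minus} --- from explicit bordered sutured identifications of each $\SFH(-Y(K),-\Gamma_j)$ with the homology of a model for a truncation of $\CFKP(-Y,K)$, compatibly with the bonding maps, so that the inverse limit recovers $\HFKP(-Y,K)$; the inverse limit interacts well with homology because the groups $\SFH(-Y(K),-\Gamma_j)$ are finite-dimensional. It then suffices to compute the single map $\sigma_j$ in the same terms: decompose the $\Gamma_\mu$-to-$\Gamma_j$ collar into basic slices, compute the bordered sutured type-$D$ and type-$A$ modules of the pieces, and apply the pairing theorem. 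The computation should show $\sigma_j$ is carried to the tautological inclusion of the hat complex $\CFKH(-Y,K)$ --- the kernel of the $U$-action, i.e.\ the bottom of the tower --- into the $j$-th truncation, so that passing to the inverse limit gives $I_+\circ\Phi_{\mathrm{dSV}}=\iota_*$ on homology. (Alternatively one could try to deduce the result from Theorem~\ref{thm:SV_map} by a duality argument, but making precise how HKM gluing maps and the identifications $I_\pm$ behave under orientation reversal looks no easier than the direct computation.)

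I expect the crux to be this bordered sutured computation of $\sigma_j$: matching Zarev's dictionary between bordered sutured invariants and knot Floer complexes carefully enough to conclude that the gluing map is precisely the inclusion $\CFKH(-Y,K)\hookrightarrow\CFKP(-Y,K)$ onto $\ker U$ --- and not, say, the zero map or a $U$-twisted variant --- once the orientation reversal built into the gluing formalism is accounted for. This is the ``plus-side'' analogue of the core computation in Theorem~\ref{thm:SV_map}, and as there one expects the rigidity of the bordered sutured algebra, together with the known behaviour of contact structures under the gluing maps, to determine the answer uniquely. A secondary, more bookkeeping matter is verifying the composition law for the contact collars used above, so that the $\sigma_j$ genuinely descend to a map into $\SFHIL(-Y,K)$.
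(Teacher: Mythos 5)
Your proposal is correct and follows essentially the same strategy as the paper's sketch in Section~\ref{sub:identifying_nat_map}: define $\Phi_{\mathrm{dSV}}$ from the HKM gluing maps $\phi_{dSV}\colon\SFH(-Y(K),-\Gamma_\mu)\to\SFH(-Y(K),-\Gamma^+_i)$ (the paper does this already in Proposition~\ref{pro:lim_hat2}, with compatibility with the bonding maps coming from the classification of tight $T^2\times I$ layers), decompose along $\SF_T$ to localize the computation to Type-$D$ modules, and show the resulting map $K_\infty\to K^+_n$ sends the generator to $b_n$, which in the inverse limit is the ``bottom'' element $\delta_0$ and hence matches $\iota_*\colon\HFKH\to\HFKP$. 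The only difference is organizational: the paper's Lemma~\ref{lem:dSV_map} computes the base case $\phi_{dSV}\colon K_\infty\to K^+_0$ directly and then iterates by box-tensoring with the Dehn-twist bimodule, whereas you propose decomposing the whole collar into basic slices and pairing; both are standard bordered-sutured mechanisms and lead to the same answer.
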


Also, in Section~\ref{sub:top_inverselimits} we define a class $\EHIL(K)$ in $\SFHIL(-Y,K)$ for a Legendrian or transverse knot $K$ in a contact manifold $(Y,\xi)$. While a corresponding invariant in knot Floer homology has not previously been studies we can prove the following result. 
\begin{theorem}\label{thm:lim_to_plus_leg}
	Let $K \subset (Y,\xi)$ be a Legendrian knot.  Under the map
\[
	\Phi_{\mathrm{dSV}}: \HFKH(-Y,K) \to \SFHIL(-Y,K)
\]
given in Theorem~\ref{thm:SV_map2}, the Legendrian invariant $\SLH(K)$ is sent to $\EHIL(K)$.
\end{theorem}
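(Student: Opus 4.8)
The plan is to reduce the statement, level by level, to a comparison of contact structures on the knot complement, in close parallel with the proof of \thmref{thm:lim_to_minus_leg}. By construction $\SFHIL(-Y,K)$ is an inverse limit $\varprojlim_i \SFH(-Y(K),-\Gamma_i)$, so an element is determined by its components; hence it suffices to show that, for each $i$, the level-$i$ component of $\Phi_{\mathrm{dSV}}(\SLH(K))$ agrees with that of $\EHIL(K)$. Unwinding the two constructions, the former is the image of $\SLH(K)$ under the $i$-th Honda--Kazez--Mati\'c gluing map $\phi^{(i)}_{\mathrm{dSV}}\colon \HFKH(-Y,K)=\SFH(-Y(K),-\Gamma_\mu)\to\SFH(-Y(K),-\Gamma_i)$, associated to a tight contact $T^2\times I$ layer $\zeta_i$, while the latter is the contact invariant $\EH$ of $Y(K)$ equipped with the contact structure $\eta_i$ used to define $\EHIL(K)$ at that stage.

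First I would invoke the Stipsicz--V\'ertesi identification $\SLH(K)=\EH(Y(K),\xib_K)$, where $\xib_K$ is obtained from $\xi_K$ by attaching the $T^2\times I$ collar compatible with negative stabilization. Combining this with the two structural properties of the HKM gluing maps that are already in play elsewhere in the paper --- functoriality (gluing in a contact piece and then taking $\EH$ equals applying the associated gluing map to $\EH$) and the composition law --- yields
\[
	\phi^{(i)}_{\mathrm{dSV}}(\SLH(K))=\phi^{(i)}_{\mathrm{dSV}}\bigl(\EH(Y(K),\xib_K)\bigr)=\EH\bigl(Y(K),\,\xib_K\cup\zeta_i\bigr).
\]
The theorem is then equivalent to the purely contact-geometric assertion that $\xib_K\cup\zeta_i$ and $\eta_i$ have the same $\EH$ invariant --- ideally, that they are isotopic rel boundary.

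The main obstacle, as I see it, is precisely this last identification, which now takes place \emph{across} the meridional slope rather than below it as in \thmref{thm:lim_to_minus_leg}. Here one must appeal to the Etnyre--Honda classification of tight contact structures on $T^2\times I$: the structure $\xib_K\cup\zeta_i$ is a stack of basic slices whose signs and shuffling have to be tracked carefully and matched against the standard decomposition realizing $\eta_i$, and one must separately verify tightness so that the classification actually controls the $\EH$ class --- or, should the stacked structure be overtwisted, check that both sides vanish. Wherever the relevant gluing maps cannot be pinned down by such formal and contact-geometric reasoning alone, I would fall back on the explicit bordered sutured Floer homology computations developed in the body of the paper (the same ones underlying \thmref{thm:SV_map2}) to complete the matching. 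Passing to the inverse limit over $i$ then gives $\Phi_{\mathrm{dSV}}(\SLH(K))=\EHIL(K)$.
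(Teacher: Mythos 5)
Your overall skeleton is the right one: reduce to a level-by-level comparison in the inverse system, use $\SLH(K)=\EH(Y(K),\xib_K)$ from Stipsicz--V\'ertesi, and apply Theorem~\ref{thm:gluingandctinvt} (gluing maps respect contact invariants) to conclude that the level-$i$ component of $\Phi_{\mathrm{dSV}}(\SLH(K))$ is $\EH(Y(K),\xib_K\cup\zeta_i)$, where $\zeta_i$ is the $T^2\times I$ contact layer defining $\phi_{\mathrm{dSV}}$. Up to that point you agree with the paper.

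Where you go astray is in treating the identification $\xib_K\cup\zeta_i\simeq\eta_i$ as the hard part, requiring the Etnyre--Honda classification of tight contact structures on $T^2\times I$, tightness checks, or a fallback to bordered Floer computations. None of that is needed, because the identification is \emph{definitional}. Looking at Section~\ref{sub:definition_inverse_leg_invt}: the contact structures $\overline{\xi}_{K,i}$ used to define $\EHIL(K)$ are constructed precisely by extending $\xib_K$ across the basic slice $\widetilde{A}^-_i$; and in Section~\ref{sub:top_inverselimits} the map $\phi_{\mathrm{dSV}}\colon\SFH(-Y(K),-\Gamma_\mu)\to\SFH(-Y(K),-\Gamma^+_i)$ is defined as the HKM gluing map along that \emph{same} contact layer $\widetilde{A}^-_i$. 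So in your notation $\zeta_i=\widetilde{A}^-_i$ and $\eta_i=\xib_K\cup\widetilde{A}^-_i$ by fiat, and the equality $\xib_K\cup\zeta_i=\eta_i$ holds on the nose --- there are no signs to track, no shuffling to compare, and no tightness to verify, since $\EH$ is defined (and the gluing maps respect it) regardless of tightness. The content you attribute to a contact-classification argument or bordered computation is simply the chain of definitions plus Theorem~\ref{thm:gluingandctinvt}. Note too that the bordered computations in the paper (Lemma~\ref{lem:dSV_map} and its relatives) are used to prove the stronger Theorem~\ref{thm:SV_map2}, which identifies $\Phi_{\mathrm{dSV}}$ with $\iota_*$; they are not needed for the present statement, which is an immediate consequence of Stipsicz--V\'ertesi, Theorem~\ref{thm:gluingandctinvt}, and the construction of $\EHIL$.
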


% subsection sutured_inverse_lim_invts (end)
%%%%%%%%%%%%%%%%%%%%%%%%%%%%%%%%%%%%%%%%%%%%%%%%%%%%%%%

%%%%%%%%%%%%%%%%%%%%%%%%%%%%%%%%%%%%%%%%%%%%%%%%%%%%%%%
\subsection{Vanishing slopes} % (fold)
\label{sub:vanishing_slopes}
%%%%%%%%%%%%%%%%%%%%%%%%%%%%%%%%%%%%%%%%%%%%%%%%%%%%%%%

The construction of the limit invariants and examples computed in Section~\ref{sec:examples} motivate the definition of an invariant of Legendrian or transverse knots we dub the ``vanishing slope''.

To be more precise, let $K$ be an oriented null-homologous Legendrian knot, and $(Y(K),\xi_K)$ the complement of an open standard neighborhood of $K$.  We define an {\it extension} of $(Y(K),\xi_K)$ to be contact manifold $(Y(K), \xi'_K)$, which is obtained from $(Y(K),\xi_K)$ by attaching a (tight) sequence of basic slices to $\partial Y(K)$.  One similarly defines a {\it positive} or {\it negative} extension to be one in which all of the attached basic slices are positive or negative, respectively. 

Recall that if $K^\pm$ is obtained from $K$ via positive or negative stabilization, then $(Y(K),\xi_{K^\pm})$ is obtained from $(Y(K),\xi_K)$ by attaching a positive or negative basic slice to $\partial Y(K)$ respectively.  In particular, $(Y(K),\xi_{K^\pm})$ is either a positive or negative extension of $(Y(K),\xi_K)$ depending on the sign of the stabilization.  Similarly, the contact 3--manifold $(Y(K),\xib_K)$, obtained via the Stipsicz-V\'ertesi attachment is a negative extension of $(Y(K),\xi_K)$.

Let $(Y(K),\xi'_K)$ be an extension of $(Y(K),\xi_K)$ (positive or negative).  We define the {\it extension slope} of $(Y(K),\xi'_K)$ to be $s(Y(K),\xi'_K) = (-n,r)$, where $n$ is the amount of Giroux ($\pi$-)torsion in $\overline{(Y(K),\xi'_K) \backslash (Y(K),\xi_K)}$ and $r$ is the usual dividing slope of the dividing curves in the boundary of $(Y(K),\xi'_K)$.  Roughly, the extension slope is just the usual dividing slope, enhanced to track the number of times the dividing curves of convex tori contained within the extension rotate beyond the meridional slope as they approach the boundary of $(Y(K),\xi'_K)$.  From this interpretation, we see that extension slopes come equipped with a natural lexicographical ordering.

\begin{definition}\label{def:vanishing}
	Let $K \subset (Y,\xi)$ be a null-homologous Legendrian knot with a given Seifert framing and non-vanishing HKM invariant.  We define the {\it vanishing slope} $\mr{Van}(K)$ to be
\[
	\sup\{s(\Gamma_{\xi'_K}) \; | \; (Y(K),\xi'_K) \; \text{extends} \; (Y(K),\xi_K),\; \EH(Y(K),\xi'_K) = 0\},
\]
where all extensions must be by tight contact structures.
We similarly define the {\it positive} and {\it negative vanishing slopes} $\mr{Van}^\pm(K)$ to be
\[
	\sup\{s(\Gamma_{\xi'_K}) \; | \; (Y(K),\xi'_K) \; \text{positively extends} \; (Y(K),\xi_K), \; \EH(Y(K),\xi'_K) = 0\}
\]
and
\[
	\sup\{s(\Gamma_{\xi'_K}) \; | \; (Y(K),\xi'_K) \; \text{negatively extends} \; (Y(K),\xi_K), \; \EH(Y(K),\xi'_K) = 0\}
\]
respectively.
\end{definition}

The above definitions can be extended to the transverse category as well via the process of Legendrian approximation.  In this case, however, one must restrict the set(s) of allowable extensions to sequences of basic slice attachments, the first of which is the Stipsicz-V\'ertesi attachment.  Otherwise, the corresponding definitions are identical.

We immediately obtain the following observation concerning the relationship of the negative vanishing slope to other invariants considered in this paper.

\begin{proposition}\label{prop:negvan_invts}
	Let $K \subset (Y,\xi)$ be a null-homologous Legendrian knot with given Seifert framing.
	\begin{enumerate}
		\item If $\EH(K) \neq 0$, then $\mr{Van}^-(K) \leq (0,tb(K))$,
		\item If any of the invariants $\SL(K) = \EHL(K)$, $\SLH(K)$, or $\EHIL(K)$ are non-vanishing, then $\mr{Var}^-(K) \leq (0,-\infty)$.
	\end{enumerate}
\qed
\end{proposition}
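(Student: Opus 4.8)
The plan is to establish both parts by tracking how the HKM contact invariant behaves under basic slice attachments, using the compatibility of the gluing maps $\phi_-$ with the limit construction. For part (1), recall that $\mr{Van}^-(K)$ is the supremum of extension slopes $s(\Gamma_{\xi'_K})$ over negative extensions $(Y(K),\xi'_K)$ with $\EH(Y(K),\xi'_K)=0$, where an extension slope takes the form $(-n,r)$ with $n$ the Giroux torsion and $r$ the dividing slope. A negative extension whose dividing slope $r$ exceeds $\tb(K)$ — equivalently, one that goes beyond the boundary slope of $(Y(K),\xi_K)$ by at least one negative stabilization — factors through $(Y(K),\xi_1)$, the complement of the first negative stabilization $K_1$. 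So the first step is: any negative extension with extension slope strictly greater than $(0,\tb(K))$ receives the gluing map $\phi_-$ applied (at least once) to $\EH(K)$, giving $\EH(K_1) = \phi_-(\EH(K))$ as an intermediate value. Thus if $\EH(K)\neq 0$ it does not immediately follow that $\EH(K_1)\neq 0$ — and indeed this is exactly why we only get the \emph{inequality} $\mr{Van}^-(K)\le(0,\tb(K))$ rather than a statement that the vanishing slope is $(0,-\infty)$: the claim is simply that no extension with slope \emph{below or equal to} $(0,\tb(K))$ in the lexicographic order can have vanishing $\EH$ while $\EH(K)\neq 0$, because every such extension is itself $(Y(K),\xi_K)$ together with at most a trivial (I-invariant, torsion-free) collar, on which the gluing map is an isomorphism onto its image carrying $\EH(K)\mapsto\EH(K)\neq 0$. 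So part (1) reduces to: (a) classifying negative extensions with extension slope $\le (0,\tb(K))$ as exactly those obtained by attaching a collar with no torsion and final slope $\le\tb(K)$ — which by Etnyre–Honda are, up to isotopy, products — and (b) invoking that the HKM gluing map for a product $T^2\times I$ layer is injective (it is a composition of bypass maps along trivial bypasses, hence an iso onto the summand it lands in), so $\EH$ cannot vanish.

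For part (2), the key observation is that each of $\SL(K)=\EHL(K)$, $\SLH(K)$, and $\EHIL(K)$ is, by Theorems~\ref{thm:lim_to_minus_leg}, \ref{thm:SV_map} / \ref{thm:lim_to_minus}, and \ref{thm:lim_to_plus_leg}, the image of the HKM invariant $\EH(K)$ under a map that factors through the \emph{entire} sequence of negative basic slice attachments — these invariants ``survive all the way to the meridional suture'' (and beyond, in the $\EHIL$ case). Concretely, $\EHL(K)$ is the class in $\SFHL(-Y,K)=\varinjlim\SFH(-Y(K),-\Gamma_i)$ represented by the coherent family $\{\EH(K_i)\}$ with $\phi_-(\EH(K_i))=\EH(K_{i+1})$. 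If any of these limit/hat invariants is non-zero, then $\EH(K_i)\neq 0$ for \emph{all} $i$, i.e.\ $\EH$ remains non-vanishing under arbitrarily many negative stabilizations, equivalently under negative extensions of arbitrarily large dividing slope. So there is no negative extension with $\EH=0$ having dividing slope $r$ finite, and since among the extensions considered the only ones with $\EH=0$ would need unboundedly large $r$ (contradiction) or would be forced to lie at slope $-\infty$ with positive torsion — and any torsion-free extension is detected by the non-vanishing family — the supremum defining $\mr{Van}^-(K)$ is taken over extensions whose slope is at best $(0,-\infty)$, the bottom of the order restricted to $n\ge 0$. (Here I am using the convention, implicit in the paper's ordering of extension slopes $(-n,r)$ lexicographically, that $(0,r)$ for any finite $r$ is larger than any $(-n,\cdot)$ with $n>0$, and that ``$(0,-\infty)$'' denotes the infimal value; the statement $\mr{Var}^-(K)\le(0,-\infty)$ — with the typo for $\mr{Van}^-$ — is thus the assertion that no extension with a finite, non-negatively-torsioned slope kills $\EH$.)

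The step I expect to be the main obstacle is making precise the claim that non-vanishing of $\SL(K)$ (or $\SLH(K)$, or $\EHIL(K)$) actually forces $\EH(K_i)\neq 0$ for every $i$. For $\EHL(K)=\SL(K)$ this is nearly immediate from the definition of the direct limit: a class in $\varinjlim$ represented by a coherent family is non-zero only if the family is not eventually zero, and coherence ($\phi_-(\EH(K_i))=\EH(K_{i+1})$) together with the fact that $\phi_-$ of zero is zero forces \emph{all} terms to be non-zero once one is. For $\SLH(K)$ one uses instead that $\SLH(K)=\Phi_{SV}(\EHL(K))$ factors through $\SFHL$, so $\SLH(K)\neq 0 \Rightarrow \EHL(K)\neq 0$, and similarly $\EHIL(K)\neq 0 \Rightarrow$ its preimage under the maps of Theorem~\ref{thm:SV_map2} is non-zero, which pushes the non-vanishing back through the whole stabilization sequence. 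The subtlety is bookkeeping: one must be careful that ``extension slope $(0,r)$ is achieved'' really does correspond to having passed through $\EH(K_j)$ for $j$ large enough that the dividing slope reaches $r$, and that the extensions allowed in $\mr{Van}^-$ (tight, negative) are precisely the ones realized by iterated negative basic slices classified in \cite{EH}; once that dictionary is set up, parts (1) and (2) are each a one-line consequence.
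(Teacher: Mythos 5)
Your part~(1) goes badly astray.  The actual observation, which is presumably why the paper marks the proposition with $\qed$, is one line: since attaching any nontrivial negative basic slice strictly decreases the extension slope (the torsion can only increase and the dividing slope rotates clockwise), the trivial extension $(Y(K),\xi_K)$ realizes the \emph{maximum} of all negative extension slopes, namely $(0,\tb(K))$.  This trivial extension has $\EH = \EH(K)\neq 0$ by hypothesis and is therefore excluded from the set defining $\mr{Van}^-(K)$; every remaining negative extension has slope strictly less than $(0,\tb(K))$, so the supremum is at most $(0,\tb(K))$.  Your proposal instead asserts that \emph{every} negative extension with slope $\leq(0,\tb(K))$ is ``$(Y(K),\xi_K)$ together with at most a trivial ($I$-invariant, torsion-free) collar'' on which the HKM gluing map is injective, so that $\EH$ cannot vanish.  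But because $(0,\tb(K))$ is the maximal extension slope, the condition ``slope $\leq(0,\tb(K))$'' describes \emph{all} negative extensions, most of which are very far from $I$-invariant collars (e.g.\ those realizing the meridional slope, or carrying positive Giroux torsion, on which $\EH$ vanishes by~\cite{GHV}); the gluing map along a nontrivial basic slice is not injective in general; and if your claim were right the set defining $\mr{Van}^-(K)$ would be empty, a far stronger and generally false conclusion.  This appears to stem from reading the lexicographic ordering backwards.

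Your part~(2) captures the intended argument, and the step you correctly single out as the heart of the matter --- that non-vanishing of $\EHL(K)=\SL(K)$, $\SLH(K)$, or $\EHIL(K)$ each forces $\EH(S^i_-(K))\neq 0$ for every $i\geq 0$ --- is justified essentially as you say: $\EHL(K)$ is represented by the coherent family $\{\EH(S^i_-(K))\}$ and $\phi_-(0)=0$; $\SLH(K)=\phi_{SV}(\EH(S^i_-(K)))$ for every $i$ by the commutativity of Figure~\ref{fig:big_SV}; and $\EHIL(K)=\Phi_{dSV}(\SLH(K))$ by Theorem~\ref{thm:ehl_sv2}, so its non-vanishing pushes back to $\SLH(K)$.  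What you flag as ``bookkeeping'' and leave unresolved is the remaining step: any torsion-free negative extension $(Y(K),\xi'_K)$ with finite dividing slope can be further negatively extended to $(Y(K),\xi_{K,i})$ for $i$ large, using Proposition~\ref{prop:relations}(2), and naturality of the HKM gluing map then sends $\EH(Y(K),\xi'_K)=0$ to $\EH(S^i_-(K))=0$, a contradiction.  Hence no negative extension with extension slope strictly greater than $(0,-\infty)$ lies in the defining set, which is precisely $\mr{Van}^-(K)\leq(0,-\infty)$.
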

See Section~\ref{sec:examples} for some explicit computations of the vanishing slope. 

% subsection vanishing_slope (end)
%%%%%%%%%%%%%%%%%%%%%%%%%%%%%%%%%%%%%%%%%%%%%%%%%%%%%%%

%%%%%%%%%%%%%%%%%%%%%%%%%%%%%%%%%%%%%%%%%%%%%%%%%%%%%%%
\subsection{Noncompact 3-manifolds} % (fold)
\label{sub:noncompact}
%%%%%%%%%%%%%%%%%%%%%%%%%%%%%%%%%%%%%%%%%%%%%%%%%%%%%%%

The work presented here is part of a broader program to develop Heegaard Floer theoretic invariants for noncompact 3-manifolds with cylindrical ends and a generalized ``suture'' on the boundary at infinity.  These invariants are built in a fashion similar to $\SFHL(-Y,K)$ above, by taking directed limits over collections of maps induced by natural contact-geometric constructions.  In the special case of manifolds with $T^2 \times [0,\infty)$-ends, a generalized suture is equivalent to a choice of ``slope at infinity'', as defined in \cite{Tr}.

When these techniques are applied to a null-homologous knot complement $Y-K$, and the slope at infinity is chosen to be meridional to $K$, the resulting group $\SFH(-Y,K)$ is isomorphic to the minus variant of knot Floer homology $\HFKM(-Y,K)$.  

Such generalizations are the subject of future papers.

% subsection noncompact (end)
%%%%%%%%%%%%%%%%%%%%%%%%%%%%%%%%%%%%%%%%%%%%%%%%%%%%%%%

%%%%%%%%%%%%%%%%%%%%%%%%%%%%%%%%%%%%%%%%%%%%%%%%%%%%%%%
\subsection{Supplementary Results and Questions} % (fold)
\label{sub:aux_quest}
%%%%%%%%%%%%%%%%%%%%%%%%%%%%%%%%%%%%%%%%%%%%%%%%%%%%%%%

To prove the theorems discussed above, we must establish a number of supplementary results which may be of independent interest.  Most notably, we discuss a general framework which one can apply to effectively and explicitly compute the HKM gluing maps.  Additionally, as a corollary of our discussion regarding maps induced by bypass attachments, we obtain an independent proof of Honda's bypass exact triangle \cite{Ho}, see Section~\ref{sec:bypass_attachment_maps}.

In Section~\ref{sec:examples}, we provide an example an example of a Legendrian knot $K_1$ for which $\EH(K_1) \neq 0$ despite the fact that $\EHL(K_1) = \SL(K_1) = 0$.  We further demonstrate the existence of a Legendrian knot $K_2$ for which $\SLH(K_2) \neq 0$ while $\EHIL(K) = 0$.  This suggests attention be paid to the following question.

\begin{question}
	What is the difference in information content between the various Legendrian and transverse invariants defined in the context of Heegaard Floer theory?
\end{question}

Marco Golla \cite{Golla} has a beautiful answer to this question for Legendrian knots in the standard contact 3--sphere.  Specifically, he shows that, in terms of information content, the HKM invariant of a Legendrian knot $K$ is equivalent to the pair of LOSS invariants $\{\SL(K),\SL(-K)\}$.  That is, $\EH(K)$ determines the pair $\{\SL(K),\SL(-K)\}$ and vice-versa. As mentioned above our examples in Section~\ref{sec:examples} indicate this is not true in arbitrary contact manifolds. 

In a different direction, Lisca and Stipsicz \cite{LSt} recently showed how to construct a new invariant of Legendrian and transverse knots using contact surgery techniques.  Although their construction is substantially different from that presented in this paper, it is similar in the sense that their invariants take values in an (inverse) limit on Heegaard Floer homology groups. Thus, we ask the following question.

\begin{question}
	What, if any, is the relationship between the inverse limit invariants defined by Lisca and Stipsicz and the directed and inverse  limit invariants defined here?
\end{question}

% subsection auxiliary_results_and_questions (end)
%%%%%%%%%%%%%%%%%%%%%%%%%%%%%%%%%%%%%%%%%%%%%%%%%%%%%%%

%%%%%%%%%%%%%%%%%%%%%%%%%%%%%%%%%%%%%%%%%%%%%%%%%%%%%%%
\subsection*{Organization} % (fold)
\label{sub:organization}
%%%%%%%%%%%%%%%%%%%%%%%%%%%%%%%%%%%%%%%%%%%%%%%%%%%%%%%
 Part~\ref{part1} of the paper gives the definition and properties of the limit sutured homologies and discusses their properties. Specifically, Section~\ref{sec:back} provides background on contact geometry and knot and sutured Floer homology.  In Section~\ref{sec:limits}, we provide a rigorous definition of the sutured limit homologies and the associated Legendrian/transverse invariant.  
Part~\ref{part2} uses bordered sutured Floer homology to identify the invariants form Part~\ref{part1} with their corresponding knot Floer homologies. We begin that part with a review of bordered sutured Floer homology in Section~\ref{backgroud2} and discuss the algebras associated to parameterized sutured surfaces used in our proofs in Section~\ref{sec:algebras}. The following two sections identify our limit invariants with the corresponding knot Floer homologies and the limit Legendrian invariants with the LOSS invariants, respectively. Then in Sections~\ref{sec:SV_limit} and~\ref{sec:proof_lim_hfh} we prove various maps between the limit invariants and knot Floer homology can be identified with corresponding maps purely in knot Floer homology.  In Section~\ref{sec:plus}, we sketch proofs of the various results concerning sutured inverse limit homology.  Having completed our identification of limit invariants with knot Floer homology, in Section~\ref{sec:gradings} we show how to identify natural gradings on sutured limit homology with the classical absolute Alexander and ($\Z/2$) Maslov gradings.  Finally, in Section~\ref{sec:examples}, examples are presented of Legendrain knots exhibiting interesting behavior from the perspective of the Legendrian and transverse invariants defined herein. 

% subsection organization (end)
%%%%%%%%%%%%%%%%%%%%%%%%%%%%%%%%%%%%%%%%%%%%%%%%%%%%%%%

%%%%%%%%%%%%%%%%%%%%%%%%%%%%%%%%%%%%%%%%%%%%%%%%%%%%%%%
\subsection*{Acknowledgements} % (fold)
\label{sub:acknowledgements}
%%%%%%%%%%%%%%%%%%%%%%%%%%%%%%%%%%%%%%%%%%%%%%%%%%%%%%%

We would like to thank the Mathematical Sciences Research Institute for their hospitality hosting the authors in the spring of 2010.  A significant portion of this work was carried out during the semester on ``Symplectic and Contact Geometry and Topology'' and on ``Homology Theories of Knots and Links''.  We would also like to thank the Banff International Research Station for allowing us to participate in the workshop ``Interactions Between Contact Symplectic Topology and Gauge Theory in Dimensions 3 and 4''. The third author was unable to participate in the completion of this paper, so any issues with the exposition are solely the responsibility of first two authors. The first author gratefully acknowledges the support of NSF grants DMS-0804820 and DMS-1309073.  The second author gratefully acknowledges the support of NSF grant DMS-1249708. The third  author gratefully acknowledges the support of a Simons Postdoctoral Fellowship.

% section acknowledgements (end)
%%%%%%%%%%%%%%%%%%%%%%%%%%%%%%%%%%%%%%%%%%%%%%%%%%%%%%%

% section intro (end)
%%%%%%%%%%%%%%%%%%%%%%%%%%%%%%%%%%%%%%%%%%%%%%%%%%%%%%%

\part{The sutured limit homology package} \label{part1}

In this part of the paper, using only sutured Floer homology and contact geometry, we define the sutured limit $\SFHL(Y,K)$ and sutured inverse limit $\SFHIL(Y,K)$ homologies of a null-homologous knot in a 3--manifold. Together with the sutured Floer homology $\SFH(Y(K),\Gamma_\mu)$ of the knot complement with meridional sutures, these groups are shown to share many of the properties of the knot Floer homology packaged $\mathrm{HFK}^\pm(Y,K)$ and $\HFKH(Y,K)$. We also show that given a Legendrian knot $K$ in a contact manifold $(Y,\xi)$ that there is an invariant $\EHL(K)\in \SFHL(-Y,K)$ that shares many properties of the LOSS invariant $\SL(L) \in \HFKM(-Y,L)$.
\begin{remark}
It will be clear from our discussion that any homology theory for sutured manifolds that poses an appropriate ``gluing" theorem and contact invariant will lead to limit invariants for knots. 
\end{remark}

%%%%%%%%%%%%%%%%%%%%%%%%%%%%%%%%%%%%%%%%%%%%%%%%%%%%%%%
\section{Background} % (fold)
\label{sec:back}
%%%%%%%%%%%%%%%%%%%%%%%%%%%%%%%%%%%%%%%%%%%%%%%%%%%%%%%

In this section, we review the basic definitions and results used in the first part of the paper to define the sutured limit and inverse limit homologies.  We begin by reviewing  standard notions in contact geometry, convex surfaces and Legendrian and transverse knot theory. In the following subsections we recall basic definitions and results from knot Floer homology, sutured Floer homology and invariants of Legendrian and transverse knots.

%%%%%%%%%%%%%%%%%%%%%%%%%%%%%%%%%%%%%%%%%%%%%%%%%%%%%%%
\subsection{Contact Geometry} % (fold)
\label{sub:contact_geometry}
%%%%%%%%%%%%%%%%%%%%%%%%%%%%%%%%%%%%%%%%%%%%%%%%%%%%%%%

Recall that a {\it contact structure} on an oriented $3$-manifold $Y$ is a $2$-plane field $\xi$ satisfying an appropriate nonintegrability condition.  In what follows, we assume that our contact structures are always cooriented by a global $1$-form $\alpha$, called a {\it contact form}.  In this case, the nonintegrability condition is equivalent to the statement that $\alpha \wedge d\alpha$ is a volume form defining the given orientation on $Y$. We refer the reader to \cite{Et2, EH, Honda00a} for details concerning contact structures, Legendrian and transverse knots, and convex surfaces, but recall below the basic facts we will need.

%%%%%%%%%%%%%%%%%%%%%%%%%%%%%%%%%%%%%%%%%%%%%%%%%%%%%%%
\subsubsection{Convex Surfaces and Bypass Attachments} % (fold)
\label{sub:conv_surf}
%%%%%%%%%%%%%%%%%%%%%%%%%%%%%%%%%%%%%%%%%%%%%%%%%%%%%%%

Recall that a surface $\Sigma$ in a contact manifold $(Y,\xi)$ has an induced singular foliation $T\Sigma\cap \xi$ called the characteristic foliation $\Sigma_\xi$ and the characteristic foliation determines $\xi$ in a neighborhood of $\Sigma$. The surface $\Sigma$ is said to be {\it convex} if there exists a vector field $v$ on $Y$ which is transverse to $\Sigma$, and whose flow preserves the contact structure $\xi$.  Given such a surface and vector field the {\it dividing set} $\Gamma \subset \Sigma$ is the collection of points $\{p \in \Sigma:\, v_p \in \xi_p\}$.

Dividing sets are so-called because they divide a convex surface $\Sigma$ into a union of two (possibly disconnected) regions.  Orienting $\Sigma$ so that the vector field is positively transverse to $\Sigma$, the regions are called positive or negative according to whether the transverse vector field $v$ along $\Sigma$ intersects the contact planes positively or negatively.

Convex surfaces have proven tremendously useful in the study of contact structures on $3$-manifolds for the following key reasons.
\begin{enumerate}
	\item If $\Sigma$ is closed or compact with Legendrian boundary (and the twisting of $\xi$ along $\partial \Sigma$ is non-positive), then after possibly applying a $C^0$-isotopy in a neighborhood of the boundary, $\Sigma$ is $C^\infty$-close to a convex surface.
	\item Giroux flexibility: Given a convex surface $\Sigma$ with dividing set $\Gamma$, if $\mathcal{F}$ is a singular foliation on $\Sigma$ that is divided by $\Gamma$ (see \cite{Honda00a} for the precise definition of ``divided by" but in practice it means that $\mathcal{F}$ is the characteristic foliation on $\Sigma$ in some contact structure and $\Gamma$ is isotopic to a dividing set for the foliation), then we may $C^0$-isotope $\Sigma$ so that its characteristic foliation is $\mathcal{F}$. 
	\item Since the characteristic foliation of a surface determines the contact structure in a neighborhood of $\Sigma$, the contact structure on $\xi$ near $\Sigma$ is {\em almost} determined by a the dividing set $\Gamma$. 
\end{enumerate}

An important example of the use of Giroux flexibility is for convex tori. Suppose $T$ is a convex torus in $(Y,\xi)$ with dividing set $\Gamma$ consisting of two parallel curves that split $T$ into two annuli $T_+$ and $T_-$. According to Giroux flexibility we can $C^0$-isotope $T$ so that its characteristic foliation consists of a two circles worth of singularities, one the core of $T_+$ and the other the core of $T_-$. These are called {\em Legendrian divides}. The rest of the foliation is non-singular and give a ruling of $T$ by curves of any pre-selected slope other than the slope of the dividing curves. These non-singular leaves are called the {\em ruling curves}. A torus with such a characteristic foliation will be called a {\em standard convex torus}.

Let $\alpha$ be an arc contained in a convex surface $\Sigma$ and suppose that $\alpha$ intersects the dividing set of $\Sigma$ in three points $p_1$, $p_2$ and $p_3$, where $p_1$ and $p_3$ are the endpoints of $\alpha$.  A {\it bypass} along $\alpha$ is a convex disk $D$ with Legendrian boundary such that
\begin{enumerate}
	\item $D \cap \Sigma = \alpha$,
	\item $\mathrm{tb}(D) = -1$,
	\item $\partial D = \alpha \cup \beta$,
	\item $\alpha \cap \beta = \{ p_1,p_3 \}$ are corners of $D$ and elliptic singularities of $D_\xi$.
\end{enumerate}

When a bypass is attached to a convex surface $\Sigma$, the dividing set on $\Sigma$ changes in the following predictable way.

\begin{othertheorem}[Honda 2000, \cite{Honda00a}]\label{thm:hon_bypass}
Suppose that $\Sigma$ is an oriented convex surface in $(Y,\xi)$. The surface $\Sigma$ locally splits $Y$ into two pieces. Suppose that $D$ is a bypass along $\alpha$ in $\Sigma$ lying on the positive side of $\Sigma$. If $\Sigma\times [0,1]$ is a small one-sided neighborhood of $\Sigma\cup D$ so that $\Sigma=\Sigma\times\{0\}$, then the dividing curves on $\Sigma\times \{1\}$ are the same as the dividing curves on $\Sigma$ except in a neighborhood of $\alpha$ where the change according to Figure~\ref{fig:bp_attach}. The change in the dividing curves if $\Sigma$ is pushed across a bypass on the negative side of $\Sigma$ is also shown in the figure. 
\end{othertheorem}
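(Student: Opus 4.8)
The plan is to reduce the statement to an explicit local model in a ball around the attaching arc $\alpha$ and then to pin that model down using the uniqueness of tight contact structures on the ball.

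\emph{Step 1: Localization.} Since $D\cap\Sigma=\alpha$, outside an arbitrarily small rectangular neighborhood $R\subset\Sigma$ of $\alpha$ the one-sided neighborhood $\Sigma\times[0,1]$ of $\Sigma\cup D$ may be chosen to be $I$-invariant, so the dividing curves on $\Sigma\times\{1\}$ already agree with those on $\Sigma=\Sigma\times\{0\}$ there. Hence it suffices to work inside a ball $B$ which is a neighborhood of $R\cup D$, to check that $\Sigma\times\{1\}$ is convex, and to identify the dividing set of $B\cap(\Sigma\times\{1\})$ with the local picture in Figure~\ref{fig:bp_attach}.

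\emph{Step 2: A standard model and the computation.} Using Giroux flexibility we first put the characteristic foliation of $R$ in standard form relative to the three dividing arcs that $\alpha$ crosses, so that $R\cong[-1,1]^2$ with $\alpha=[-1,1]\times\{0\}$ and $\Gamma_R$ explicit. Since $\mathrm{tb}(\partial D)=-1$ forces the dividing set of the disk $D$ to be a single arc joining the two elliptic corners $p_1,p_3$, Giroux flexibility also lets us fix a convenient characteristic foliation on $D$. Shrinking $B$, its contact structure is tight; after rounding the edge where $D$ meets $R$ along $\alpha$, $\partial B$ is a convex sphere whose dividing set is a single circle. One now writes an explicit model contact form on $B=\{x^2+y^2+z^2\le 1\}$ in which $R$ sits as a convex rectangle, $D$ as a bypass along $\alpha$, and the top surface $\Sigma'=B\cap(\Sigma\times\{1\})$ is obtained by isotoping $R$ across $D$; computing the characteristic foliation of $\Sigma'$ in this model — equivalently, applying an edge-rounding computation to the surface obtained by enlarging $D$ slightly so that $R\cup D$ has Legendrian boundary — produces exactly the dividing set drawn in Figure~\ref{fig:bp_attach}. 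The case of a bypass attached on the negative side follows by reversing the co-orientation of $\Sigma$, equivalently by reflecting the model, which swaps the positive and negative regions and yields the second picture.

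\emph{Step 3: Uniqueness and conclusion.} By Eliashberg's uniqueness theorem, a tight contact structure on $B$ is determined by the dividing curve on $\partial B$, and by Giroux flexibility the auxiliary choices of characteristic foliations above do not affect the resulting dividing set; hence the dividing set computed in the model is \emph{the} dividing set on $\Sigma\times\{1\}$ near $\alpha$. Gluing this local model back to the $I$-invariant structure outside $B$ reconstructs $\Sigma\times[0,1]$ and completes the proof. The main obstacle is the explicit computation in Step 2: choosing honest coordinates on $B$ in which $\Sigma$, $D$, and $\Sigma'$ are simultaneously controlled, smoothing the corner along $\alpha$ correctly, and tracking how the three dividing arcs of $\Sigma$ recombine into the new configuration — the uniqueness statement for tight balls is what guarantees that this bookkeeping gives a well-defined, choice-independent answer.
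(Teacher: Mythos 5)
The paper does not prove this theorem; it is Honda's bypass attachment lemma, imported by citation from \cite{Honda00a}, so there is no in-paper proof to compare against. That said, your sketch essentially reproduces Honda's original argument: localize to a ball around the attaching arc, put the characteristic foliations of $R$ and $D$ in standard form via Giroux flexibility, compute the dividing set of the rounded surface $R\cup D$ using the edge-rounding lemma, and isotope slightly to get $\Sigma\times\{1\}$.

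One place where your account is slightly off is Step~3. You lean on Eliashberg's uniqueness of tight contact structures on $B^3$ to justify well-definedness, but this is heavier machinery than is required and somewhat misattributes where the choice-independence comes from. The germ of the contact structure along $\Sigma\cup D$ is already determined by the characteristic foliations on $\Sigma$ and $D$, and Giroux flexibility is exactly what lets you normalize those foliations without altering the isotopy class of the dividing sets; you never need the global classification of tight balls. (Indeed Honda's proof does not invoke it.) Two further remarks: the actual content of the argument is the edge-rounding computation, which in your Step~2 is asserted rather than carried out --- for a sketch that is fine, but be aware that this is where the real work sits; and you should verify, rather than assume, that $\Sigma\times\{1\}$ is convex, which in the explicit model is confirmed as part of the same computation.
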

\begin{figure}[htbp]
	\centering
	\begin{picture}(215,208)
		\put(0,0){\includegraphics{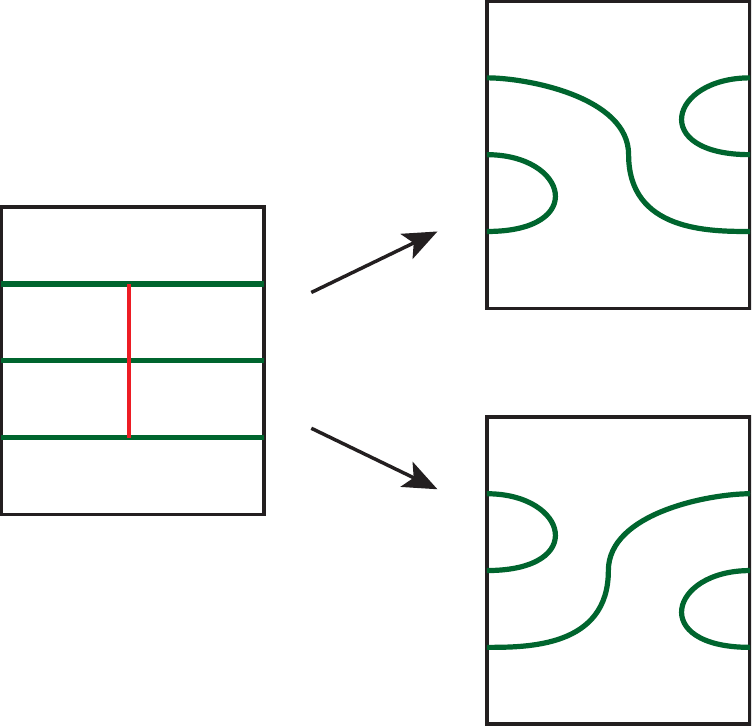}}
		\put(40,112){$\alpha$}
		\put(95,140){\large $P$}
		\put(95,62){\large $N$}
	\end{picture}
	\caption{Effect of a bypass attachment along $\alpha$ from the positive side of the surface (top right) and negative side of the surface (bottom right).}
	\label{fig:bp_attach}
\end{figure}

% subsection conv_surf (end)
%%%%%%%%%%%%%%%%%%%%%%%%%%%%%%%%%%%%%%%%%%%%%%%%%%%%%%%

%%%%%%%%%%%%%%%%%%%%%%%%%%%%%%%%%%%%%%%%%%%%%%%%%%%%%%%
\subsubsection{Legendrian and Transverse Knots} % (fold)
\label{sub:leg_trans}
%%%%%%%%%%%%%%%%%%%%%%%%%%%%%%%%%%%%%%%%%%%%%%%%%%%%%%%

When studying $3$-dimensional contact manifolds $(Y,\xi)$, it is profitable to focus attention on $1$-dimensional subspaces which either lie within or transversely intersect the contact planes.  If a knot $K \subset Y$ satisfied $T_pK \subset \xi_p$ for all $p \in K$, we say that $K$ is {\it Legendrian}.  Similarly, if $K \subset Y$ satisfies $T_pK \pitchfork \xi_p$ for all $p \in K$, we say that $K$ is {\it transverse}.  Since our contact structures are always oriented, we further require that each of the intersections between a transverse knot $K$ and the contact structure $\xi$ be positive.  Legendrian or transverse knots are said to be isotopic if they are isotopic through Legendrian or transverse knots respectively.

Recall that a Legendrian knot always has a framing coming from the contact structure called the {\em contact faming}. If $L$ has a preferred framing $\mathcal{F}$ then we can associate an integer, $\tw(L,\mathcal{F})$, to the contact framing. If $L$ is null-homologous and its preferred framing is the Seifert framing the we call the twisting $\tw(L,\mathcal{F})$ the {\it Thurston-Bennequin invariant} and denote it $\tb(L)$. In addition, when $L$ is null-homologous and oriented we can define the rotation number $\rot(L)$ to be minus the Euler number of $\xi$ restricted to a Seifert surface, relative to an oriented vector field in $\xi$ along $L$. (This number is only well defined module $n$, where $n$ is the generator of the image of the Euler class of $\xi$ in $\Z$.)

It is well known, see \cite{EH}, that any two Legendrian knots have contactomorphic neighborhoods. Thus studying a model situation one can see that given a Legendrian knot $L$ there is a neighborhood of $L$ with convex boundary having two dividing curves of slope $\tb(L)$. If the boundary of this neighborhood is in standard form with any ruling slope then we say this is a {\em standard neighborhood of $L$}. We also recall that given any solid torus $N$ in a contact manifold $(Y,\xi)$ with convex boundary having two dividing curves of slope $n$ and standard form on the boundary and for which $\xi|_N$ is tight, is the standard neighborhood of a unique Legendrian knot $L$ in $N\subset M$. Thus studying Legendrian knots in a given knot type in $(Y,\xi)$ is equivalent to studying such solid tori that represent the given knot type. 

Given an oriented Legendrian knot $K$, one can produce new Legendrian knots $S_+(K)$ and $S_-(K)$ in the same knot type by applying operations called positive, respectively negative, stabilization.  These operations, performed in a standard neighborhood of a point on $L$ are depicted in Figure~\ref{fig:leg_stab}. We will discuss the relation between stabilization and standard neighborhoods of Legendrian knots in the next subsection. 
%%%%%%%%%%%% Legendrian Stabilization %%%%%%%%%%%%%%%%%%%%%
\begin{figure}[htbp]
	\centering
	\begin{picture}(160,85)
		\put(0,0){\includegraphics[scale=1]{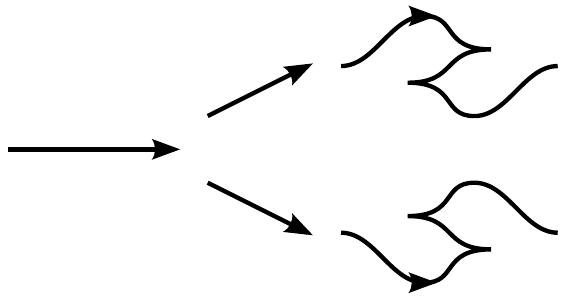}}
		\put(63,15){$S_-$}
		\put(63,65){$S_+$}
	\end{picture}
	\caption{Positive and negative Legendrian Stabilizations}
	\label{fig:leg_stab}
\end{figure}
%%%%%%%%%%%% End Legendrian Stabilization %%%%%%%%%%%%%%%%%%%%%

Given a Legendrian knot $K$, one can produce a canonical transverse knot nearby to $K$, called the transverse pushoff of $K$.  If $T$ is a transverse knot, we say that $K_T$ is a Legendrian approximation of $T$ if the transverse pushoff of $K_T$ is $T$.  For a given transverse knot, there are typically infinitely many distinct Legendrian approximations of $T$.  However, each of these infinitely many distinct Legendrian approximations are related to one another by sequences of negative stabilizations.  Thus, these two constructions are inverses to one another, up to the ambiguity involved in choosing a Legendrian approximation of a given transverse knot (see \cite{EFM,EH}).

% subsection leg_trans (end)
%%%%%%%%%%%%%%%%%%%%%%%%%%%%%%%%%%%%%%%%%%%%%%%%%%%%%%%

%%%%%%%%%%%%%%%%%%%%%%%%%%%%%%%%%%%%%%%%%%%%%%%%%%%%%%%
\subsubsection{Contact structures on thickened tori} % (fold)
\label{sub:tt}
%%%%%%%%%%%%%%%%%%%%%%%%%%%%%%%%%%%%%%%%%%%%%%%%%%%%%%% 

Before discussing contact structures on $T^2\times[0,1]$ we first discuss curves on $T^2$.
Choosing a product structure on $T^2$ we may identify (unoriented) essential curves on $T^2$ with the rational numbers union infinity so that $S^1\times \{pt\}$ is the $\infty$-curve and $\{pt\}\times S^1$ is the 0-curve. It will be useful to compactify $\R$ to $S^1$ and think of the added point as being both $\infty$ and $-\infty$. Having done this the essential curves on $T^2$ are represented by the rational points union infinity on $S^1$. Recall that two curves form an integral basis for $H^1(T^2;\Z)$ if and only if they can be isotoped to intersect exactly once. In terms of the rational numbers $p_0/q_0$ and $p_1/q_1$ associated to the curves, they will form an integral basis if and only if $pq'-q'p=\pm 1$. 

We can encode these ideas in the Farey tessellation, see Figure~\ref{fig:farey}. 
\begin{figure}[htbp]
	\centering
	\begin{picture}(160,165)
		\put(0,0){\includegraphics[scale=1]{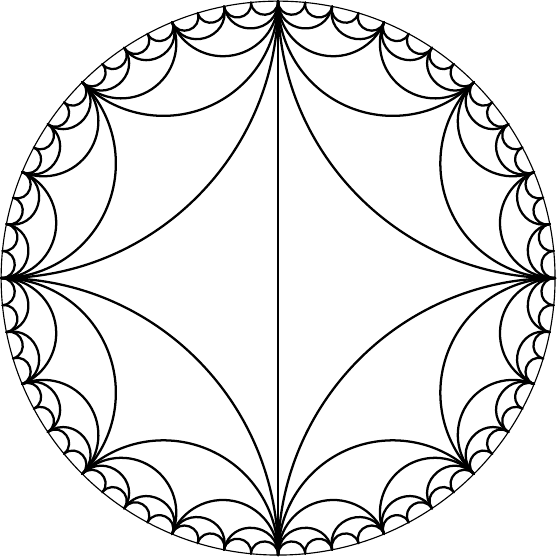}}
		\put(76,-7){$\infty$}
		\put(78,164){$0$}
		\put(164,79){$1$}
		\put(-14,79){$-1$}
		\put(-2,138){$-1/2$}
		\put(138,138){$1/2$}
		\put(138,17){$2$}
		\put(8,17){$-2$}
	\end{picture}
	\caption{The Farey tesselation oriented for use with our convention of slopes.}
	\label{fig:farey}
\end{figure}
Let $D$ be the unit disk in the complex plane. Label the complex number $i$ by 0 and $-i$ by $\pm\infty$ and connect them with a geodesic in $D$ (where we give $D$ the standard hyperbolic metric). Label $1$ by $1$ and connect it to the points labeled $0$ and $\pm\infty$ by geodesics. We will now inductively label the points on $\partial D$ with positive real part. Given an interval on $\partial D$ with positive real part and end points two adjacent points that have been labeled by $p/q$ and $p'/q'$, label its midpoint by $(p+p')/(q+q')$ and connect it to the end points of the interval by geodesics. (Here we think of $0$ as $0/1$ and $\infty$ as $1/0$.) We can similarly label points on $\partial D$ with negative real part (except here we must think of $0$ as $0/1$ and $\infty$ as $-1/0$). This procedure will assign all the rational numbers to points on $\partial D$ and they will appear in order, that is if $a>b$ then $a$ will be in the region that is clockwise of $b$ and counterclockwise of $\infty$. Moreover, the edges will not intersect and two points will be connected by an edge if and only if they correspond to curves that form an integral basis for $H_1(T^2;\Z)$. 

Turning to contact structures, let $\Gamma_i$ be two parallel curves on $T^2$ with slope $s_i$, $i=0,1$. Given a contact structure $\xi$ on $T^2\times[0,1]$ with convex boundary having dividing curves $\Gamma_i$ on $T^2\times\{i\}$, $i=0,1$, we say $\xi$ is {\em minimally twisting} if any other convex torus $T$ in $T^2\times [0,1]$ that is isotopic to the boundary has dividing slope clockwise of $s_0$ and counterclockwise of $s_1$. (Note that a minimally twisting contact structure is necessarily tight.) Recall that the classification of contact structures on thickened tori implies that given any slope that lies clockwise of $s_0$ and counterclockwise of $s_1$ is the dividing slope for some convex torus, thus the minimally twisting condition says that the only convex tori are the ones that must be there. 

A {\em basic slice} is a tight, minimally twisting tight contact structure on $T^2 \times [0,1]$, for which each boundary component is convex with $\Gamma_i$ being the dividing set on $T^2\times\{i\}$, $\Gamma_i$ consisting of two curves each of slope $p_i/q_i$ and $p_0q_1-p_1q_0=\pm1$.

According to \cite{Gi2,Honda00a} there are precisely two basic slice for any given dividing curves (once the characteristic foliations on the boundary are arranged to be the same), called positive and negative.  We denote them by $\xi_{p_0/q_0, p_1/q_1}^\pm$. They are distinguished by their relative Euler class, but are the same up to contactomorphism.  Moreover there is a diffeomorphism taking any basic slice to another. The following theorem relates basic slices to bypass attachments.

\begin{othertheorem}[Honda 2000, \cite{Honda00a}]\label{thm:bp_bslice}
	Let $(T^2 \times [0,1],\xi_{-1,0}^+)$ and $(T^2 \times [0,1],\xi_{-1,0}^-)$ be positive and negative basic slices respectively with dividing slopes $-1$ and $0$.  The contact structures $\xi_{-1,0}^+$ and $\xi_{-1,0}^-$ are obtained from an invariant neighborhood of $T^2\times \{1\}$ by attaching a bypasses layer along the curves $\gamma_+$ and $\gamma_-$ shown in Figure~\ref{fig:slice_bp}, respectively.
\end{othertheorem}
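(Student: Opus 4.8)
The plan is to produce each basic slice as a one-bypass-thick layer on a collar of $T^2\times\{1\}$ and to read off both its boundary data and its sign from that model. Begin with an invariant neighborhood $N=T^2\times[t_0,1]$ of $T^2\times\{1\}$, on which the dividing set has constant slope $0$; by Giroux flexibility we may take $T^2\times\{t_0\}$ to be a standard convex torus. The curves $\gamma_+,\gamma_-$ of Figure~\ref{fig:slice_bp} are Legendrian arcs on $T^2\times\{t_0\}$ each meeting the dividing set in exactly three points, so a bypass may be attached along either; we do so from the side of $T^2\times\{t_0\}$ opposite $N$, obtaining in each case a larger product region $T^2\times[t_1,1]$ with $T^2\times\{t_1\}$ again convex.

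By Theorem~\ref{thm:hon_bypass}, equivalently by reading the local change of Figure~\ref{fig:bp_attach}, the dividing slope on $T^2\times\{t_1\}$ is $-1$ in both cases: a single bypass moves the dividing slope exactly one edge along the Farey tessellation of Figure~\ref{fig:farey} in the direction determined by the arc, and $-1$ and $0$ are joined by such an edge since $0\cdot 1-(-1)\cdot 1=1$. Rescaling $[t_1,1]$ to $[0,1]$ gives a contact structure on $T^2\times[0,1]$ with dividing slopes $-1$ on $T^2\times\{0\}$ and $0$ on $T^2\times\{1\}$. This layer is a basic slice: it is minimally twisting because any convex torus in it parallel to the boundary can be isotoped either into the collar (slope $0$) or past the bypass (slope $-1$), so no intermediate slope occurs --- which, for Farey-adjacent boundary slopes, is exactly the minimally twisting condition in Honda and Giroux's classification of tight contact structures on $T^2\times I$ \cite{Honda00a,Gi2} --- and tightness follows from that classification. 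Hence the two layers are each equal to one of $\xi^{+}_{-1,0}$, $\xi^{-}_{-1,0}$.

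It remains to match $\gamma_+$ with $\xi^{+}_{-1,0}$ and $\gamma_-$ with $\xi^{-}_{-1,0}$, and for this I would compute the relative Euler class. Trivializing $\xi$ along the boundary by a Legendrian ruling direction, the Poincar\'e dual of the relative Euler class of a basic slice with boundary slopes $s_0,s_1$ is $\pm(\mathbf{v}_{s_1}-\mathbf{v}_{s_0})$ for appropriately oriented primitive vectors $\mathbf{v}_{s_0},\mathbf{v}_{s_1}$ in the directions of $s_0,s_1$, the sign being by definition the sign of the basic slice. The two attaching configurations $\gamma_+,\gamma_-$ are arranged --- this is precisely the meaning of the $\pm$ labels in Figure~\ref{fig:slice_bp} --- so that the bypass half-disk contributes to the relative Euler class with opposite sign in the two cases; hence the two layers carry opposite relative Euler classes and so realize the two distinct basic slices, and comparison with a single explicit signed bypass model fixes the labelling as stated.

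The sign analysis in the last paragraph is the step I expect to be delicate: the Farey-graph bookkeeping and the minimal-twisting assertion follow routinely from Honda's classification, but computing the contribution of a bypass half-disk to the relative Euler class --- and getting it to agree with the convention recorded in Figure~\ref{fig:slice_bp} --- requires a careful local analysis of the signs of the singularities on the bypass disk (or a direct appeal to the corresponding computation in \cite{Honda00a}), where sign errors are easy to make.
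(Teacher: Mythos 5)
This theorem is quoted from Honda \cite{Honda00a} and is not reproved in the present paper, so there is no internal proof to compare against. Your reconstruction follows the shape of Honda's argument: attach a bypass to an invariant collar of a convex torus of slope $0$, read off from Theorem~\ref{thm:hon_bypass} that the new boundary slope is $-1$, recognize the resulting $T^2 \times I$ as one of the two basic slices via the classification of tight contact structures on thickened tori, and distinguish the two signs by the relative Euler class --- which you rightly flag as the delicate step, and which Honda settles by analyzing the signs of the singularities of the characteristic foliation on the bypass half-disk. Two informalities are worth noting, both already hedged in your write-up. First, invoking the classification to obtain tightness is mildly circular: in \cite{Honda00a} the tightness of a one-bypass layer with Farey-adjacent boundary slopes is established on the way to the classification, not deduced from it. Second, the formula $\pm(\mathbf{v}_{s_1}-\mathbf{v}_{s_0})$ for the Poincar\'e dual of the relative Euler class depends on orientation conventions that must be fixed before the sign-matching step is meaningful; compare the computation in Section~\ref{ssec:convexandspin}, where the relative Euler class of $A_i^\pm$ is the Poincar\'e dual of $\pm[(i+1)\mu+\lambda]$, and note that the sign pattern there is sensitive to the choice of section $t_0$ and the orientation on the torus. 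Neither point breaks the overall logic; the argument is a sound sketch of Honda's proof.
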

\begin{figure}[htbp]
	\centering
	\begin{picture}(216,100)
		\put(0,-2){\includegraphics[scale=1]{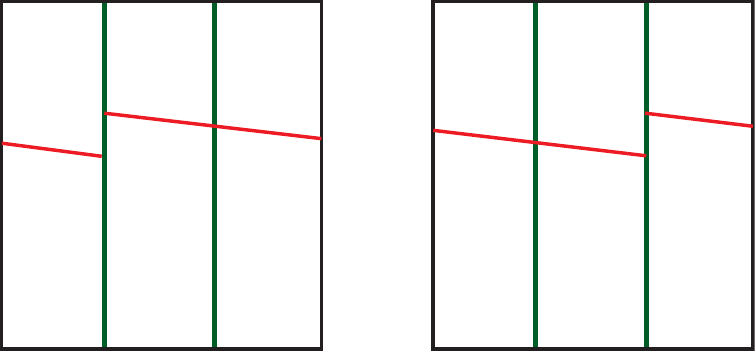}}
		\put(43,72){$\gamma_+$}
		\put(165,65){$\gamma_-$}
		\put(42,28){$+$}
		\put(73,28){$-$}
		\put(165,28){$+$}
		\put(197,28){$-$}
	\end{picture}
	\caption{The bypass attachments for the positive and negative basic slice.}
	\label{fig:slice_bp}
\end{figure}

We now recall part of the classification of minimally twisting contact structures on $T^2\times [0,1]$ that we will need below (for details see \cite{Honda00a}).
\begin{enumerate}
\item Given a  minimally twisting contact structure on $T^2\times[0,1]$ with standard convex boundary having dividing slope $s_i$ on $T^2\times {i}$, $i=0,1$, there corresponds a minimal path in the Farey tessellation that goes from $s_0$ clockwise to $s_1$ and signs on each edge in the path.
\item Given the contact structure above any slope $s$ in the interval clockwise of $s_0$ and counterclockwise of $s_1$ can be realized as the dividing slope on some convex torus.
\item\label{tTitem3} Given a minimal path in the Farey tessellation between two numbers $s_0$ and $s_1$ and any assignment of signs to the edges in the path, there is a unique minimally twisting contact structure realizing that path. (Different assignments of signs can correspond to the same contact structure, see \cite{Honda00a}.)
\item Given a non-minimal path in the Farey tessellation between two numbers $s_0$ and $s_1$ and an assignment of signs to the edges it will correspond to a tight (and minimally twisting) contact structure if and only if it can be shortened to a minimal path, otherwise it corresponds to an overtwisted contact structure. A path can be shortened if there are two edges in the path which can be replaced by a third edge and the edges have the same sign, then in the shortening the third edge is assigned the sign of the two edges it replaces. 
\end{enumerate}
We briefly note that each edge in the Farey tessellation corresponds to a basic slice. So the above results basically say that a contact structure on $T^2\times[0,1]$ can be factored into basic slices and when you ``glue" two basic slices together you get a minimally twisting contact structure unless the basic slices have different signs and corresponds to a path that can be shortened.

We now establish some important notation used in the following section to define our limit invariants. Using the product structure above on $T^2$ we denote the basic slice $(T^2\times[0,1],\xi_{-i,-i+1}^\pm)$ by $B_i^\pm$ and $(T^2\times[0,1],\xi_{-\infty,-i}^\pm)$ by $A_i^\pm$. Let  $(T^2\times[0,1],\xi_{i, \infty,}^\pm)$ be denoted by $\widetilde{A}^\pm_i$; and finally, for $i>j$, let $C^\pm_{i,j}$ denote the contact manifold $T^2\times [0,1]$ corresponding to the minimal path in the Farey tessellation from $-i$ to $-j$ with all signs being $\pm$. We note that according to the classification results discussed above we have the following facts.
\begin{proposition}\label{prop:relations}
We have the following relations between contact structures on $T^2\times [0,1]$.
\begin{enumerate}
\item The contact manifold $A^\pm_i\cup C^\pm_{i,0}$ (with the two boundary components having the same slope glued together) is contactomorphic to $A^\pm_0$.
\item For $i>k>j$, the contact manifold $C^\pm_{i,k}\cup C^\pm_{k,j}$ (with the two boundary components having the same slope glued together) is contactomorphic to $C^\pm_{i,j}$.
\item The contact manifold $B_i^\pm\cup B_{i+1}^\mp$ (with the two boundary components having the same slope glued together) is contactomorphic to the contact structure $B_i^\mp\cup B_{i+1}^\pm$. (This does not directly follow from the classification results above but is essentially the ambiguity mentioned in Item~\eqref{tTitem3} above, see \cite{Honda00a}.)
\item The contact manifold $A^\pm_i\cup C^\mp_{i,0}$ (with the two boundary components having the same slope glued together) is overtwisted.
\item The contact manifold $A^\pm_i\cup B_i^\pm$  (with the two boundary components having the same slope glued together) is contactomorphic to $A^\pm_{i-1}$. 
\item The contact manifold $A^\pm_i\cup B_i^\mp$  (with the two boundary components having the same slope glued together) is tight and minimally twisting. 
\end{enumerate}
\end{proposition}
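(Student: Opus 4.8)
The plan is to verify each of the six relations by translating the gluing in question into the combinatorics of the Farey tessellation and then invoking the classification of minimally twisting tight contact structures on $T^2\times[0,1]$ recalled just above (and, where convenient, Theorem~\ref{thm:bp_bslice}). First I would record the signed minimal paths attached to the pieces: $A^\pm_i$ is the single edge from $-\infty$ to $-i$ with sign $\pm$; $B^\pm_i$ is the single edge from $-i$ to $-i+1$ with sign $\pm$; $C^\pm_{i,j}$ is the minimal path $-i,-i+1,\dots,-j$ with every edge signed $\pm$; and gluing two such pieces along a common-slope boundary corresponds to concatenating the associated signed paths.

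With this dictionary, relations (1), (2) and (5) are immediate. In each case we concatenate a path all of whose edges carry one and the same sign, so every shortening move permitted by the classification is available; applying all of them reduces the concatenation to the minimal path between its endpoints, and by the uniqueness part of the classification the resulting contact structure is the named one. For (1) that minimal path is $A^\pm_0$, for (2) it is $C^\pm_{i,j}$, and for (5) the triangle $\{-\infty,-i,-i+1\}$ collapses to the single edge $\{-\infty,-i+1\}$, i.e.\ $A^\pm_{i-1}$.

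For relation (4) I would instead show that the concatenated path cannot be reduced to a minimal one. The path of $A^\pm_i\cup C^\mp_{i,0}$ is $-\infty,-i,-i+1,\dots,0$ with the first edge signed $\pm$ and all others signed $\mp$; the only triangle it contains is $\{-\infty,-i,-i+1\}$, and the shortening across it is blocked since those two edges carry opposite signs, while no shortening is possible elsewhere because integers two apart are not Farey-adjacent. Hence the path is non-minimal and non-shortenable, and the overtwistedness criterion in the classification gives the claim.

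The substance, and the main obstacle, is relations (3) and (6): these are exactly where the coarse list (1)--(4) does not suffice and one must appeal to Honda's finer analysis of sign assignments within a single continued-fraction block (the parenthetical caveat in item~(3) of the classification, \cite{Honda00a}). For (6) one must check that the two edges of $A^\pm_i\cup B^\mp_i$ lie in a common continued-fraction block, so that this mixed-sign structure is tight and minimally twisting even though its underlying path is not minimal; for (3) the same analysis shows that a tight minimally twisting structure supported on such a block depends only on its number of positively signed edges, which identifies $B^\pm_i\cup B^\mp_{i+1}$ with $B^\mp_i\cup B^\pm_{i+1}$. The delicate points are to normalize the slope and orientation conventions so that the relevant edges genuinely constitute one block, and to extract from \cite{Honda00a} the precise statement that the mixed-sign filling of such a block is tight --- a statement that a naive reading of the overtwistedness criterion would seem to contradict, and whose reconciliation is exactly the content of the block refinement.
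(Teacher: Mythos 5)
Your handling of items (1), (2), (4), and (5) is correct and is exactly the argument the paper leaves implicit: translate each gluing into a concatenation of signed Farey paths and invoke the shortening criterion from the preceding classification. For (3) you also correctly identify the necessary ingredient --- the path $-i-1,-i,-i+1$ is already minimal (the endpoints have determinant $2$, so are not Farey-adjacent), both of its edges lie in a single continued-fraction block (every integer is adjacent to $\infty$), and Honda's shuffling principle within a block makes the sign pattern $(+,-)$ equivalent to $(-,+)$. This matches the paper's parenthetical deferral to \cite{Honda00a}.

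The gap is in your treatment of (6), and it is more serious than your own caveat suggests. The path underlying $A^\pm_i\cup B^\mp_i$ is $\infty,-i,-i+1$, which is \emph{not} minimal ($\infty$ and $-i+1$ are Farey-adjacent), and the single shortening triangle $\{\infty,-i,-i+1\}$ carries opposite signs. Honda's shuffling refinement is a statement about sign patterns on a \emph{minimal} path; it does not provide any move that converts a non-shortenable non-minimal path into a minimal one. So your hope that ``the two edges lie in a common block'' would rescue tightness cannot be realized: the ``naive reading of the overtwistedness criterion'' that you flag as troubling is in fact the correct reading here. One can confirm this independently via relative Euler classes. Using the paper's own formulas, $A^\pm_i$ has relative Euler class $\mathrm{PD}(\pm[(i+1)\mu+\lambda])$, and consistency with item (5) forces $B^\pm_i$ to contribute $\mathrm{PD}(\mp\mu)$; hence $A^\pm_i\cup B^\mp_i$ has Euler class $\mathrm{PD}(\pm[(i+2)\mu+\lambda])$, which is neither $\mathrm{PD}(+[i\mu+\lambda])$ nor $\mathrm{PD}(-[i\mu+\lambda])$. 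Since the only tight minimally twisting structures on $T^2\times I$ with boundary slopes $\infty$ and $-i+1$ are the two basic slices $A^\pm_{i-1}$, the glued structure cannot be tight and minimally twisting. This is also consistent with the paper's own Stipsicz--V\'ertesi discussion, where the composition of a positive-stabilization basic slice with the SV attachment --- an instance of $A^-\cup B^+$ --- is asserted to be overtwisted. You should therefore not try to ``reconcile'' the apparent contradiction; instead you should flag that item (6) as printed appears to be mis-stated (the conclusion should plausibly be overtwistedness, not tightness), and that no block-shuffling argument can establish the claim as written.

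A smaller point: in (2) the concatenated path is already the minimal one, so no shortening is actually invoked; and in (4) it is worth noting explicitly, as you do, that the only candidate triangle among consecutive edges is $\{\infty,-i,-i+1\}$, since integers differing by two are never Farey-adjacent.
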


\begin{remark}\label{rem:ends}
Turning the first observation around we notice that there is a sequence of tori $T_i,$ $i=0,1,\ldots,$ in $A^-_0$ such that $T_i$ is a standard convex torus (isotopic to the boundary of $A^-_0$) with dividing slope $-i$ such that $T_i$ cuts $A^-_0$ into two pieces, namely $A^-_i$ and $C^-_{i,0}$. Moreover for $j>i$ the torus $T_j$ is contained in the $A^-_i$ component of the complement of $T_i$.

We have analogous results for $\widetilde{A}^-_0$. Specifically in $\widetilde{A}^-_0$ there is a sequence of tori $\widetilde{T}_i$,  $i=0,1,\ldots,$ such that $\widetilde{T}_i$ is a standard convex torus (isotopic to the boundary of $\widetilde{A}^-_0$) with dividing slope $i$ such that $\widetilde{T}_i$ cuts $\widetilde{A}^-_0$ into two pieces, namely $\widetilde{A}^-_i$ and $C^-_{0,-i}$. Moreover for $j>i$  the torus $\widetilde{T}_j$ is contained in the $\widetilde{A}^-_i$ component of the complement of $\widetilde{T}_i$.
\end{remark}

From the perspective of Legendrian and transverse knot theory we have the following result. 

\begin{othertheorem}[Etnyre-Honda 2001, \cite{EH}]\label{thm:Leg_stab_bs}
	Let $L \subset (Y,\xi)$ be a Legendrian knot and identify the boundary of its complement with $T^2$ so that the meridional curve has slope $\infty$ and the longitude given by the contact framing has slope $0$. Now let $N$ be a standard neighborhood of $L$, $L_+$ and $L_-$ its positive and negative stabilizations inside $N$ and $N_\pm$ standard neighborhoods of $L_\pm$ inside $N$.  The contact manifold $\overline{N-N_\pm}$ is contactomorphic to $B_1^\pm$ (and $C_{0,1}^\pm$). In particular the (closure of the) complement of the standard neighborhoods of $L_+$ and $L_-$ are obtained from the (closure of the) complement of the standard neighborhood of $L$ by a positive and negative basic slice attachments respectively.
\end{othertheorem}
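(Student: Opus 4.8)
The plan is to reduce the statement to an explicit local model, exploiting that stabilization is a local operation and invoking the bypass/basic--slice dictionary recorded as \thmref{thm:bp_bslice}. First I would fix the coordinates on $T^2=\partial Y(L)$ as in the statement and record the boundary data: a standard neighborhood $N$ of $L$ has convex boundary with two dividing curves of slope $0$ (the contact framing), while a standard neighborhood $N_\pm$ of $L_\pm$ has two dividing curves of slope $-1$, since stabilization drops $\tb$ by one. Hence $\overline{N-N_\pm}$ is a copy of $T^2\times[0,1]$ with convex boundary and dividing slopes $-1$ and $0$, and it is tight because $\xi|_N$ is tight. The slopes $-1$ and $0$ span a single edge of the Farey tessellation, so once one knows $\overline{N-N_\pm}$ is minimally twisting it must be one of the two basic slices $\xi_{-1,0}^{\pm}=B_1^{\pm}$; the alternate realization written as a $C$ in the statement is just this same single Farey edge, so no extra work is needed there.

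The heart of the argument is (i) to rule out any longer (folded) layer, so that exactly one basic slice occurs, and (ii) to pin down its sign. For both I would pass to the standard local model of stabilization: in a Darboux chart containing an arc of $L$, the front of $L$ is a horizontal strand and the front of $L_\pm$ is obtained by inserting a single up/down zig--zag as in Figure~\ref{fig:leg_stab}. Tracking convex neighborhoods through this picture, one sees directly that a standard neighborhood $N_\pm$ of $L_\pm$ may be chosen inside $N$ so that $N$ is recovered from $N_\pm$ by attaching exactly one bypass, from the outside, along one of the arcs $\gamma_\pm$ pictured in Figure~\ref{fig:slice_bp}; in particular no further convex tori are forced and the layer is minimally twisting. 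The attaching arc is $\gamma_+$ for the positive stabilization and $\gamma_-$ for the negative one, the two cases being distinguished by the side on which the zig--zag is inserted, equivalently by the sign of the change in rotation number $\rot(L_\pm)=\rot(L)\pm1$. Theorem~\ref{thm:bp_bslice} then identifies the bypass layer with $\xi_{-1,0}^{+}$, respectively $\xi_{-1,0}^{-}$, that is with $B_1^{\pm}$. As an independent check one can instead compute the relative Euler class of $\overline{N-N_\pm}$ and match it against the known relative Euler classes of the two basic slices, using the rotation numbers of the Legendrian cores.

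For the ``in particular'' clause, write $Y(L)=\overline{Y-N}$ for the complement of the standard neighborhood of $L$; then the complement of the standard neighborhood $N_\pm$ of $L_\pm$ is $\overline{Y-N_\pm}=Y(L)\cup\overline{N-N_\pm}$, glued along $\partial N$, which by the above is $Y(L)$ with a positive, resp.\ negative, basic slice attached. The step I expect to be the main obstacle is the local--model identification of the bypass, and in particular getting the orientation conventions right so that $S_+\leftrightarrow\gamma_+$ and $S_-\leftrightarrow\gamma_-$ rather than the reverse; by contrast the minimal--twisting point is comparatively routine once the model is in hand, since the picture exhibits $\overline{N-N_\pm}$ directly as a single bypass layer.
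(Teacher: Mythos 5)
The paper states this as a result of Etnyre--Honda and does not itself prove it, so there is no in-paper argument to compare against; I will simply assess your outline, which is correct and is the expected proof. Two remarks tighten the steps you flag as delicate. For minimal twisting the zig-zag picture is not really needed: $N$ carries the unique tight contact structure on the solid torus with two boundary dividing curves of slope $0$, in which the dividing slope of boundary-parallel convex tori progresses monotonically from $0$ toward the meridian $\infty$ (through $-1,-2,\dots$) as one approaches the core; a $T^2\times[0,1]$ layer in $N$ cut out between convex tori of slopes $0$ and $-1$ therefore cannot fold, spans exactly one Farey edge, and is automatically a basic slice. For the sign, the relative Euler class check you propose is the robust route: pairing the relative Euler class of $\overline{N-N_\pm}$ with the annular piece of the Seifert surface contained in the layer records $\rot(L_\pm)-\rot(L)=\pm 1$, which matches the values for the two basic slices recalled in Section~\ref{ssec:convexandspin} and selects $B^\pm_1$ without having to track the orientation conventions matching $S_\pm$ with $\gamma_\pm$ in Figure~\ref{fig:slice_bp}. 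Since that Euler class is determined by the rotation numbers of $L$ and $L_\pm$ alone, this also shows the contactomorphism type of $\overline{N-N_\pm}$ is independent of the particular choice of $N_\pm\subset N$, which justifies your reduction to one convenient local model.
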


% subsection tt (end)
%%%%%%%%%%%%%%%%%%%%%%%%%%%%%%%%%%%%%%%%%%%%%%%%%%%%%%%

%%%%%%%%%%%%%%%%%%%%%%%%%%%%%%%%%%%%%%%%%%%%%%%%%%%%%%%
\subsubsection{Open Book Decompositions} % (fold)
\label{sub:open_book_decompositions}
%%%%%%%%%%%%%%%%%%%%%%%%%%%%%%%%%%%%%%%%%%%%%%%%%%%%%%%

In recent years, the primary tool used to study contact structures on $3$-manifolds has been Giroux's correspondence \cite{Gi}.  An {\it open book decomposition} of a $3$-manifold $Y$ is a pair $(B,\pi)$ consisting of an oriented, fibered link $B \subset Y$, together with a fibration of the complement $\pi: (Y-B) \to S^1$ by surfaces whose oriented boundary is $B$.  An open book $(B,\pi)$ is said to be compatible with a contact structure $\xi$ if $B$ is positively transverse to the contact planes and there exists a contact form $\alpha$ for $\xi$ so that $d\alpha$ restricts to an area form on the fibers $S_\theta = \pi^{-1}(\theta)$.

In was shown by Thurston and Winkelnkemper in \cite{TW} that, given an open book, $(B,\pi)$, one can always produce a compatible contact structure.  Giroux showed in \cite{Gi} that two contact structures which are compatible with the same open book are, in fact, isotopic.  He further showed that two open books which compatible with the same contact structure are related by a sequence of ``positive stabilizations'', that is plumbing with positive Hopf bands.  In other words, Giroux proved the following result. 

\begin{othertheorem}[Giroux 2002, \cite{Gi}]\label{thm:giroux}
	There exists a one-to-one correspondence between the set of isotopy classes of contact structures supported by a $3$-manifold $Y$ and the set of open book decompositions of $Y$ up to positive stabilization.
\end{othertheorem}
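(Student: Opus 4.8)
The plan is to exhibit a map in each direction and check they are mutually inverse, with almost all the difficulty concentrated in the ``common positive stabilization'' statement. Throughout I would first invoke Gray stability to reduce claims about contact structures ``up to isotopy'' to claims about a fixed contact structure: two isotopic contact structures are related by an ambient diffeomorphism isotopic to the identity, so one may always normalize. With that reduction in place, the remaining content splits cleanly into (a) the assignment open book $\mapsto$ supported contact structure is well-defined on stabilization classes, (b) this assignment is surjective, and (c) it is injective.

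\textbf{The assignment and its stabilization invariance.} Given an open book $(B,\pi)$, the Thurston--Winkelnkemper construction \cite{TW} produces a compatible contact form. The set of contact forms compatible with a fixed $(B,\pi)$ is nonempty and convex once one has normalized the behaviour near $B$ (a convex combination of two compatible forms is again compatible, as both conditions defining compatibility are convex), hence path-connected, so Gray stability shows the supported contact structure is well-defined up to isotopy. To see positive stabilization does not change it, one does a local model computation: plumbing on a positive Hopf band, the given compatible contact form on the original piece extends over the standard tight Hopf-band region, so the new open book supports the same $\xi$. Thus we get a well-defined map from open books modulo positive stabilization to contact structures modulo isotopy, and it remains to prove this map is a bijection.

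\textbf{Surjectivity.} Every $(Y,\xi)$ is supported by some open book. Here I would use a contact cell decomposition: starting from a sufficiently fine triangulation of $Y$ and applying the Legendrian realization principle together with the fact that generic surfaces are convex (items (1)--(2) in Section~\ref{sub:conv_surf}), one arranges the $1$-skeleton to be Legendrian, the $2$-cells to have $\tw=-1$ along their boundaries, and $\xi$ to be ``standard'' on each cell. A regular neighborhood $H$ of the $1$-skeleton is then a contact handlebody, its complement is also a handlebody, and after isotoping $\partial H$ to be convex and using Giroux flexibility plus Legendrian realization to simplify, one obtains a Heegaard splitting $Y=H\cup_\Sigma H'$ with $\Sigma$ convex and dividing set $\Gamma\subset\Sigma$ connected. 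A convex Heegaard splitting with connected dividing set is precisely the data of an open book (binding a push-off of $\Gamma$, pages interpolating across the two handlebodies), and one checks directly that it is compatible with $\xi$.

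\textbf{Injectivity, the main obstacle.} Suppose $(B_1,\pi_1)$ and $(B_2,\pi_2)$ support the same $\xi$; one must find a common positive stabilization. The strategy is: (i) show that after finitely many positive stabilizations each $(B_i,\pi_i)$ becomes ``subordinate'' to one fixed contact cell decomposition $\Delta$ of $(Y,\xi)$ --- i.e.\ can be realized as a convex Heegaard splitting with a spine of $\Delta$ sitting inside a page --- and (ii) show any two open books subordinate to the same $\Delta$ are isotopic. Step (i) is carried out by enlarging a page of $(B_i,\pi_i)$, one positive Hopf band at a time, until it engulfs a spine of $\Delta$; in the convex-surface picture each such plumbing is realized as a bypass attachment along an arc of the dividing set of the splitting surface, whose effect is governed by Theorem~\ref{thm:hon_bypass}. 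The crux --- and the step I expect to consume the overwhelming majority of the argument --- is controlling this enlargement process so that the open book stays honest and, crucially, so that only \emph{positive} stabilizations are ever required; verifying this uses the classification of tight, minimally twisting contact structures on thickened surfaces and handlebodies (Section~\ref{sub:tt} and \cite{Honda00a}) to rule out the appearance of negative Hopf bands. The other three parts are comparatively formal; this last one is where the real work lies.
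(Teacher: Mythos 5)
This statement is a background theorem that the paper quotes from Giroux \cite{Gi} and does not prove, so there is no internal argument for you to be compared against; the cited result is being used as a black box. Judged on its own terms, your outline is the standard (Giroux) route to the correspondence: Thurston--Winkelnkemper plus contractibility of the space of normalized compatible forms and Gray stability for well-definedness, a local Hopf-plumbing model for stabilization-invariance, a contact cell decomposition yielding a convex Heegaard splitting for existence, and ``common subordination'' to a fixed contact cell decomposition for the common-stabilization statement. That is the right high-level skeleton and you have correctly identified where the weight falls.

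Two cautions on the details. First, the step ``a convex Heegaard splitting with connected dividing set is precisely the data of an open book'' is stated too strongly: connectedness of the dividing set on $\Sigma$ is not by itself enough, one also needs both handlebodies to be standard contact handlebodies (product-disk decomposable with $I$-invariant $\xi$), which in your construction is what the $\tw=-1$ condition on the $2$-cells and the Legendrian-graph neighborhood are securing; the sentence should be phrased in those terms. Second, you are right that the injectivity step is where the overwhelming difficulty lies, and it is worth being explicit that this is not a routine bypass argument: controlling the page enlargement so that the splitting surface stays convex, the handlebodies stay standard, and only \emph{positive} Hopf bands ever appear is exactly the part of Giroux's announcement that resisted a complete write-up for two decades. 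A blind reconstruction at the level of ``use Theorem~\ref{thm:hon_bypass} and the $T^2\times I$ classification to rule out negative plumbings'' is a plan rather than a proof, and you should say so; as written the paragraph reads as though the step is within easy reach, which it is not. With those adjustments the outline is sound, but it is an outline of a very long argument rather than a proof.
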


One can alternatively specify an open book decomposition $(B,\pi)$ of a $3$-manifold $Y$ by specifying a pair $(S,\phi)$ consisting of a fiber surface $S$ and a monodromy map $\phi:S \to S$ corresponding to the fibration $\pi: (Y-B) \to S^1$ (note that $\phi|_{\partial S} = Id$).  The data $(S,\phi)$ is called an {\it abstract open book}, and determines an open book $(B,\pi)$ on the $3$-manifold obtained via the appropriate mapping-cylinder construction, but only up to diffeomorphism.

% subsection open_book_decompositions (end)
%%%%%%%%%%%%%%%%%%%%%%%%%%%%%%%%%%%%%%%%%%%%%%%%%%%%%%%

% subsection contact_geometry (end)
%%%%%%%%%%%%%%%%%%%%%%%%%%%%%%%%%%%%%%%%%%%%%%%%%%%%%%%

%%%%%%%%%%%%%%%%%%%%%%%%%%%%%%%%%%%%%%%%%%%%%%%%%%%%%%%
\subsection{Knot Floer homology} % (fold)
\label{sub:knot_floer_homology}
%%%%%%%%%%%%%%%%%%%%%%%%%%%%%%%%%%%%%%%%%%%%%%%%%%%%%%%

The Heegaard Floer package possesses a specialization to knots and links known commonly as {\it knot Floer homology}.  This specialization was defined independently by Ozsv\'ath and Szab\'o \cite{OS3} and by Rasmussen \cite{Ra}.  In what follows, we review some basic definitions.  The interested reader is encouraged to read the original papers \cite{OS3,Ra} for a more complete and elementary treatment.  We work with coefficients in $\F = \Z/2$ throughout the remainder of the paper.

If $K \subset Y$ is a knot, a doubly-pointed Heegaard diagram for $K$ consists of an ordered tuple $\SH = (\Sigma,\balpha,\bbeta,z,w)$.  We require that the Heegaard diagram $(\Sigma,\balpha,\bbeta)$ specifies the 3-manifold $Y$ and that the knot $K$ is obtained as follows.  Choose oriented, embedded arcs $\gamma_\alpha$ in $\Sigma - \balpha$ and $\gamma_\beta$ in $\Sigma - \bbeta$ connecting the basepoint $z$ to $w$ and $w$ to $z$ respectively. Now, form pushoffs $\gamma_\alpha$ and $\gamma_\beta$ by pushing the interior of these arcs into the $\balpha$ and $\bbeta$ handlebodies respectively. The knot is then the union $K = \gamma_\alpha \cup \gamma_\beta$ of the two curves.

To such a doubly-pointed diagram $\SH$, Ozsv\'ath and Szab\'o associate a chain complex $\CFKI(\SH)$, which is freely generated as an $\F[U,U^{-1}]$-module by the intersections of the tori $\TT_\alpha = \alpha_1 \times \dots \times \alpha_g$ and $\TT_\beta = \beta_1 \times \dots \times \beta_g$ inside the symmetric product $\mathrm{Sym}^g(\Sigma)$.  Given a pair of intersections $\x,\y \in \TT_\alpha \cap \TT_\beta$, a Whitney disk connecting them $\phi \in \pi_2(\x,\y)$ and a generic path of almost complex structures on $\mr{Sym}^g(\Sigma)$, we denote the moduli space of pseudo-holomorphic representatives of $\phi$ by $\mathcal{M}(\phi)$.  It has expected dimension given by the Maslov index $\mu(\phi)$ and possesses a natural $\R$-action given by translation.  We denote the quotient of $\mathcal{M}(\phi)$ under the $\R$-action by $\widehat{\mathcal{M}}(\phi)$.  If $p \in \Sigma - \balpha - \bbeta$, then we denote by $n_p(\phi)$ the intersection number of $\phi$ with the subvariety $V_p = \{p\} \times \mr{Sym}^{g-1}(\Sigma)$.

We define the differential
\[
	\partial^\infty: \CFKI(\SH) \to \CFKI(\SH)
\]
on generators via
\[
	\partial^\infty(\x) = \sum_{\y \in \TT_\alpha \cap \TT_\beta} \sum_{\substack{\phi \in \pi_2(\x,\y)\\ \mu(\phi)=1}} \# \widehat{\mathcal{M}}(\phi) \cdot U^{n_w(\phi)}\y.
\]

For a knot $K$ in a 3-manifold $Y$ with $b_1 = 0$, the complex $(\CFKI(\SH),\partial^\infty)$ comes equipped with two natural gradings. The {\it Maslov} (homological) grading, which is an absolute $\Q$-grading, is specified up to an overall shift by the formula
\[
	M(\x) - M(\y) = \mu(\phi) - 2n_w(\phi),
\]
for $\x,\y \in \TT_\alpha \cap \TT_\beta$ and any $\phi \in \pi_2(\x,\y)$, and the requirement that multiplication by $U$ drop the Maslov grading by two.  The {\it Alexander} grading is again an absolute $\Q$-grading, specified up to an overall shift by the formula
\[
	A(\x) - A(\y) = n_z(\phi) - n_w(\phi),
\]
and the requirement that multiplication by $U$ drop the Alexander grading by one.

From these formulae, we see that the differential $\partial^\infty$ decreases the Maslov grading by one and is $\Z$-filtered with respect to the Alexander grading; $A(\partial^\infty(\x)) \leq A(\x)$ for any $\x \in \TT_\alpha \cap \TT_\beta$.  There is an additional $\Z$-filtration on $(\CFKI(\SH),\partial^\infty)$ obtained by recording the $U$-exponent multiplying a given generator $\x \in \TT_\alpha \cap \TT_\beta$.

By positivity of intersection, $n_w(\phi)$ is always non-negative, so the $\Z[U]$-module $\CFKM(\SH)\subset \CFKI(\SH)$ freely generated by the intersections of the tori $\TT_\alpha$ and $\TT_\beta$ inside the symmetric product $\mathrm{Sym}^g(\Sigma)$ is a sub-complex of $\CFKI(\SH)$. We denote $\partial^\infty$ restricted to $\CFKM(\SH)$ by $\partial^-$. We additionally denote by $(\CFKP(\SH),\partial^+)$ the quotient complex.

\begin{theorem}[Ozsv\'ath-Szabo \cite{OS3}, Rasmussen \cite{Ra}]\label{thm:OSR_HFKM}
	Let $K$ be a null-homologous knot in a 3-manifold $Y$ with $b_1 = 0$, and $\SH$ a doubly-pointed Heegaard diagram for the pair $(Y,K)$.  Then the $\Q$-graded, $\Z \oplus \Z$-filtered chain homotopy type of the complexes $(\CFKI(\SH),\partial^\infty)$, $(\CFKM(\SH),\partial^-)$ and $(\CFKP(\SH),\partial^+)$ are invariants of $(Y,K)$.
\end{theorem}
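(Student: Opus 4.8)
The plan is to establish the invariance of the $\Q$-graded, $\Z\oplus\Z$-filtered chain homotopy types by reducing everything to the corresponding statement in the closed theory, i.e. to the invariance of $\CFH(Y)$, together with the by-now-standard analysis of how the basepoints interact with Heegaard moves. First I would recall that a doubly-pointed Heegaard diagram $\SH = (\Sigma,\balpha,\bbeta,z,w)$ for $(Y,K)$ forgets to an ordinary (singly-pointed) diagram $(\Sigma,\balpha,\bbeta,w)$ for $Y$, and that the underlying $\F[U,U^{-1}]$-module $\CFKI(\SH)$ is simply $\CFKI(\SH)=\widehat{\CF}(\Sigma,\balpha,\bbeta,w)\otimes_\F \F[U,U^{-1}]$, with the only subtlety being that we are now tracking $U^{n_w(\phi)}$ in the differential rather than killing all disks with $n_w(\phi)>0$. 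The two filtrations are: the Alexander filtration recorded by $A(\x)$, controlled by the relative count $n_z(\phi)-n_w(\phi)$; and the algebraic filtration by the $U$-power. So the content of the theorem is: (i) the filtered complex is well-defined (independent of the almost complex structure and of the analytic choices made in the definition), and (ii) it is independent of the choice of doubly-pointed Heegaard diagram.

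Step (i) is handled exactly as in the closed case: for a generic path of almost complex structures the moduli spaces $\widehat{\mathcal M}(\phi)$ of index-one disks are compact $0$-manifolds, the index-two moduli spaces give $\partial^2=0$ by the usual Gromov-compactness/gluing argument, and changing the almost complex structure induces a filtered chain homotopy equivalence by counting index-zero disks in a one-parameter family. None of these counts can lower $n_w$ or $n_z-n_w$ below their values, so everything respects both filtrations; one just has to check that the continuation maps are $U$-equivariant, which is automatic since $n_w$ is additive under juxtaposition of disks. For step (ii), invariance under Heegaard moves, I would invoke the list of elementary moves connecting any two Heegaard diagrams for the same $3$-manifold: isotopies of the $\balpha$ and $\bbeta$ curves, handleslides, and (de)stabilizations. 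The new ingredient over the closed theory is that the basepoints $z,w$ must be kept fixed (more precisely, kept in the complement of the curves and in the correct regions so that the knot $K=\gamma_\alpha\cup\gamma_\beta$ is unchanged up to isotopy). For isotopies and handleslides not crossing $z$ or $w$, the standard holomorphic-triangle/continuation arguments produce chain maps that count $U^{n_w(\phi)}$-weighted triangles and hence are $U$-equivariant and respect the Alexander filtration, because $n_z$ and $n_w$ of the small triangles attached to the nearest-point generator vanish; the homotopy inverses and the null-homotopies of the composites are handled the same way. Stabilization changes $\Sigma$ by connect-summing a torus and adds a canceling pair of curves meeting once in a point disjoint from both basepoints, and one checks as usual that this does not change the filtered chain homotopy type.

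The main obstacle, and the step that deserves real care rather than a wave of the hand, is the following: because $z$ and $w$ are two distinct points (not one), the space of admissible isotopies/handleslides is more constrained, and one must show that any two doubly-pointed diagrams for $(Y,K)$ are connected through diagrams in which the relative homotopy class of the pair $(\gamma_\alpha,\gamma_\beta)$ — equivalently the knot $K$ — is preserved. This is precisely the doubly-pointed analogue of the fact that a knot in $Y$ is determined by a doubly-pointed diagram up to the usual moves, and it is where the hypothesis that $K$ is a genuine knot (and null-homologous, $b_1=0$, so that the relative gradings $A$ and $M$ lift to absolute $\Q$-gradings) enters. I would cite \cite{OS3,Ra} for this combinatorial fact and then run the analytic machinery above on top of it. The absolute $\Q$-grading shifts are pinned down, as in the closed case, by comparing with the large-surgery formula or with the overall normalization used in \cite{OS3}, and one checks directly from the index formula $M(\x)-M(\y)=\mu(\phi)-2n_w(\phi)$ and $A(\x)-A(\y)=n_z(\phi)-n_w(\phi)$ that the continuation and triangle maps above shift these gradings in the prescribed way (degree $0$ for the homotopy equivalences once the normalizations are fixed). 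Finally, the statements for $\CFKM$ and $\CFKP$ follow formally: $\CFKM(\SH)\subset\CFKI(\SH)$ is the $\F[U]$-submodule generated by the $\TT_\alpha\cap\TT_\beta$, it is a subcomplex by positivity of intersection ($n_w\ge0$), $\CFKP$ is the quotient, and since every map constructed above is $U$-equivariant and filtered it restricts to $\CFKM$ and descends to $\CFKP$, yielding the asserted invariance of all three filtered chain homotopy types.
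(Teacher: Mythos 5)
This statement is a background result in the paper, attributed explicitly to Ozsv\'ath--Szab\'o and Rasmussen; the paper cites \cite{OS3,Ra} and does not supply its own proof, so there is no internal argument to compare your sketch against. Your outline is a reasonable summary of how the result is proved in those sources: you correctly reduce the problem to (a) analytic invariance (almost complex structure, admissibility) and (b) combinatorial invariance under Heegaard moves fixing the two basepoints, you correctly identify that the $U$-filtration and the Alexander filtration are both controlled by positivity of $n_w$ and $n_z$, and you correctly derive the $\CFKM$ and $\CFKP$ cases formally from the filtered $\CFKI$ case via the $U$-equivariance of all the continuation and triangle maps. The one place your sketch leans on a citation where the original sources do real work is the ``doubly-pointed moves suffice'' claim (that any two doubly-pointed diagrams for $(Y,K)$ are connected by moves keeping both basepoints in the correct complementary regions), but since you explicitly defer that to \cite{OS3,Ra} this is an honest outline rather than a gap. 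In short, the approach is the standard one, and it is consistent with what this paper's stated references establish; no objection.
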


The homologies of the associated graded object with respect to the Alexander filtration give various types of knot Floer homology.  It is customary to write them as 
\[
	\HFKI(Y,K), \, \HFKM(Y,K), \text{ and } \HFKP(Y,K).
\]

Setting the formal variable $U$ equal to zero in $(\CFKM(\SH),\partial^-)$, we obtain the $\Q$-graded, $\Z$-filtered complex $(\CFKH(\SH),\PH)$.  Taking the homology of the associated graded object with respect to this filtration yields the hat version of knot Floer homology
\[
	\HFKH(Y,K).
\]

The projection $p: (\CFKM(\SH),\PM) \to (\CFKH(\SH),\PH)$ obtained by setting $U=0$ gives rise to a natural map on homology
\[
	p_*: \HFKM(Y,K) \to \HFKH(Y,K).
\]

In a similar spirit, setting the formal variable $U$ equal to the identity gives a projection $\pi: (\CFKM(\SH),\PM) \to (\CFH(Y),\PH)$, inducing a map
\[
	\pi_*: \HFKM(Y,K) \to \HFH(Y),
\]
from the minus version of knot Floer homology to the hat version of the Heegaard Floer theory for the ambient 3-manifold.

We can also identify $(\CFKH(\SH),\PH)$ as the kernel of the $U$ map on $(\CFKP(\SH),\partial^+)$. Thus the inclusion induces a natural map on homology 
\[
	\iota_*: \HFKH(Y,K) \to \HFKP(Y,K).
\]

% subsection knot_floer_homology (end)
%%%%%%%%%%%%%%%%%%%%%%%%%%%%%%%%%%%%%%%%%%%%%%%%%%%%%%%

%%%%%%%%%%%%%%%%%%%%%%%%%%%%%%%%%%%%%%%%%%%%%%%%%%%%%%%
\subsection{The Lisca-Osv\'ath-Stipsicz-Szab\'o invariant} % (fold)
\label{sub:loss_invt}
%%%%%%%%%%%%%%%%%%%%%%%%%%%%%%%%%%%%%%%%%%%%%%%%%%%%%%%

There is an invariant of Legendrian knots which takes values in knot Floer homology.  Let $L \subset (Y,\xi)$ be a Legendrian knot in the knot type $K$.  In \cite{LOSS}, Lisca, Ozsv\'ath, Stipsicz and Szabo defined invariants
\[
	\SL(L) \in \HFKM(-Y,L)
\]
and
\[
	\SLH(L) \in \HFKH(-Y,L).
\]
Their invariants are constructed in a manor reminiscent of Honda, Kazez and Mati\'c's construction of the usual contact invariant in Heegaard Floer homology.  Since it will be useful in what follows, we recall the construction from \cite{LOSS}.

Given a Legendrian knot $L \subset (Y,\xi)$, we choose an open book decomposition $(B,\pi)$ of $(Y,\xi)$ which contains the knot $L$ sits on a page $S$ of $(B,\pi)$.  We can assume without loss of generality that this page is given by $S = S_{1/2}$, and that $L$ is nontrivial in the homology of $S$.

Now choose a basis $\{a_0,\dots,a_k\}$ for $S$ so that $L$ is intersected only by the arc $a_0$, and does so transversally in a single point.  Next, apply small isotopies to the $a_i$ to obtain a collection of arcs $\{b_0,\dots,b_k\}$.  We require that the endpoints of $b_i$ be obtained from those of those of $a_i$ by shifting along the orientation of $\partial S$, and that $b_i$ intersect $a_i$ in a single transverse point $x_i = a_i \cap b_i$ (see Figure~\ref{fig:loss}).

%%%%%%%%%%%% LOSS figures %%%%%%%%%%%%%%%%%%%%%
\begin{figure}[htbp]
	\centering
	\begin{picture}(325,71)
		\put(0,0){\includegraphics[scale=1]{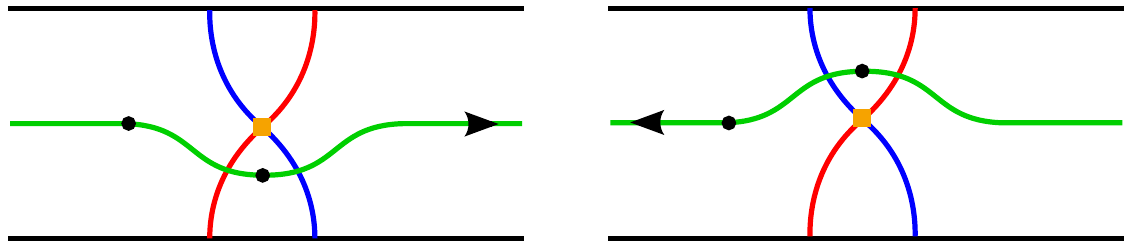}}
		\put(82,32){$x_0$}
		\put(255,35){$x_0$}
		\put(35,40){$z$}
		\put(207,40){$z$}
		\put(72,12){$w$}
		\put(246,55){$w$}
		\put(290,40){$-L$}
		\put(120,40){$L$}
	\end{picture}
	\caption{Construction of the LOSS invariant.}
	\label{fig:loss}
\end{figure}
%%%%%%%%%%%% End LOSS figures %%%%%%%%%%%%%%%%%%%%%

A doubly-pointed Heegaard diagram for the pair $(-Y,L)$ can now be constructed as follows.  The diagram itself is specified by
\[
	(\Sigma,{\bf \beta},{\bf \alpha}) = (S_{1/2} \cup -S_0, (b_i \cup \phi(b_i)), (a_i \cup a_i) ),
\]
where $\phi: S \to S$ is the monodromy map of the fibration $(B,\pi)$ and the arcs $\phi(b_i)$ and the second $a_i$ above sit on the page $-S_{0}$.  The basepoint $z$ is placed on the page $S_{1/2}$, away from the thin strips of isotopy between the $a_i$ and $b_i$.  The second basepoint $w$ is placed inside the thin strip between $a_0$ and $b_0$ as shown in Figure~\ref{fig:loss}.  The two possible choices for the placement of $w$ correspond to the two possible choices of orientation for the Legendrian knot $L$.  

\begin{definition}\label{def:loss}
	Let $L \subset (Y,\xi)$ be a Legendrian knot and let $(\Sigma,{\bf \beta}, {\bf \alpha},z,w)$ be a Heegaard diagram adapted to $L$ constructed as above.  The invariants $\SL(L)$ and $\SLH(L)$ are defined to be
\[
	\SL(L) := [(x_0,\dots,x_k)] \in \HFKM(-Y,L),
\]
and
\[
	\SLH(L) := [(x_0,\dots,x_k)] \in \HFKH(-Y,L)
\]
respectively.
\end{definition}

It was shown in \cite{LOSS} that $\SL(L)$ and $\SLH(L)$ enjoy a number of useful properties, some of which are the following:
\begin{enumerate}
	\item  Under the map $\HFKM(-Y,L) \to \HFKH(-Y,L)$ induced by setting $U = 0$ at the chain level, $\SL(L)$ is sent to $\SLH(L)$.
	\item  Under the map $\HFKM(-Y,L) \to \HFH(-Y)$ induced by setting $U = 1$ at the chain level, $\SL(L)$ is sent to $\EH(Y,\xi)$, the contact invariant of the ambient space.
	\item  If the complement of $L$ is overtwisted (see \cite{LOSS}) or has positive Giroux torsion (see \cite{Ve}), then both $\SL(L)$ and $\SLH(L)$ vanish.
	\item  If $(Y,\xi)$ has non-vanishing contact invariant, then $\SL(L)$ is non-vanishing for every Legendrian $L \subset (Y,\xi)$.
\end{enumerate}

In addition we have the following interesting property. 

\begin{othertheorem}[Lisca-Osv\'ath-Stipsicz-Szab\'o \cite{LOSS}]\label{thm:loss_stab}
	The invariants $\SL$ and $\SLH$ behave as follows under stabilization.  If $L$ is a Legendrian knot and $L_-$ is its negative stabilization, then 
\[
	\SL(L_-) = \SL(L) \;\;\;\; \text{and} \;\;\;\; \SLH(L_-) = \SLH(L).
\]
Similarly, if $L_+$ is the positive stabilization of a Legendrian knot $L$, then 
\[
	\SL(L_+) = U \cdot \SL(L) \;\;\;\; \text{and} \;\;\;\; \SLH(L_+) = 0.
\]
\end{othertheorem}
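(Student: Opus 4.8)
The plan is to work in the open-book model underlying Definition~\ref{def:loss} and to read off how a Legendrian stabilization changes the adapted diagram. First I would recall the standard fact (see \cite{EH} and the discussion in \cite{LOSS}) that if $L$ lies on the page $S=S_{1/2}$ of a compatible open book $(B,\pi)$, then $L_\pm$ can be arranged to lie on the page of the open book obtained from $(B,\pi)$ by a single positive Hopf-band plumbing along an arc $\delta$ that meets $L$ transversely once, inside the region where $L$ crosses the basis arc $a_0$. Concretely the new page is $S'=S\cup H$ for a $1$-handle $H$; the new basis is $\{a_0,\dots,a_k,a_{k+1}\}$ with $a_{k+1}$ the co-core of $H$; the new monodromy is $\phi'=\phi\circ\tau_c$ for a right-handed Dehn twist $\tau_c$ about a curve $c$ running once over $H$; and the distinguished tuple grows by a single point $x_{k+1}=a_{k+1}\cap b_{k+1}$ coming from the small isotopy strip of $a_{k+1}$. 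The only thing separating the two signs is on which side of the point $L\cap a_0$ the arc $\delta$ (equivalently the feet of $H$, equivalently the sub-arc of $c$ lying in $S$) is routed; I would make this the precise combinatorial input of the argument.

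Next I would form the doubly pointed Heegaard diagram $\SH'$ for $(-Y,L_\pm)$ dictated by Definition~\ref{def:loss} and observe that, forgetting the basepoints, $\SH'$ is an ordinary Heegaard stabilization of $\SH$: the new curves $\alpha_{k+1}$ and $\beta_{k+1}$ meet in a single point of a bigon supported near $H$, so the diagram destabilizes, and under the destabilization the canonical cycle $(x_0,\dots,x_k,x_{k+1})$ is carried to $(x_0,\dots,x_k)$. The point demanding care is the basepoint bookkeeping. In all cases the destabilizing bigon misses $z$; and by the local picture from the previous step it misses $w$ exactly when the stabilization is negative, while for a positive stabilization $w$ lies in (the image of) that bigon. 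Hence the identification of cycles is basepoint-preserving in the negative case but acquires exactly one factor of $U$ in the positive case.

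The conclusions now follow. For $L_-$ the destabilization is basepoint-preserving, so it induces a filtered chain homotopy equivalence $\CFKM(\SH')\simeq\CFKM(\SH)$ taking the cycle representing $\SL(L_-)$ to the one representing $\SL(L)$; passing to homology gives $\SL(L_-)=\SL(L)$, and reducing modulo $U$ gives $\SLH(L_-)=\SLH(L)$. For $L_+$ the same destabilization carries $(x_0,\dots,x_k,x_{k+1})$ to $U\cdot(x_0,\dots,x_k)$ because the relevant strip crosses $w$ once, so $\SL(L_+)=U\cdot\SL(L)$ in $\HFKM(-Y,L)$; and since the hat invariant is obtained by setting $U=0$, $\SLH(L_+)=0$.

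The main obstacle is the first step: making rigorous the claim that a Legendrian stabilization of $L$ is realized by exactly one positive Hopf plumbing along an arc hitting $L$ once near $a_0$, and isolating the local model cleanly enough that the sign of the stabilization is manifestly the choice of side on which the plumbing arc --- and hence the basepoint $w$ --- sits. This is a purely contact-geometric/open-book computation (a finger move near a point of $L$ matched against the zig-zag picture of Legendrian stabilization), but it has to be carried out precisely enough that the subsequent Heegaard destabilization, and in particular the $n_w$-count, is unambiguous. Everything downstream of that is a routine destabilization argument in knot Floer homology.
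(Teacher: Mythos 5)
Your route is the original diagrammatic one from \cite{LOSS}: realize a Legendrian stabilization of $L$ by positively plumbing a Hopf band along an arc meeting $L$ once near $a_0$, form the adapted doubly-pointed Heegaard diagram, destabilize, and read off the $U$-power from whether the destabilizing region meets $w$. That is genuinely different from what is done in this paper. Here the hat statement is derived from the Stipsicz--V\'ertesi picture by pure contact geometry: a negative stabilization corresponds to a negative basic slice attachment to $\partial Y(L)$, the Stipsicz--V\'ertesi slice factors through it, so $\SLH(L_-)=\SLH(L)$; while a positive basic slice followed by the Stipsicz--V\'ertesi slice is overtwisted, so $\SLH(L_+)=0$. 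The minus statements are likewise captured in the paper's framework via $\EHL$ (Theorems~\ref{thm:ehl_neg_stab}, \ref{thm:ehl_pos_stab} and~\ref{thm:lim_to_minus_leg}), where attaching a positive basic slice is literally the $U$-action on the limit group. The basic-slice point of view makes the $U$-multiplication and the vanishing transparent at once, at the cost of the HKM gluing machinery; your version is more elementary and self-contained and gives all four equalities in one diagrammatic pass.

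One caution beyond the obstacle you already flag (pinning down the plumbing model). A Heegaard destabilization whose bigon meets $w$ does \emph{not} automatically produce a filtered chain map sending the distinguished cycle to $U$ times the old one. The standard destabilization isomorphism $\CFKM(\SH')\simeq\CFKM(\SH)$ requires the destabilizing region to be disjoint from both basepoints; once that region runs over $w$ (as in the $L_+$ case) the map has to be constructed and analyzed with more care --- for example via preliminary handle-slides that restore a $w$-avoiding destabilization while producing a visible $U$-power on the cycle --- and that analysis is where most of the real work in the positive case of \cite{LOSS} actually lives. So treat ``the bigon meets $w$ once, hence a factor of $U$'' as a claim to be established, not as routine destabilization bookkeeping downstream of the local model.
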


It immediately follows from Theorem~\ref{thm:loss_stab} that $\SL$ and $\SLH$ define transverse invariants as well.  If $K$ is a transverse knot and $L$ is a Legendrian approximation of $K$, define
\[
	\ST(K) = \SL(L) \;\;\;\; \text{and} \;\;\;\; \STH(K) = \SLH(L).
\]

% subsection the_lisca_osv'ath_stipsicz_szab'o_invariant (end)
%%%%%%%%%%%%%%%%%%%%%%%%%%%%%%%%%%%%%%%%%%%%%%%%%%%%%%%

%%%%%%%%%%%%%%%%%%%%%%%%%%%%%%%%%%%%%%%%%%%%%%%%%%%%%%%
\subsection{Sutured Floer homology} % (fold)
\label{sub:SFH}
%%%%%%%%%%%%%%%%%%%%%%%%%%%%%%%%%%%%%%%%%%%%%%%%%%%%%%%
Recall that a sutured manifold $(Y,\Gamma)$, with annular sutures, is a manifold $Y$ together with a collection of oriented simple closed curves $\Gamma$ on $\partial Y$ such that each component of $\partial Y$ contains at least one curve in $\Gamma$ and $\partial Y-\Gamma$ consists of two surface $\partial Y_+$ and $\partial Y_-$ so that $\Gamma$ is the oriented boundary of $\partial Y_+$ and $-\Gamma$ is the oriented boundary of $\partial Y_-$. We say that $(Y,\Gamma)$ is balanced if $\partial Y_+$ and $\partial Y_-$ have the same Euler characteristic.

In \cite{Ju} Juh{\'a}sz showed how to associate to a balanced sutured manifold $(Y,\Gamma)$ the sutured Heegaard-Floer homology groups $\SFH(Y,\Gamma)$. We will see a generalization of this in Section~\ref{backgroud2} below, so we will not give the detail of the construction of $\SFH(Y,\Gamma)$ here, but merely recall facts relevant to the definition of our limit sutured homology and its properties. In addition we note that, as in ordinary Heegaard Floer theory, the chain groups are generated by the intersection of tori coming from the curves used in a Heegaard diagram for $(Y,\Gamma)$. The first two results we need relate the sutured Floer theory to previous flavors of Heegaard Floer homology.
\begin{theorem}[Juh\'asz 2006, \cite{Ju}]
\label{thm:SFHisHFH}
Let $Y$ be a closed 3-manifold and denote by $Y(1)$ the sutured manifold obtained from $Y$ by deleting an open ball and placing a single suture on the resulting 2--sphere boundary.  Then, there exists an isomorphism
\[
\SFH(Y(1)) \to \HFH(Y).
\]
\end{theorem}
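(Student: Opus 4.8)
\textbf{Proof proposal for Theorem~\ref{thm:SFHisHFH}.} The plan is to compare Heegaard diagrams for the two objects directly. First I would recall that an admissible doubly-pointed Heegaard diagram $(\Sigma,\balpha,\bbeta,z,w)$ for a closed pointed $3$--manifold $(Y,z)$ computes $\CFH(Y)$ as the $\F$-module generated by $\TT_\alpha\cap\TT_\beta$, with differential counting index-one holomorphic disks that avoid the basepoint $z$. On the other side, I would produce an adapted sutured Heegaard diagram for $Y(1)$: delete a small ball in $\Sigma$ away from all the $\alpha$ and $\beta$ curves and from $z$, so that the resulting surface-with-boundary, together with the same $\balpha$ and $\bbeta$ (now viewed as arcs/circles in a sutured Heegaard diagram), represents $Y(1)$ with its single suture on the $S^2$ boundary. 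The point is that the sutured Heegaard diagram for $Y(1)$ can be taken to be literally $(\Sigma\smallsetminus D^2,\balpha,\bbeta)$ where $D^2$ is a neighborhood of $z$; the single suture on the sphere boundary of $Y(1)$ corresponds exactly to the condition ``the basepoint region'' in the closed picture.

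The key steps, in order, are: (1) Fix an admissible pointed Heegaard diagram for $Y$ and remove a disk neighborhood of $z$ to obtain a balanced sutured Heegaard diagram $\mathcal{H}'$ for $Y(1)$, checking that $\mathcal{H}'$ is admissible as a sutured diagram (no periodic/provincial domains with only nonnegative multiplicities other than the trivial one — this follows from admissibility of the closed diagram since the removed region is precisely where $z$-avoiding domains are constrained). (2) Observe that the generating sets coincide: $\SFC(\mathcal{H}') $ and $\CFH(\Sigma,\balpha,\bbeta,z)$ have the same generators $\TT_\alpha\cap\TT_\beta$. (3) Identify the differentials: an index-one holomorphic disk contributing to $\partial$ in $\SFC(\mathcal{H}')$ is, by definition, one whose domain has multiplicity zero along $\partial(\Sigma\smallsetminus D^2)$, hence multiplicity zero near $z$, which is exactly the condition $n_z(\phi)=0$ defining the differential on $\CFH$. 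Since removing the disk does not change $\mr{Sym}^g$ of the relevant region or the moduli spaces (the disk $D^2$ is disjoint from all curves), the two chain complexes are literally equal. (4) Invoke invariance of both theories under change of diagram (Ozsv\'ath--Szab\'o for $\CFH$, Juh\'asz for $\SFH$) to conclude the isomorphism is canonical and independent of choices.

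The main obstacle, and the one step requiring genuine care, is the admissibility bookkeeping in step (1): one must verify that the sutured admissibility condition for $\mathcal{H}'$ (the relevant notion here is that $\mathcal{H}'$ has no nontrivial positive \emph{provincial} periodic domain, and for the stronger ``admissible'' version no nontrivial positive periodic domain at all) is implied by the weak admissibility of the original pointed diagram, and conversely that one can always arrange the closed diagram to be weakly admissible with $z$ in a suitable region. This is essentially a translation exercise between the two admissibility formalisms — periodic domains in the sutured diagram correspond to periodic domains in the closed diagram with vanishing multiplicity at $z$ — but it is where the identification genuinely has content rather than being a tautology. Everything else is a matter of matching definitions, so I would present the proof as: construct $\mathcal{H}'$, check admissibility, note the chain-level equality $\SFC(\mathcal{H}')=\CFH(\Sigma,\balpha,\bbeta,z)$, and appeal to invariance.
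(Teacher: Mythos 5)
Your proposal is correct and is essentially the same argument Juh\'asz gives in \cite{Ju}: remove a disk neighborhood of the basepoint $z$ from a weakly admissible pointed Heegaard diagram for $Y$ to obtain a sutured Heegaard diagram for $Y(1)$, observe that the generators and differentials agree (the condition $n_z(\phi)=0$ for $\CFH$ is exactly the condition of zero multiplicity in the boundary region of the sutured diagram), and appeal to invariance. The admissibility translation you flag is indeed the only point that needs checking, and it works out just as you indicate: periodic domains in the sutured diagram are precisely the periodic domains of the closed diagram with $n_z=0$, so sutured admissibility matches weak admissibility for $\HFH$.
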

\begin{theorem}[Juh\'asz 2006, \cite{Ju}]
\label{thm:SFHisHFK}
Let $K$ be a knot in a closed 3-manifold $Y$ and denote by $Y(K)$ the complement of an open tubular neighborhood of $K$ in $Y$. Let $\Gamma_\mu$ be two disjoint meridional sutures on $\partial Y(K)$. Then there is an isomorphism
\[
\SFH(Y(K),\Gamma_\mu)\to \HFKH(Y,K).
\] 
\end{theorem}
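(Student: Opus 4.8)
The plan is to prove Theorem~\ref{thm:SFHisHFK} by constructing an explicit sutured Heegaard diagram for $(Y(K),\Gamma_\mu)$ out of a doubly-pointed Heegaard diagram for $(Y,K)$, and then observing that the resulting sutured Floer chain complex is literally the associated graded complex computing $\HFKH(Y,K)$.

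First I would fix an admissible doubly-pointed Heegaard diagram $\SH=(\Sigma,\balpha,\bbeta,z,w)$ for $(Y,K)$ as in Section~\ref{sub:knot_floer_homology}, and set $\Sigma'=\Sigma\setminus(\nu(z)\cup\nu(w))$, the surface obtained by deleting small open disks around the two basepoints. Since $z,w\notin\balpha\cup\bbeta$, all the attaching curves survive in $\Sigma'$, and I claim $(\Sigma',\balpha,\bbeta)$ is a balanced sutured Heegaard diagram presenting $(Y(K),\Gamma_\mu)$. The key topological point — the step I expect to require the most care — is identifying the sutured manifold this data presents: building a three-manifold from $\Sigma'\times[0,1]$ by attaching three-dimensional two-handles along $\alpha_i\times\{0\}$ and $\beta_i\times\{1\}$ produces exactly the complement of the two arcs $\gamma_\alpha\cup\gamma_\beta$ (pushed into the respective handlebodies) used to build $K$, namely $Y(K)=\overline{Y\setminus\nu(K)}$; and the two boundary circles $\partial\nu(z)$ and $\partial\nu(w)$ become precisely a pair of oppositely-oriented meridional curves on $\partial Y(K)=T^2$, cutting it into two annuli of equal (zero) Euler characteristic, so the diagram is balanced. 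I would also verify that admissibility of $\SH$ in the knot Floer sense translates to admissibility of $(\Sigma',\balpha,\bbeta)$ in Juh\'asz's sense, so that $\SFH(Y(K),\Gamma_\mu)$ is computed from this diagram.

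Next I would compare the two chain complexes. The generators of $\SFC(Y(K),\Gamma_\mu)$ are the points of $\TT_\alpha\cap\TT_\beta$ in $\mathrm{Sym}^g(\Sigma')$ — the same set as the generators of $\CFKH(\SH)$. The sutured differential counts index-one holomorphic disks in $\mathrm{Sym}^g(\Sigma')$ with boundary on $\TT_\alpha$ and $\TT_\beta$; under the inclusion $\Sigma'\hookrightarrow\Sigma$ these are exactly the index-one disks $\phi$ in $\mathrm{Sym}^g(\Sigma)$ with $n_z(\phi)=n_w(\phi)=0$, for a generic almost complex structure that may be taken identical on both sides (no disk is permitted to pass through the removed punctures, so the moduli spaces literally coincide). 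But the condition $n_z(\phi)=n_w(\phi)=0$ is precisely what defines the differential on the associated graded of $(\CFKM(\SH)/U,\PH)$ with respect to the Alexander filtration — setting $U=0$ kills the terms with $n_w(\phi)>0$, and passing to the associated graded kills the terms with $n_z(\phi)>0$ — i.e.\ the complex whose homology is $\HFKH(Y,K)$. Hence $\SFC(Y(K),\Gamma_\mu)$ and the associated graded of $\CFKH(\SH)$ agree as chain complexes, so their homologies are isomorphic; moreover the identification respects the splitting of $\SFH$ over relative $\mathrm{Spin}^c$ structures, which matches the Alexander grading. Finally, since $\SFH(Y(K),\Gamma_\mu)$ (by Theorem~\ref{thm:SFHisHFH}'s source, Juh\'asz) and $\HFKH(Y,K)$ (by Ozsv\'ath--Szab\'o and Rasmussen) are invariants independent of the chosen diagram, exhibiting this one matching pair of diagrams suffices to establish the isomorphism.
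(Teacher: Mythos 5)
Your proof is correct and is essentially the argument from Juh\'asz's original paper \cite{Ju}, which the paper above cites without reproducing a proof: delete small disks about the two basepoints of a doubly-pointed Heegaard diagram to obtain a balanced sutured Heegaard diagram for $(Y(K),\Gamma_\mu)$, observe that the presented sutured manifold is indeed the knot complement with two meridional sutures, and note that both differentials count exactly the index-one curves with $n_z(\phi)=n_w(\phi)=0$, so the chain complexes coincide. The only point worth stating a bit more explicitly in a polished write-up is the identification of the underlying sutured manifold (the small balls at the index-$0$ and index-$3$ critical points together with the two tubes $D_z\times[0,1]$ and $D_w\times[0,1]$ form the removed tubular neighborhood of $K$, making $\partial D_z$ and $\partial D_w$ meridians), but your sketch already identifies this as the step requiring care.
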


% subsection sutured_floer_hom (end)
%%%%%%%%%%%%%%%%%%%%%%%%%%%%%%%%%%%%%%%%%%%%%%%%%%%%%%%

%%%%%%%%%%%%%%%%%%%%%%%%%%%%%%%%%%%%%%%%%%%%%%%%%%%%%%%
\subsection{Relative Spin$^{\mathbb{C}}$ structures and gradings} % (fold)
\label{sub:rel_spinc_gradings}
%%%%%%%%%%%%%%%%%%%%%%%%%%%%%%%%%%%%%%%%%%%%%%%%%%%%%%%

Here, we discuss how to put a grading on the sutured Floer homology groups using relative $\spinc$ structures \cite{Ju2} and, in the case where the sutured manifold comes from a null-homologous knot complement (with meridional sutures), we can see that this grading and Theorem~\ref{thm:SFHisHFK} can be used to recover the Alexander grading on knot Floer homology.

%%%%%%%%%%%%%%%%%%%%%%%%%%%%%%%%%%%%%%%%%%%%%%%%%%%%%%%
\subsubsection{Relative $\spinc$ structures} % (fold)
\label{relspinc}
%%%%%%%%%%%%%%%%%%%%%%%%%%%%%%%%%%%%%%%%%%%%%%%%%%%%%%%
Given a manifold $Y$ with boundary, choose a non-zero vector field $v_0$ in $TY$ along $\partial Y$. We can define the relative $\spinc$ structures on $Y$ to be  is the set of homology classes of non-zero vector field on $Y$ that restrict to $v_0$ on $\partial Y$. We say two non-zero vector fields are homologous if they are homotopic in the compliment of a 3-ball in the interior of $Y$. Notice that if $v_0'$ is another non-zero vector field along $\partial Y$ that is homotopic to $v_0$ through non-zero vector fields then we can use the homotopy to identify the relative $\spinc$ structures defined by $v_0$ and those defined by $v_0'$ and, if we restrict attention to a contractible set of choices for $v_0$, then these identifications are canonical. 

Consider a sutured manifold $(Y,\Gamma)$. In \cite{Ju2} relative $\spinc$ structures were defined by choosing a vector field $v_0$ that points out of $Y$ along $\partial Y_+$ and into $Y$ along $\partial Y_-$ (and is tangent to $\partial Y$ along $\Gamma$ and pointing into $\partial Y_+$). The set of relative $\spinc$ structures on $(Y,\Gamma)$ defined using such a $v_0$ is denoted by $\spinc(Y,\Gamma)$ and is well defined independent of $v_0$ since the possible choices for $v_0$ form a contractible set.

There is the standard map from the generators of the sutured homology chain groups to $\spinc$ structures
\[
{\s}: \TT_\alpha \cap \TT_\beta \to \spinc(Y,\Gamma)
\]
defined by using the intersections corresponding to points in $\TT_\alpha \cap \TT_\beta$ to pair the critical points of a Morse function corresponding to the chosen Heegaard diagram which used to compute the sutured Floer homology. 
The sutured Floer homology groups can be decomposed by $\spinc$ strucure
\[
\SFH(Y,\Gamma)=\bigoplus_{\s \in \spinc(Y,\Gamma)} \SFH(Y,\Gamma,\s).
\]

Given a vector field $v$ representing an element $\s\in \spinc(Y,\Gamma)$ let $v^\perp$ denote the orthogonal complement of $v$ (using some fixed auxiliary metric). If each component $S$ of $\partial Y$ satisfies $\chi(S_+)=\chi(S_-)$ then $(Y,\Gamma)$ is called strongly balanced. (For manifolds with connected boundary this is of course the same as being balanced.) In this case $v_0^\perp$ is necessarily a trivial plane field, \cite{Ju2}, so there is a non-zero section which we denote $t_0$. We can then define the Euler class  of $\s$ relative to $t_0$, $c_1(\s,t_0)\in H^2(Y,\partial Y;\Z)$ as the obstruction to extending $t_0$ to a non-zero section of $v^\perp$.

% subsection rel_spinc (end)
%%%%%%%%%%%%%%%%%%%%%%%%%%%%%%%%%%%%%%%%%%%%%%%%%%%%%%%

%%%%%%%%%%%%%%%%%%%%%%%%%%%%%%%%%%%%%%%%%%%%%%%%%%%%%%%
\subsubsection{Knot complements and the Alexander grading} % (fold)
\label{sub:knot_alex}
%%%%%%%%%%%%%%%%%%%%%%%%%%%%%%%%%%%%%%%%%%%%%%%%%%%%%%%

We now consider the case of knot complements. Suppose that $K \subset Y$ is a null-homologous knot, and let $F$ be a Seifert surface for $K$.  To the pair $(Y,K)$, we associate the compact sutured manifold $(Y(K),\Gamma_\mu)$ as discussed above; where $Y(K)$ is the complement of an open tubular neighborhood of $K$, and $\Gamma_\mu$ consists of a pair of oppositely-oriented meridional sutures on $\partial Y(K)$.

The set of relative $\spinc$ structures on $(Y(K),\Gamma_\mu)$ is naturally an affine space over $\mr{H}^2(Y(K),\partial Y(K))$.

Choosing an orientation on the knot $K$, we have the natural map
\[
	i^* : \mr{H}^2(Y(K),\partial Y(K); \Z) \to \mr{H}^2(Y; \Z),
\]
induced by Poincar\'e duality and the inclusion of $Y(K)$ into $Y$.  Given a relative $\spinc$ structure on $(Y(K),\Gamma_K)$, Ozsv\'ath and Szab\'o show in Sections 2.2 and 2.4 of \cite{OS6} how to extend this relative $\spinc$ structure to a $\spinc$ structure on $Y$.\footnote{Strictly speaking, Ozsv\'ath and Szab\'o's construction takes a relative $\spinc$ on $Y(K)$, normalized to point outward along the boundary, and produces an absolute $\spinc$ structure on $Y$.  A careful reading of Sections 2.2 and 2.4 of \cite{OS6}, however, indicates that their techniques apply in this more general context.}  Thus, we obtain the natural map
\[
	G_{Y,K}: \spinc(Y(K),\Gamma_\mu) \to \spinc(Y),
\]
which is equivariant with respect to the actions of $\mr{H}^2(Y(K),\partial Y(K))$ and $\mr{H}^2(Y)$ on $\spinc(Y(K),\Gamma_\mu)$ and $\spinc(Y)$, respectively. 

Let $[F,\partial F]$ be a homology class Seifert surface in $\mr{H}^2(Y(K),\partial Y(K); \Z)$ for the null-homologous (oriented) knot $K \subset Y$, and let $\s \in \spinc(Y(K),\Gamma_K)$ be a relative $\spinc$ structure, with non-zero section along the boundary given by the oriented meridional vector field $t_\mu$.  We define the {\it Alexander grading} of $\s$ with respect to $[F,\partial F]$ via the formula
\begin{equation}\label{eqn:alexander}
	A_{[F,\partial F]}(\s) = \frac{1}{2} \langle c_1(\s,t_\mu), [F,\partial F] \rangle
\end{equation}
and notice that the isomorphism in Theorem~\ref{thm:SFHisHFK} preserves Alexander gradings. (In essence the kernel of the map $G_{Y,K}$ is $\Z$ and we can get a map from $ \spinc(Y(K),\Gamma_K)$ to $\Z$ by a choice of Seifert surface for $K$. Moreover, for the choices made here $c_1(\s,t_\mu)$ is an even cohomology class and so we can divide by $2$ to obtain a map onto $\Z$.)

\subsubsection{Convex surfaces and relative $\spinc$ structures}\label{ssec:convexandspin}
We now extend our discussion of relative Spin$^{\mathbb{C}}$ from above so that they are better suited for contact geometry. To a sutured manifold $(Y,\Gamma)$ notice that the set of vector fields $v_0$ that are positively transverse to $\partial Y_+$, negatively transverse to $\partial Y_-$, and in $T\partial Y$ is positively tangent to $\Gamma$, is contractible. So we could use any such $v_0$ to define relative $\spinc$ structures of $(Y,\Gamma)$ instead of the ones used above. Moreover by a homotopy supported near $\Gamma$ that will take one of these vector fields to one of those from Section~\ref{relspinc} and vice versa. Thus when defining relative $\spinc$ structures on $(Y,\Gamma)$ we are free to use either type of vector field along $\partial Y$. 

Notice that the plane field $v_0^\perp$ is transverse to $\Gamma$. More generally, we can homotope $v_0$ so that the plane field $v_0^\perp$ intersected with $T\partial Y$ induces any characteristic foliation for a convex surface divided by $\Gamma$. (Note we are not bringing contact geometry into the picture yet, just indicating the flexibility we have in choosing $v_0$.)

When $(Y,\Gamma)$ is a sutured manifold with torus boundary and $\Gamma$ consists of two parallel curves then we can always choose a $v_0$ so that $v_0^\perp$ induces a standard foliation on the boundary (that is agrees with the standard characteristic foliation on a torus described in Section~\ref{sub:tt}). Given this situation we can take a section $t_0$ of $v_0^\perp$ that is tangent to the ruling curves to define $c_1(\xi,t_0)$. One may easily check, {\em cf.\  }\cite[Lemma 4.6]{Honda00a}, that the class $c_1(\xi,t_0) \in H^2(Y,\partial Y;\Z)$ is independent of the ruling slope on $\partial Y$. 

We discuss a particular case of the relative Euler classes that will be useful in our construction. Recall from Section~\ref{sub:tt} the basic slice $A_i^\pm$ has dividing slope $\infty$ on the back torus and slope $-i$ on the front torus. Once may easily compute, or consult \cite[Section 4.7.1]{Honda00a}, that the relative Euler class $c_1(\xi,t_0)$ is the Poincar\'e dual of $\pm [(i+1)\mu+\lambda]$ (where $t_0$ is a section as above so that the oriented tangent vector to an $\infty$-curve followed by $t_0$ induces the positive orientation on the torus). Similarly the relative Euler class for $\widetilde{A}^\pm_i$ is the Poincar\'e dual of $\mp [(i-1)\mu+\lambda]$.

More generally once can compute that the relative Euler class of the contact structure on $C^\pm_{j,i}$ (see Section~\ref{sub:tt} to recall this notation) is the Poincar\'e dual of $\pm(i-j)\mu$.

% subsection gradings (end)
%%%%%%%%%%%%%%%%%%%%%%%%%%%%%%%%%%%%%%%%%%%%%%%%%%%%%%%

% subsection rel_spinc_grad (end)
%%%%%%%%%%%%%%%%%%%%%%%%%%%%%%%%%%%%%%%%%%%%%%%%%%%%%%%

%%%%%%%%%%%%%%%%%%%%%%%%%%%%%%%%%%%%%%%%%%%%%%%%%%%%%%%
\subsection{Contact structures and sutured Floer homology} % (fold)
\label{sub:HKM}
%%%%%%%%%%%%%%%%%%%%%%%%%%%%%%%%%%%%%%%%%%%%%%%%%%%%%%%

Given a balanced sutured manifold $(Y,\Gamma)$ and any contact structure $\xi$ on $Y$ that has convex boundary with dividing set $\Gamma$, Honda, Kazez and Mati\'c \cite{HKM2} defined a class
\[
\EH(\xi)\in \SFH(-Y,-\Gamma_\xi,\s_\xi)
\] 
that is an invariant of $\xi$, where $\s_\xi$ is the relative $\spinc$ structure on $Y$ corresponding to $\xi$. Actually this invariant is only defined up to sign when using $\Z$-coefficients, but in this paper we work exclusively over $\F = \Z/2$, and can thus ignore the sign ambiguity. 

A key component of our constructions will be the following  gluing theorem for sutured Floer homology of Honda, Kazez and Mati\'c. The map in the theorem spiritually amounts to ``tensoring with the contact class''.  Henceforth, we will refer to this map as the HKM gluing map.

\begin{theorem}[Honda-Kazez-Mati\'c \cite{HKM3}]\label{thm:hkm_gluing}
	Let $(Y_1,\Gamma_1)$ and $(Y_2, \Gamma_2)$ be two balanced sutured 3--manifolds, Suppose that $Y_1 \subset Y_2$ and $\xi$ is a contact structure on $Y_2-\mathrm{int } (Y_1)$ with convex boundary divided by $\Gamma_1\cup\Gamma_2$ so that each component of $Y_2-\mathrm{int } (Y_1)$  contains a boundary component of $Y_2$. Then there exists a ``gluing" map
\[
	\phi_{\xi} : \SFH(-Y_1,-\Gamma_{1}) \to \SFH(-Y_2,-\Gamma_{2}).
\] 
\end{theorem}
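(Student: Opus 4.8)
The statement is the gluing theorem of Honda--Kazez--Mati\'c, and the plan is to reconstruct their argument: build the contact structure $\xi$ on the complement $N = Y_2 - \mathrm{int}(Y_1)$ out of elementary pieces, assign to each piece a map on sutured Floer homology, define $\phi_\xi$ as the composite, and then verify that the composite depends only on $\xi$. First I would invoke a contact handle decomposition of $(N,\xi)$ relative to the inner boundary: since $\xi$ has convex boundary and every component of $N$ meets $\partial Y_2$, one can present $(N,\xi)$ as an $I$-invariant collar of $\partial Y_1$ (with its characteristic foliation) together with contact handles of index $0$, $1$, and $2$ attached --- index-$3$ handles are avoided precisely because they would be needed only to cap off $S^2$ boundary components disjoint from $\partial Y_2$. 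Each contact handle attachment modifies the ambient sutured manifold and its dividing set in a prescribed local way.

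Next I would define the map attached to each handle type. A contact $0$-handle adds a Darboux ball, contributing a disjoint $(B^3,S^1)$ on the sutured side; since $\SFH(B^3,S^1)\cong\F$, tensoring with its generator gives a canonical isomorphism. A contact $1$-handle, glued along two disks lying in one boundary region of the complement, is exactly Juh\'asz's product $1$-handle and also induces a canonical isomorphism on $\SFH$. The essential case is a contact $2$-handle $D^2\times D^1$ glued along an annulus meeting the dividing set in two arcs: here one chooses a sutured Heegaard diagram for the resulting manifold adapted to the handle, exhibits the old diagram as a subdiagram, and defines the map on generators by pairing with the distinguished intersection point determined by the local tight model on the handle; positivity of intersections shows this is a chain map, and one passes to homology.

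The technical heart, and what I expect to be \textbf{the main obstacle}, is proving that $\phi_\xi$ is independent of the chosen contact handle decomposition. Any two contact handle decompositions of a fixed $(N,\xi)$ rel boundary differ by a finite sequence of contact isotopies, handle slides, and creations or cancellations of canceling $0/1$- and $1/2$-handle pairs, and one must check that the associated $\SFH$-maps are unchanged under each move. Isotopy invariance is inherited from the invariance of $\SFH$ itself, handle slides translate into the handleslide (holomorphic triangle) isomorphisms of Heegaard Floer theory, and cancellation invariance reduces to a local computation inside the handle --- but executing this requires carefully tracking Heegaard diagrams and their distinguished generators through every move. Once well-definedness is established, the properties used elsewhere in the paper follow formally: $\phi_\xi$ carries $\EH(\xi_1)$ to $\EH(\xi_1\cup\xi)$ for any contact structure $\xi_1$ on $Y_1$ with compatible convex boundary (because the $\EH$ class of a handle-decomposed contact structure is the image of the distinguished generators), and $\phi_{\xi'}\circ\phi_\xi=\phi_{\xi\cup\xi'}$ for composable contact cobordisms.
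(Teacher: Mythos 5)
The paper quotes this theorem from Honda--Kazez--Mati\'c and does not reprove it, so the relevant comparison is against the argument in [HKM3] itself. Your reconstruction has the right skeleton --- decompose the complement $(N,\xi)$ into elementary contact pieces, attach a canonical $\SFH$-map to each, compose, and prove independence of the decomposition --- but it diverges from HKM at the well-definedness step, which you correctly flag as the technical heart. HKM do not prove a Cerf-type theorem for contact handle decompositions. Instead they encode $(N,\xi)$ by a \emph{partial open book decomposition} arising from a contact cell decomposition (whose $1$-skeleton is a Legendrian graph), extend a given sutured Heegaard diagram for $(Y_1,\Gamma_1)$ by the page data of the partial open book, send a generator $\y$ to $\y\cup\x_\xi$ where $\x_\xi$ is the $\EH$ tuple, and prove independence by reducing to Giroux's stabilization theorem: any two compatible partial open books are related by positive stabilizations, which become Heegaard-diagram stabilizations of a controlled form. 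The move-equivalence you assert --- that any two contact handle decompositions rel boundary differ by a finite sequence of contact isotopies, slides, and $0/1$- or $1/2$-cancellations --- would indeed suffice, but it is itself a substantive theorem rather than something available off the shelf, roughly as deep as the Giroux-type input HKM invoke; the contact-handle version of the gluing map and the accompanying invariance under handle moves were only carried out explicitly later (Ozbagci, and Juh\'asz--Zemke in the functoriality program). So your outline is a viable alternate route, but as written it defers, rather than discharges, the key well-definedness ingredient.

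One smaller correction: the attaching region of a contact $1$-handle is a pair of disks each of which meets the dividing set in a single arc --- the disks straddle the suture, they do not lie entirely inside one boundary region $R_\pm$. This is exactly what preserves the balance condition under the attachment and identifies it with a Juh\'asz product $1$-handle; attaching along disks contained in a single region would change $\chi(R_+)-\chi(R_-)$ and break balance.
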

The map in this theorem is only well defined up to sign when $\Z$-coefficients are used, but we can again ignore this ambiguity since we are working over $\F$.

Furthermore, the map above respects contact invariants.

The invariant $\EH(\xi)$ of a contact structure respects the map in Theorem~\ref{thm:hkm_gluing}.
\begin{theorem}[Honda-Kazez-Mati\'c \cite{HKM3}]\label{thm:gluingandctinvt}
Let $(Y_1,\xi_1)$ and $(Y_2, \xi_2)$ be a compact contact 3-manifolds with convex boundary, and suppose that $(Y_1,\xi_1) \subset (Y_2,\xi_2)$.  If each component of $Y_2\backslash\mathrm{int } (Y_1)$  contain a boundary component of $Y_2$, then the map $\phi_{\xi_2-\xi_1}$ from Theorem~\ref{thm:hkm_gluing} respects contact invariants. That is,
\[
	\phi_{\xi_2-\xi_1}(\EH(Y_1,\xi_1))= \EH(Y_2,\xi_2).
\] 
\end{theorem}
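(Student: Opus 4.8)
\emph{Proof proposal.} The plan is to read off the equality from the partial open book descriptions of both $\EH$ and the HKM gluing map. Recall from \cite{HKM2} that a contact structure $\xi_2$ on a balanced sutured manifold $(Y_2,\Gamma_2)$ with convex boundary admits a compatible partial open book $(S_2,P_2,h_2)$, and that Juh\'asz's sutured Heegaard diagram associated to it carries a distinguished generator $\mathbf{x}(\xi_2)$ whose class in $\SFH(-Y_2,-\Gamma_2)$ is by definition $\EH(Y_2,\xi_2)$; likewise $\EH(Y_1,\xi_1)=[\mathbf{x}(\xi_1)]$ for a compatible partial open book $(S_1,P_1,h_1)$ of $\xi_1$.

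First I would arrange the two partial open books compatibly with the inclusion $(Y_1,\xi_1)\subset(Y_2,\xi_2)$. Put $Z=\overline{Y_2\setminus\mathrm{int}(Y_1)}$, a contact manifold carrying $\xi|_Z$, whose boundary is $-\partial Y_1$ together with the boundary components of $Y_2$ it contains, and which is divided by $\Gamma_1\cup\Gamma_2$. After normalizing the contact structures near the common convex surface $\partial Y_1$ and extending a contact vector field across $Z$, one obtains a compatible partial open book for $\xi_2$ of the form $(S_2,P_2,h_2)$ in which $S_1\subset S_2$, $P_1\subset P_2$, and $h_2$ restricts to $h_1$ on the overlap; that is, $(S_2,P_2,h_2)$ is the amalgamation of $(S_1,P_1,h_1)$ with a compatible partial open book for $(Z,\xi|_Z)$ rel $\partial Y_1$. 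This is simply the open-book shadow of the convex gluing, and it is where all the geometric content sits.

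Next I would invoke the construction of the gluing map (Theorem~\ref{thm:hkm_gluing}, \cite{HKM3}): $\phi_{\xi_2-\xi_1}$ is realized on precisely this enlargement of sutured Heegaard diagrams, the diagram for $(-Y_2,-\Gamma_2)$ being obtained from the one for $(-Y_1,-\Gamma_1)$ by adjoining the region coming from the partial open book of $(Z,\xi|_Z)$, with the induced map given on generators by $\mathbf{x}\mapsto\mathbf{x}\times\mathbf{x}(\xi|_Z)$, where $\mathbf{x}(\xi|_Z)$ is the contact generator of the new region. Applying this to $\mathbf{x}(\xi_1)$ and using that the distinguished generator of an amalgamated partial open book is the product of the distinguished generators of the pieces gives $\phi_{\xi_2-\xi_1}(\mathbf{x}(\xi_1))=\mathbf{x}(\xi_1)\times\mathbf{x}(\xi|_Z)=\mathbf{x}(\xi_2)$ at the chain level, and hence $\phi_{\xi_2-\xi_1}(\EH(Y_1,\xi_1))=\EH(Y_2,\xi_2)$ on homology. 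Independence of all auxiliary choices (partial open books, diagrams, almost complex structures) is subsumed by the well-definedness of $\EH$ in \cite{HKM2} and of $\phi_{\xi_2-\xi_1}$ in Theorem~\ref{thm:hkm_gluing}.

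The hard part will be the amalgamation claim of the second paragraph: that compatible partial open books for $\xi_1$ and for $(Z,\xi|_Z)$ can always be glued into a compatible partial open book for $\xi_2$ whose distinguished generator is the product. Proving this cleanly requires the convex-surface gluing machinery (standard forms near $\partial Y_1$, extension of contact vector fields across $Z$, and a Torisu-type recognition of the resulting supporting partial open book). An alternative route around this obstacle is to decompose $\xi|_Z$ into elementary contact handles, express $\phi_{\xi_2-\xi_1}$ as the corresponding composition of contact-handle attachment maps, and verify $\EH\mapsto\EH$ for each elementary attachment separately; the general statement then follows by composition.
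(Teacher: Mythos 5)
The paper does not prove this theorem; it cites it to Honda--Kazez--Mati\'c \cite{HKM3}, so there is no in-paper argument to compare against. That said, your sketch is a faithful outline of the argument that actually appears in \cite{HKM3}. The gluing map of Theorem~\ref{thm:hkm_gluing} is constructed precisely by extending a partial open book of $(Y_1,\xi_1)$ to one of $(Y_2,\xi_2)$ compatible with the contact structure on $Z = \overline{Y_2\setminus\mathrm{int}(Y_1)}$, and the induced chain map sends $\x$ to $\x$ together with the distinguished intersection points coming from the new region; the amalgamated distinguished generator is by construction the one computing $\EH(\xi_2)$, so the identity $\phi_{\xi_2-\xi_1}(\EH(\xi_1)) = \EH(\xi_2)$ is essentially definitional once the amalgamation is in hand. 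You correctly identify that the substantive content is that amalgamation step, and both routes you propose are viable; HKM's published argument is closer to your alternative route, decomposing $\xi|_Z$ into contact $1$- and $2$-handles (the hypothesis that every component of $Z$ meets $\partial Y_2$ guarantees no $0$- or $3$-handles are needed), verifying the invariant compatibility handle-by-handle, and then composing. The one caveat is that, over $\Z$, the HKM gluing map and contact class are only defined up to sign and up to chain homotopy, so the identity $\phi_{\xi_2-\xi_1}(\x(\xi_1)) = \x(\xi_2)$ should be read as holding for suitable representatives and on homology, not as a chain-level equality for all auxiliary choices; over $\F = \Z/2$ (as in this paper) the sign ambiguity disappears.
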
 

By associating Honda, Kazez and Mati\'c's contact invariant to the complement of an open standard neighborhood of a Legendrian knot $L$, one obtains an invariant of $L$
\[
	\EH(L) \in \SFH(-Y(L),-\Gamma_L),
\]
which lives in the sutured Floer homology groups of the complement $Y(L)$, with sutures given by the resulting dividing curves on $\partial Y(L)$.

Since $\EH(L)$ is, by definition, the contact invariant of the complement $(Y(L),\xi_L)$, it follows from the theorem above that $\EH(L)$ vanishes if the complement of $L$ possesses a compact submanifold $(N,\xi|_N)$ with $\EH(N,\xi|_N) =0$.  Recall that convex neighborhoods of both overtwisted disks \cite{HKM2} and Giroux torsion layers \cite{GHV} have vanishing contact invariant.  Therefore, the invariant $\EH(L)$ vanishes if either the complement of $L$ is overtwisted or has positive Giroux torsion.

% subsection hkm_invt (end)
%%%%%%%%%%%%%%%%%%%%%%%%%%%%%%%%%%%%%%%%%%%%%%%%%%%%%%%

%%%%%%%%%%%%%%%%%%%%%%%%%%%%%%%%%%%%%%%%%%%%%%%%%%%%%%%
\subsection{Relationships between sutured Legendrian invariants} % (fold)
\label{sub:relationships}
%%%%%%%%%%%%%%%%%%%%%%%%%%%%%%%%%%%%%%%%%%%%%%%%%%%%%%%

It is natural to seek connections and commonalities between the invariant defined by Honda, Kazez and Mati\'c and those defined by Lisca, Ozsv\'ath, Stipsicz and Szab\'o.  The first substantive progress along these lines was accomplished by Stipsicz and V\'ertesi in \cite{StV}.  There, they proved

\begin{othertheorem}[Stipsicz-V\'ertesi \cite{StV}]\label{thm:sv_hkm_loss}
	Let $L \subset (Y,\xi)$ be a Legendrian knot.  Then there exists a map
\[
	\phi_{SV}: \SFH(-Y(L), -\Gamma_L) \to \HFKH(-Y,L)
\]
which sends the invariant $\EH(L)$ to $\SLH(L)$.
\end{othertheorem}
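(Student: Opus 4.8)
The goal is to prove Theorem~\ref{thm:sv_hkm_loss}, i.e. that the Stipsicz--V\'ertesi map $\phi_{SV}$ sends $\EH(L)$ to $\SLH(L)$. The plan is to follow the strategy already sketched in Section~\ref{sub:relationships}: realize $\phi_{SV}$ as an HKM gluing map for a specific contact $T^2\times I$ layer, identify the glued-up contact manifold, and then perform an explicit open book computation identifying its $\EH$ class with the chain-level generator defining $\SLH(L)$.

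\textbf{Step 1: Set up the contact geometry.} Starting from a Legendrian knot $L\subset(Y,\xi)$, remove an open standard neighborhood to get $(Y(L),\xi_L)$ with convex boundary and dividing set $\Gamma_L$ of slope $\tb(L)$. Choose the framing identification of $\partial Y(L)$ with $T^2$ so that the meridian has slope $\infty$ and the contact framing has slope $0$. Attach the Stipsicz--V\'ertesi contact $T^2\times I$ layer --- this is the minimally twisting contact structure $A^-_0$ (in the notation of Section~\ref{sub:tt}), chosen so that it factors as a sequence of negative basic slices (compatibly with negative stabilization, via Theorem~\ref{thm:Leg_stab_bs}) and so that the resulting dividing set on the outer boundary is a pair of meridional curves $\Gamma_\mu$. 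Call the resulting contact manifold $(Y(L),\xib_L)$. By Theorem~\ref{thm:hkm_gluing} this layer induces $\phi_{SV}\colon \SFH(-Y(L),-\Gamma_L)\to \SFH(-Y(L),-\Gamma_\mu)$, and by Theorem~\ref{thm:SFHisHFK} the target is canonically $\HFKH(-Y,L)$. Theorem~\ref{thm:gluingandctinvt} immediately gives
\[
	\phi_{SV}(\EH(L)) = \phi_{SV}(\EH(Y(L),\xi_L)) = \EH(Y(L),\xib_L).
\]
So the theorem reduces to the identification $\EH(Y(L),\xib_L) = \SLH(L)$ under the isomorphism of Theorem~\ref{thm:SFHisHFK}.

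\textbf{Step 2: Build a common open book / Heegaard diagram.} The LOSS construction (Definition~\ref{def:loss}) produces a doubly-pointed Heegaard diagram $(\Sigma,\bbeta,\balpha,z,w)$ from an open book $(B,\pi)$ of $(Y,\xi)$ adapted to $L$, a basis $\{a_0,\dots,a_k\}$ with $L$ meeting only $a_0$ (once), and isotoped arcs $\{b_0,\dots,b_k\}$. I would run the HKM recipe for the contact invariant $\EH(Y(L),\xib_L)$ on a compatible partial open book for the sutured manifold $(Y(L),\xib_L)$: removing the standard neighborhood of $L$ and attaching the $A^-_0$ layer corresponds, on the open book side, to cutting the page along $a_0$ and reglueing by the appropriate product (negative-stabilization) modification, exactly so that the partial open book for $(Y(L),\xib_L)$ has the same combinatorial description as the LOSS diagram with the thin isotopy strips and basepoints $z,w$ in the prescribed positions. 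The HKM contact class is then represented by the distinguished generator coming from the intersection points $x_i = a_i\cap b_i$ of the basis arcs with their pushoffs. This is precisely the tuple $(x_0,\dots,x_k)$ whose class is $\SLH(L)$ by Definition~\ref{def:loss}.

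\textbf{Step 3: Match the generators and conclude.} It remains to check that the two diagrams, and the two distinguished generators, actually agree under the canonical identification $\SFH(-Y(L),-\Gamma_\mu)\cong\HFKH(-Y,L)$ --- i.e. that placing the second basepoint $w$ inside the thin strip between $a_0$ and $b_0$ is exactly what converts the sutured diagram for $(Y(L),\Gamma_\mu)$ into the doubly-pointed diagram for $(-Y,L)$ of the LOSS construction, with the right orientation of $L$ corresponding to the right choice among the two positions for $w$. Once the diagrams and generators are matched, $\EH(Y(L),\xib_L) = [(x_0,\dots,x_k)] = \SLH(L)$, which is the claim. I expect the main obstacle to be Step~2--3: verifying that the contact $T^2\times I$ layer $A^-_0$ used to define $\phi_{SV}$ really does correspond at the level of partial open books to the LOSS modification (cutting along $a_0$ and reglueing), and tracking carefully through the orientation and basepoint conventions so that the HKM-distinguished generator of $\EH(Y(L),\xib_L)$ is literally the LOSS generator $(x_0,\dots,x_k)$ rather than some other tuple or a class differing by the diffeomorphism ambiguity; this is a concrete but delicate bookkeeping argument with open books, convex surfaces, and the Honda--Kazez--Mati\'c contact-class recipe.
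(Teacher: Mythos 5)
Your proposal matches the paper's account of Stipsicz--V\'ertesi's argument exactly: realize $\phi_{SV}$ as the HKM gluing map for the basic slice $A_0^-$, use naturality of the gluing map (Theorem~\ref{thm:gluingandctinvt}) to reduce to $\EH(Y(L),\xib_L)=\SLH(L)$, and then match the HKM-distinguished generator with the LOSS generator $(x_0,\dots,x_k)$ via a common partial-open-book Heegaard diagram. You have also correctly located the real technical content (your Steps 2--3): the paper simply cites this diagram comparison to \cite{StV}, which is where the careful bookkeeping with basepoints, orientation of $L$, and the passage between sutured and doubly-pointed diagrams lives.
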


Stipsicz and V\'ertesi use the  of Honda, Kazez and Mati\'c gluing map from Theorem~\ref{thm:hkm_gluing} to construct their map as follows.  First, they attach a basic slice to the boundary of $Y(L)$ so that the dividing set on the resulting manifold consists of two meridional sutures.  A picture of this basic slice attachment is depicted on the left hand side of Figure~\ref{fig:YB}.  Recall from Section~\ref{sub:conv_surf} that there are two possible signs, positive and negative, one can choose for this basic slice.  Stipsicz and Vertesi choose to attach a negative basic slice to ensure that the contact $3$-manifold obtained via their construction does not change if we modify $L$ by negative stabilization.

\begin{definition}\label{def:st_attach}
	Let $L \subset (Y,\xi)$ be a Legendrian knot and $(Y(L),\xi_L)$ the complement of an open standard neighborhood of $L$.  We call the basic slice attachment discussed in the above paragraph a {\it Stipsicz-V\'ertesi} attachment, and denote the resulting contact $3$-manifold $(Y(L),\xib_L)$
\end{definition}

%%%%%%%%%%%%%%% SV and SV Factor %%%%%%%%%%%%%%
\begin{figure}[htbp]
	\centering
	\includegraphics[scale=1]{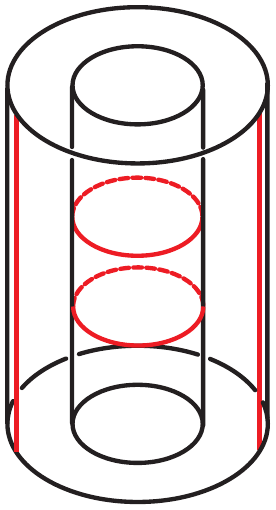}
	\hspace{30px}
	\includegraphics[scale=1]{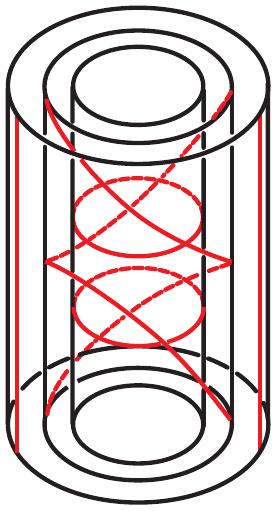}
	\caption{The Stipsicz-V\'ertesi attachment and a factorization.}
	\label{fig:page_and_slice}\label{fig:YB}\label{fig:YB_factor}
\end{figure}
%%%%%%%%%%%%%%% End SV and SV Factor %%%%%%%%%%%%%%

It follows immediately from this definition that the space $(Y(L),\xib_L)$ depends only on the Legendrian $L$ up to negative stabilization.  Recall  from the discussion in Section~\ref{sub:conv_surf} that if $L^-$ is the negative stabilization of $L$, then the complement of $(Y(L^-),\xi_{L^-})$ is obtained form $(Y(L),\xi_L)$ by attaching a negatively signed basic slice $A^-_{n}$ where $n$ is the Thurston-Bennequin invariant of $L$.  By factoring the basic slice attachment yielding $(Y(L),\xib_L)$ as shown on the right hand side of Figure~\ref{fig:YB_factor}, we see it as a composition of two attachments, the first yielding $(Y(L^-),\xi_{L^-})$ and the second $(Y(L^-),\xib_{L^-}) = (Y(L),\xib_L)$.

Since the dividing set on $(Y(L),\xib_L)$ consists of two meridional sutures, it follows from \cite{Ju} that
\[
	\SFH(-Y(L),-\Gamma_{\xib_L}) \cong \HFKH(-Y,L).
\]
By analyzing a Heegaard diagram adapted to both $\EH(L)$ and $\SLH(L)$, Stipsicz and V\'ertesi are able to conclude that $\EH(Y(L),\xib_L) = \SLH(L)$.

From this construction, one obtains a simple proof of Theorem~\ref{thm:loss_stab} for $\SLH$.  If $L^+$ is the positive stabilization of $L$, then $(Y(L^+),\xi_{L^+})$ is obtained from $(Y(L),\xi_{L})$ by attaching a positive basic slice to the boundary of $Y(L)$.  The composition of this positive basic slice with the Stipsicz-V\'ertesi basic slice attachment is then overtwisted, forcing $\EH(Y(L^+),\xib_{L^+})$ to vanish.  Since, as discussed above, negative stabilizations of $L$ factor through the Stipsicz-V\'ertesi attachment, Theorem~\ref{thm:loss_stab} follows.

We also notice that $\EH(Y(L),\xib_L) = \SLH(L)$ sits in the $\spinc$ component of suture Floer homology $\SFH(-Y(L),-\Gamma_\mu, \s_{\xib_L})$, so to see its Alexander grading we need to evaluate $c_1(s_{\xib_L},t_\mu)$ on the Seifert surface $F$ for $L$. Choosing ruling curves on all tori involved that are parallel the $\partial F$ we see that $\langle c_1(\s,t_\mu), [F,\partial F] \rangle$ can be evaluated in two steps. When the Euler class of $\xi_L$ is evaluated on  the component of $F$ contained in $Y(L)$ it is well known to contribute minus the rotation number $-r(L)$, see \cite{EH}.
In Section~\ref{ssec:convexandspin} above we saw that the contact structure on $A^-_n$ will evaluate to $n+1$ on the annulus $F\cap A^-_n$, where $n=tb(L)$ is the Thurston-Bennequin invariant of $L$. Thus, the Alexander grading of $\EH(Y(L),\xib_L) = \SLH(L)$ is
\begin{equation}\label{eqn:LOSS_Alex}
 	\frac{1}{2} \langle c_1(\s,t_\mu), [F,\partial F] \rangle= \frac12 (tb(L)-r(L)+1).
\end{equation}

% subsection relationships (end)
%%%%%%%%%%%%%%%%%%%%%%%%%%%%%%%%%%%%%%%%%%%%%%%%%%%%%%%

% section background (end)
%%%%%%%%%%%%%%%%%%%%%%%%%%%%%%%%%%%%%%%%%%%%%%%%%%%%%%%

%%%%%%%%%%%%%%%%%%%%%%%%%%%%%%%%%%%%%%%%%%%%%%%%%%%%%%%
\section{Limits and Invariants of Knots} % (fold)
\label{sec:limits}
%%%%%%%%%%%%%%%%%%%%%%%%%%%%%%%%%%%%%%%%%%%%%%%%%%%%%%%

In  Subsections~\ref{sub:top_limits} and \ref{sub:positive_stabilization}  we present definitions of the sutured limit invariant, $\SFHL$, and its $\F[U]$-module structure. In Subsection~\ref{sub:alexander} an ``Alexander grading" is given to $\SFHL$. We then discuss some of the properties of this invariant in Subsection~\ref{sub:natural_maps}. In Subsection~\ref{sub:leg_limits} we define the limit invariant $\EHL$ of Legendrian and transverse knots and discuss its properties. 
The definition of the inverse limit invariant  $\SFHIL$ is quite similar to the definition of $\SFHL$. In Subsection~\ref{sub:top_inverselimits} we quickly define the inverse limit invariants $\SFHIL$, discuss it properties and define the corresponding Legendrian and transverse invariant $\EHIL$. 

%%%%%%%%%%%%%%%%%%%%%%%%%%%%%%%%%%%%%%%%%%%%%%%%%%%%%%%
\subsection{The sutured limit homology groups of a knot} % (fold)
\label{sub:top_limits}
%%%%%%%%%%%%%%%%%%%%%%%%%%%%%%%%%%%%%%%%%%%%%%%%%%%%%%%

Given a knot $K$ in a closed 3--manifold $Y$ denote the complement of an open tubular neighborhood of $K$ by $Y(K)$. A choosing a framing on $K$ is equivalent to choosing a longitude $\lambda$ on $\partial Y(K)$. We now fix a choice of longitude $\lambda$ and let $\Gamma_0$ be a union of two disjoint, oppositely oriented copies of $\lambda$ on $\partial Y(K)$. Then $(Y(K),\Gamma_0)$ is a balanced sutured manifold.

Using notation from the end of Subsection~\ref{sub:tt} we define the {\em meridional completion} of $Y(K)$ to be
\[
	(Y(K),\Gamma_\mu) := [(Y(K),\Gamma_0) \cup A_0^-]/\sim,
\]
where $T^2\times \{1\}$ is identified to $\partial Y(K)$ so that $S^1\times\{pt\}$ is mapped to a meridian of $K$ and the dividing curves on $T^2\times\{1\}$ are mapped to the sutures on $\partial Y(K)$. (Note that this can be done since the dividing curves and sutures are ``longitudinal".) The manifold $(Y(K),\Gamma_\mu)$ is naturally a sutured manifold with sutures $\Gamma_\mu$ coming from the dividing curves on $\partial A_0^-$, that is $\Gamma_\mu$ consists of two meridional curves. As noted in Subsection~\ref{sub:tt} there are convex tori $T_i$ in $A_0^-\subset (Y(K),\Gamma_\mu)$ whose dividing curves are parallel to $\lambda-i\mu$ and such that $T_j$ is closer to the boundary of $(Y(K),\Gamma_\mu)$ than $T_i$ if $j>i$. Thus, we have a sequence of sutured manifolds $(Y(K), \Gamma_i)$ given as the closure of the component of the complement of $T_i$ in $(Y(K),\Gamma_\mu)$ not containing $\partial (Y(K),\Gamma_\mu)$, with sutures coming from the dividing curves of $T_i$\footnote{Strictly speaking, we have a sequence of distinct manifolds $\{Y_i(K)\}$, each contained in the next.  However, since the $Y_i(K)$ are all pairwise diffeomorphic, we drop the subscript to avoid obscuring future discussions.}.

Note that for any $j>i$ we have the inclusion $(Y(K),\Gamma_i) \subset (Y(K),\Gamma_j)$ and that $\overline{(Y(K),\Gamma_j) \backslash (Y(K),\Gamma_i)}$ has a contact structure on it. More specifically 
\[
	\overline{(Y(K),\Gamma_j) \backslash (Y(K),\Gamma_i)}
\] 
is the contact manifold $C^-_{j,i}$. Using the HKM-gluing maps in sutured Floer homology discussed in Theorem~\ref{thm:hkm_gluing} we obtain maps 
\[
\phi_{ij}:\SFH(-Y(K),-\Gamma_i)\to \SFH(-Y(K),-\Gamma_j)
\]
if $i\leq j$.  
\begin{proposition}\label{prop:limitgroup}
	Let $K$ be a knot in $Y$. With the notation above the collection 
\[
	(\{\SFH(-Y(K),-\Gamma_i)\},\{\phi_{ij}\})
\] 
of sutured Floer homology groups and maps together form a directed system. 
\end{proposition}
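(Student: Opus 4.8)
The plan is to verify directly that $(\{\SFH(-Y(K),-\Gamma_i)\},\{\phi_{ij}\})$ satisfies the defining axioms of a directed system. The index set is $\{i\in\Z:i\ge i_0\}$ with its usual total order, which is obviously directed, so the substance of the statement is the functoriality of the maps: I must check that $\phi_{ii}=\id$ for every $i$, and that $\phi_{jk}\circ\phi_{ij}=\phi_{ik}$ whenever $i\le j\le k$. Both will follow from standard naturality properties of the HKM gluing map of Theorem~\ref{thm:hkm_gluing}, together with the contactomorphisms recorded in Proposition~\ref{prop:relations}.

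First I would dispose of the degenerate case $i=j$. Here the region $\overline{(Y(K),\Gamma_i)\backslash(Y(K),\Gamma_i)}$ is an invariant collar $T^2\times I$ carrying the $I$-invariant (torsion-free) contact structure, and the HKM gluing map associated to such a collar is the identity; this is one of the elementary naturality properties of the construction of \cite{HKM3}. (Alternatively, one may simply take this as the definition of $\phi_{ii}$ and observe it is consistent with everything below.)

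For the cocycle relation, fix $i<j<k$, the remaining cases being instances of the identity just treated. By construction $\phi_{ij}$ is the HKM gluing map obtained by inserting the contact piece $C^-_{j,i}$ onto $(Y(K),\Gamma_i)$ along $T_i$, and $\phi_{jk}$ is obtained by inserting $C^-_{k,j}$ onto $(Y(K),\Gamma_j)$ along $T_j$. The HKM gluing maps are functorial under composition of gluings -- performing two successive gluings is computed by the single gluing map of the union of the two inserted contact manifolds, glued along their common boundary torus (again \cite{HKM3}) -- so $\phi_{jk}\circ\phi_{ij}$ is the gluing map of $C^-_{k,j}\cup C^-_{j,i}$, glued along $T_j$. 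By Proposition~\ref{prop:relations}(2), applied with its indices specialized to our $k>j>i$, this union is contactomorphic to $C^-_{k,i}$, and since the HKM gluing map depends only on the contactomorphism type of the inserted piece relative to the dividing sets on its two ends, the gluing map of $C^-_{k,j}\cup C^-_{j,i}$ coincides with that of $C^-_{k,i}$, which is $\phi_{ik}$ by definition. Hence $\phi_{jk}\circ\phi_{ij}=\phi_{ik}$, and the three properties together give the directed system.

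The main obstacle I anticipate is not conceptual but a matter of arranging the geometry carefully: one must choose the convex tori $T_i$ (as in Remark~\ref{rem:ends}) so that the characteristic foliations they induce on the various shared boundary tori are simultaneously standard and mutually compatible, so that the pieces $C^-_{j,i}$ genuinely glue up along the $T_j$'s to produce the contact manifold $C^-_{k,i}$ on the nose, and not merely something isotopic to it. This is precisely what Giroux flexibility and the uniqueness clause in the classification of minimally twisting contact structures on thickened tori (recalled just before Proposition~\ref{prop:relations}) allow. Once this compatibility is in place, the identity and composition laws are immediate and nothing further remains to verify.
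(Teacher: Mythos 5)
Your proof is correct and follows the same route as the paper: the cocycle relation $\phi_{jk}\circ\phi_{ij}=\phi_{ik}$ is deduced from Proposition~\ref{prop:relations}(2) together with the naturality/well-definedness of the HKM gluing map. You simply make explicit a few details the paper leaves implicit (the degenerate case $\phi_{ii}=\id$ and the Giroux flexibility needed to arrange compatible characteristic foliations on the shared tori).
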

\begin{proof}
From Proposition~\ref{prop:relations} we know the contact structure on 
\[
\overline{(Y(K),\Gamma_j) \backslash (Y(K),\Gamma_i)}
\]
is the same as the one on 
\[
\overline{(Y(K),\Gamma_j) \backslash (Y(K),\Gamma_k)} \cup  \overline{(Y(K),\Gamma_k) \backslash (Y(K),\Gamma_i)}
\]
for any $j>k>i$. The proposition follows by the well-definedness of the gluing map in sutured Floer homology.  
\end{proof}

This leads us to the following definition.

\begin{definition}\label{def:limitgroup}
	Let $K \subset (Y,\xi)$ be a null-homologous Legendrian knot and consider the associated directed system $(\{\SFH(-Y(K),-\Gamma_{i})\},\{\phi_{ij}\})$ given by Proposition~\ref{prop:limitgroup}.  The \emph{sutured limit homology} of $(-Y,K)$ is defined by taking the directed limit
\[
	\SFHL(-Y,K) = \varinjlim_{\phi_{ij}} \SFH(-Y(K),-\Gamma_{i}).
\]
\end{definition}
Denoting $\phi_{i,i+1}$ by $\phi_-$ for each $i$, and noting that the maps 
\begin{center}\label{fig:alglimit}
\begin{tikzpicture}	[->,>=stealth',auto,thick]
	\node (a) at (0,0){$\SFH(-Y(K),-\Gamma_0)$} ;
	\node (b) at (4.25,0) {$\SFH(-Y(K),-\Gamma_{1})$} ;
	\node (c) at (8.75,0) {$\SFH(-Y(K),-\Gamma_{2})$} ;
	\node (d) at (11.75,0) {$\dots$} ;
	
	\draw (a) edge node[above] {\small $\phi_-$} (b);
	\draw (b) edge node[above] {\small $\phi_-$} (c);
	\draw (c) edge node[above] {\small $\phi_-$} (d);
\end{tikzpicture}
\end{center}
form a cofinal sequence in our directed system, we can compute the sutured limit homology using just the $\phi_-$ maps. 

The only choices made in the definition of the sutured limit homology was that of a framing on $K$. We note that the sutured limit homology is independent of that choice and so is only an invariant of the knot $K$ in $Y$.

\begin{theorem}\label{thm:limit_top}
The sutured limit invariant $\SFHL(-Y,K)$ depends only on the knot type of $K$ in $Y$ and not the choice of framing used in the definition. 
\end{theorem}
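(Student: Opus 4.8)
The plan is to show that the two directed systems arising from two different choices of framing admit a common cofinal subsystem, so that their direct limits are canonically isomorphic. Two framings on $K$ correspond to two longitudes $\lambda,\lambda'$ on $\partial Y(K)$, and since the meridian $\mu$ of $K$ is intrinsically defined, any two longitudes satisfy $\lambda'=\lambda+k\mu$ for a unique $k\in\Z$. I would first observe that the meridional completion $(Y(K),\Gamma_\mu)$ is already framing-independent as a balanced sutured manifold: gluing on $A_0^-$ enlarges $Y(K)$ by a collar, which is a diffeomorphism, and the resulting dividing set consists of two curves isotopic in $Y(K)$ to the meridian of $K$. Thus only the \emph{identification} of the collar region with the model $A_0^-$, and hence the auxiliary labelling of the tori $T_i$, changes with the framing.

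Next I would identify the two systems after an index shift. The torus $T_i$ is, up to isotopy, the unique standard convex torus in the collar region whose dividing set lies in a prescribed isotopy class of curve on $\partial Y(K)$; the uniqueness comes from the classification of minimally twisting contact structures on $T^2\times I$ recalled in Section~\ref{sub:tt}. For the $\lambda'$-construction this dividing class is $\lambda'-i\mu=\lambda-(i-k)\mu$, which in the normalization coming from $\lambda$ has slope $-(i-k)$; hence the sutured manifold built from $\lambda'$ at index $i$ coincides, whenever $i-k\geq 0$, with the one built from $\lambda$ at index $i-k$. Likewise $\phi_{ij}$ is the HKM gluing map attached to the contact manifold $C^-_{j,i}$ occupying $\overline{(Y(K),\Gamma_j)\setminus(Y(K),\Gamma_i)}$, and this contact structure --- the minimal Farey path between the two boundary slopes with all signs negative --- is determined by the pair of slopes alone. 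So the $\lambda'$-system and the $\lambda$-system agree verbatim after shifting the index by $k$.

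Finally I would pass to the limit. For $i\geq\max(k,0)$ the objects and connecting maps of the $\lambda'$-system at index $i$ equal those of the $\lambda$-system at index $i-k$, and as $i\to\infty$ this is a cofinal subsystem of both directed systems. Since a direct limit is computed by any cofinal subsystem, the groups $\SFHL(-Y,K)$ obtained from $\lambda$ and from $\lambda'$ are identified, which proves the theorem.

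The step I expect to be the main obstacle is the middle one: one must be confident that the cut-open sutured manifold $(Y(K),\Gamma_i)$ together with the contact structure on its complement in $(Y(K),\Gamma_\mu)$ depends only on the dividing slope --- a framing-independent quantity --- and not on the particular embedded copy of $A_0^-$ used to generate the sequence. This rests on Honda's classification on $T^2\times I$ (uniqueness up to isotopy of a convex torus of fixed slope in a minimally twisting layer, and the fact that a sub-layer is determined by its path of slopes and signs) together with well-definedness of the HKM gluing map; once those are invoked, what remains is bookkeeping with Farey coordinates and with direct limits.
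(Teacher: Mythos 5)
Your argument is correct and follows essentially the same route as the paper's own proof: identify the two directed systems after an index shift induced by $\lambda'=\lambda+k\mu$ (the paper uses $\lambda_a=\lambda_b+n\mu$), then invoke the fact that a direct limit is unchanged under passing to a cofinal reindexing. The paper compresses the middle step you flag as delicate into the phrase ``canonically (up to isotopy) diffeomorphic'' together with ``these isomorphisms commute with the maps $\phi_{ij}$,'' whereas you spell out why this holds via uniqueness of convex tori of a given slope in a minimally twisting layer and the fact that $C^-_{j,i}$ is determined by its boundary slopes; this is precisely the justification the paper is implicitly relying on, so the proofs are in substance identical.
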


\begin{proof}[Proof of Theorem~\ref{thm:limit_top}]\label{pf:limit_top}
Let $\lambda_a$ and $\lambda_b$ be two longitudes for a knot $K$ in $Y$. Let $Y^{ac}(K)$ and $Y^{bc}(K)$ be the meridional completions of $Y(K)$ with respect to the two different longitudes. We note that both these completions are canonically diffeomorphic to $Y(K)$ with meridional sutures. Moreover we can assume that there is some non-positive number $n$ such that  $\lambda_a=\lambda_b+n\mu$. Given this we see that $(Y^b_{n+i}(K),\Gamma^b_{n+i})$ is canonically (up to isotopy) diffeomorphic to $(Y^a_i(K),\Gamma^a_i)$. These diffeomorphism induce isomorphisms of the sutured Floer homology groups $\SFH(-Y^b_{n+i}(K),-\Gamma^b_{n+i})$ and $\SFH(-Y^a_i(K),-\Gamma^a_i)$. These isomorphisms commute with the maps $\phi_{ij}$ and thus induce an isomorphism of the resulting direct limits. 
\end{proof}

% section sutured_limit_knot (end)
%%%%%%%%%%%%%%%%%%%%%%%%%%%%%%%%%%%%%%%%%%%%%%%%%%%%%%%

%%%%%%%%%%%%%%%%%%%%%%%%%%%%%%%%%%%%%%%%%%%%%%%%%%%%%%%
\subsection{The $U$-action on the sutured limit homology} % (fold)
\label{sub:positive_stabilization}
%%%%%%%%%%%%%%%%%%%%%%%%%%%%%%%%%%%%%%%%%%%%%%%%%%%%%%%

Recall, using the notation from the previous section, that
\[
\overline{(Y(K),\Gamma_{i+1}) \backslash (Y(K),\Gamma_i)}
\] 
is the basic slice $B_{i+1}^-$. And the contact structure on $B_{i+1}^-$ gave rise to the gluing map
\[
	\phi_-:\SFH(-Y(K),-\Gamma_i) \to \SFH(-Y(K),-\Gamma_{i+1}).
\]
The region $\overline{(Y(K),\Gamma_{i+1}) \backslash (Y(K),\Gamma_i)}$ can also be given the contact structure $B_{i+1}^+$. That contact structure will induce a gluing map
\[
	\psi_+:\SFH(-Y(K),-\Gamma_i)\to \SFH(-Y(K),-\Gamma_{i+1}).
\]

The maps $\phi_-$ and $\psi_+$ together fit into a diagram, shown in Figure~\ref{fig:limit_action} whose commutativity is the content of Proposition~\ref{pro:pos_commute}.
\begin{figure}[htbp]
	\centering
	\begin{tikzpicture}	[->,>=stealth',auto,thick]
		\node (a) at (0,0){$\SFH(-Y(K),-\Gamma_0)$} ;
		\node (b) at (4.75,0) {$\SFH(-Y(K),-\Gamma_{1})$} ;
		\node (c) at (9.5,0) {$\SFH(-Y(K),-\Gamma_{2})$} ;
		\node (d) at (12.5,0) {$\dots$} ;
		\node (e) at (0,-2){$\SFH(-Y(K),-\Gamma_{1})$} ;
		\node (f) at (4.75,-2) {$\SFH(-Y(K),-\Gamma_{2})$} ;
		\node (g) at (9.5,-2) {$\SFH(-Y(K),-\Gamma_{3})$} ;
		\node (h) at (12.5,-2) {$\dots$} ;

		\draw (a) edge node[above] {\small $\phi_-$} (b);
		\draw (b) edge node[above] {\small $\phi_-$} (c);
		\draw (c) edge node[above] {\small $\phi_-$} (d);
		\draw (e) edge node[above] {\small $\phi_-$} (f);
		\draw (f) edge node[above] {\small $\phi_-$} (g);
		\draw (g) edge node[above] {\small $\phi_-$} (h);
		\draw (a) edge node[right] {\small $\psi_+$} (e);
		\draw (b) edge node[right] {\small $\psi_+$} (f);
		\draw (c) edge node[right] {\small $\psi_+$} (g);
	\end{tikzpicture}
	\caption{}
	\label{fig:limit_action}
\end{figure}

\begin{proposition}\label{pro:pos_commute}
	The diagram shown in Figure~\ref{fig:limit_action} is commutative.
\end{proposition}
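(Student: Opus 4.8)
The strategy is to reduce the commutativity of each square in Figure~\ref{fig:limit_action} to the single contact-geometric identity recorded in Proposition~\ref{prop:relations}(3). Fix $i\geq 0$ and consider the square whose corners are $\SFH(-Y(K),-\Gamma_i)$, the two copies of $\SFH(-Y(K),-\Gamma_{i+1})$, and $\SFH(-Y(K),-\Gamma_{i+2})$. First I would observe that $\overline{(Y(K),\Gamma_{i+2})\backslash(Y(K),\Gamma_i)}$ is a copy of $T^2\times[0,1]$ which is cut by the convex torus $T_{i+1}$ into the two thickened-torus pieces $\overline{(Y(K),\Gamma_{i+1})\backslash(Y(K),\Gamma_i)}$ and $\overline{(Y(K),\Gamma_{i+2})\backslash(Y(K),\Gamma_{i+1})}$, each of which may be equipped with either the basic slice $B_{i+1}^{\pm}$ or $B_{i+2}^{\pm}$ respectively, exactly as in Subsections~\ref{sub:top_limits} and \ref{sub:positive_stabilization}.

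Next I would use the well-definedness of the HKM gluing map under stacking of contact pieces --- the same property invoked in the proof of Proposition~\ref{prop:limitgroup} --- to rewrite each way around the square as a single gluing map. Going right-then-down, $\psi_+\circ\phi_-$ equals $\phi_{\xi'}$, where $\xi'$ is the contact structure on $\overline{(Y(K),\Gamma_{i+2})\backslash(Y(K),\Gamma_i)}$ whose inner piece (adjacent to $\partial(Y(K),\Gamma_i)$) is $B_{i+1}^-$ and whose outer piece is $B_{i+2}^+$; going down-then-right, $\phi_-\circ\psi_+$ equals $\phi_{\xi''}$, where $\xi''$ has inner piece $B_{i+1}^+$ and outer piece $B_{i+2}^-$. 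Applying Proposition~\ref{prop:relations}(3) with its index taken to be $i+1$, the contact manifolds $B_{i+1}^-\cup B_{i+2}^+$ and $B_{i+1}^+\cup B_{i+2}^-$ agree, so $\xi'=\xi''$, hence $\phi_{\xi'}=\phi_{\xi''}$ and the square commutes. Since $i$ was arbitrary, the whole diagram commutes.

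The delicate point, and the one I would treat most carefully, is this last step. The HKM gluing map depends on the contact structure on the embedded thickened torus together with the parametrization of its boundary, so to conclude $\phi_{\xi'}=\phi_{\xi''}$ I need the equivalence between $B_{i+1}^+\cup B_{i+2}^-$ and $B_{i+1}^-\cup B_{i+2}^+$ to be realized by an isotopy rel boundary, with the boundary characteristic foliations held fixed, and not merely by an abstract contactomorphism. This is precisely what Honda's classification supplies: the underlying path $-(i+2)\to-(i+1)\to-i$ in the Farey tessellation is minimal (its endpoints are not Farey-adjacent, since the relevant determinant is $-2$), so each sign assignment is realized by a unique minimally twisting tight contact structure with the prescribed convex boundary, and the two assignments $(-,+)$ and $(+,-)$ are identified with one another --- this is exactly the content of Proposition~\ref{prop:relations}(3) and of item (3) of the classification recalled in Subsection~\ref{sub:tt}. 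A secondary, purely bookkeeping matter is to keep the direction of stacking and the orientation conventions consistent so that the signs in Proposition~\ref{prop:relations}(3) line up with those in the definitions of $\phi_-$ and $\psi_+$; I would resolve this by fixing once and for all that $\phi_-$ is induced by $B_\bullet^-$ and $\psi_+$ by $B_\bullet^+$, as in the preceding subsection.
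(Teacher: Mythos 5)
Your argument is correct and follows the paper's own proof essentially verbatim: both reduce the commutativity of each square to the identification of $B_{i+1}^-\cup B_{i+2}^+$ with $B_{i+1}^+\cup B_{i+2}^-$ from Proposition~\ref{prop:relations}(3), together with the well-definedness of the HKM gluing map under stacking. The extra paragraph you include --- observing that one needs the two stacked contact structures to agree up to isotopy rel boundary (with matching characteristic foliations) rather than merely up to abstract contactomorphism, and that Honda's classification supplies exactly this --- is a genuine subtlety that the paper's two-line proof leaves implicit, and you resolve it correctly.
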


\begin{proof}
On the thickened torus $\overline{(Y(K),\Gamma_{i+2}) \backslash (Y(K),\Gamma_i)}$ one can consider the contact structures $B_{i+1}^+\cup B_{i+2}^-$ and $B_{i+1}^-\cup B_{i+2}^+$. The former induces the map $\psi_+\circ\phi_-$ and the later induces  the map $\phi_-\circ\psi_+$. Item (3) in Proposition~\ref{prop:relations} says that  $B_{i+1}^+\cup B_{i+2}^-$ and $B_{i+1}^-\cup B_{i+2}^+$ are the same contact structure so the well-definedness of the gluing maps in sutured Floer homology implies that $\psi_+\circ\phi_-=\phi_-\circ\psi_+$.
\end{proof}

It follows from Proposition~\ref{pro:pos_commute} that the collection of maps $\{\psi_+\}$ together induce a well-defined map on sutured limit homology
\[
	\Psi: \SFHL(-Y,K) \to \SFHL(-Y,K).
\]
As an immediate consequence, we obtain the following theorem.

\begin{theorem}\label{thm:U_action}
	Let $K$ be a smoothly embedded null-homologous knot in a 3-manifold $Y$.  The sutured limit homology $\SFHL(-Y,K)$ of the pair $(Y,K)$ can be given the structure of an $\F [U]$-module, where $U$ acts on elements of $\SFHL(-Y,K)$ via the map $\Psi$:
\[
	U \cdot [\x] = \Psi([\x]).
\]\qed
\end{theorem}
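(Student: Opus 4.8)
The plan is to observe that, once Proposition~\ref{pro:pos_commute} is in hand, the statement is a purely formal consequence of the $\F$-linearity of the Honda--Kazez--Mati\'c gluing maps, so no new geometric input is required.

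First I would record that each map $\psi_+\colon \SFH(-Y(K),-\Gamma_i)\to\SFH(-Y(K),-\Gamma_{i+1})$ is a homomorphism of $\F$-vector spaces: by Theorem~\ref{thm:hkm_gluing} the HKM gluing map attached to a contact structure on a thickened torus is an $\F$-linear map of sutured Floer homology groups, and the same applies to every structure map $\phi_-$ of our directed system. Consequently $\SFHL(-Y,K)=\varinjlim_{\phi_-}\SFH(-Y(K),-\Gamma_i)$ is an $\F$-vector space and the canonical maps $\SFH(-Y(K),-\Gamma_i)\to\SFHL(-Y,K)$ are $\F$-linear.

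Next I would package Proposition~\ref{pro:pos_commute} as the statement that the collection $\{\psi_+\}$ is a morphism of directed systems from $(\{\SFH(-Y(K),-\Gamma_i)\},\{\phi_-\})$ to the subsystem indexed by $i\geq 1$; since the latter is cofinal, its directed limit is canonically $\SFHL(-Y,K)$. Passing to the limit therefore produces a single well-defined map $\Psi\colon\SFHL(-Y,K)\to\SFHL(-Y,K)$, and since it is assembled from $\F$-linear maps intertwining the (linear) structure maps, $\Psi$ is an $\F$-linear endomorphism of the $\F$-vector space $\SFHL(-Y,K)$.

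Finally I would invoke the standard algebraic fact that any $\F$-vector space $V$ equipped with an $\F$-linear endomorphism $T$ carries a (unique) $\F[U]$-module structure in which $U$ acts as $T$: a polynomial $\sum a_j U^j\in\F[U]$ acts by $\sum a_j T^j$, and the module axioms --- additivity of the action, $1\cdot v=v$, associativity $(pq)\cdot v=p\cdot(q\cdot v)$, and the two distributivity laws $(p+q)\cdot v=p\cdot v+q\cdot v$ and $p\cdot(v+w)=p\cdot v+p\cdot w$ --- are immediate from linearity of $T$ and the ring structure of $\F[U]$. Applying this with $V=\SFHL(-Y,K)$ and $T=\Psi$ yields the theorem, with $U\cdot[\x]=\Psi([\x])$ as claimed. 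The only genuine obstacle is establishing the well-definedness of $\Psi$, which is precisely the content of Proposition~\ref{pro:pos_commute}, already proved; beyond that I anticipate no difficulty.
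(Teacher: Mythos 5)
Your argument is correct and is essentially the paper's own proof: the paper records that Proposition~\ref{pro:pos_commute} makes the collection $\{\psi_+\}$ a morphism of directed systems, so it induces a well-defined endomorphism $\Psi$ of the limit, and then declares the $\F[U]$-module structure an immediate consequence. You have simply spelled out the two routine steps the paper leaves implicit, namely the $\F$-linearity of the HKM maps and the standard fact that a linear endomorphism of an $\F$-vector space determines a unique $\F[U]$-module structure.
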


% subsection positive_stabilization (end)
%%%%%%%%%%%%%%%%%%%%%%%%%%%%%%%%%%%%%%%%%%%%%%%%%%%%%%%

%%%%%%%%%%%%%%%%%%%%%%%%%%%%%%%%%%%%%%%%%%%%%%%%%%%%%%%
\subsection{An Alexander grading} % (fold)
\label{sub:alexander}
%%%%%%%%%%%%%%%%%%%%%%%%%%%%%%%%%%%%%%%%%%%%%%%%%%%%%%%
In this section, we show how to endow the sutured limit homology groups with an absolute Alexander grading which will later be shown to agree with the usual Alexander grading on knot Floer homology. We note that in the previous subsections all definitions could be made whether or not $K$ in $Y$ was null-homologous. To define the Alexander grading it is important that $K$ is null-homologous and that in the definition of $\SFHL(-Y,K)$ we take our initial longitude $\lambda$ to be the one coming from the Seifert surface for $K$. 

Let $K$ be a null-homologous knot in a 3--manifold $Y$ and $F$ a Seifert surface for $K$.  If $\SH = (\Sigma,\balpha,\bbeta,z,w)$ is a sutured Heegaard diagram for the space $(Y(K),\Gamma_i)$, we define the Alexander grading of a generator $\x \in \SG(\SH)$ via the formula
\[
	A_{[F,\partial F]}(\x) = \frac{1}{2} \langle c_1(\s(\x)),t_0), [F,\partial F] \rangle,
\]
where $t_0$ is any non-zero section as discussed in Section~\ref{ssec:convexandspin}.

Recall that the maps $\phi_{ij}:\SFH(-Y(K),-\Gamma_i) \to \SFH(-Y(K),-\Gamma_j)$ used to define the sutured limit invariants are defined via the contact manifold $C^-_{j,i}$.  From the discussion at the end of Section~\ref{ssec:convexandspin}, we see that the map $\phi_{ij}$ is Alexander-homogeneous of degree $(j-i)/2$.  We similarly see that the maps $\psi_+$, which are induced by positive basic slice attachment, are Alexander homogeneous of degree $-1/2$.

To obtain a well-defined Alexander grading on the sutured limit homology groups $\SFHL(-Y,K)$, we introduce shift operators into the directed system.  Specifically, we consider the sequence
\begin{center}
\begin{tikzpicture}	[->,>=stealth',auto,thick]
	\node (a) at (0,0){$\SFH(-Y(K),-\Gamma_0)[-1/2]$} ;
	\node (b) at (4,0) {$\dots$} ;
	\node (c) at (8,0) {$\SFH(-Y(K),-\Gamma_i)[-(i+1)/2]$} ;
	\node (d) at (12,0) {$\dots$} ;
	
	\draw (a) edge node[above] {\small $\phi_-$} (b);
	\draw (b) edge node[above] {\small $\phi_-$} (c);
	\draw (c) edge node[above] {\small $\phi_-$} (d);
\end{tikzpicture}
\end{center}

It follows from the discussion in Section~\ref{ssec:convexandspin} above that each of the maps in the collections $\{\phi_-\}$ and $\{\psi_+\}$ are Alexander-homogeneous of degrees $0$ and $-1$ respectively.  Thus, upon taking the direct limit, we obtain a well-defined Alexander grading on sutured limit homology for which multiplication $U$ decreases grading by a factor of $1$.  The initial grading shift $[-1/2]$ ensures that the Alexander grading we have just defined on sutured limit homology matches the usual one knot Floer homology.

% subsection alexander_grading (end)
%%%%%%%%%%%%%%%%%%%%%%%%%%%%%%%%%%%%%%%%%%%%%%%%%%%%%%%

%%%%%%%%%%%%%%%%%%%%%%%%%%%%%%%%%%%%%%%%%%%%%%%%%%%%%%%
\subsection{Natural Maps} % (fold)
\label{sub:natural_maps}
%%%%%%%%%%%%%%%%%%%%%%%%%%%%%%%%%%%%%%%%%%%%%%%%%%%%%%%

We now turn our attention to natural maps on sutured limit homology induced by the Stipsicz-V\'ertesi basic slice attachment and meridional 2-handle attachment respectively.  Proofs of Theorems \ref{thm:SV_map} and \ref{thm:2handle}, which characterize the maps $\Phi_{SV}$ and $\Phi_{2h}$ in terms of the identification between $\SFHL(-Y,K)$ and $\HFKM(-Y,K)$ will be given in Sections \ref{sec:SV_limit} and \ref{sec:proof_lim_hfh} respectively.

We begin by focussing on the map induced by the Stipsicz-V\'ertesi basic slice attachment --- henceforth referred to as the ``SV attachment''.  Recall that given $(Y(K),\Gamma_i)$ we can attach the basic slice $A_i^-$ to obtain the manifold $(Y(K),\Gamma_\mu)$. As noted in Subsection~\ref{sub:SFH}  we know that $\SFH(-Y(K),-\Gamma_\mu)$ is isomorphic to $\HFKH(-Y,K)$. Thus the gluing map coming from the contact structure on $A^-_i$ induces the Stipsicz-V\'ertesi map
\[
	\phi_{SV}: \SFH(-Y(K),-\Gamma_i) \to \HFKH(-Y,K).
\]
\begin{proposition}
The collection of gluing maps formed by applying the SV attachment to $(Y(K),\Gamma_{i})$, for each $i \geq 0$, together fit into the commutative diagram depicted in Figure~\ref{fig:big_SV} and all maps in the diagram respect the Alexander grading. 
\end{proposition}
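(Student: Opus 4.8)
The plan is to recognize every arrow in Figure~\ref{fig:big_SV} as an HKM gluing map (Theorem~\ref{thm:hkm_gluing}) and then deduce commutativity from the composition law for such maps together with the contactomorphisms among thickened‑torus contact structures listed in Proposition~\ref{prop:relations}. The first step is to identify the relevant cobordism regions. Since $(Y(K),\Gamma_i)$ is the component of $(Y(K),\Gamma_\mu)$ cut off by the convex torus $T_i$ of slope $-i$, Remark~\ref{rem:ends} shows that $\overline{(Y(K),\Gamma_\mu)\backslash(Y(K),\Gamma_i)}$ is the basic‑slice tower $A_i^-$, so that, under the identification $\SFH(-Y(K),-\Gamma_\mu)\cong\HFKH(-Y,K)$ of Theorem~\ref{thm:SFHisHFK}, the SV arrow out of $\SFH(-Y(K),-\Gamma_i)$ is precisely $\phi_{A_i^-}$. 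Likewise $\overline{(Y(K),\Gamma_{i+1})\backslash(Y(K),\Gamma_i)}$ is the basic slice $B_{i+1}^-=C^-_{i+1,i}$, so the horizontal arrow $\phi_-$ out of $\SFH(-Y(K),-\Gamma_i)$ is $\phi_{B_{i+1}^-}$; more generally $\overline{(Y(K),\Gamma_j)\backslash(Y(K),\Gamma_i)}=C^-_{j,i}$ for $j>i$, so $\phi_{ij}=\phi_{C^-_{j,i}}$.

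Next I would prove commutativity of a single triangle with top vertices $\SFH(-Y(K),-\Gamma_i)$ and $\SFH(-Y(K),-\Gamma_{i+1})$ and bottom vertex $\HFKH(-Y,K)$. The region $\overline{(Y(K),\Gamma_\mu)\backslash(Y(K),\Gamma_i)}$ decomposes along the convex torus $T_{i+1}$ of slope $-(i+1)$ as $B_{i+1}^-$ adjacent to $\Gamma_i$ and $A_{i+1}^-$ adjacent to $\Gamma_\mu$; hence by the composition law for HKM gluing maps the composite along the two legs of the triangle equals $\phi_{A_{i+1}^-}\circ\phi_{B_{i+1}^-}=\phi_{B_{i+1}^-\cup A_{i+1}^-}$. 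By part~(5) of Proposition~\ref{prop:relations}, the contact structure $A_{i+1}^-\cup B_{i+1}^-$ is contactomorphic to $A_i^-$, so by well‑definedness of the gluing map in sutured Floer homology this composite is $\phi_{A_i^-}$, i.e. the SV arrow out of $\SFH(-Y(K),-\Gamma_i)$. Thus each triangle commutes, and chaining them yields commutativity of the entire diagram. (Alternatively, one obtains the same conclusion for every $\phi_{ij}$ at once: by Remark~\ref{rem:ends}, $A_i^-=C^-_{j,i}\cup A_j^-$, whence $\phi_{A_j^-}\circ\phi_{C^-_{j,i}}=\phi_{A_i^-}$.)

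Finally I would check that every arrow is Alexander‑homogeneous. For the $\phi_-$ (equivalently $\phi_{ij}$) arrows this was already settled in Section~\ref{sub:alexander}; for the SV arrows it follows from the relative Euler‑class computation in Section~\ref{ssec:convexandspin}, which gives $c_1$ of $A_i^-$ as the Poincar\'e dual of $-[(i+1)\mu+\lambda]$ --- evaluating this against the Seifert class $[F,\partial F]$, exactly as in the computation underlying \eqref{eqn:LOSS_Alex}, pins down the degree of $\phi_{SV}=\phi_{A_i^-}$, and additivity of relative Euler classes under gluing makes this degree compatible with the commutativity just established, so that with the grading shifts of Section~\ref{sub:alexander} in place the arrows respect the Alexander grading. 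I expect the only genuine work to be the bookkeeping of the first step --- correctly matching the contact structure carried by each cobordism region with the named pieces $A_i^-$, $B_{i+1}^-$, $C^-_{j,i}$ of Section~\ref{sub:tt}, keeping careful track of slopes in the Farey tessellation and of which boundary torus is glued to which --- after which commutativity is purely formal and the grading statement reduces to facts already recorded in the paper.
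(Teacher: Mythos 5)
Your proof is correct and follows essentially the same route as the paper: each arrow is identified as the HKM gluing map for a thickened-torus piece ($A_i^-$, $B_{i+1}^-$, or $C^-_{j,i}$), commutativity of each triangle is reduced via the gluing-map composition law to the contactomorphism $A_{i+1}^-\cup B_{i+1}^-\cong A_i^-$ from Proposition~\ref{prop:relations}(5), and Alexander-homogeneity follows from the relative Euler class computations of Section~\ref{ssec:convexandspin}. The paper's own proof is a one-line appeal to Proposition~\ref{prop:relations} and naturality of the HKM gluing maps; you have simply spelled out that argument in full.
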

\begin{figure}[htbp]
	\centering
	\begin{tikzpicture}	[->,>=stealth',auto,thick]
		\node (a) at (0,0){$\SFH(-Y(K),-\Gamma_0)$} ;
		\node (b) at (4.5,0) {$\SFH(-Y(K),-\Gamma_{1})$} ;
		\node (c) at (9.5,0) {$\SFH(-Y(K),-\Gamma_{2})$} ;
		\node (d) at (12.5,0) {$\dots$} ;
		\node (e) at (0,-2){$\HFKH(-Y,K)$};

		\draw (a) edge node[above] {\small $\phi_-$} (b);
		\draw (b) edge node[above] {\small $\phi_-$} (c);
		\draw (c) edge node[above] {\small $\phi_-$} (d);
		\draw (a) edge node[right] {\small $\phi_{SV}$} (e);
		\draw (b) edge node[right] {\small $\quad\phi_{SV}$} (e);
		\draw (c) edge node[right] {\small $\qquad\phi_{SV}$} (e);
	\end{tikzpicture}
	\caption{ }
	\label{fig:big_SV}
\end{figure}
\begin{proof}
This is again a simple consequence of the classification of contact structures given in Proposition~\ref{prop:relations} and the naturality of the HKM gluing maps.  
\end{proof}

Therefore, the collection $\{\phi_{SV}:\SFH(-Y(K),-\Gamma_i) \to \HFKH(-Y,K)\}$ induces a map on the sutured limit homology.

\begin{proposition}\label{pro:lim_hat}
	Let $K$ be a null-homologous knot in a 3--manifold $Y$.  There exists a well-defined, Alexander grading preserving map $\Phi_{\mr{SV}}: \SFHL(-Y,L) \to \HFKH(-Y,L)$ which is induced by the SV attachment, and whose constituent maps are depicted in Figure~\ref{fig:big_SV}.  \qed
\end{proposition}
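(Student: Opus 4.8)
The statement to prove, Proposition~\ref{pro:lim_hat}, asserts that the collection of Stipsicz--Vértesi maps $\{\phi_{SV}\colon \SFH(-Y(K),-\Gamma_i) \to \HFKH(-Y,K)\}$ induces a well-defined, Alexander-grading-preserving map $\Phi_{SV}\colon \SFHL(-Y,K) \to \HFKH(-Y,K)$. The plan is to appeal to the universal property of the directed limit, together with the commutativity of the diagram in Figure~\ref{fig:big_SV} established in the preceding (unnamed) proposition.

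First I would recall that the directed limit $\SFHL(-Y,K) = \varinjlim_{\phi_{ij}} \SFH(-Y(K),-\Gamma_i)$ is a colimit, so it suffices to exhibit a cocone: a family of maps out of each $\SFH(-Y(K),-\Gamma_i)$ into the fixed target $\HFKH(-Y,K)$ that is compatible with the transition maps $\phi_{ij}$ of the directed system. The preceding proposition provides exactly this: it states that for each $i \leq j$ the triangle with sides $\phi_{ij}$, $\phi_{SV}$ (from level $i$), and $\phi_{SV}$ (from level $j$) commutes, i.e.\ $\phi_{SV} = \phi_{SV} \circ \phi_{ij}$. This is precisely the cocone condition, so by the universal property of $\varinjlim$ there is a unique map $\Phi_{SV}$ from the colimit making all the evident triangles commute. (One can also see this concretely: an element of $\SFHL(-Y,K)$ is represented by some $\x \in \SFH(-Y(K),-\Gamma_i)$, and one sets $\Phi_{SV}([\x]) = \phi_{SV}(\x)$; well-definedness is the statement that if $\x$ and $\y$ represent the same limit class, meaning $\phi_{ij}(\x)$ and $\phi_{ij'}(\y)$ agree after passing far enough up the system, then $\phi_{SV}(\x) = \phi_{SV}(\y)$, which follows immediately from $\phi_{SV} = \phi_{SV}\circ\phi_{ij}$.)

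For the grading statement, I would invoke the computation in Section~\ref{ssec:convexandspin}: since the SV attachment at level $i$ is the basic slice $A_i^-$, whose relative Euler class is Poincaré dual to $-[(i+1)\mu + \lambda]$, the map $\phi_{SV}$ from level $i$ is Alexander-homogeneous of a degree depending on $i$ in exactly the way that is cancelled by the grading shift $[-(i+1)/2]$ introduced into the directed system in Section~\ref{sub:alexander}. In other words, after incorporating the shifts the maps $\phi_{SV}$ become Alexander-homogeneous of degree $0$ uniformly, and this homogeneity passes to the limit. Concretely, the identification $\SFH(-Y(K),-\Gamma_\mu) \cong \HFKH(-Y,K)$ preserves Alexander gradings (as noted after equation~\eqref{eqn:alexander}), the factorization through the shifted system is grading-compatible, and hence $\Phi_{SV}$ preserves the Alexander grading.

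The only real subtlety—and hence the step I would be most careful about—is bookkeeping the grading shifts so that the degree of $\phi_{SV}$ at each finite level is genuinely constant after shifting, rather than merely constant on a cofinal subsequence; this is a direct consequence of the Euler-class computations already recorded in Section~\ref{ssec:convexandspin} for $A_i^-$, so it is routine but must be stated correctly. Everything else is formal: the existence and uniqueness of $\Phi_{SV}$ is nothing more than the universal property of direct limits applied to the cocone furnished by Figure~\ref{fig:big_SV}.
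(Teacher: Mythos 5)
Your proof is correct and follows the same route the paper takes: the commutative diagram established in the preceding proposition furnishes a cocone, the universal property of the direct limit then yields $\Phi_{SV}$, and the grading claim follows from the shift bookkeeping of Section~\ref{sub:alexander} together with the Euler-class computation for $A^-_i$. The paper marks this proposition with \qed precisely because it is an immediate formal consequence of the preceding commutative diagram, which is exactly the content of your argument.
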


There is one additional geometrically meaningful construction one can perform to the space $(Y(K),\Gamma_i)$  --- meridional contact 2--handle attachment.  We obtain the topological manifold $Y^{2h}(K)$ from $Y(K)$ by attaching a topological 2--handle along a meridional curve in $Y(K)$ that intersects $\Gamma_i$ minimally (twice). The boundary of $Y^{2h}(K)$ consists of the annulus $A$ that was part of the boundary of $Y(K)$ and two disks coming form the 2--handle. The sutures $\Gamma^{2h}$ on $Y^{2h}(K)$ consists of $A \cap \Gamma_i$  (that is two arcs) and an arc in each disk coming from the 2--handle that connects the endpoints of $A \cap \Gamma_i$. Notice that $\partial Y^{2h}(K)$ is a sphere and $\Gamma^{2h}$ is a simple closed curve. In other words, $(Y^{2h}(K),\Gamma^{2h}) = Y(1)$.  Thus, as discussed in Subsection~\ref{sub:SFH}, there exists a natural identification
\[
	\SFH(-Y^{2h}(K),-\Gamma^{2h}) \to \HFH(-Y).
\]

There is a unique tight contact structure (up to a choice of compatible characteristic foliation on the boundary) on the 2-handle so that the boundary is convex with corners and the sutures are the induced dividing curves. We use this contact structure to obtain the gluing map
\[
	\phi_{2h}: \SFH(-Y(K),-\Gamma_i) \to \HFH(-Y).
\]
It follows that the collection of gluing maps formed by attaching meridional contact 2-handles to $(Y(K),\Gamma_i)$, for each $i \geq 0$, together fit into the diagram depicted in Figure~\ref{fig:lim_hat_closed}, whose commutativity is the subject of Proposition~\ref{pro:lim_hat_closed_comm}.
\begin{figure}[htbp]
	\centering
	\begin{tikzpicture}	[->,>=stealth',auto,thick]
		\node (a) at (0,0){$\SFH(-Y(K),-\Gamma_0)$} ;
		\node (b) at (4.5,0) {$\SFH(-Y(K),-\Gamma_{1})$} ;
		\node (c) at (9,0) {$\SFH(-Y(K),-\Gamma_{2})$} ;
		\node (d) at (12,0) {$\dots$} ;
		\node (e) at (0,-2){$\HFH(-Y)$};

		\draw (a) edge node[above] {\small $\phi_-$} (b);
		\draw (b) edge node[above] {\small $\phi_-$} (c);
		\draw (c) edge node[above] {\small $\phi_-$} (d);
		\draw (a) edge node[right] {\small $\phi_{2h}$} (e);
		\draw (b) edge node[right] {\small $\quad\phi_{2h}$} (e);
		\draw (c) edge node[right] {\small $\qquad\phi_{2h}$} (e);
	\end{tikzpicture}
	\caption{ }
	\label{fig:lim_hat_closed}
\end{figure}

\begin{proposition}\label{pro:lim_hat_closed_comm}
There exists a well-defined map $\Phi_{\mr{2h}}: \SFHL(-Y,L) \to \HFH(-Y)$ which is induced by meridional contact 2-handle attachment, and whose constituent maps are depicted in Figure~\ref{fig:lim_hat_closed}.  
\end{proposition}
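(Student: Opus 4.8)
The plan is to show that every triangle in Figure~\ref{fig:lim_hat_closed} commutes, and then to invoke the universal property of the direct limit, exactly as in the proof of Proposition~\ref{pro:lim_hat}. For each $i\geq 0$, write
\[
\phi_{2h}^{(i)}:\SFH(-Y(K),-\Gamma_i)\to\SFH(-Y^{2h}(K),-\Gamma^{2h})\cong\HFH(-Y)
\]
for the HKM gluing map of the meridional contact $2$--handle attached along $\Gamma_i$; this is well defined because, as recalled just before the statement, the tight contact structure on the $2$--handle inducing a given dividing set is unique up to the characteristic foliation on its boundary. What must be shown is that $\phi_{2h}^{(j)}\circ\phi_{ij}=\phi_{2h}^{(i)}$ for all $i\leq j$, since this is precisely the compatibility needed to produce a map out of $\varinjlim$.

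To prove this, fix $i\leq j$ and consider the cobordism $Z=\overline{Y^{2h}(K)\backslash(Y(K),\Gamma_i)}$, where $Y^{2h}(K)$ is built by attaching the meridional $2$--handle to $(Y(K),\Gamma_j)$ (which gives the same space as attaching it to $(Y(K),\Gamma_i)$). Topologically $Z$ is a solid torus with an open ball removed: its $T^2$ boundary carries two dividing curves of slope $-i$, and its $S^2$ boundary a single dividing curve. On $Z$ there are two natural contact structures with this boundary data --- the contact $2$--handle attached directly along $\Gamma_i$, and the structure obtained by first attaching $C^-_{j,i}$ and then the contact $2$--handle along $\Gamma_j$ --- and by Theorem~\ref{thm:hkm_gluing} together with the composability of the HKM gluing maps these induce $\phi_{2h}^{(i)}$ and $\phi_{2h}^{(j)}\circ\phi_{ij}$ respectively. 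So the proposition reduces to showing that these two contact structures on $Z$ are contactomorphic relative to the characteristic foliation on $\partial Z$.

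This identification is the step I expect to be the main obstacle; it is the analogue of the role that Proposition~\ref{prop:relations} plays in the proofs of Propositions~\ref{pro:pos_commute} and~\ref{pro:lim_hat}. To carry it out I would start from the contact $2$--handle over $\Gamma_i$ and push the convex torus of slope $-i$ into the handle; by Giroux flexibility and the classification of tight contact structures on thickened tori recalled in Section~\ref{sub:tt}, one realizes a nested sequence of standard convex tori of slopes $-i-1,-i-2,\dots,-j$, cutting off a thickened torus near $\partial(Y(K),\Gamma_i)$ whose complementary piece is again a meridional contact $2$--handle over $\Gamma_j$. It then remains to check that the signs on these successive basic slices are all negative, so that the thickened torus is exactly $C^-_{j,i}$; I would verify this by a relative Euler class computation of the kind carried out in Section~\ref{ssec:convexandspin}, accounting for the contribution of the contact $2$--handle. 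Alternatively, one can note that the slope $-i$ is Farey--adjacent to the meridional slope $\infty$ along which the handle is attached, so that Honda's classification forces the tight contact structure on $Z$ with the prescribed boundary to be unique, which sidesteps the sign bookkeeping. Either way, the two contact structures on $Z$ agree, so every triangle in Figure~\ref{fig:lim_hat_closed} commutes and the universal property of $\varinjlim$ yields a unique map $\Phi_{\mr{2h}}:\SFHL(-Y,K)\to\HFH(-Y)$ restricting to $\phi_{2h}^{(i)}$ on each $\SFH(-Y(K),-\Gamma_i)$.
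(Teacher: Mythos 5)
Your proof is correct and follows essentially the same approach as the paper's, which likewise reduces commutativity of each triangle in Figure~\ref{fig:lim_hat_closed} to identifying the contact structure on the region $Z$ between $(Y(K),\Gamma_i)$ and the 2-handle-filled space with the complement of a standard contact ball in the unique tight solid torus of the given boundary slope, and then invokes naturality of the HKM gluing maps. The paper phrases this for the one-step case (the basic slice $B_{i+1}^-$ followed by a 2-handle, versus a 2-handle directly) and simply asserts the identification (``one may easily check$\ldots$''), while you handle general $i\leq j$ and offer two ways to justify it; your second, Farey-adjacency route is the one closest in spirit to what the paper has in mind. The only small point to flag there is that Honda's uniqueness statement is for the solid torus, not for $Z$ itself: to deduce uniqueness of the tight structure on $Z$ one should add that filling in a standard Darboux ball along the convex $S^2$ boundary component produces a tight contact structure on the solid torus (tight by convex gluing along a sphere), necessarily the unique one of slope $-i$, and that any two Darboux balls in it are contact isotopic. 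This is routine, and the paper's own proof glosses over the same point.
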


\begin{proof}\label{pf:lim_hat_closed_comm}
Let $M$ be the contact manifold obtained from the vertically invariant contact structure on $T^2\times[0,1]$ with dividing curves of slope $-i$ by attaching a contact 2--handle to $T^2\times\{0\}$. Similarly let $M'$ be the contact manifold obtained from the basic slice $B_{i+1}^-$ by attaching a contact 2--handle to $T^2\times\{0\}$. One may easily check that both of these contact structures are contactomorphism to the complement of an open standard contact ball inside the tight contact structure on the solid torus with convex boundary having dividing slope $i$. Thus, the naturality of the HKM gluing maps yields the claimed result. 
\end{proof}

% subsection natural_maps (end)
%%%%%%%%%%%%%%%%%%%%%%%%%%%%%%%%%%%%%%%%%%%%%%%%%%%%%%%

%%%%%%%%%%%%%%%%%%%%%%%%%%%%%%%%%%%%%%%%%%%%%%%%%%%%%%%
\subsection{Legendrian and Transverse Invariants: Definition and Properties} % (fold)
\label{sub:leg_limits}
%%%%%%%%%%%%%%%%%%%%%%%%%%%%%%%%%%%%%%%%%%%%%%%%%%%%%%%

We now turn our attention to defining an invariant $\EHL$ of Legendrian and transverse knots which takes values in the sutured limit homology groups $\SFHL(-Y,K)$.  Although its definition is qualitatively different, we will see Section~\ref{sec:equiv_leg} that the invariant $\EHL$ is identified with the Legendrian/transverse invariants defined by Lisca, Ozsv\'ath, Stipsicz and Szab\'o in \cite{LOSS} under the isomorphism given in Theorem~\ref{thm:lim_to_minus}.

%%%%%%%%%%%%%%%%%%%%%%%%%%%%%%%%%%%%%%%%%%%%%%%%%%%%%%%
\subsubsection{Definition of the Legendrian/transverse invariant} % (fold)
\label{sub:definition_leg_invt}
%%%%%%%%%%%%%%%%%%%%%%%%%%%%%%%%%%%%%%%%%%%%%%%%%%%%%%%

Let $K \subset (Y,\xi)$ be a Legendrian knot.  In Section~\ref{sub:top_limits}, we defined the sutured limit homology group $\SFH(-Y,K)$ by forming the directed limit of the following sequence of groups and maps.
\begin{center}
\begin{tikzpicture}	[->,>=stealth',auto,thick]
	\node (a) at (0,0){$\SFH(-Y(K),-\Gamma_0)$} ;
	\node (b) at (4.5,0) {$\SFH(-Y(K),-\Gamma_{1})$} ;
	\node (c) at (9,0) {$\SFH(-Y(K),-\Gamma_{2})$} ;
	\node (d) at (12.5,0) {$\dots$} ;
	
	\draw (a) edge node[above] {\small $\phi_-$} (b);
	\draw (b) edge node[above] {\small $\phi_-$} (c);
	\draw (c) edge node[above] {\small $\phi_-$} (d);
\end{tikzpicture}
\end{center}
We also showed that the resulting $\F[U]$-module $\SFHL(-Y,K)$ depends only on the topological type of the Legendrian knot $K$.

Notice that if we choose the framing on $K$ used in the definition of $\SFHL$ to be the contact framing, then the sutured manifold $(Y(K),\Gamma_0)$ is precisely the sutured manifold one obtains by removing a standard neighborhood of $K$ from $Y$. Moreover $(Y(K),\Gamma_i)$ is precisely the sutured manifold obtained by removing a standard neighborhood of the $i$-times negatively stabilized $K$, $S_-^i(K)$, from $Y$. Thus there is a natural contact structure $\xi_{K,i}$ on $(Y(K),\Gamma_i)$ coming from the complement of a standard neighborhood of $S_-^i(K)$. Therefore, associated to the Legendrian knot $K$, we have a collection of contact invariants $\{\EH(S^i_-(K))\in \SFH(-Y(K),-\Gamma_i)\}$.

Theorem~\ref{thm:Leg_stab_bs} says that the contact manifold $(Y(K),\xi_{K,i})$ with the basic slice $B_{i+1}^-$ attached to it, is contact isotopic to $(Y(K),\xi_{K,i+1})$. Thus the collection $\{\EH(S^i_-(K))\}$ satisfies $\phi_i(\EH(S^i_-(K))) = \EH(S^{i+1}_-(K))$ for each $i \geq 0$.

\begin{definition}\label{def:leg_lim_invt}
	Let $K \subset (Y,\xi)$ be a Legendrian knot and $S^i_-(K)$ its $i^{th}$ negative stabilization.  We define the {\em LIMIT invariant} of $K$ to be the element $\EHL(K) \in \SFHL(-Y,K)$ given as the residue class of the collection $\{\EH(S^i_-(K))\}$ of HKM invariants associated to the $S^i_-(K)$s inside $\SFHL(-Y,K)$.
\end{definition}

From the discussing at the end of Section~\ref{sub:relationships}, we have that the Alexander grading of $\EHL(K)$ in $\SFHL(-Y,K)$ is $\frac{1}{2} (tb(K)-r(k)+1)$.

From Definition~\ref{def:leg_lim_invt}, we see that the class $\EHL$ defines a Legendrian invariant.  Furthermore, since the invariant $\EHL$ is obtained as a residue class over all possible negative stabilizations of a given Legendrian knot, we have the following.

\begin{theorem}\label{thm:ehl_neg_stab}
	Let $K$ be a null-homologous Legendrian knot and let $K_-$ denote its negative Legendrian stabilization, then $\EHL(K_-) = \EHL(K)$.
\end{theorem}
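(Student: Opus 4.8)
The plan is to observe that, once the identifications built into the definition of $\SFHL$ are unwound, the directed system computing $\SFHL(-Y,K_-)$ is nothing but a cofinal tail of the one computing $\SFHL(-Y,K)$, and that under this identification the collection of HKM classes defining $\EHL(K_-)$ is the corresponding tail of the collection defining $\EHL(K)$. Since passing to a cofinal tail does not change a direct limit, and since the transition maps $\phi_-$ carry $\EH(S_-^i(K))$ to $\EH(S_-^{i+1}(K))$, the two residue classes will then be forced to coincide.

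Concretely, I would proceed as follows. First, recall from the discussion preceding Definition~\ref{def:leg_lim_invt} that, using the contact framing of $K$ as the initial longitude, the sutured manifold $(Y(K),\Gamma_i)$ is the complement of an open standard neighborhood of $S_-^i(K)$, carrying the natural contact structure $\xi_{K,i}$, and $\EHL(K)$ is the residue class of the compatible collection $\{\EH(S_-^i(K))\}_{i\ge 0}$. Running the same construction for the Legendrian knot $K_- = S_-(K)$, using \emph{its} contact framing, the manifold $(Y(K_-),\Gamma_j^{K_-})$ is the complement of an open standard neighborhood of $S_-^j(K_-)$, carrying $\xi_{K_-,j}$, and $\EHL(K_-)$ is the residue class of $\{\EH(S_-^j(K_-))\}_{j\ge 0}$. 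Now $S_-^j(K_-)=S_-^{j+1}(K)$ is literally the same Legendrian knot in the same ambient contact manifold, so the complements of open standard neighborhoods of it are contactomorphic via an ambient isotopy (Legendrian neighborhood theorem, Section~\ref{sub:leg_trans}); hence there is a canonical identification of $(Y(K_-),\Gamma_j^{K_-})$ together with $\xi_{K_-,j}$ on the one hand, and $(Y(K),\Gamma_{j+1})$ together with $\xi_{K,j+1}$ on the other, under which $\EH(S_-^j(K_-)) = \EH(S_-^{j+1}(K))$. Since the contact framing of $K_-$ differs from that of $K$ by a meridian, these identifications are compatible with all the $\phi_-$ maps, so they exhibit the directed system for $K_-$ as the subsystem $\{\SFH(-Y(K),-\Gamma_i),\phi_-\}_{i\ge 1}$ of the directed system for $K$. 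This subsystem is cofinal, so the induced map of direct limits $\SFHL(-Y,K_-)\cong \SFHL(-Y,K)$ is an isomorphism; it is precisely the canonical framing-change isomorphism of Theorem~\ref{thm:limit_top}. Under it, $\EHL(K_-)$ is represented by the tail $\{\EH(S_-^i(K))\}_{i\ge 1}$. Finally, Theorem~\ref{thm:Leg_stab_bs} (that $(Y(K),\xi_{K,0})$ with the basic slice $B_1^-$ attached is contact isotopic to $(Y(K),\xi_{K,1})$) together with naturality of the HKM gluing map (Theorem~\ref{thm:gluingandctinvt}) gives $\phi_-(\EH(S_-^0(K))) = \EH(S_-^1(K))$, so in $\SFHL(-Y,K)$ the class of $\EH(S_-^1(K))$ equals the class of $\EH(S_-^0(K))$; thus the residue class of the tail equals the residue class of the full collection, i.e. $\EHL(K_-) = \EHL(K)$.

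The one step deserving care — and the only point at which genuinely contact-geometric input enters, beyond the smooth framing-independence supplied by Theorem~\ref{thm:limit_top} — is verifying that the ambient isotopy identifying the two complements of standard neighborhoods genuinely matches the contact structures $\xi_{K_-,j}$ and $\xi_{K,j+1}$, so that the HKM classes (and not merely the underlying sutured Floer groups) are carried to one another. Everything else is a formal property of direct limits, so I expect this compatibility check to be the main, though modest, obstacle.
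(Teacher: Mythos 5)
Your argument is correct and is essentially the same as the paper's, which simply observes before stating the theorem that $\EHL$ is ``obtained as a residue class over all possible negative stabilizations of a given Legendrian knot'' and treats the conclusion as immediate. You have spelled out carefully the cofinality and framing-independence steps the paper leaves implicit, including the minor point that $S_-^j(K_-)=S_-^{j+1}(K)$ so no genuine matching of contact structures is actually needed beyond Legendrian isotopy invariance of the HKM class.
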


It follows immediately from Theorem~\ref{thm:ehl_neg_stab} that $\EHL$ gives rise to a transverse invariant through Legendrian approximation.

\begin{definition}\label{def:trans_lim_invt}
	Let $K \subset (Y,\xi)$ be a transverse knot and $L_K$ a Legendrian approximation of $K$.  We define $\EHL(K) = \EHL(L_K)$.
\end{definition}

% subsection definition_leg_invt (end)
%%%%%%%%%%%%%%%%%%%%%%%%%%%%%%%%%%%%%%%%%%%%%%%%%%%%%%%

%%%%%%%%%%%%%%%%%%%%%%%%%%%%%%%%%%%%%%%%%%%%%%%%%%%%%%%
\subsubsection{Properties of the Legendrian/Transverse Invariant} % (fold)
\label{sub:prop_leg_invt}
%%%%%%%%%%%%%%%%%%%%%%%%%%%%%%%%%%%%%%%%%%%%%%%%%%%%%%%

We now take a moment to discuss some useful and important properties of the Legendrian/transverse invariant $\EHL$ defined above.  These properties should be compared with their analogues for the invariant $\SL$ defined by Lisca, Ozsv\'ath, Stipsicz and Szab\'o in light of the equivalence promised by Theorem~\ref{thm:lim_to_minus_leg}.

Recall that Theorem~\ref{thm:ehl_neg_stab} states that $\EHL$ remains unchanged under negative Legendrian stabilization.  The following theorem describes the corresponding behavior of $\EHL$ under positive Legendrian stabilization.

\begin{theorem}\label{thm:ehl_pos_stab}
	Let $K$ be a Legendrian knot and let $K_+$ denote its positive Legendrian stabilization, then 
	\[\EHL(K_+) = U \cdot \EHL(K).\]
\end{theorem}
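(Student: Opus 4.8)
The plan is to unwind both sides of the asserted identity into $\EH$-invariants of thickened-torus layers, and then to match those layers using the sign-shuffling relation for tight contact structures on $T^2\times[0,1]$ recorded in Proposition~\ref{prop:relations}(3). Throughout I take the contact framing of $K$ as the reference longitude in the definition of $\SFHL$, so that by Definition~\ref{def:leg_lim_invt} the class $\EHL(K)$ is the residue class of the sequence $\{\EH(Y(K),\xi_{K,i})\}_{i\ge 0}$, where $(Y(K),\xi_{K,i})$ is the complement of a standard neighborhood of $S^i_-(K)$ and has dividing slope $-i$. By Theorem~\ref{thm:Leg_stab_bs} the passage from $(Y(K),\xi_{K,i})$ to $(Y(K),\xi_{K,i+1})$ is the attachment of a negative basic slice $B_{i+1}^-$, so $(Y(K),\xi_{K,i})$ is obtained from the fixed contact manifold $(Y(K),\xi_{K,0})$ by attaching a chain $B_1^-\cup\cdots\cup B_i^-$ of negative basic slices, with slopes running from $0$ down to $-i$.

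First I would identify the directed system computing $\SFHL(-Y,K_+)$ inside the one computing $\SFHL(-Y,K)$. Because $\tb(K_+)=\tb(K)-1$, the contact framing of $K_+$ differs from that of $K$ by one meridian; this is exactly the framing change underlying Theorem~\ref{thm:limit_top}, and under it the complement of a standard neighborhood of $S^i_-(K_+)=S^i_-(S_+(K))$ is the sutured manifold $(Y(K),\Gamma_{i+1})$. Moreover Theorem~\ref{thm:Leg_stab_bs} says $(Y(K_+),\xi_{K_+,0})=(Y(K),\xi_{K,0})\cup B_1^+$, with a \emph{positive} basic slice, so after attaching the negative basic slices as above,
\[
  (Y(K_+),\xi_{K_+,i})=(Y(K),\xi_{K,0})\cup B_1^+\cup B_2^-\cup\cdots\cup B_{i+1}^-.
\]
Hence, viewed inside the $K$-system, the class $\EHL(K_+)$ is represented at level $i+1$ by the $\EH$-invariant of $(Y(K),\xi_{K,0})$ with a chain of $i+1$ basic slices attached, all negative except the innermost one.

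Next I would compute $U\cdot\EHL(K)$. By Theorem~\ref{thm:U_action} (whose well-definedness rests on Proposition~\ref{pro:pos_commute}) the $U$-action is induced, level by level, by the maps $\psi_+$ that attach the positive basic slice $B_{i+1}^+$, and by Theorem~\ref{thm:gluingandctinvt} these gluing maps carry contact classes to contact classes, so $\psi_+\big(\EH(Y(K),\xi_{K,i})\big)=\EH\big((Y(K),\xi_{K,i})\cup B_{i+1}^+\big)$. Thus $U\cdot\EHL(K)$ is represented at level $i+1$ by the $\EH$-invariant of
\[
  (Y(K),\xi_{K,0})\cup B_1^-\cup\cdots\cup B_i^-\cup B_{i+1}^+,
\]
that is, again $(Y(K),\xi_{K,0})$ with a chain of $i+1$ basic slices attached, all negative except the outermost one.

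To conclude I would apply Proposition~\ref{prop:relations}(3), which replaces $B_k^+\cup B_{k+1}^-$ by the contactomorphic $B_k^-\cup B_{k+1}^+$ (the contactomorphism being supported away from the boundary of the pair of slices); iterating this $i$ times slides the unique positive basic slice from the innermost to the outermost slot, producing a contactomorphism of $Y(K)$, fixed near $\partial Y(K)$, between the two contact manifolds found above. Their convex boundaries and dividing sets coincide, so their $\EH$-invariants agree in $\SFH(-Y(K),-\Gamma_{i+1})$; since this holds for every $i$, the two sequences determine the same element of $\SFHL(-Y,K)$, giving $\EHL(K_+)=U\cdot\EHL(K)$. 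The step I expect to require the most care is the bookkeeping in the second paragraph --- translating the change of reference framing from $K$ to $K_+$ into the index shift and the appearance of the positive basic slice $B_1^+$ --- together with checking that the contactomorphisms furnished by Proposition~\ref{prop:relations}(3) can be arranged to fix a collar of $\partial Y(K)$, so that they genuinely descend to equalities of $\EH$-classes in a fixed sutured Floer group and are compatible with the structure maps of the directed systems.
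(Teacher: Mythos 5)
Your argument is correct, but it takes a more explicitly geometric route than the paper's. The paper's proof is shorter: it observes directly that $K_{+,i} = S^i_-(S_+(K)) = S_+(S^i_-(K)) = S_+(K_i)$ (using that positive and negative Legendrian stabilizations commute), so by Theorem~\ref{thm:Leg_stab_bs} the complement $(Y(K_+),\xi_{K_{+,i}})$ is obtained from $(Y(K),\xi_{K_i})$ by a single positive basic slice attachment, which is precisely the gluing region underlying $\psi_+$; naturality of the HKM gluing map then gives $\EH(K_{+,i}) = \psi_+(\EH(K_i))$ level by level and the identity follows at once. What you do instead is unwind the commutativity $S_+S_- = S_-S_+$ into contact geometry: you express both $(Y(K_+),\xi_{K_{+,i}})$ and the level-$(i+1)$ representative of $U\cdot\EHL(K)$ as $(Y(K),\xi_{K,0})$ capped off by a stack of $i+1$ basic slices, all negative save one $+$-slice sitting at opposite ends, and then slide the positive slice through the stack using the shuffle relation of Proposition~\ref{prop:relations}(3). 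This is a valid and instructive substitute, since the shuffle is exactly the contact-geometric content underlying the commutativity of stabilizations; what it buys is transparency about where the sign ambiguity is being used, at the cost of $i$ separate applications of the shuffle and a bit more bookkeeping. Your final caveat is handled by the usual convention: the shuffle in Proposition~\ref{prop:relations}(3) is an isotopy of contact structures on $T^2\times[0,1]$ rel boundary, so it is supported away from $\partial Y(K)$ and from the convex boundary of the thickened torus, and hence genuinely identifies the two $\EH$-classes in $\SFH(-Y(K),-\Gamma_{i+1})$ and commutes with the structure maps $\phi_-$ of the directed system.
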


\begin{proof}\label{pf:ehl_pos_stab}
Denote the $i^{th}$ negative stabilizations of $K$ and $K_+$ by $K_i$ and $K_{+,i}$, respectively.  Then, for each $i \geq 0$, the contact manifold $(Y(K_{+}),\xi_{K_{+,i}})$ is obtained from $(Y(K),\xi_{K_i})$ by attaching a positively signed basic slice to its boundary.  The gluing maps induced by these basic slice attachments are precisely the $\psi_+$ maps defining $U$-multiplication on $\SFHL(-Y,K)$, and discussed in Section~\ref{sub:positive_stabilization}.

Since the HKM gluing maps respect contact invariants, we have that, for each $i \geq 0$,
\[
	\EH(K_{+,i}) = \psi_+(\EH(K_i)).
\]
Thus,
\[
	\EHL(K_+) = \Psi(\EHL(K)) = U \cdot \EHL(K),
\]
completing the proof of Theorem~\ref{thm:ehl_pos_stab}.
\end{proof}

The next three theorems illustrate some natural relations connecting $\EHL$ to previously defined invariants of Legendrian and transverse knots.  We begin with a theorem concerning the relationship between the LIMIT invariant and the HKM invariant, whose truth follows immediately from the definitions of the sutured limit homology $\SFHL$ and the LIMIT invariant $\EHL$.

\begin{theorem}\label{thm:HKM_to_ehl}
	Let $K \subset (Y,\xi)$ be a null-homologous Legendrian knot and $(Y(K),\xi_K)$ the contact manifold obtained by removing a open standard tubular neighborhood of $K$ from $(Y,\xi)$.  Under the natural map
\[
	\iota: \SFH(-Y(K),-\Gamma_K) \to \SFHL(-Y,K),
\]
induced by inclusion, the invariant $\EH(K)$  is sent to $\EHL(K)$.\qed
\end{theorem}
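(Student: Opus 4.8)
The plan is to observe that, after the harmless choice of framing used to build $\SFHL(-Y,K)$, the statement is a direct unwinding of the definition of $\SFHL(-Y,K)$ as a colimit and of $\EHL(K)$ as a residue class in that colimit.

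First I would fix the framing used in the construction of $\SFHL(-Y,K)$ to be the contact framing of $K$; by Theorem~\ref{thm:limit_top} this does not change the limit group, so there is no loss of generality. With this choice the sutured manifold $(Y(K),\Gamma_0)$ is literally the complement $(Y(K),\xi_K)$ of an open standard neighborhood of $K$, with $\Gamma_0=\Gamma_K$, so that $\SFH(-Y(K),-\Gamma_K)=\SFH(-Y(K),-\Gamma_0)$ is the zeroth term of the directed system $(\{\SFH(-Y(K),-\Gamma_i)\},\{\phi_{ij}\})$ of Proposition~\ref{prop:limitgroup}. The map $\iota$ of the statement is then identified with the canonical structure map from this zeroth term into the colimit $\SFHL(-Y,K)$: ``inclusion'' here means the inclusion $(Y(K),\Gamma_0)\subset(Y(K),\Gamma_i)$ with the contact structure $C^-_{i,0}$ on the complementary layer, whose associated HKM gluing map is precisely $\phi_{0i}$, and these all agree after passing to the limit.

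Next I would recall from Section~\ref{sub:definition_leg_invt} that the family $\{\EH(S^i_-(K))\in\SFH(-Y(K),-\Gamma_i)\}$ is compatible with the directed system: by Theorem~\ref{thm:Leg_stab_bs} the contact manifold $(Y(K),\xi_{K,i+1})$ is obtained from $(Y(K),\xi_{K,i})$ by attaching the negative basic slice $B_{i+1}^-$, and Theorem~\ref{thm:gluingandctinvt} (HKM gluing respects contact invariants) then gives $\phi_-(\EH(S^i_-(K)))=\EH(S^{i+1}_-(K))$. By Definition~\ref{def:leg_lim_invt}, $\EHL(K)$ is \emph{by definition} the residue class of this compatible family in $\SFHL(-Y,K)$. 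Since the zeroth member of the family is $\EH(S^0_-(K))=\EH(K)$ and $\iota$ is the structure map at level $0$, this yields $\iota(\EH(K))=\EHL(K)$, which is the assertion.

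The only point requiring any care --- the closest thing to an obstacle --- is the verification in the second paragraph that the map labeled ``induced by inclusion'' in the statement really coincides with the canonical colimit structure map rather than, say, with the single HKM gluing map $\SFH(-Y(K),-\Gamma_0)\to\SFH(-Y(K),-\Gamma_\mu)\cong\HFKH(-Y,K)$. This is settled by the well-definedness and functoriality of the HKM gluing maps under composition of contact layers (Proposition~\ref{prop:relations}), exactly as in the proof of Proposition~\ref{prop:limitgroup}: the gluing map for the layer $C^-_{i,0}$ factors through all intermediate $C^-_{k,0}$, so the induced map to the colimit is well defined and is the structure map. Beyond this bookkeeping, there is nothing further to prove.
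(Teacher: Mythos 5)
Your proposal is correct and matches what the paper has in mind: Theorem~\ref{thm:HKM_to_ehl} is stated with a \qed because the authors regard it as immediate from the definitions of $\SFHL(-Y,K)$ and $\EHL(K)$, and your write-up is precisely the explicit unwinding of that fact (choosing the contact framing so $\Gamma_0=\Gamma_K$, recognizing $\iota$ as the level-zero structure map of the colimit, and using that $\EHL(K)$ is by definition the residue class of the compatible family $\{\EH(S^i_-(K))\}$ whose zeroth term is $\EH(K)$). Your final paragraph's care about which map ``induced by inclusion'' denotes is reasonable bookkeeping but not a genuine obstacle, since the paper's construction of $\SFHL$ already builds $\iota$ as the structure map.
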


The next theorem describes the result of applying the Stipsicz-V\'ertesi map to the invariant $\EHL$.

\begin{theorem}\label{thm:ehl_sv}
	Let $K \subset (Y,\xi)$ be a null-homologous Legendrian knot.  Under the Stipsicz-V\'ertesi map $\Phi_{SV} : \SFHL(-Y,K) \to \HFKH(-Y,K)$, the class $\EHL(K)$ is identified with the LOSS invariant $\SLH(K)$.
\end{theorem}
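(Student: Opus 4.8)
The plan is to reduce the statement to the Stipsicz--V\'ertesi theorem (Theorem~\ref{thm:sv_hkm_loss}) by unwinding the definitions of $\EHL$ and $\Phi_{SV}$. Recall that $\EHL(K)$ was defined (Definition~\ref{def:leg_lim_invt}) using the contact framing of $K$, so that $(Y(K),\Gamma_0)$ is the complement of a standard neighborhood of $K$, $\EH(S^0_-(K)) = \EH(K)$, and $\EHL(K)$ is the residue class of the coherent sequence $\{\EH(S^i_-(K))\}$. Equivalently, by Theorem~\ref{thm:HKM_to_ehl}, $\EHL(K) = \iota(\EH(K))$, where $\iota\colon \SFH(-Y(K),-\Gamma_K)\to\SFHL(-Y,K)$ is the canonical map of the initial term of the directed system into its limit. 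On the other hand, by Proposition~\ref{pro:lim_hat} the map $\Phi_{SV}$ restricts along this canonical map to the $i=0$ constituent map in Figure~\ref{fig:big_SV}, namely $\phi_{SV}\colon\SFH(-Y(K),-\Gamma_0)\to\HFKH(-Y,K)$; that is, $\Phi_{SV}\circ\iota = \phi_{SV}$. Hence $\Phi_{SV}(\EHL(K)) = \phi_{SV}(\EH(K))$, and it suffices to show that this $\phi_{SV}$ is precisely the Stipsicz--V\'ertesi map of Theorem~\ref{thm:sv_hkm_loss}.

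For this I would identify the gluing data. The map $\phi_{SV}$ is, by construction, the HKM gluing map (Theorem~\ref{thm:hkm_gluing}) associated to the contact structure $A^-_0$ on the layer $\overline{(Y(K),\Gamma_\mu)\setminus(Y(K),\Gamma_0)}$, followed by Juh\'asz's identification $\SFH(-Y(K),-\Gamma_\mu)\cong\HFKH(-Y,K)$ of Theorem~\ref{thm:SFHisHFK}. The Stipsicz--V\'ertesi attachment (Definition~\ref{def:st_attach}) attaches to $(Y(K),\xi_K)$ the unique negative basic slice whose outer dividing set is a pair of meridional curves; in the parametrization in which the contact framing has slope $0$ and the meridian has slope $\infty$, this is the negative basic slice running from slope $0$ to slope $\infty$, which is identified with $A^-_0$ by the classification of tight contact structures on $T^2\times I$ (Proposition~\ref{prop:relations}, Honda). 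Consequently $(Y(K),\Gamma_0)\cup A^-_0$, with the attached contact structure, is exactly $(Y(K),\xib_K)$, and $\phi_{SV}$ coincides with the Stipsicz--V\'ertesi map. By Theorem~\ref{thm:sv_hkm_loss} --- equivalently, by the equality $\EH(Y(K),\xib_K)=\SLH(K)$, which follows from Theorem~\ref{thm:gluingandctinvt} together with Stipsicz--V\'ertesi's explicit Heegaard-diagram computation --- we obtain $\phi_{SV}(\EH(K)) = \SLH(K)$, and therefore $\Phi_{SV}(\EHL(K)) = \SLH(K)$, as claimed.

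The main obstacle is the identification carried out in the second paragraph: verifying that the $T^2\times I$ layer $A^-_0$ appearing in the directed-system definition of $\Phi_{SV}$ really is the negative basic slice used by Stipsicz and V\'ertesi, so that the two maps denoted $\phi_{SV}$ genuinely agree. This amounts to correctly matching the boundary parametrizations (contact versus Seifert framings, as in Theorem~\ref{thm:Leg_stab_bs} and Theorem~\ref{thm:limit_top}) and invoking the uniqueness of the negative basic slice on a thickened torus; everything else --- naturality of the HKM gluing maps and the universal property of the direct limit --- is formal. As a consistency check one may compare Alexander gradings: both $\EHL(K)$ and $\SLH(K)$ lie in grading $\tfrac12(\tb(K)-\rot(K)+1)$ by the computations surrounding equation~\eqref{eqn:LOSS_Alex}.
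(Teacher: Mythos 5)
Your proposal is correct and follows essentially the same route as the paper: it reduces the claim to the Stipsicz--V\'ertesi theorem (Theorem~\ref{thm:sv_hkm_loss}) by observing that $\EHL(K)$ is the image of $\EH(K)$ under the canonical map into the direct limit, and that $\Phi_{SV}$ restricted along that map recovers the $i=0$ constituent $\phi_{SV}$, which is the SV map. The only difference is that you spell out the identification of the $A^-_0$ layer with the Stipsicz--V\'ertesi basic slice in more detail; the paper treats this as built into the definitions in Section~\ref{sub:natural_maps}.
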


\begin{proof}\label{pf:ehl_sv}
	The main theorem of \cite{StV} states that under the map 
\[
	\phi_{SV} : \SFH(-Y(K),-\Gamma_K) \to \HFKH(-Y,K),
\]
the HKM invariant $\EH(K)$ is identified with $\SLH(K)$.  Combining this result with the definition of $\EHL(K)$ and the commutativity of the diagram shown in Figure~\ref{fig:big_SV} defining the map $\Phi_{SV}$, we have that $\Phi_{SV}(\EHL(K)) = \SLH(K)$.
\end{proof}

The theorem below illustrates how the LIMIT invariant of a Legendrian or transverse knot relates to the classical contact invariant of the ambient space.

\begin{theorem}\label{thm:leg_ehl_eh}
	Let $K \subset (Y,\xi)$ be a null-homologous Legendrian knot.  Under the map $\Phi_{2h} : \SFHL(-Y,K) \to \HFH(-Y)$ induced by 2-handle attachment, the class $\EHL(K)$ is identified with the contact invariant $\EH(Y,\xi)$ of the ambient space.
\end{theorem}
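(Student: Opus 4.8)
The plan is to reduce the statement to the already-established analogue for the HKM invariant at each finite stage, and then pass to the limit using the compatibility of the $2$-handle gluing maps with the structure maps $\phi_-$. More precisely, recall from Proposition~\ref{pro:lim_hat_closed_comm} that the map $\Phi_{2h}\colon \SFHL(-Y,K)\to \HFH(-Y)$ is induced by the collection $\{\phi_{2h}\colon \SFH(-Y(K),-\Gamma_i)\to\HFH(-Y)\}$ of gluing maps coming from attaching a meridional contact $2$-handle to $(Y(K),\Gamma_i)$, and that $\SFH(-Y^{2h}(K),-\Gamma^{2h})=\SFH(Y(1))$ is canonically identified with $\HFH(-Y)$ via Theorem~\ref{thm:SFHisHFH}. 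First I would observe, as in Section~\ref{sub:definition_leg_invt}, that when the framing in the definition of $\SFHL$ is taken to be the contact framing, the sutured manifold $(Y(K),\Gamma_i)$ is the complement of a standard neighborhood of the $i$-th negative stabilization $S^i_-(K)$, carrying the natural contact structure $\xi_{K,i}$, and $\EHL(K)$ is the residue class of the collection $\{\EH(S^i_-(K))\}$.

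The key step is then the finite-stage identification: for each $i\ge 0$, the contact manifold obtained from $(Y(K),\xi_{K,i})$ by attaching the (unique tight) contact $2$-handle along a meridian is contactomorphic to $(Y,\xi)$ with an open standard contact ball removed, i.e.\ it realizes $(Y(1),\xi)$. Indeed, the $2$-handle attachment along a meridian of $S^i_-(K)$ caps off the neighborhood, and since filling back in a Legendrian knot's standard neighborhood by an appropriate solid torus with compatible contact structure recovers $(Y,\xi)$, removing a ball instead leaves $(Y(1),\xi)$; concretely, the model computation in the proof of Proposition~\ref{pro:lim_hat_closed_comm} shows the piece being glued (invariant collar plus $2$-handle) is the complement of a standard contact ball in a tight solid torus, so the glued-up contact manifold is exactly $(Y,\xi)$ punctured. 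Consequently, by Theorem~\ref{thm:gluingandctinvt} (the HKM gluing map respects contact invariants), we get
\[
  \phi_{2h}\bigl(\EH(S^i_-(K))\bigr) = \EH(Y(1),\xi) = \EH(Y,\xi)\in\HFH(-Y),
\]
using the standard fact that the contact invariant of $(Y,\xi)$ agrees with that of its puncture $(Y(1),\xi)$ under the identification $\SFH(Y(1))\cong\HFH(Y)$.

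Finally I would pass to the direct limit: since $\EHL(K)$ is represented at stage $i$ by $\EH(S^i_-(K))$, and since the diagram in Figure~\ref{fig:lim_hat_closed} commutes (Proposition~\ref{pro:lim_hat_closed_comm}), applying $\Phi_{2h}$ to $\EHL(K)$ amounts to applying any one $\phi_{2h}$ to a representative, which by the previous step yields $\EH(Y,\xi)$. Hence $\Phi_{2h}(\EHL(K))=\EH(Y,\xi)$, as claimed. The main obstacle I anticipate is the finite-stage contactomorphism claim — one must be careful that attaching the meridional contact $2$-handle to the \emph{stabilized} complement $(Y(K),\xi_{K,i})$ (rather than to $(Y(K),\xi_K)$ itself) still yields the punctured $(Y,\xi)$ and not some contact manifold affected by the negative stabilizations; this follows because the negative basic slices $B^-_{j}$ are absorbed into the standard neighborhood when one caps off along the meridian, but verifying this cleanly (e.g.\ via the model computation in a tight solid torus, tracking dividing slopes through the $2$-handle attachment) is where the real content lies. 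Everything else is formal naturality of the HKM maps together with Theorems~\ref{thm:SFHisHFH} and~\ref{thm:gluingandctinvt}.
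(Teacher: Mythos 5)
Your proof is correct and follows essentially the same route as the paper's (very terse) argument, which simply observes that the HKM gluing maps respect contact invariants so each constituent $\phi_{2h}$ sends $\EH(S^i_-(K))$ to $\EH(Y,\xi)$, and then passes to the limit exactly as in the proof of Theorem~\ref{thm:ehl_sv}. The concern you flag at the end is resolved exactly as you indicate: the standard neighborhood of $S^i_-(K)$ minus a Darboux ball is the unique tight contact $2$-handle, so attaching it to $(Y(K),\xi_{K,i})$ recovers $(Y(1),\xi)$, and Theorem~\ref{thm:gluingandctinvt} applies directly.
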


The proof of this theorem is similar to that of Theorem~\ref{thm:ehl_sv}, so we omit it. The key observation is that since the HKM gluing maps respect contact invariants, the constituent maps defining $\Phi_{2h}$ each identify the elements $\EH(K_i)$ with $\EH(Y,\xi)$.  Otherwise, the proof is identical.

% subsection properties_of_ehl_ (end)
%%%%%%%%%%%%%%%%%%%%%%%%%%%%%%%%%%%%%%%%%%%%%%%%%%%%%%%

% subsection leg_and_trans_def (end)
%%%%%%%%%%%%%%%%%%%%%%%%%%%%%%%%%%%%%%%%%%%%%%%%%%%%%%%

%%%%%%%%%%%%%%%%%%%%%%%%%%%%%%%%%%%%%%%%%%%%%%%%%%%%%%%
\subsection{The sutured inverse limit homology of a knot} % (fold)
\label{sub:top_inverselimits}
%%%%%%%%%%%%%%%%%%%%%%%%%%%%%%%%%%%%%%%%%%%%%%%%%%%%%%%

As usual, given a knot $K$ in a closed 3--manifold $Y$, we let $Y(K)$ denote the complement of an open tubular neighborhood of $K$.  Choosing a framing on $K$ is equivalent to choosing a longitude $\lambda$ on $\partial Y(K)$. Let $\Gamma_\mu$ be the union of two disjoint copies of the meridian of $K$ on $\partial Y(K)$, and consider the sutured manifold $(Y(K),\Gamma_\mu)$.

Using notation from the end of Subsection~\ref{sub:tt} we define a {\em longitudinal completion} of $Y(K)$ to be
\[
	(Y(K),\Gamma_\lambda) = [(Y(K),\Gamma_\mu) \cup \widetilde{A}_0^-]/\sim,
\]
where $T^2\times \{0\}$ is identified to $\partial Y(K)$ so that $\{pt\}\times S^1$ is mapped to the chosen longitude $\lambda$ of $K$ and the dividing curves on $T^2\times\{0\}$ are mapped to the sutures $\Gamma_\mu$ on $\partial Y(K)$. The manifold $(Y(K),\Gamma_\lambda)$ is naturally a sutured manifold with sutures $\Gamma_\lambda$ coming from the dividing curves on $\partial Y(K)$.  That is, $\Gamma_\lambda$ consists of two longitudinal curves. 

For notational ease in the following discussion, we will henceforth denote the longitudinal suture set $\Gamma_\lambda$ by $\Gamma_0$.  

As noted in Subsection~\ref{sub:tt} there are convex tori $\widetilde{T}_i$ in $\widetilde{A}_0^-\subset (Y(K),\Gamma_0)$ whose dividing curves are parallel to $\lambda+i\mu$ and such that $\widetilde{T}_i$ is closer to the (convex) boundary of $(Y(K),\Gamma_0)$ than $\widetilde{T}_j$ if $j>i$. Thus we have a sequence of sutured manifolds $(Y(K), \Gamma^+_i)$ given as the closure of the component of the complement of $\widetilde{T}_i$ in $(Y(K),\Gamma_0)$ not containing the boundary of $(Y(K),\Gamma_0)$, with sutures coming from the dividing curves of $\widetilde{T}_i$. (As in Section~\ref{sub:top_limits}, strictly speaking, we have a sequence of distinct manifolds $\{Y_i(K)\}$, each contained in its successor.  However, as before, since each of the $Y_i(K)$ are pairwise diffeomorphic, we drop the subscript to avoid obscuring the discussion.)

Note that for any $j>i$ we have the inclusion $(Y(K),\Gamma^+_j)\subset (Y(K),\Gamma^+_i)$, and that $\overline{(Y(K),\Gamma^+_i) \backslash (Y(K),\Gamma^+_j)}$ has a contact structure on it. More specifically 
\[
	\overline{(Y(K),\Gamma^+_{i}) \backslash (Y(K),\Gamma^+_{j})}
\] 
is the contact manifold  $C_{-i,-j}^-$. Using the HKM gluing maps in sutured Floer homology discussed in Theorem~\ref{thm:hkm_gluing}, we have maps 
\[
	\phi'_{ji}:\SFH(-Y(K),-\Gamma^+_{j}) \to \SFH(-Y(K),-\Gamma^+_{i})
\]
if $i\leq j$.  Just as in Proposition~\ref{prop:limitgroup} we have the following result.
\begin{proposition}\label{prop:limitgroup2}
	Let $K$ be a knot in $Y$. With the notation above the collection 
\[
	(\{\SFH(-Y(K),-\Gamma^+_{i})\},\{\phi'_{ji}\})
\] 
of sutured Floer homology groups and maps together form a directed system. \hfill \qed
\end{proposition}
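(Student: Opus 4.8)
The statement is the exact analogue of Proposition~\ref{prop:limitgroup}, and I would prove it by the same argument. The index set $\{0,1,2,\dots\}$ (ordered so that the maps $\phi'_{ji}$ point from larger to smaller index) is directed, so it suffices to check the two compatibility axioms for a directed system of groups and homomorphisms: that $\phi'_{ii}$ is the identity map on $\SFH(-Y(K),-\Gamma^+_{i})$ for every $i$, and that $\phi'_{ki}\circ\phi'_{jk}=\phi'_{ji}$ whenever $i\le k\le j$.

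For the identity axiom, $\overline{(Y(K),\Gamma^+_{i})\backslash(Y(K),\Gamma^+_{i})}$ is an invariant collar of the boundary carrying the vertically invariant contact structure, whose associated HKM gluing map (Theorem~\ref{thm:hkm_gluing}) is the identity, by the normalization built into the well-definedness of that map. For the composition axiom, the key geometric input is that, for $i\le k\le j$, the nested tori $\widetilde{T}_i$ of Remark~\ref{rem:ends} give nested inclusions $(Y(K),\Gamma^+_{j})\subset(Y(K),\Gamma^+_{k})\subset(Y(K),\Gamma^+_{i})$ whose complementary $T^2\times I$ layers stack: the layer $\overline{(Y(K),\Gamma^+_{i})\backslash(Y(K),\Gamma^+_{j})}$ is obtained by placing the layer $\overline{(Y(K),\Gamma^+_{k})\backslash(Y(K),\Gamma^+_{j})}$ below the layer $\overline{(Y(K),\Gamma^+_{i})\backslash(Y(K),\Gamma^+_{k})}$. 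As recorded just before the statement, these three layers carry the contact structures $C^-_{-i,-j}$, $C^-_{-k,-j}$, and $C^-_{-i,-k}$ respectively, so item~(2) of Proposition~\ref{prop:relations}, applied with the indices $-i>-k>-j$, supplies a contactomorphism $C^-_{-i,-k}\cup C^-_{-k,-j}\cong C^-_{-i,-j}$ (the gluing along the common boundary of slope $-k$). Since the HKM gluing map of Theorem~\ref{thm:hkm_gluing} depends only on the contactomorphism type of the intervening contact structure and is functorial under stacking such layers, this identification gives $\phi'_{ki}\circ\phi'_{jk}=\phi'_{ji}$.

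I do not expect any genuine obstacle; the proof is a transcription of the proof of Proposition~\ref{prop:limitgroup}, and the only thing requiring care is a slope-convention check --- namely, that the sequence of convex tori in the longitudinal completion built from $\widetilde{A}^-_0$ really does cut $(Y(K),\Gamma_\lambda)$ into the asserted chain of sutured submanifolds whose successive complements are the contact manifolds $C^-_{-i,-j}$. This is precisely the content of the $\widetilde{A}^-_0$ half of Remark~\ref{rem:ends}, which is itself a direct consequence of the classification of minimally twisting contact structures on thickened tori recalled in Section~\ref{sub:tt}.
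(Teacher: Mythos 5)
Your proof is correct and follows exactly the route the paper intends: the claim is the verbatim analogue of Proposition~\ref{prop:limitgroup}, deduced from item~(2) of Proposition~\ref{prop:relations} (that $C^-_{a,b}\cup C^-_{b,c}\cong C^-_{a,c}$ for $a>b>c$, here with $a=-i$, $b=-k$, $c=-j$) together with the well-definedness and functoriality of the HKM gluing map. One small sign slip in your parenthetical: since $C^\pm_{a,b}$ has boundary slopes $-a$ and $-b$, the common torus along which $C^-_{-i,-k}$ and $C^-_{-k,-j}$ are glued has dividing slope $k$, not $-k$; the argument is otherwise unaffected.
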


This leads us to the following definition.

\begin{definition}\label{def:limitgroup2}
	Let $K \subset (Y,\xi)$ be a null-homologous Legendrian knot and consider the associated directed system $(\{\SFH(-Y^e_i(K),-\Gamma^+_{i})\},\{\phi^e_{ji}\})$ given by Proposition~\ref{prop:limitgroup2}.  The \emph{sutured inverse limit homology} of $(-Y,K)$ is defined by taking the inverse limit
\[
	\SFHIL(-Y,K) = \varprojlim_{\phi'_{ji}} \SFH(-Y(K),-\Gamma^+_{i}).
\]
\end{definition}

One may easily show, as in the proof of Theorem~\ref{thm:limit_top}, that this invariant is independent of the choice of longitude. 

\begin{theorem}\label{thm:limit_top2}
The sutured limit invariant $\SFHIL(-Y,K)$ depends only on the knot type of $K$ in $Y$ and not the choice of framing used in the definition. 
\end{theorem}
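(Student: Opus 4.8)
The plan is to adapt the proof of Theorem~\ref{thm:limit_top} to the inverse limit setting. Fix a knot $K\subset Y$ together with two longitudes $\lambda_a,\lambda_b$ on $\partial Y(K)$, and write $\lambda_a=\lambda_b+n\mu$; after interchanging the two labels if necessary we may assume $n\ge 0$. For each framing $c\in\{a,b\}$, form the longitudinal completion and the nested sequence of sutured manifolds $(Y(K),\Gamma^{+,c}_i)$, $i\ge 0$, as in Section~\ref{sub:top_inverselimits}. By construction $(Y(K),\Gamma^{+,c}_i)$ is the knot complement $Y(K)$ equipped with boundary dividing curves of slope $\lambda_c+i\mu$, and the structure map $\phi'_{ji}$ of Proposition~\ref{prop:limitgroup2} is induced by the HKM gluing map of the contact thickened torus $\overline{(Y(K),\Gamma^{+,c}_i)\backslash(Y(K),\Gamma^{+,c}_j)}=C^-_{-i,-j}$. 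Since $\lambda_a+i\mu=\lambda_b+(n+i)\mu$, the sutured manifold $(Y(K),\Gamma^{+,a}_i)$ is carried to $(Y(K),\Gamma^{+,b}_{n+i})$ by a diffeomorphism that fixes $\partial Y(K)$ and is canonical up to isotopy, and these diffeomorphisms take the intermediate contact thickened tori for the $a$-framing onto those for the $b$-framing.

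It then follows that the induced isomorphisms $\SFH(-Y(K),-\Gamma^{+,a}_i)\cong\SFH(-Y(K),-\Gamma^{+,b}_{n+i})$ on sutured Floer homology intertwine the structure maps $\phi'_{ji}$: here one invokes the naturality of the HKM gluing maps (Theorem~\ref{thm:hkm_gluing}) exactly as in the proof of Theorem~\ref{thm:limit_top}, using that the diffeomorphisms identify the relevant intermediate contact manifolds. Consequently the inverse system computing $\SFHIL(-Y,K)$ from $\lambda_a$ is isomorphic, after the reindexing $i\mapsto n+i$, to the subsystem of the $\lambda_b$-system supported on the indices $\{\,j\ge n\,\}$. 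A terminal segment of $\mathbb{N}$ is cofinal for the purpose of inverse limits, so this subsystem has the same inverse limit as the full $\lambda_b$-system; composing the identifications yields the desired isomorphism between $\SFHIL(-Y,K)$ computed with $\lambda_a$ and that computed with $\lambda_b$. Since the same diffeomorphisms also match the positive basic-slice attachments defining the $U$-action, this isomorphism respects the $\F[U]$-module structure.

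I expect the only real difficulty to be bookkeeping rather than anything conceptual: because we pass to an \emph{inverse} rather than a direct limit, the index shift must be absorbed through cofinality of a \emph{terminal} (as opposed to initial) segment of $\mathbb{N}$, and one has to keep careful track of the directions of the maps $\phi'_{ji}$ and of the slope conventions for the tori $\widetilde T_i$ and the regions $C^-_{-i,-j}$. With those caveats, the argument is formally identical to that of Theorem~\ref{thm:limit_top}.
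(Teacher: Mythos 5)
Your proof is correct and follows exactly the approach the paper intends: the paper's own treatment of this theorem simply refers back to the proof of Theorem~\ref{thm:limit_top}, and you carry out that adaptation faithfully, with the right sign convention ($n\ge 0$ rather than $n\le 0$, reflecting that $\Gamma^+_i$ has slope $\lambda+i\mu$ rather than $\lambda-i\mu$) and the right cofinality fact for inverse limits over $\mathbb{N}$ (a terminal segment of a tower has the same inverse limit). The extra observation that the diffeomorphisms also intertwine the $\psi'_+$ maps and hence respect the $\F[U]$-structure is correct and a useful supplement, though not strictly demanded by the statement.
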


Analogously to the sutured limit homology, we can define a $U$-action. To this end we set $\phi'_-=\phi'_{i+1,i}$ and obtain the cofinal sequence 
\begin{center}\label{fig:alglimit2}
\begin{tikzpicture}	[<-,>=stealth',auto,thick]
	\node (a) at (0,0){$\SFH(-Y(K),-\Gamma_0)$} ;
	\node (b) at (4.25,0) {$\SFH(-Y(K),-\Gamma^+_{1})$} ;
	\node (c) at (8.75,0) {$\SFH(-Y(K),-\Gamma^+_{2})$} ;
	\node (d) at (11.75,0) {$\dots$} ;
	
	\draw (a) edge node[above] {\small $\phi'_-$} (b);
	\draw (b) edge node[above] {\small $\phi'_-$} (c);
	\draw (c) edge node[above] {\small $\phi'_-$} (d);
\end{tikzpicture}
\end{center}
from which $\SFHIL(-Y,K)$ can be computed. 

Each $\phi'_-$ is defined using the contact structure on the basic slice $B^-_{-i}$. We can similarly define
\[
	\psi'_+:\SFH(-Y(K),-\Gamma^+_{i+1})\to \SFH(-Y(K),-\Gamma^+_{i}).
\]
using the basic slice $B^+_{-i}$.

The same arugments used in the proof of Proposition~\ref{pro:pos_commute} show that the maps $\phi'_-$ and $\psi'_+$ together fit into the commutative diagram shown below.
\begin{figure}[htbp]
	\centering
	\begin{tikzpicture}	[<-,>=stealth',auto,thick]
		\node (a) at (0,0){$\SFH(-Y(K),-\Gamma_0)$} ;
		\node (b) at (4.75,0) {$\SFH(-Y(K),-\Gamma^+_{1})$} ;
		\node (c) at (9.5,0) {$\SFH(-Y(K),-\Gamma^+_{2})$} ;
		\node (d) at (12.5,0) {$\dots$} ;
		\node (e) at (0,-2){$\SFH(-Y(K),-\Gamma^+_{1})$} ;
		\node (f) at (4.75,-2) {$\SFH(-Y(K),-\Gamma^+_{2})$} ;
		\node (g) at (9.5,-2) {$\SFH(-Y(K),-\Gamma^+_{3})$} ;
		\node (h) at (12.5,-2) {$\dots$} ;

		\draw (a) edge node[above] {\small $\phi'_-$} (b);
		\draw (b) edge node[above] {\small $\phi'_-$} (c);
		\draw (c) edge node[above] {\small $\phi'_-$} (d);
		\draw (e) edge node[above] {\small $\phi'_-$} (f);
		\draw (f) edge node[above] {\small $\phi'_-$} (g);
		\draw (g) edge node[above] {\small $\phi'_-$} (h);
		\draw (a) edge node[right] {\small $\psi'_+$} (e);
		\draw (b) edge node[right] {\small $\psi'_+$} (f);
		\draw (c) edge node[right] {\small $\psi'_+$} (g);
	\end{tikzpicture}
\end{figure}

Thus the collection of maps $\{\psi'_+\}$ together induce a well-defined map on sutured inverse limit homology
\[
	\Psi': \SFHIL(-Y,K) \to \SFHIL(-Y,K).
\]
As an immediate consequence, we obtain the following theorem.

\begin{theorem}\label{thm:U_action2}
	Let $K$ be a smoothly embedded null-homologous knot in a 3-manifold $Y$.  The sutured inverse limit homology $\SFHIL(-Y,K)$ of the pair $(Y,K)$ can be given the structure of an $\F [U]$-module, where $U$ acts on elements of $\SFHIL(-Y,K)$ via the map $\Psi'$:
\[
	U \cdot [\x] := \Psi'([\x]).
\]\qed
\end{theorem}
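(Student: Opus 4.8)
The statement is essentially formal once the endomorphism $\Psi'$ is in hand, so the plan is short, and it runs exactly parallel to the proof of Theorem~\ref{thm:U_action}. The substantive geometric input --- that the collection $\{\psi'_+\}$ descends to a well-defined map $\Psi' \colon \SFHIL(-Y,K) \to \SFHIL(-Y,K)$ --- has already been established: the commutativity of the displayed square preceding the theorem is an instance of Item (3) of Proposition~\ref{prop:relations} together with the well-definedness of the HKM gluing maps (the same argument as in the proof of Proposition~\ref{pro:pos_commute}), and this commutativity guarantees that applying $\psi'_+$ termwise to a compatible sequence $(\x_i)$ representing a class in the inverse limit produces another compatible sequence. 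Hence $\Psi'$ is well-defined by functoriality of $\varprojlim$, and I would simply cite the discussion above the theorem statement for this.

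With $\Psi'$ available, the remaining argument is pure algebra. First I would observe that each $\SFH(-Y(K),-\Gamma^+_i)$ is an $\F$-vector space and that the bonding maps $\phi'_-$, being HKM gluing maps over $\F = \Z/2$, are $\F$-linear; hence $\SFHIL(-Y,K) = \varprojlim \SFH(-Y(K),-\Gamma^+_i)$ is an $\F$-vector space, and the induced map $\Psi'$ is $\F$-linear since it is assembled from the $\F$-linear maps $\psi'_+$ and the $\F$-linear structure maps of the inverse system. Then I would invoke the standard fact that an $\F$-vector space $V$ equipped with an $\F$-linear endomorphism $T$ is canonically an $\F[U]$-module via $\bigl(\sum_k a_k U^k\bigr)\cdot v := \sum_k a_k\, T^k(v)$: the module axioms --- bilinearity, $1\cdot v = v$, and $(fg)\cdot v = f\cdot(g\cdot v)$ --- follow at once from the additivity and $\F$-linearity of $T$ together with $T^iT^j = T^{i+j}$. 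Applying this with $V = \SFHIL(-Y,K)$ and $T = \Psi'$ yields the claimed $\F[U]$-module structure in which $U$ acts by $\Psi'$.

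There is no real obstacle: all the nontrivial content (the classification of tight, minimally twisting contact structures on $T^2\times[0,1]$ from Section~\ref{sub:tt}, packaged in Proposition~\ref{prop:relations}, and the functoriality of the HKM gluing maps) was consumed in setting up $\Psi'$. The only point deserving a sentence of care is that $\Psi'$ must be checked to be $\F$-linear and not merely additive, so that exponentiation $\Psi' \mapsto (\Psi')^k$ makes sense and the $\F[U]$-action is well-defined; as noted above this is automatic from the construction of $\Psi'$ out of the $\F$-linear gluing maps.
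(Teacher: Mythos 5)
Your proof is correct and follows the same route as the paper, which treats the theorem as an immediate consequence of the commutative diagram preceding it (the inverse-limit analogue of Proposition~\ref{pro:pos_commute}). You have simply made explicit the routine algebra — that an $\F$-linear endomorphism of an $\F$-vector space yields an $\F[U]$-module structure — that the paper leaves implicit in the $\qed$.
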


The sutured inverse limit homology groups can be endowed with a well-defined Alexander grading using the method discussed in Section~\ref{sub:alexander}.

%%%%%%%%%%%%%%%%%%%%%%%%%%%%%%%%%%%%%%%%%%%%%%%%%%%%%%%
\subsubsection{A Natural Map} % (fold)
%%%%%%%%%%%%%%%%%%%%%%%%%%%%%%%%%%%%%%%%%%%%%%%%%%%%%%%

Recall that $(Y(K),\Gamma_\mu)$ sits as a sutured submanifold of $(Y(K),\Gamma_\lambda)$, and that the basic slices $\widehat{A}^-_i$ gives a contact structure on $\overline{(Y(K),\Gamma^+_i) \backslash (Y(K),\Gamma_\mu)}$.  Thus, the HKM gluing map from Theorem~\ref{thm:hkm_gluing} gives a maps
\[
	\phi_{dSV}:\SFH(-Y(K),-\Gamma_\mu) \to \SFH(-Y(K),-\Gamma^+_i)
\] 
and since $\SFH(-Y(K),-\Gamma_\mu)$ is isomorphic to $\HFKH(-Y,L)$, we have the commutative diagram in Figure~\ref{fig:big_SV2}.
\begin{figure}[htbp]
	\centering
	\begin{tikzpicture}	[<-,>=stealth',auto,thick]
		\node (a) at (0,0){$\SFH(-Y(K),-\Gamma_0)$} ;
		\node (b) at (4.5,0) {$\SFH(-Y(K),-\Gamma^+_{1})$} ;
		\node (c) at (9.5,0) {$\SFH(-Y(K),-\Gamma^+_{2})$} ;
		\node (d) at (12.5,0) {$\dots$} ;
		\node (e) at (0,-2){$\HFKH(-Y,K)$};

		\draw (a) edge node[above] {\small $\phi'_-$} (b);
		\draw (b) edge node[above] {\small $\phi'_-$} (c);
		\draw (c) edge node[above] {\small $\phi'_-$} (d);
		\draw (a) edge node[right] {\small $\phi_{dSV}$} (e);
		\draw (b) edge node[right] {\small $\quad\phi_{dSV}$} (e);
		\draw (c) edge node[right] {\small $\qquad\phi_{dSV}$} (e);
	\end{tikzpicture}
	\caption{ }
	\label{fig:big_SV2}
\end{figure}

It follows that the maps $\{\phi_{dSV}: \HFKH(-Y,K) \to \SFH(-Y(K),-\Gamma^+_i)\}$ together induce a map to the sutured inverse limit homology.
\begin{proposition}\label{pro:lim_hat2}
	Let $K$ be a null-homologous knot in a 3-manifold $Y$.  There exists a well-defined, grading preserving map $\Phi_{\mr{dSV}}:\HFKH(-Y,L) \to \SFHIL(-Y,L) $ which is induced by the constituent maps depicted in Figure~\ref{fig:big_SV2}.  \qed
\end{proposition}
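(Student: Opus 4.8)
### Proof proposal for Proposition~\ref{pro:lim_hat2}

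The plan is to verify that the collection of maps $\{\phi_{dSV}\colon \HFKH(-Y,K) \to \SFH(-Y(K),-\Gamma^+_i)\}$ forms a cone over the inverse system $(\{\SFH(-Y(K),-\Gamma^+_i)\},\{\phi'_{ji}\})$, and then invoke the universal property of the inverse limit to produce $\Phi_{\mathrm{dSV}}$. Concretely, one must show that for each pair $j > i$ the triangle
\[
	\phi'_{ji}\circ \phi_{dSV}^{(j)} = \phi_{dSV}^{(i)}
\]
commutes, where $\phi_{dSV}^{(i)}$ denotes the map into the $i$-th stage. Since the $\phi'_-$ maps are cofinal in the inverse system, it suffices to check commutativity of each square in the diagram of Figure~\ref{fig:big_SV2}, i.e. $\phi'_-\circ\phi_{dSV}^{(i+1)} = \phi_{dSV}^{(i)}$.

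The key geometric input is the same one used throughout this section: the HKM gluing map of Theorem~\ref{thm:hkm_gluing} depends only on the contactomorphism type of the inserted $T^2\times I$ layer, and gluing maps compose according to the stacking of contact structures. Here $\phi_{dSV}^{(i)}$ is induced by the contact structure $\widetilde{A}^-_i$ sitting in $\overline{(Y(K),\Gamma^+_i)\backslash(Y(K),\Gamma_\mu)}$ (after identifying $\SFH(-Y(K),-\Gamma_\mu)$ with $\HFKH(-Y,K)$ via Theorem~\ref{thm:SFHisHFK}), while $\phi'_-$ is induced by the basic slice $B^-_{-i}$ in $\overline{(Y(K),\Gamma^+_i)\backslash(Y(K),\Gamma^+_{i+1})}$. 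The composite $\phi'_-\circ\phi_{dSV}^{(i+1)}$ is therefore the gluing map of the stacked layer $\widetilde{A}^-_{i+1}\cup B^-_{-i}$; by the analogue of item~(5) of Proposition~\ref{prop:relations} for the $\widetilde{A}$-family (see Remark~\ref{rem:ends}), this stack is contactomorphic to $\widetilde{A}^-_i$, which is exactly the layer defining $\phi_{dSV}^{(i)}$. Well-definedness of the gluing map then gives the desired equality. The statement that $\Phi_{\mathrm{dSV}}$ preserves the Alexander grading follows from the computation in Section~\ref{ssec:convexandspin} of the relative Euler classes of the $\widetilde{A}^\pm$ layers, exactly as in the grading-shift discussion of Section~\ref{sub:alexander}: one introduces the appropriate grading shifts into the inverse system so that all the $\phi'_-$ and $\phi_{dSV}$ maps become homogeneous of the correct degree, and the grading passes to the limit.

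I expect the only genuine obstacle to be bookkeeping rather than substance: one must confirm that the relevant decomposition of contact structures on thickened tori — the $\widetilde{A}^-_{i+1}\cup B^-_{-i}\cong \widetilde{A}^-_i$ identity — holds with the slope conventions fixed in Subsection~\ref{sub:tt}, and that the identification of $\SFH(-Y(K),-\Gamma_\mu)$ with $\HFKH(-Y,K)$ is compatible with all the gluing maps so that the triangles literally commute on the nose (not merely up to the canonical isomorphism). Once these conventions are pinned down, the proof is a formal consequence of the universal property of $\varprojlim$ together with the naturality and composition properties of the HKM gluing maps, mirroring verbatim the argument for Proposition~\ref{pro:lim_hat}.
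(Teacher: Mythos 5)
Your proof is correct and follows essentially the same route the paper intends: the paper states Proposition~\ref{pro:lim_hat2} with a bare \texttt{\textbackslash qed}, leaving the commutativity of Figure~\ref{fig:big_SV2} as an immediate consequence of the classification of contact structures on thickened tori (Proposition~\ref{prop:relations} and Remark~\ref{rem:ends}) exactly as in the direct-limit analogue, Proposition~\ref{pro:lim_hat}. You have merely made explicit what the paper keeps implicit, namely the contactomorphism $\widetilde{A}^-_{i+1}\cup B^-_{-i}\cong \widetilde{A}^-_i$ (going from slope $i$ to $\infty$ via the intermediate torus of slope $i+1$), and then applied well-definedness of the HKM gluing map and the universal property of $\varprojlim$.
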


% subsection inverse_natural_map (end)
%%%%%%%%%%%%%%%%%%%%%%%%%%%%%%%%%%%%%%%%%%%%%%%%%%%%%%%

%%%%%%%%%%%%%%%%%%%%%%%%%%%%%%%%%%%%%%%%%%%%%%%%%%%%%%%
\subsubsection{A Legendrian/transverse invariant in sutured inverse limit homology} % (fold)
\label{sub:definition_inverse_leg_invt}
%%%%%%%%%%%%%%%%%%%%%%%%%%%%%%%%%%%%%%%%%%%%%%%%%%%%%%%

Let $K \subset (Y,\xi)$ be a Legendrian knot.  Let $\nu(K)$ be a standard neighborhood of $K$ and notice that the sutured manifold $(Y(K),\Gamma_\mu)$ used to define $\SFHIL(-Y,K)$ is obtained from $\overline{(Y \backslash \nu(K))}$ by attaching a negative bypass as in the Stipsicz-V\'ertesi's construction from Section~\ref{sub:relationships}. As noted there, we have a contact structure $\overline{\xi}_K$ on the sutured manifold $(Y(K),\Gamma_\mu)$ and hence we have a contact structure $\overline{\xi}_{K,i}$ on each $(Y(K),\Gamma^+_i)$ by extending $\overline{\xi}_K$ by the contact structure on $\widetilde{A}^-_i$. From this we obtain a contact invariant
\[
	\EH(\overline{\xi}_{K,i})\in \SFH(-Y(K),-\Gamma^+_i)
\]
for each $i$ and, as when defining the direct limit Legendrian invariant, each element in the collection $\{\EH(\overline{\xi}_{K,i})\}$ is taken to another element in the collection by the $\phi'_{ji}$ maps used in the definition of $\SFHIL(-Y,K)$. Thus, we can define a inverse limit invariant as well. 

\begin{definition}\label{def:leg_lim_invt2}
	Let $K \subset (Y,\xi)$ be a Legendrian knot.  We define the {\em inverse LIMIT invariant} of $K$ to be the element $\EHIL(K) \in \SFHIL(-Y,K)$ given as the residue class of the collection $\{\EH(\overline{\xi}_{K,i})\}$ of HKM invariants associated to the $K$ inside $\SFHIL(-Y,K)$.
\end{definition}

From Definition~\ref{def:leg_lim_invt2}, we see that the class $\EHIL$ defines a Legendrian invariant.  Notice that the bypass attached to $\overline{(Y \backslash \nu(K))}$ to obtain the complement of the negatively stabilized $K$ embeds in the Stipsicz-V\'ertesi bypass but the bypass attached to $\overline{(Y \backslash \nu(K))}$ to obtain the complement of the positively stabilized $K$ when glued to the Stipsicz-V\'ertesi bypass yields an overtwisted contact structure. From this one easily concludes the following result. 

\begin{theorem}\label{thm:ehl_neg_stab2}
	Let $K$ be a null-homologous Legendrian knot and let $K_-$ and $K_+$ denote its negative and positive Legendrian stabilization, respectively, then $\EHIL(K_-) = \EHIL(K)$ and $\EHIL(K_+) = 0$. 
\end{theorem}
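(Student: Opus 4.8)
The plan is to deduce both statements from the behaviour of the Stipsicz--V\'ertesi attachment under stabilization, already recorded in Section~\ref{sub:relationships}, together with the fact that the HKM gluing maps respect contact invariants (Theorem~\ref{thm:gluingandctinvt}). Recall from Definition~\ref{def:leg_lim_invt2} that $\EHIL(K)$ is the residue class in $\SFHIL(-Y,K)$ of the collection $\{\EH(\overline{\xi}_{K,i})\}_{i\geq 0}$, where $\overline{\xi}_K$ is the contact structure on $(Y(K),\Gamma_\mu)$ obtained from $\overline{(Y\backslash\nu(K))}$ by attaching the Stipsicz--V\'ertesi negative bypass, and $\overline{\xi}_{K,i}$ is the extension of $\overline{\xi}_K$ over $(Y(K),\Gamma^+_i)$ by the contact piece $\widetilde{A}^-_i$. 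Exactly the same description applies to $K_-$ and to $K_+$ (using Theorem~\ref{thm:limit_top2} to identify the ambient groups), so it suffices to compare the corresponding collections $\{\EH(\overline{\xi}_{K_\pm,i})\}$ with $\{\EH(\overline{\xi}_{K,i})\}$.

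For the negative stabilization, I would first observe, using Theorem~\ref{thm:Leg_stab_bs} and the factorization of the Stipsicz--V\'ertesi attachment depicted on the right of Figure~\ref{fig:YB_factor}, that $\overline{(Y\backslash\nu(K_-))}$ is obtained from $\overline{(Y\backslash\nu(K))}$ by attaching a negative basic slice, and that attaching this negative basic slice and then performing the Stipsicz--V\'ertesi attachment for $K_-$ is the same, rel boundary, as performing the Stipsicz--V\'ertesi attachment for $K$; at the level of thickened-torus contact structures this is precisely the relation $A^-_i\cup B^-_i\cong A^-_{i-1}$ of Proposition~\ref{prop:relations}(5). Hence $(Y(K_-),\Gamma_\mu,\overline{\xi}_{K_-})$ and $(Y(K),\Gamma_\mu,\overline{\xi}_K)$ are contactomorphic rel boundary, and, extending both by the same $\widetilde{A}^-_i$, so are $(Y(K),\Gamma^+_i,\overline{\xi}_{K_-,i})$ and $(Y(K),\Gamma^+_i,\overline{\xi}_{K,i})$ for every $i$. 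Since $\EH$ is a contactomorphism invariant and the underlying diffeomorphism is isotopic to the identity, $\EH(\overline{\xi}_{K_-,i})=\EH(\overline{\xi}_{K,i})$ for all $i$, the two defining collections coincide, and therefore $\EHIL(K_-)=\EHIL(K)$.

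For the positive stabilization, I would again use Theorem~\ref{thm:Leg_stab_bs}: $\overline{(Y\backslash\nu(K_+))}$ is obtained from $\overline{(Y\backslash\nu(K))}$ by attaching a \emph{positive} basic slice. Consequently, inside $(Y(K),\Gamma_\mu,\overline{\xi}_{K_+})$ --- and hence inside each extension $(Y(K),\Gamma^+_i,\overline{\xi}_{K_+,i})$ --- there sits a $T^2\times I$ consisting of this positive basic slice glued to the Stipsicz--V\'ertesi negative basic slice, which, as recorded in Section~\ref{sub:relationships}, is overtwisted (equivalently, by the classification in Section~\ref{sub:tt}, it corresponds to a non-shortenable path of mixed signs in the Farey tessellation). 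Thus each $\overline{\xi}_{K_+,i}$ contains an overtwisted submanifold, so $\EH(\overline{\xi}_{K_+,i})=0$ for all $i$; consequently the defining collection for $\EHIL(K_+)$ is identically zero and $\EHIL(K_+)=0$. (Alternatively, one notes $\EH(\overline{\xi}_{K_+})=0$ in $\SFH(-Y(K),-\Gamma_\mu)$ and pushes this forward using the gluing map $\phi_{dSV}$ of Proposition~\ref{pro:lim_hat2} and Theorem~\ref{thm:gluingandctinvt}.)

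The only step requiring genuine care is the rel-boundary contactomorphism $(Y(K_-),\Gamma_\mu,\overline{\xi}_{K_-})\cong(Y(K),\Gamma_\mu,\overline{\xi}_K)$, i.e.\ the claim that the basic slice realizing negative stabilization fuses with the Stipsicz--V\'ertesi basic slice of $K_-$ to give the Stipsicz--V\'ertesi basic slice of $K$. This is exactly the factorization already used by Stipsicz and V\'ertesi and recorded in Section~\ref{sub:relationships}, so little work is needed; everything else is a formal manipulation of residue classes and the naturality of $\EH$ under the HKM gluing maps.
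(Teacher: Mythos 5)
Your proof is correct and takes essentially the same route as the paper: the paper records exactly the two observations you use (the negative-stabilization bypass embeds in the Stipsicz--V\'ertesi bypass, while the positive-stabilization bypass glued to it is overtwisted) and then says "From this one easily concludes the following result," leaving the verification implicit. Your write-up fills in those implicit steps---identifying the fusion of the negative-stabilization slice with the SV slice of $K_-$ with the SV slice of $K$ via Proposition~\ref{prop:relations}(5), and detecting the overtwisted $T^2\times I$ in the positive case---in precisely the way the paper intends.
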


It follows immediately from this theorem that $\EHIL$ defines a transverse invariant through Legendrian approximation.

\begin{definition}\label{def:trans_lim_invt2}
	Let $K \subset (Y,\xi)$ be a transverse knot and $L_K$ a Legendrian approximation of $K$.  We define $\EHIL(K) := \EHIL(L_K)$.
\end{definition}

Lastly we observe the following result. 

\begin{theorem}\label{thm:ehl_sv2}
	Let $K \subset (Y,\xi)$ be a null-homologous Legendrian knot.  Under the map $\Phi_{dSV} : \HFKH(-Y,K)\to \SFHIL(-Y,K)$, defined in Proposition~\ref{pro:lim_hat2}, the the LOSS invariant $\SLH(K)$ is mapped to the class $\EHIL(K)$.
\end{theorem}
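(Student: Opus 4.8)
The plan is to mirror the proof of Theorem~\ref{thm:ehl_sv}, replacing the direct limit with the inverse limit and reversing the direction of the relevant gluing maps. The essential input is, once again, the Stipsicz-V\'ertesi identification \cite{StV}: under the map $\phi_{SV}\colon \SFH(-Y(K),-\Gamma_K)\to \HFKH(-Y,K)$ the HKM invariant $\EH(K)$ is carried to $\SLH(K)$. Equivalently, in the language used here to build $\SFHIL$, the Stipsicz-V\'ertesi bypass attachment produces a contact manifold $(Y(K),\overline\xi_K)$ with meridional dividing set $\Gamma_\mu$ for which, under the identification $\SFH(-Y(K),-\Gamma_\mu)\cong\HFKH(-Y,K)$ of Theorem~\ref{thm:SFHisHFK}, the contact class $\EH(\overline\xi_K)$ \emph{is} $\SLH(K)$. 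This is precisely the base case $i=0$ of the statement, since by construction $\EHIL(K)$ is the residue class of the sequence $\{\EH(\overline\xi_{K,i})\}$ and $\phi_{dSV}$ is assembled from the maps $\SFH(-Y(K),-\Gamma_\mu)\to\SFH(-Y(K),-\Gamma_i^+)$ induced by the contact structures on $\widetilde A_i^-$.

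First I would set up notation carefully: the inverse limit $\SFHIL(-Y,K)$ is computed from the cofinal tower of maps $\phi'_-$, and an element of the inverse limit is exactly a compatible sequence $(y_i)$ with $\phi'_{ji}(y_j)=y_i$. The map $\Phi_{dSV}$ sends $x\in\HFKH(-Y,K)=\SFH(-Y(K),-\Gamma_\mu)$ to the sequence $(\phi_{dSV}(x))_i$, which is compatible precisely because of the commutativity in Figure~\ref{fig:big_SV2}, itself a consequence of Proposition~\ref{prop:relations} applied to the decomposition $\widetilde A_j^- \cong \widetilde A_i^- \cup C_{-i,-j}^-$ and the well-definedness of the HKM gluing maps. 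So it suffices to check that for each $i$,
\[
	\phi_{dSV}\bigl(\SLH(K)\bigr) = \EH(\overline\xi_{K,i}) \in \SFH(-Y(K),-\Gamma_i^+).
\]

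Second, I would prove this equality using Theorem~\ref{thm:gluingandctinvt} (the HKM gluing map respects contact invariants). The map $\phi_{dSV}$ at level $i$ is, by definition, the gluing map $\phi_{\widetilde\xi_i}$ associated to the contact structure $\widetilde\xi_i$ on $\overline{(Y(K),\Gamma_i^+)\setminus(Y(K),\Gamma_\mu)}$, which is the basic-slice tower $\widetilde A_i^-$. On the other hand, $(Y(K),\overline\xi_{K,i})$ is by construction obtained from $(Y(K),\overline\xi_K)$ by gluing on exactly this $\widetilde A_i^-$. Theorem~\ref{thm:gluingandctinvt} therefore gives $\phi_{\widetilde\xi_i}(\EH(\overline\xi_K)) = \EH(\overline\xi_{K,i})$. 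Combining with the base case $\EH(\overline\xi_K)=\SLH(K)$ (i.e.\ the main theorem of \cite{StV}, reinterpreted via the SV attachment as in the discussion at the end of Section~\ref{sub:relationships}), we obtain $\phi_{dSV}(\SLH(K)) = \EH(\overline\xi_{K,i})$ for every $i$. Taking residue classes in the inverse limit yields $\Phi_{dSV}(\SLH(K)) = \EHIL(K)$, as desired.

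The main obstacle is purely bookkeeping rather than conceptual: one must confirm that the contact structure $\overline\xi_{K,i}$ used in Definition~\ref{def:leg_lim_invt2} really does decompose as $(Y(K),\overline\xi_K)$ glued to the same model basic-slice tower $\widetilde A_i^-$ that defines $\phi_{dSV}$ at level $i$ — that the identifications of boundary tori, sutures, and characteristic foliations all match up so that Theorem~\ref{thm:gluingandctinvt} applies on the nose. This is exactly the content of the sentence in Section~\ref{sub:definition_inverse_leg_invt} that $\overline\xi_{K,i}$ is obtained ``by extending $\overline\xi_K$ by the contact structure on $\widetilde A_i^-$,'' so the verification amounts to unwinding that definition together with the description of the tori $\widetilde T_i$ in Remark~\ref{rem:ends}; no new geometric ideas are needed. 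One should also note that, unlike in the direct-limit case, one does not get a clean "inclusion sends $\EH$ to the limit invariant'' statement, because the structure maps of the inverse system point the wrong way; the argument above sidesteps this by working level-by-level and only passing to the limit at the very end.
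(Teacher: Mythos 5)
Your proof is correct and follows exactly the approach the paper intends for this statement (the paper states Theorem~\ref{thm:ehl_sv2} but does not write out its proof, viewing it as the straightforward inverse-limit analogue of Theorem~\ref{thm:ehl_sv}). You correctly reduce the claim to the level-by-level equality $\phi_{dSV}(\SLH(K)) = \EH(\overline\xi_{K,i})$, observe that this follows by applying Theorem~\ref{thm:gluingandctinvt} to the decomposition $\overline\xi_{K,i} = \overline\xi_K \cup \widetilde A_i^-$, and feed in Stipsicz--V\'ertesi's identification $\EH(\overline\xi_K) = \SLH(K)$ as the base case; the only small slip is the index convention in the decomposition you quote --- it should read $\widetilde A_i^- \cong C^-_{-i,-j} \cup \widetilde A_j^-$ for $j>i$ (the bigger thickened torus, starting at the smaller slope, factors as the thinner annular piece glued to the smaller $\widetilde A$), not the reversed form you wrote, but this does not affect the argument since you use it only to justify commutativity of Figure~\ref{fig:big_SV2}, which is correct as a consequence of Proposition~\ref{prop:relations}.
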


% subsection inverse_leg_trans (end)
%%%%%%%%%%%%%%%%%%%%%%%%%%%%%%%%%%%%%%%%%%%%%%%%%%%%%%%

% subsection sutured_inverse_limit (end)
%%%%%%%%%%%%%%%%%%%%%%%%%%%%%%%%%%%%%%%%%%%%%%%%%%%%%%%

% section limit_invariants (end)
%%%%%%%%%%%%%%%%%%%%%%%%%%%%%%%%%%%%%%%%%%%%%%%%%%%%%%%

\part{Identifying the sutured limit homology package with the knot Floer homology package}\label{part2}

This part of the paper is devoted to proving our main theorems connecting the sutured limit invariants defined in Part~\ref{part1} with the more standard knot Floer homology package.

%%%%%%%%%%%%%%%%%%%%%%%%%%%%%%%%%%%%%%%%%%%%%%%%%%%%%%%
\section{Bordered Sutured Floer Homology} % (fold)
 \label{backgroud2}
%%%%%%%%%%%%%%%%%%%%%%%%%%%%%%%%%%%%%%%%%%%%%%%%%%%%%%%

We begin by reviewing some of the basic constructions and definitions from bordered sutured Floer homology.  For a more thorough and elementary treatment, we refer to interested reader to the book \cite{LOT1} on bordered Floer homology by Lipshitz, Ozsv\'ath and Thurston, and to the third author's paper \cite{Za1} extending this theory to the sutured category.

%%%%%%%%%%%%%%%%%%%%%%%%%%%%%%%%%%%%%%%%%%%%%%%%%%%%%%%
\subsection{Sutured manifolds and surfaces} % (fold)
\label{sub:sutured_manifolds}
%%%%%%%%%%%%%%%%%%%%%%%%%%%%%%%%%%%%%%%%%%%%%%%%%%%%%%%

We recall the definition of a sutured 3-manifold, originally due to Gabai \cite{Ga}.

\begin{definition}\label{def:bal_sut}
	A {\it sutured manifold} is a pair $(Y,\Gamma)$, where $Y$ is an oriented 3-manifold with boundary and $\Gamma$ is a collection of oriented, disjoint, simple closed curves on $\partial Y$ called {\it sutures}.  We further require that $Y$ contains no closed components, all boundary components of $Y$ have sutures and that the suture set $\Gamma$ divides $\partial Y$ into two regions $R_+$ and $R_-$ satisfying $\chi(R_+) = \chi(R_-)$.
\end{definition}

\begin{remark}
	The definition presented above is actually that of a {\it balanced annular sutured manifold} \cite{Ju}.  Since the sutured 3-manifolds encountered in Heegaard Floer theory are annular and generally satisfy the balancing condition, it is customary to omit the words ``balanced'' and ``annular'' when referring to such a space.
\end{remark}

Paralleling the above, in \cite{Za1}, the third author introduced the following 2-dimensional analogue of a sutured manifold.

\begin{definition}\label{def:sut_surf}
	A {\it sutured surface} is a pair $\SF = (F,\Lambda)$, where $F$ is an oriented 2-manifold, and $\Lambda$ is a collection of oriented, disjoint points on $\partial F$ called {\it sutures}.  We further require that $F$ contains no closed components, and that the suture set $\Lambda$ intersects each component of $\partial F$ nontrivially, dividing it into two components $S_+$ and $S_-$ satisfying $\partial S_\pm = \pm \Lambda$.
\end{definition}

\begin{definition}\label{def:sut_div}
	Let $\SF = (F,\Lambda)$ be a sutured surface.  A {\it dividing set} for $\SF$ is a finite collection $\Gamma$ of disjoint, embedded, oriented arcs and simple closed curves in $F$ for which $\partial \Gamma = -\Lambda$, as oriented submanifolds.  We further require that the dividing set $\Gamma$ separates $F$ into two regions $R_+$ and $R_-$ with $\partial R_\pm = (\pm \Gamma) \cup S_\pm$.
\end{definition}

For examples of a sutured surface, and a dividing set on a sutured surface, see Figure~\ref{fig:sut_surf_div}.

%%%%%%%%%%%%%%% Sutured Surfaces and Dividing Sets %%%%%%%%%%%%%%
\begin{figure}[htbp]
	\centering
	{\begin{picture}(135,82)
	\put(0,0){\includegraphics[scale=2.25]{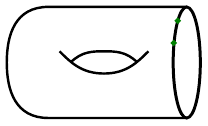}}
	\put(133,40){$S+$}
	\put(98,60){$S_-$}
	\put(118,65){$-$}
	\put(116,52){$+$}
	\end{picture}
	}
	\hspace{30px}
	{\begin{picture}(135,82)
	\put(0,0){\includegraphics[scale=2.25]{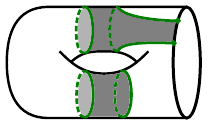}}
	\put(17,40){$R_+$}
	\put(93,23){$R_+$}
	\put(62,19){$R_-$}
	\put(75,60){$R_-$}
	\end{picture}
	}
	\caption{On the left is a sutured surface. On the right is a sutured surface equipped with a dividing set. }
	\label{fig:sut_surf_div}
\end{figure}
%%%%%%%%%%%%%%% End Sutured Surfaces and Dividing Sets %%%%%%%%%%%%%%

% subsection sutured_manifolds_surfaces (end)
%%%%%%%%%%%%%%%%%%%%%%%%%%%%%%%%%%%%%%%%%%%%%%%%%%%%%%%

%%%%%%%%%%%%%%%%%%%%%%%%%%%%%%%%%%%%%%%%%%%%%%%%%%%%%%%
\subsection{Arc diagrams and bordered sutured manifolds} % (fold)
\label{sub:arc_diagrams_and_bordered_sutured_manifolds}
%%%%%%%%%%%%%%%%%%%%%%%%%%%%%%%%%%%%%%%%%%%%%%%%%%%%%%%

\begin{definition}\label{def:arc_diag}
	An {\it arc diagram} of rank $k$ is a triple $\SZ = (\BZ,\ba,M)$ consisting of a finite collection $\BZ$ of oriented arcs, a set of $2k$ disjoint points $\ba = \{a_1,\dots,a_{2k} \} \subset \BZ$, and a 2-to-1 matching $M : \ba \to \{1,\dots,k\}$ such that the 1-manifold obtained by performing 0-surgery along each 0-sphere $M^{-1}(i)$ in $\BZ$ has no closed components.
\end{definition}

Given an arc diagram $\SZ$, one can associate a a graph $G(\SZ)$ obtained from $\BZ$ by attaching 1--cells to points in $\ba$ according to the matching $M$. In addition, one can associate a sutured surface $\SF(\SZ) = (F(\SZ),\Lambda(\SZ))$ to it in the following way.  Starting with the product $\BZ \times [0,1]$, attach (oriented) 2-dimensional 1-handles along the 0-spheres in $M^{-1}(i) \times \{0\}$, for $i = 1,\dots,k$.  The suture set is given by $\Lambda(\SF) = -(\partial \BZ \times \{1/2\})$, and the positive and negative regions are the portions of the boundary $\partial F(\SZ)$ containing $\BZ \times \{1\}$ and $\BZ \times \{0\}$ respectively, see Figure~\ref{fig:par_surf}. We also notice that there is an obvious embedding of $G(\SZ)$ in $\SF(\SZ)$ such that $\BZ$ goes to $\BZ\times \{1/2\}$ and the 1-cells map to (extensions) of the cores of the 1-handles. When discussing the subset of arcs $\BZ$ inside $\SF(\SZ)$, we will always mean $\BZ\times \{1/2\}$.

%%%%%%%%%%%%%%% Arc Diagram and Parametrized Sutured Surface %%%%%%%%%%%%%%

\begin{figure}[htbp]
\centering
	\begin{picture}(230,130)
		\put(0,5){\includegraphics[scale=1.2]{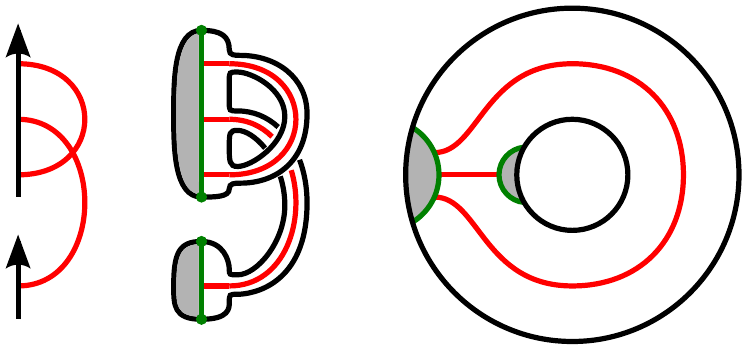}}
		\put(66,49){$+$}
		\put(66,118){$-$}
		\put(66,44){$-$}
		\put(66,5){$+$}
		\put(120,65){$=$}
		\put(40,65){$\to$}
		\put(45,85){$S_+$}
		\put(45,28){$S_+$}
		\put(108,83){$S_-$}
		\put(108,48){$S_-$}
	\end{picture}
	\caption{An arc diagram and its associated parametrized sutured surface.}
	\label{fig:par_surf}
\end{figure}

%%%%%%%%%%%%%%% End Arc Diagram and Parametrized Sutured Surface %%%%%%%%%%%%%%

Let $\SF = (F,\Lambda)$ be a sutured surface and let $\SF(\SZ)$ be the sutured surface associated to an arc diagram $\SZ$.  If there exists a proper diffeomorphism $\iota : \SF(\SZ) \to \SF$, then we say that $\SZ$ {\it parametrizes} $\SF = (F,\Lambda)$.

\begin{definition}\label{def:bord_sut}
	A {\it bordered sutured manifold} $\Y = (Y,\Gamma,\SZ)$ is a (not necessarily balanced) sutured manifold $(Y,\Gamma)$, together with an embedding of the sutured surface $\SF(\SZ)$ into $\partial Y$ that sends $\BZ$, in an orientation preserving way, into $\Gamma$.
\end{definition}

An example of a bordered sutured manifold is depicted in Figure~\ref{fig:bord_sut_manifold}.

%%%%%%%%%%%%%%% Bordered Sutured Manifold %%%%%%%%%%%%%%
\begin{figure}[htbp]
	\centering
	\begin{picture}(240,100)
		\put(0,0){\includegraphics[scale=2.75]{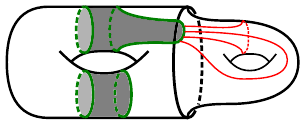}}
		\put(220,85){$\SF(\SZ)$}
		\put(115,35){$R_+$}
		\put(25,50){$R_+$}
		\put(95,73){$R_-$}
		\put(77,22){$R_-$}
	\end{picture}
		\caption{A bordered sutured manifold with sutured surface $\SF$ parametrized by the arc diagram $\SZ$.}
		\label{fig:bord_sut_manifold}
\end{figure}
%%%%%%%%%%%%%%% End Bordered Sutured Manifold %%%%%%%%%%%%%%

\begin{remark}
So far, the discussion has been focused exclusively on the bordered sutured category.  In ``classical'' bordered Floer homology, the concept of an arc diagram is replaced by that of a {\it pointed matched circle} --- roughly, an arc diagram with a single arc, whose tip and tail are identified via a marked point.  When working with bordered Heegaard diagrams, and when computing their corresponding invariants, this marked point plays the role of a usual basepoint in Heegaard Floer theory, as the sutures do in the discussion to follow. So, a ``classical" bordered manifold can be thought of as a sutured bordered manifold with suture a circle bounding a disk. 
\end{remark}

% subsection arc_diagrams_and_bordered_sutured_manifolds (end)
%%%%%%%%%%%%%%%%%%%%%%%%%%%%%%%%%%%%%%%%%%%%%%%%%%%%%%%

%%%%%%%%%%%%%%%%%%%%%%%%%%%%%%%%%%%%%%%%%%%%%%%%%%%%%%%
\subsection{The strands algebra} % (fold)
\label{sub:the_strands_algebra}

We now recall the definition of the strands algebra and bordered algebra from \cite{Za1}. These will both be differential graded algebras, but we omit the discussion of gradings in what follows and refer the interested reader to \cite{Za1}.

\begin{definition}\label{def:strand_alg}
	The {\it strands algebra} $\A(n,k)$ is a free $\F$-module with generators $\mu = (S,T,\phi)$, where $S$ and $T$ are both $k$-element subsets of $\{1,\dots,n\}$, and $\phi: S \to T$ is a non-decreasing bijection.  We denote by $\mr{Inv}(\mu)$ the set of inversions of the map $\phi$, that is, pairs $i<j$ in $S$ such that $\phi(i)>\phi(j)$. We also denote the cardinality of $\mr{Inv}(\mu)$ by $\mr{inv}(\mu) = \mr{inv}(\phi)$.  Multiplication in $\A(n,k)$ is then given by
\[
	(S,T,\phi) \cdot (U,V,\psi) = 
		\begin{cases}
			(S,V,\psi \circ \phi) \;\;\; & \text{if} \; T = U \; \text{and} \; \mr{inv}(\phi) + \mr{inv}(\psi) = \mr{inv}(\psi \circ \phi),\\
			0 & \text{else}.
		\end{cases}
\]
The differential is obtained by summing over all possible ways of ``resolving'' inversions (see below).
\end{definition}

The strands algebra is so-called because it has an obvious interpretation in terms of moving strands from the points of $S$ to those of $T$.  From this perspective, the differential corresponds precisely to resolving topological crossings between two strands.  There is an additional interpretation of the strands algebra in terms of Reeb chords along $\BZ$, see below and \cite{Za1}, and we use the terms ``strand'' and ``Reeb chord'' interchangeably according to context.

In the bordered sutured setting, we require a slight generalization of the strands algebra.  The {\it extended strands algebra} of the tuple $(n_1,\dots,n_\ell;k)$ is
\[
	\A(n_1,\dots,n_\ell;k) = \bigoplus_{k_1 + \dots + k_\ell = k} \A(n_1,k_1) \otimes \dots \otimes \A(n_\ell,k_\ell).
\]
We can view $\A(n_1,\dots,n_\ell;k)$ as a subalgebra of $\A(n_1 + \dots + n_\ell,k)$ by thinking of the components $\A(n_i,k_i)$ as acting on $\{(n_1 + \dots + n_{i-1})+1,\dots,(n_1 + \dots + n_{i-1})+n_i\}$ instead of $\{1,\dots,n_i\}$.

Let $(\BZ,\ba)$ be a finite collection of oriented arcs and a subset of $2k$ points as in Section~\ref{sub:arc_diagrams_and_bordered_sutured_manifolds} above.  Denote by $\BZ_i$ the $i^{\text{th}}$ oriented arc in $\BZ$ and let $\ba_i = \BZ_i \cap \ba$ be the subset of $\ba$ contained in $\BZ_i$.  The strands algebra associated to the pair $(\BZ,\ba)$ is
\[
	\A'(\BZ,\ba) = \bigoplus_{i = 1}^{2k}\A(|\ba_1|,\dots,|\ba_\ell|;i)
\]

Let $\SZ = (\BZ,\ba,M)$ be a rank $k$ arc diagram and $\A'(\BZ,\ba)$ the strands algebra associated to $(\BZ,\ba)$.  To each $i$-element subset $S \subset \{1,\dots,2k\}$, there exists and idempotent $I(S) = (S,S,\mr{id_S}) \in \A'(\BZ,\ba)$.  If $s \subset \{1,\dots,k\}$ is an $i$-element subset, then a {\it section} of $s$ is a subset $S \subset M^{-1}(s)$ such that $M|_S:S \to s$ is a bijection.  For each subset $s \subset \{1,\dots,k\}$ there is an idempotent
\[
	I_s = \sum_{S \; \text{a section of} \; s} I(S),
\]
obtained by summing over over all idempotents associated to sections over $s$.

\begin{definition}\label{def:strands_idemp}
	The ground ring $\SI(\SZ)$ associated to the arc diagram $\SZ = (\BZ,\ba,M)$ is the rank $2^k$ subalgebra of $\A'(\BZ,\ba)$ spanned by the collection of idempotents $\{I_s \; | \; s \subset \{1,\dots,k\}\}$.
\end{definition}

If we let $\SI(\SZ,i)$ denote the subalgebra of $\SI(\SZ)$ generated by the set $\{I_s \; | \; s \subset \{1,\dots,k\}, |s| = i\}$, then there exists a natural decomposition
\[
	\SI(\SZ) = \bigoplus_{i=0}^n \SI(\SZ,i).
\]

It is frequently convenient to focus on the subset of triples $(S,T,\phi)$, where $S,T \subset \{1,\dots,2k\}$, and $\phi:S \to T$ is a {\it strictly} increasing bijection.  In such a situation, we say that a subset $U \subset \{1,\dots,2k\}$ {\it completes} the pair $(S,T)$ if $U \cap (S \cup T) = \emptyset$.  Given, $(S,T,\phi)$ as above, we let
\[
	a_i(S,T,\phi) = \sum_{\substack{U \; \text{completes} \; (S,T)\\ |U \cup S| = i}} (S \cup U, T \cup U, \phi_U) \in \A'(\BZ,\ba),
\]
where $\phi_U|_S = \phi$, and $\phi_U|_U = \id_U$.

\begin{definition}\label{def:strands_assoc}
	The bordered algebra associated to the arc diagram $\SZ = (\BZ,\ba,M)$ is the algebra 
\[
	\A(\SZ) = \SI(\SZ) \cdot \A'(\BZ,\ba) \cdot \SI(\SZ) \subset \A'(\BZ,\ba).
\]
	It is generated over $\F$ by $\SI(\SZ)$ and elements of the form $I \cdot a_i(S,T,\phi) \cdot I$.
\end{definition}

The bordered algebra $\A(\SZ)$ is a module over the idempotent subalgebra $\SI(\SZ)$ and decomposes as a direct sum
\[
	\A(\SZ) = \bigoplus_{i = 0}^k \A(\SZ,i),
\]
where the constituents $\A(\SZ,i) = \SI(\SZ,i) \cdot \A(\SZ) \cdot \SI(\SZ,i)$ are modules over $\SI(\SZ,i)$.

One can alternatively describe the strands algebra $\A(\SZ)$ in terms of Reeb chords as follows.  If $\SZ = (\BZ,\ba,M)$ is an arc diagram, then, up to isotopy, there exists a unique (compatibly oriented) contact structure on the collection of arcs $\BZ$.  If we endow $\BZ$ with this contact structure, then the elements of $\ba \subset \BZ$ are Legendrian.  In this case, there exists a family of positively oriented Reeb chords whose beginning and end-points lie in $\ba$.  If $\brho = \{\rho_1,\dots,\rho_n\}$ is a collection of Reeb chords in $(\BZ,\ba)$, then we let $\brho^- = \{\rho_1^-,\dots,\rho_n^-\}$ and $\brho^+ = \{\rho_1^+,\dots,\rho_n^+\}$ denote the beginning and endpoints of the elements of $\brho$ respectively.

The idea is to use Reeb chords as geometric manifestations of the strictly increasing pairing functions discussed above.  For this to be possible, we must introduce an appropriate compatibility condition.

\begin{definition}\label{def:rho_comp}
	Let $\SZ=(\BZ, \ba, M)$ be an arc-diagram.  A collection of Reeb chords $\brho = \{\rho_1,\dots,\rho_n\}$ in $(\BZ,\ba)$, where $| \ba|=2k$, is said to be $i$-compatible if none of the $\rho_j$ are constant, the points $M(\rho_1^-),\dots,M(\rho_n^-)$ and, independently, $M(\rho_1^+),\dots,M(\rho_n^+)$ are all distinct, and $\#(M(\brho^-)\cup M(\brho^+)) \leq k - (i - n)$.
\end{definition}

Thinking of a collection of Reeb chords as a strictly increasing pairing function, the final condition above guarantees the existence of at lease one $(i-n)$-element subset $s \subset \{1,\dots,k\}$ which is disjoint from $M(\brho^-)\cup M(\brho^+)$, and which ``completes'' $\rho$.  That is, if $\rho$ is an $i$-compatible collection of Reeb chords and $s$ is an $i$-completion, then
\[
	a(\rho,s) = \sum_{S \; \text{is a section of} \; s} (\brho^- \cup S, \brho^+ \cup S, \phi_S),
\]
defines an element of $\A(\SZ,i)$, where $\phi_S(\rho_i^-) = \rho_i^+$ and $\phi_S$ is the identity on $S$.  Defining
\[
	a_i(\brho) = \sum_{s \; \text{an i-completion of} \; \brho} a(\brho,s),
\]
we see that $\A(\SZ,i)$ is generated over $\SI(\SZ)$ by the collection of elements $\{a_i(\brho)\}$, where $\brho$ is an $i$-compatible collection of Reeb chords.

% subsection the_strand_algebra (end)
%%%%%%%%%%%%%%%%%%%%%%%%%%%%%%%%%%%%%%%%%%%%%%%%%%%%%%%

%%%%%%%%%%%%%%%%%%%%%%%%%%%%%%%%%%%%%%%%%%%%%%%%%%%%%%%
\subsection{$A_\infty$-modules and Type-D structures} % (fold)
 \label{sub:_a_infty_modules_and_type_d_structures}
%%%%%%%%%%%%%%%%%%%%%%%%%%%%%%%%%%%%%%%%%%%%%%%%%%%%%%%

We now review basic definitions surrounding $A_\infty$-modules and Type-D structures.  Although everything that follows can be extended to $\Z$-coefficients, we work exclusively over $\F = \Z/2$ since this is all that is needed to define the bordered invariants, and to avoid sign complications. 

Recall an $A_\infty$-algebra over $\F$ is a pair $\A=(A,\{\mu_i\})$ where $A$ is graded $\F$-module and the $\mu_i$ are a sequence of multiplication maps 
\[
	\mu_i:A^{\otimes i}\to A[2-i]
\]
for $i=1,2 \ldots$, satisfying, for each $n$, the compatibility conditions
\[
	\sum_{i+j=n+1}\sum_{l=1}^{n-j+1} \mu_i(a_1\otimes \dots \otimes a_{l-1}\otimes\mu_j(a_l\otimes\dots\otimes a_{l+j-1})\otimes a_{l+j}\otimes \dots \otimes a_n)=0
\]
and the unital condition that there is an element $1\in A$ for which $\mu_2(a,1)=\mu_2(1,a)=a$ for all $a\in A$ and $\mu_i$ vanishes on $i$-tuples containing $1$ for $i\not=2$. 
Here and throughout this paper we denote by $A[n]$ is the module $A$ with grading shifted {\it down} by $n$ and by $A^{\otimes i}$ the tensor product over $\F$ of $i$ copies of $A$. If $\mu_i=0$ for $i>2$ then $A$ is simply a differential graded algebra (with differential $\mu_1$ and multiplication $\mu_2$).

\begin{definition}\label{def:a_infty}
	Let $\A$ be a unital, (graded) $A_\infty$-algebra over $\F$, with multiplication maps $\mu_i$, and $\SI$ the subalgebra of idempotents with orthogonal basis $\{I_i\}$ satisfying $\sum I_i = 1 \in \A$.  A (right unital) $A_\infty$-module over $\A$ is a graded module $M$ over the base ring $\SI$
\[
	M_\A = \bigoplus_i M \cdot I_i,
\]
together with a family of homogeneous maps
\begin{equation}\label{eqn:a_inf_mod}
	m_i : M \otimes A^{\otimes i-1} \to M[2-i], \;\; i \geq 1
\end{equation}
which together satisfy the $A_\infty$ structure conditions
\begin{align*}
	0 = \sum_{j = 1}^{n-1} \sum_{i = 1}^{n-j} m_n(x \otimes a_1 & \otimes \dots \otimes \mu_j(a_i \otimes \dots \otimes a_{i + j}) \otimes \dots \otimes a_{n-1})\\
	& + \sum_{i = 1}^n m_{n-i+1}(m_i(x \otimes a_1 \otimes \dots a_{i-1})  \otimes \dots \otimes a_{n-1}).
\end{align*}
and unital conditions
\begin{align*}
	m_2(x \otimes 1) &= x\\
	m_i(x \otimes \dots \otimes 1 \otimes \dots) &= 0, \;\; i \geq 2.
\end{align*}
We say that the $A_\infty$-module $M_\A$ is {\it bounded} if $m_i = 0$ for all sufficiently large $i$. 
\end{definition}

It is frequently convenient to represent a structure equation like \eqref{eqn:a_inf_mod} graphically.  For the case of an $A_\infty$-module, this is depicted in Figure~\ref{fig:typeA_structure}.

\begin{figure}[htbp]
\centering
	\begin{picture}(140,130)
		\put(0,5){\includegraphics[scale=1]{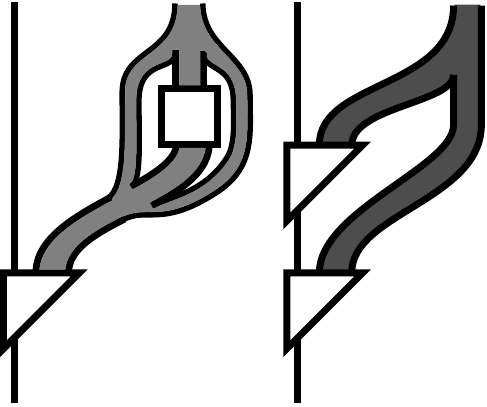}}
		\put(0,125){$M$}
		\put(83,125){$M$}
		\put(0,-3){$M$}
		\put(83,-3){$M$}
		\put(52,125){$\A^{\otimes i}$}
		\put(130,125){$\A^{\otimes i}$}
		\put(52,88){$\mu$}
		\put(5,37){$m$}
		\put(65,55){$+$}
		\put(-20,65){$0 =$}
		\put(87,35){$m$}
		\put(87,73){$m$}
	\end{picture}
	\caption{The structure equation for an $A_\infty$ module $M$.}
	\label{fig:typeA_structure}
\end{figure}

\begin{remark}
If $\SZ$ is an arc diagram, then the associated strands algebra $\A(\SZ)$ is actually a $DG$-algebra --- meaning that $\mu_i = 0$ for all $i > 2$.  Thus, the first summand in the structure equation for an $A_\infty$-module over such an $\A(\SZ)$ involves only terms containing $\mu_1$ and $\mu_2$.
\end{remark}

\begin{definition}\label{def:type_d}
	Let $\A$ be a unital $DG$-algebra over $\F$, with idempotent subalgebra $\SI$ as above.  A {\it (left) Type-$D$ structure} over $\A$ is a graded module $N$ over the base ring $\SI$
\[
	^\A N = \bigoplus_i I_i \cdot N
\]
together with a homogeneous map
\[
	\delta: N \to (A \otimes N)[1],
\]
satisfying the compatibility relation
\[
	(\mu_1 \otimes \id_N) \circ \delta + (\mu_2 \otimes \id_N) \circ (\id_A \otimes \delta) \circ \delta = 0
\]
\end{definition}

Iterating, we obtain a collection of maps indexed by $k \in \{0,1,\dots\}$
\[
	\delta_k : N \to (A^{\otimes k} \otimes N)[k],
\]	
where
\[
	\delta_k = \begin{cases} \id_N & \text{for} \; k = 0\\
	(\id_A \otimes \delta_{k-1}) \circ \delta \;\; & \text{for} \; k \geq 0.
	\end{cases}
\]
The structure equation for a Type-$D$ modules is shown in Figure~\ref{fig:typeD_structure}.  We say a Type-$D$ structure $^\A N$ is {\it bounded} if $\delta_k = 0$ for all sufficiently large $k$.

\begin{figure}[htbp]
\centering
	\begin{picture}(65,130)
		\put(0,5){\includegraphics[scale=1]{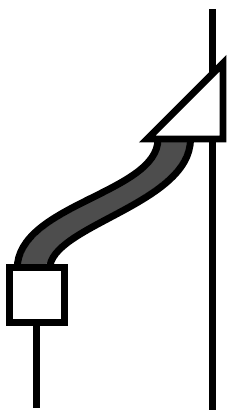}}
		\put(55,-2){$M$}
		\put(5,-2){$\A$}
		\put(55,125){$M$}
		\put(55,88){$\overline{\delta}$}
		\put(6,38){$\mu$}
		\put(-20,65){$0 =$}
	\end{picture}
	\caption{The structure equation for a Type-$D$ structure on $M$.}
	\label{fig:typeD_structure}
\end{figure}

Given two Type-$D$ structures $(N,\delta)$ and $(N',\delta')$ over $\A$ an $\F$-module homomorphism $\psi: N\to A\otimes N'$ is a $D$-structure homomorphism if it satisfies
\[
	(\mu_2\otimes \id_{N'})\circ (\id_A \otimes \psi)\circ \delta + (\mu_2\otimes \id_{N'})\circ (\id_{A}\otimes \delta_{N'})\circ \psi + (\mu_1\otimes \id_{N'})\circ \psi=0.
\]
Given two $D$-structure homomorphisms $\phi: N\to A\otimes N'$  and $\psi:N'\to A\otimes N''$ their composition is defined to be the $D$-structure homomorphism $\psi\circ \phi$ form $N$ to $N''$ defined by 
\[
	(\mu_2\otimes \id_{N''})\circ(\id_A\otimes \psi)\circ \phi.
\]
We say that two Type-$D$ structure homomorphisms $\phi:N\to A\otimes N'$ and $\psi:N\to A\otimes N'$ are homotopic if there is a $D$-structure homotopy between them.  That is, a $\F$-module homomorphism $h: N \to A\otimes N'[-1]$ satisfying 
\[
	(\mu_2\otimes \id_{N'})\circ (\id_A \otimes h)\circ \delta + (\mu_2\otimes \id_{N'})\circ (\id_{A}\otimes \delta_{N'})\circ h + (\mu_1\otimes \id_{N'})\circ h=\psi-\phi.
\]

Given an $A_\infty$-module $M_\A$ and a Type-$D$ structure $^\A N$, at least one of which is bounded, we can form their {\it box tensor product} $M_\A \boxtimes {^\A N} = (M \otimes_\SI N, \partial^\boxtimes)$, with differential given by the formula
\[
	\partial^\boxtimes(x \otimes y) = \sum_{k = 0}^\infty (m_{k+1} \otimes \id_N)(x \otimes \delta_k(y)). 
\]
The boundedness assumption ensures that the above sum is finite.
\begin{figure}[htbp]
\centering
	\begin{picture}(63,130)
		\put(0,5){\includegraphics[scale=1]{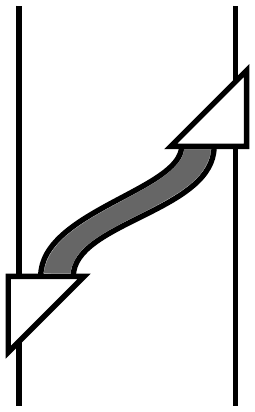}}
		\put(0,125){$M$}
		\put(65,125){$N$}
		\put(0,-3){$M$}
		\put(65,-3){$N$}
		\put(60,85){$\overline{\delta}$}
		\put(4,35){$m$}
		\put(-30,65){$\partial^\boxtimes = $}
	\end{picture}
	\caption{The structure equation for $\partial^\boxtimes$.}
	\label{fig:box_structure}
\end{figure}

The following definition from \cite{LOT1} shows how to induce maps on box tensor products. 

\begin{definition}\label{def:boxmap}
	Let $\phi: N \to N'$ be a map of $D$-modules and $\id_M$ the identity. then {\it box tensor product} of $\id_M$ and $\phi$ is a map 
\[
	\id_M \boxtimes \phi: M \boxtimes N \to M \boxtimes N'
\]
given by
\[
	\id_M \boxtimes \phi (x \otimes y) = \sum_{k=0}^\infty (m_{k+1} \otimes \id_{N'})\circ(x \otimes \phi_k(y)),
\]
where the maps $\phi_k\to A^{\otimes k} \otimes N$ are defined inductively by
\[	
	\phi_k := \sum_{i+j = k-1} (\id_{A^{\otimes (i+1)}} \otimes \delta'_j) \circ (\id_{A^{\otimes i}} \otimes \phi) \circ \delta_i.
\]

\end{definition}

Graphically, the map $\id_M \boxtimes \phi$ can represented as in Figure~\ref{fig:boxmap}.

\begin{figure}[htbp]
\centering
	\begin{picture}(55,125)
		\put(0,5){\includegraphics[scale=1]{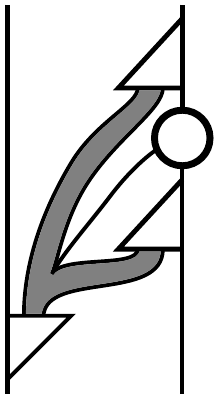}}
		\put(-2,120){$M$}
		\put(48,120){$N$}
		\put(-2,-3){$M$}
		\put(48,-3){$N'$}
		\put(44,98){$\delta$}
		\put(50,77){$\phi$}
		\put(44,51){$\delta'$}
		\put(3,20){$m$}
	\end{picture}
	\caption{The map $\mathrm{Id}_{M} \boxtimes \phi$.}
	\label{fig:boxmap}
\end{figure}

We will also make use of $A_\infty$-bimodules. As the $A_\infty$-algebras we will be concerned with are differential graded algebras we will define our bimodules over such algebras. See \cite{LOT2} for the more general definition. 
\begin{definition}
Let $\A$ and $\B$ be two differential graded algebras with underlying $\F$-modules $A$ and $B$, differential denoted $\partial_\A$ and $\partial_\B$ and multiplication denoted $\mu_\A$ and $\mu_\B$, respectively.  A Type-$DA$ structure over $\A$ and $\B$, denoted $^\A M_\B$, is a graded vector space $M$ over $\F$ together with a collection of graded maps
\[
	m_k:M\otimes B^{\otimes (k-1)}\to A\otimes M[2-k]
\]
satisfying 
\begin{align*}
	\sum_{p=1}^k (\mu_\A\otimes \id_M)&\circ(\id_\A\otimes m_{k-p+1})\circ(m_p\otimes \id_{B^{\otimes (k-p)}})+(\partial_\A \otimes \id_M)\circ m_k\\
	&+\sum_{p=0}^{k-2} m_k\circ (\id_M\otimes \id_{B^{\otimes p}}\otimes \partial_\B\otimes \id_{B^{\otimes(k-p-2)}})\\
	&+\sum_{p=0}^{k-3} m_k\circ(\id_M\otimes\id_{B^{\otimes p}}\otimes \mu_\B\otimes \id_{B^{\otimes(k-p-3)}})=0,
\end{align*}
for all $k\geq 0$, and the unital condition that $m_2(x\otimes 1)=1\otimes x$ and $m_k$ is zero when any entry is $1$ for $k\not=2$.
\end{definition}
Given such a Type-$DA$ structure, we can define maps $m_k^i:M\otimes B^{\otimes (k-1)} \to A^{\otimes i}\otimes M[1+i-k]$ by setting $m_1^0=\id_M, m_k^0=0$ for $k>1$, $m_k^1=m_k$ and then inductively define 
\[
	m_k^i=\sum_{j=0}^{k-1} (\id_{A^{\otimes(i-1)}} \otimes m_{j+1})\circ(m_{k-j}^{i-1}\otimes \id_{B^{\otimes j}}).
\]
Notice that, in the case that $A$ or $B$ is the trivial algebra, we get an $A_\infty$-module over $\B$ or a Type-$D$ structure over $\A$ (by ignoring the $m_k^i$ for $k>1$), respectively. There are notions of maps between Type-$DA$ modules and homotopies between such maps analogous to those for Type-$D$ modules discussed above. For details see \cite{LOT2}.

Now, given two Type-$DA$ structures $^\A M_\B$ and $^\B N_\C$ with maps $\{m_k^i\}$ and $\{n_l^j\}$, respectively, we define their {\em box tensor product} $^\A M_\B \boxtimes {^\B N}_\C$ to be the Type-$DA$ structure $^\A(M\otimes N)_\C$ with operations 
\[
(m\boxtimes n)^i_k= \sum_{j\geq 1} (m_j^i\otimes \id_N)\circ(\id_M\otimes n_k^{j-1}).
\]
If both $\A$ and $\C$ are trivial then one notes that this definition agrees with the box tensor product defined above. 

% subsection _a_infty_modules_and_type_d_structures (end)
%%%%%%%%%%%%%%%%%%%%%%%%%%%%%%%%%%%%%%%%%%%%%%%%%%%%%%%

%%%%%%%%%%%%%%%%%%%%%%%%%%%%%%%%%%%%%%%%%%%%%%%%%%%%%%%
\subsection{The bordered invariants} % (fold)
\label{sub:the_bordered_invariants}
%%%%%%%%%%%%%%%%%%%%%%%%%%%%%%%%%%%%%%%%%%%%%%%%%%%%%%%
Let $\SZ = (\BZ,\ba,M)$ be an arc diagram.

\begin{definition}\label{def:bord_sut_heegaard}
A bordered sutured Heegaard diagram is a quadruple $\SH = (\Sigma, \balpha,\bbeta, \SZ)$ comprised of the following
\begin{itemize}
	\item $\Sigma$ a compact surface with no closed components
	\item $\balpha = \balpha^c \cup \balpha^a$ a collection of pairwise disjoint, properly embedded circles $\balpha^c$ and arcs $\balpha^a$ in $\Sigma$.
	\item $\bbeta$ a collection of pairwise disjoint, properly embedded circles.
	\item An embedding of the associated graph $G(\SZ) \to \Sigma$ such that $\BZ$ is sent to $\partial \Sigma$ in an orientation preserving way, and the 1-cells of $\mathcal{G}(\SZ)$ are identified with the arcs $\balpha^a$.
\end{itemize}
We further require that each component of $\Sigma - (\balpha^c \cup \balpha^a)$ and each component of $\Sigma - \bbeta$ intersects $\partial \Sigma - \BZ$.
\end{definition}

From a bordered sutured Heegaard diagram $\SH = (\Sigma, \balpha,\bbeta, \SZ)$ we can constructed a bordered sutured 3--manifolds $(Y,\Gamma,\SZ)$ as follows. The manifold $Y$ is simply $\Sigma\times [0,1]$ with 2--handles attached to $\Sigma\times\{1\}$ along the circles in $\bbeta$ and attached to $\Sigma\times\{0\}$ along the circles in $\balpha^c$. The sutures $\Gamma$ are $\partial\Sigma\times\{1/2\}$. Finally we construct the embedding of $\SF(\SZ)$ into $\partial Y$ by first embedding $G(\SZ)$. To this end, we embed $\BZ$ as $\BZ\times \{1/2\}\subset\partial \Sigma\times \{1/2\}$ and to each $\alpha \in \balpha^a$ we attach the 1-cell $(\partial \alpha\times[0,1/2])\cup \alpha\times\{0\}$. Now a small neighborhood of $G(\SZ)$ in $\partial Y$ gives an embedding of $\SF(\SZ)$ into $\partial Y$. 

Given a bordered sutured Heegaard diagram $\SH = (\Sigma, \balpha, \bbeta, \SZ)$, a {\it generator} for bordered sutured Floer homology is a collection of intersections $\x = (x_1,\dots,x_g)$ in $\balpha \cap \bbeta$ such that exactly one point comes from each $\alpha$-circle, exactly one point comes from each $\beta$ circle, and at most one point comes from each $\alpha$-arc.  We denote this generating set $\SG(\SH)$.

Let $\SH$ be a bordered sutured Heegaard diagram and $\x \in \SG(\SH)$ a generator.  The set of $\alpha$-arcs containing points of the generator $\x$ is denoted $o(\x)$ and is called the set of {\it occupied} arcs.  Similarly, we denote by $\overline{o}(\x) = \balpha^a - o(\x)$ the complement of the set of occupied arcs.

To a bordered sutured Heegaard diagram $\SH = (\Sigma,\balpha,\bbeta,\SZ)$ one associates two basic algebraic objects.  The first is a right $A_\infty$-module over the bordered strands algebra $\A(\SZ)$ and is denoted $\BSA(\SH)_{\A(\SZ)}$.  The second is a left Type-$D$ structure over $\A(-\SZ)$, and is denoted $^{\A(-\SZ)}\BSD(\SH)$.  More specifically the $A_\infty$-module $\BSA(\SH)$ is $X(\SH)$, where $X(\SH)$ is the $\F$-vector space generated by $\SG(\SH)$ and given a right $\SI(\SZ)$-module structure by the action
\[
	\x \cdot I(s) = \begin{cases}
						\x \;\;\; &\text{if} \; s = o(\x)\\
						0 & \text{else}.
					\end{cases}
\]
In a similar spirit, the Type-$D$ structure $\BSD(\SH)$ is defined to be $\A(-\SZ)\otimes_{\SI(-\SZ)} X(\SH)$ where the left action of $\SI(-\SZ)$ on $X(\SH)$ is given by
\[
	I(s) \cdot \x = \begin{cases}
						\x \;\;\; &\text{if} \; s = \overline{o}(\x)\\
						0 & \text{else}.
					\end{cases}
\]
The $A_\infty$-module and Type-$D$ structures on $\BSA(\SH)$ and $\BSD(\SH)$, respectively, are obtained by counting holomorphic curves in $\Sigma \times [0,1] \times \R$ with appropriate asymptotic --- we refer the reader to \cite{LOT1} and \cite{Za1} for details, but make a few remarks in the next subsection that are relevant to our computations below. 

The third author showed in \cite{Za1} that  $\BSA(\SH)$ and $\BSD(\SH)$ are both invariants of the underlying bordered sutured manifold specified by $\SH$ up to homotopy equivalence.  In addition, the third author established a pairing theorem which describes how these invariants behave if one glues together two bordered sutured 3-manifolds along a common parametrized sutured surface $\SZ$.

\begin{theorem}\label{thm:zarev_pairing}
	Let $(Y_1,\Gamma_1,\SZ)$ and $(Y_2,\Gamma_2,-\SZ)$ be two bordered sutured 3-manifolds and $(Y,\Gamma) = (Y_1 \cup_\SZ Y_2, \Gamma_1 \cup \Gamma_2)$ the sutured 3-manifold obtained by gluing them together along $\SZ$.  Then there exists a graded homotopy equivalence
\[
	\SFC(Y,\Gamma) \simeq \BSA(Y_1,\Gamma_1) \boxtimes \BSD(Y_2,\Gamma_2) \simeq \BSA(Y_1,\Gamma_1) \widetilde{\otimes}_{\A(\SZ)} \BSD(Y_2,\Gamma_2),
\]
where $\widetilde{\otimes}$ denotes the derived ($A_\infty$) tensor product.
\end{theorem}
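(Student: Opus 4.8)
\noindent The plan is to adapt the proof of the Lipshitz--Ozsv\'ath--Thurston pairing theorem from \cite{LOT1} to the sutured bordered setting, as done in \cite{Za1}. First I would fix suitably admissible bordered sutured Heegaard diagrams $\SH_1 = (\Sigma_1,\balpha_1,\bbeta_1,\SZ)$ for $(Y_1,\Gamma_1,\SZ)$ and $\SH_2 = (\Sigma_2,\balpha_2,\bbeta_2,-\SZ)$ for $(Y_2,\Gamma_2,-\SZ)$, chosen so that at least one of the resulting invariants is bounded (always arrangeable by a provincially admissible choice), so that the box tensor product is defined. Gluing $\SH_1$ and $\SH_2$ along the common arc of the boundary carrying $G(\SZ)$ produces a sutured Heegaard diagram $\SH = \SH_1 \cup_\SZ \SH_2$ for $(Y,\Gamma)$, which one checks is admissible in Juh\'asz's sense. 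On generators the identification is purely combinatorial: since the $\alpha$-arcs of $\SH_1$ are glued to those of $\SH_2$, a generator of $\SH$ is exactly a pair $(\x_1,\x_2)$ with $o(\x_1)$ and $\overline{o}(\x_2)$ complementary, so the underlying $\F$-module of $\SFC(\SH)$ is $X(\SH_1)\otimes_{\SI(\SZ)} X(\SH_2)$, which is the module underlying $\BSA(\SH_1)\boxtimes\BSD(\SH_2)$.

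\noindent The analytic heart of the argument is to match the differentials. I would degenerate $\Sigma$ along the circle and arcs where the two halves were glued, stretching a long neck. A neck-stretching and Gromov compactness argument then shows that, for a sufficiently stretched family of almost complex structures on $\Sigma \times [0,1] \times \R$, every index-one holomorphic curve contributing to $\partial$ breaks into a curve in $\Sigma_1 \times [0,1] \times \R$ and a curve in $\Sigma_2 \times [0,1] \times \R$ asymptotic to a common ordered sequence $\brho$ of Reeb chords along the boundary; the $\Sigma_1$-piece is counted by a multiplication map $m_{k+1}$ of $\BSA(\SH_1)$ and the $\Sigma_2$-piece by a component of $\delta_k$ of $\BSD(\SH_2)$. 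A complementary gluing lemma shows every such matched pair comes from an honest curve on $\SH$. Summing over all distributions of chords gives exactly
\[
	\partial^\boxtimes(\x_1\otimes\x_2) = \sum_{k\geq 0}(m_{k+1}\otimes\id)(\x_1\otimes\delta_k(\x_2)),
\]
with finiteness guaranteed by the boundedness hypothesis together with provincial admissibility; the region $\partial\Sigma-\BZ$ plays the role of the basepoint that keeps everything over $\F$ rather than $\F[U]$. This yields a homotopy equivalence $\SFC(Y,\Gamma) \simeq \BSA(\SH_1)\boxtimes\BSD(\SH_2)$, and the homotopy invariance of $\BSA$ and $\BSD$ (quoted above from \cite{Za1}) upgrades it to the stated equivalence $\SFC(Y,\Gamma)\simeq \BSA(Y_1,\Gamma_1)\boxtimes\BSD(Y_2,\Gamma_2)$, independent of the diagrams.

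\noindent Finally, the equivalence $\BSA(Y_1,\Gamma_1)\boxtimes\BSD(Y_2,\Gamma_2) \simeq \BSA(Y_1,\Gamma_1)\widetilde{\otimes}_{\A(\SZ)}\BSD(Y_2,\Gamma_2)$ is formal homological algebra over the $DG$ algebra $\A(\SZ)$: a type-$D$ structure over $\A(\SZ)$ determines an $A_\infty$-module, and the box tensor product of an $A_\infty$-module with a type-$D$ structure is a model for the derived ($A_\infty$) tensor product of the two $A_\infty$-modules. This is carried out in \cite{LOT1,LOT2} and needs only that $\A(\SZ)$ be a $DG$ algebra, which it is.

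\noindent The main obstacle is the degeneration argument of the middle step: one must rule out broken configurations with components entirely in the neck region or with unmatched chord asymptotics, establish transversality uniformly along the stretching family, and verify that the combinatorics of Reeb chords produced on the two sides is precisely that encoded by $\A(\SZ)$ and the definition of $\boxtimes$. The sutured-specific features to check are the role of $\partial\Sigma-\BZ$ in place of a basepoint and the appearance of the extended strands algebra; these are exactly the modifications made in \cite{Za1} to the original argument of \cite{LOT1}.
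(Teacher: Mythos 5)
The paper itself does not prove this theorem; it is imported as a black box from the third author's earlier work \cite{Za1}, where it is Theorem~8.7 (and its derived-tensor reformulation). Your sketch faithfully reproduces the strategy used there, which is the sutured adaptation of the Lipshitz--Ozsv\'ath--Thurston pairing argument: glue admissible bordered sutured diagrams, identify generators combinatorially via the idempotent/occupancy matching, stretch the neck along the gluing locus and use Gromov compactness together with a gluing lemma to match $\partial^\boxtimes$ with the Heegaard Floer differential, and then pass from $\boxtimes$ to $\widetilde{\otimes}$ by the standard homological-algebra equivalence for type-$D$ structures over a $DG$ algebra. You have also correctly flagged the sutured-specific modifications (the role of $\partial\Sigma - \BZ$ in place of a basepoint, and the appearance of the extended strands algebra), so the proposal is aligned with the cited proof.
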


Moving to bi-modules, let $\SH = (\Sigma,\balpha,\bbeta,-\SZ_1\cup \SZ_2)$ he a bordered sutured Heegaard diagram where the parameterized surface has two disjoint components one parameterized by $-\SF(\SZ_1)$ and the other by $\SF(\SZ_2)$. We define the Type-$DA$ structure $^{\A(\SZ_1)} \BSDA(\SH)_{\A(\SZ_2)}$ to be $\A(\SZ_1)\otimes_{\SI(\SZ_1)} X(\SH)$ where the left $\SI(-\SZ_1)$ and right $\SI(\SZ_2)$ module structures on $X(\SH)$ are defined by
\[
	I(s_1) \cdot \x \cdot I(s_2)=   \begin{cases}
						\x\;\:\; &\text{if } s_1=\overline{o}(\x) \text{ and } s_2 = o(\x)\\
						0 & \text{else}.
					\end{cases}
\]
The operators $\{m_k\}$ are defined by counting holomorphic curves in $\Sigma \times [0,1] \times \R$ with appropriate asymptotic. 

We also have a generalization of Theorem~\ref{thm:zarev_pairing}.
\begin{theorem}
Given two bordered sutured manifolds $(Y_1,\Gamma_1,-\SZ_1\cup \SZ_2)$ and $(Y_2,\Gamma_2, -\SZ_2\cup \SZ_3)$  we can glue them together along $\SF(\SZ_2)$ to get a sutured cobordism from $\SF(\SZ_1)$ to $\SF(\SZ_2)$. Then there is a graded homotopy equivalence of bimodules
\[
	\BSDA(Y_1\cup_{\SF(\SZ_2)} Y_2) \simeq \BSDA(Y_1)\boxtimes \BSDA(Y_2) \simeq \BSDA(Y_1) \widetilde{\otimes} \BSDA(Y_2).
\] 
\end{theorem}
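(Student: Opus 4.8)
The plan is to mimic the proof of the module pairing theorem (Theorem~\ref{thm:zarev_pairing}) and its counterpart for bordered bimodules over pointed matched circles, keeping track of the extra parametrized boundary component that survives the gluing. First I would choose bordered sutured Heegaard diagrams $\SH_1=(\Sigma_1,\balpha_1,\bbeta_1,-\SZ_1\cup\SZ_2)$ and $\SH_2=(\Sigma_2,\balpha_2,\bbeta_2,-\SZ_2\cup\SZ_3)$ representing $(Y_1,\Gamma_1,-\SZ_1\cup\SZ_2)$ and $(Y_2,\Gamma_2,-\SZ_2\cup\SZ_3)$, and form the glued diagram $\SH=\SH_1\cup_{\SZ_2}\SH_2$ by identifying the portions of $\partial\Sigma_1$ and $\partial\Sigma_2$ carrying $\SZ_2$ and $-\SZ_2$ and concatenating the corresponding $\alpha$-arcs. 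One checks that $\SH$ is a bordered sutured Heegaard diagram for $Y_1\cup_{\SF(\SZ_2)}Y_2$ with parametrized boundary $-\SF(\SZ_1)\cup\SF(\SZ_3)$. On the level of underlying $\F$-modules, the occupied/complementary-arc bookkeeping along $\SZ_2$ shows that $\SG(\SH)$ is in natural bijection with the set of pairs $(\x_1,\x_2)\in\SG(\SH_1)\times\SG(\SH_2)$ whose occupied arcs along the gluing surface match complementarily, i.e.\ with the tensor product $X(\SH_1)\otimes_{\SI(\SZ_2)}X(\SH_2)$ underlying $\BSDA(\SH_1)\boxtimes\BSDA(\SH_2)$, and this bijection respects the residual left $\SI(\SZ_1)$- and right $\SI(\SZ_3)$-actions.

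The main step is analytic: stretch the neck along $\SF(\SZ_2)\times[0,1]\times\R\subset\Sigma\times[0,1]\times\R$ and analyze the limits of the rigid holomorphic curves counted by the $\BSDA(\SH)$ operations. Exactly as in \cite{Za1} and \cite{LOT2}, for a sufficiently stretched neck each such curve degenerates into a pair of curves, one in $\Sigma_1\times[0,1]\times\R$ and one in $\Sigma_2\times[0,1]\times\R$, with matching Reeb-chord asymptotics along $\SZ_2$; conversely a gluing argument recovers such curves from matched pairs. Reading the $\Sigma_2$-side curve as contributing to the Type-$D$ operations $\delta_k$ that emit a sequence of algebra elements of $\A(\SZ_2)$ along $-\SZ_2$ (while also recording inputs/outputs along $\SZ_3$), and the $\Sigma_1$-side curve as contributing to the $A_\infty$ operations that absorb those same algebra elements along $\SZ_2$ (while emitting elements of $\A(\SZ_1)$ and absorbing elements along $\SZ_3$), one sees that the resulting count is precisely the defining formula for $(m\boxtimes n)^i_k$. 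This identifies $\BSDA(\SH)$ with $\BSDA(\SH_1)\boxtimes\BSDA(\SH_2)$ as Type-$DA$ structures, and since $\BSDA$ is a homotopy invariant of the bordered sutured manifold, this yields the first equivalence $\BSDA(Y_1\cup_{\SF(\SZ_2)}Y_2)\simeq\BSDA(Y_1)\boxtimes\BSDA(Y_2)$.

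I expect the main obstacle to be the transversality and gluing analysis near the stretched neck: verifying that the degeneration has exactly the stated form (no extra components sinking into the neck region, correct index and dimension bookkeeping, and that the matched fibered product of moduli spaces is cut out transversally). This is, however, entirely parallel to the arguments already carried out for the module pairing theorem in \cite{Za1} and for bimodules over pointed matched circles in \cite{LOT2}; the only genuinely new feature is that the glued diagram $\SH$ retains two parametrized boundary components rather than one, which affects only the combinatorial labeling of asymptotics and not the compactness or gluing estimates. One should also invoke the standard provincial admissibility hypotheses on $\SH_1$ and $\SH_2$ (achievable after isotopy) to guarantee finiteness of all the relevant sums.

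Finally, the second equivalence $\BSDA(Y_1)\boxtimes\BSDA(Y_2)\simeq\BSDA(Y_1)\,\widetilde{\otimes}\,\BSDA(Y_2)$ is purely algebraic. Replacing one of the two factors by a homotopy equivalent bounded model over $\A(\SZ_2)$ — which exists by the general structure theory of $A_\infty$-modules and Type-$DA$ structures over the strands algebras, as in \cite{LOT1,LOT2} — the box tensor product computes the derived ($A_\infty$) tensor product, and boundedness guarantees that $\boxtimes$ is already homotopy-equivalent to $\widetilde{\otimes}$ before the replacement. Combining the two equivalences gives the statement.
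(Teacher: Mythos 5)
The paper does not prove this theorem; it states it as a background result, as a ``generalization of Theorem~\ref{thm:zarev_pairing},'' which in turn is cited to \cite{Za1}. The bimodule pairing theorem for bordered sutured manifolds is established in the third author's papers \cite{Za1, Za2} and in \cite{LOT2} for the pointed-matched-circle case, and your task here is therefore to reconstruct the argument rather than to match a proof in the text.

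Your sketch does correctly reproduce the standard argument. You identify the key steps in the right order: gluing Heegaard diagrams $\SH_1\cup_{\SZ_2}\SH_2$; the bijection between $\SG(\SH)$ and pairs of generators with complementary occupied arcs along $\SZ_2$ (this is precisely what the $\otimes_{\SI(\SZ_2)}$ in the box tensor encodes); the neck-stretching/SFT degeneration to match holomorphic curve counts on the glued diagram with the $(m\boxtimes n)^i_k$ formula; and the purely algebraic comparison of $\boxtimes$ with $\widetilde{\otimes}$. Two small points worth sharpening. First, the existence of a \emph{bounded} model is achieved geometrically (by choosing a provincially admissible Heegaard diagram for one of the pieces) rather than by an abstract structure theorem, and admissibility is also what guarantees the finiteness of the box-tensor sums; you gesture at this (``achievable after isotopy''), but it is worth stating that the admissibility is doing double duty. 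Second, the degeneration analysis in the bordered sutured setting has one additional wrinkle beyond the pointed-matched-circle case: curves can have east $\infty$ punctures on either or both of the two parametrized boundary components of $\SH$, so the SFT compactness statement must rule out components escaping toward $\SZ_1$ or $\SZ_3$ during the stretch along $\SZ_2$; this is handled in \cite{Za1} using energy/period bounds and is the part you flag as ``the main obstacle,'' which is the right place to point. Overall, your approach is the same as the one in the sources the paper is implicitly citing.
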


% subsection the_bordered_invariants (end)
%%%%%%%%%%%%%%%%%%%%%%%%%%%%%%%%%%%%%%%%%%%%%%%%%%%%%%

%%%%%%%%%%%%%%%%%%%%%%%%%%%%%%%%%%%%%%%%%%%%%%%%%%%%%%%
\subsection{Nice Heegaard diagrams} % (fold)
 \label{sub:nice_heegaard_diagrams}
%%%%%%%%%%%%%%%%%%%%%%%%%%%%%%%%%%%%%%%%%%%%%%%%%%%%%%%
 In \cite{SW}, Sarkar and Wang showed how to compute the differential in Heegaard-Floer homology combinatorially if the Heegaard diagram was ``nice''. As discussed in \cite{Za1}, the same is true in the bordered sutured category. 
 
 A bordered sutured Heegaard diagram $\SH = (\Sigma, \balpha,\bbeta, \SZ)$ is {\em nice} if every region of $\Sigma\setminus (\balpha\cup \bbeta)$ either contains part of $\partial \Sigma\setminus \BZ$ or is a disk with at most four vertices. For such diagrams we can define the Type-$D$ structure $\delta$ on $\BSD(\SH)$ as follows. For each generator $\x$ of $\BSD(\SH)$ the differential $\delta(\x)$ is computed as follows:
 \begin{enumerate}
 \item\label{it1} Suppose $\y$ is another generator that differs from $\x$ in only one double point and $S$ is a convex bigon embedded in $\Sigma$ with boundary consisting of one arc from $\balpha$ and one arc from $\bbeta$, no points of $\x\cap \y$ in the interior of the bigon, traversing $\partial S$ near one of the double points of $\x$ in the direction induced from the orientation of $S$ one encounters the arc from $\bbeta$ and then the one from $\balpha$, and traversing the boundary near one of the double points of $\y$ one encounters the arcs in the opposite order. Then $S$ contributes $I(\overline{o}(\x)) \otimes \y$ to $\delta(\x)$. 
 \item\label{it2} Suppose $\y$ is another generator that differs from $\x$ at exactly two double points and $S$ is a convex rectangle embedded in $\Sigma$ with boundary consisting of two arcs from $\balpha$ and two arcs from $\bbeta$, no points of $\x\cap \y$ in the interior of the rectangle,  traversing $\partial S$ near one of the double points of $\x$ in the direction induced from the orientation of $S$ one encounters the arc from $\bbeta$ and then the one from $\balpha$, and traversing the boundary near one of the double points of $\y$ one encounters the arcs in the opposite order. Then $S$ contributes $I(\overline{o}(\x)) \otimes \y$ to $\delta(\x)$. 
 \item\label{it3} Suppose $\y$ is another generator that differs from $\x$ in only one double point and and $S$ is a convex rectangle embedded in $\Sigma$ with boundary consisting of two arcs from $\balpha$, one arc from $\bbeta$ and one Reeb chord $-\rho\in \BZ$,  no points of $\x\cap \y$ in the interior of the rectangle,  traversing $\partial S$ near one of the double points of $\x$ in the direction induced from the orientation of $S$ one encounters the arc from $\bbeta$ and then the one from $\balpha$, and traversing the boundary near one of the double points of $\y$ one encounters the arcs in the opposite order. Then $S$ contributes $I(\overline{o}(\x)) a(\rho)I(\overline{o}(\y))\otimes \y$ to $\delta(\x)$. 
 \end{enumerate}
 
Not all of our diagrams are nice but when computing Type-$D$ structures all the regions we compute will be (possibly immersed) bigons and rectangles, or embedded annuli. They will count towards $\delta$ in an entirely analogous way to the situation for nice diagrams (see for example, the computations in Appendix A of \cite{LOT1}).  When computing Type-$AD$ structures we make similar counts but in that case we will also need to consider (possibly immersed) annuli with varying numbers of  corners on each boundary components. (Again see the computations in Appendix A of \cite{LOT1} for the fact that these are counted analogously.)
 
% subsection nice_heegaard_diagrams (end)
%%%%%%%%%%%%%%%%%%%%%%%%%%%%%%%%%%%%%%%%%%%%%%%%%%%%%%%

%%%%%%%%%%%%%%%%%%%%%%%%%%%%%%%%%%%%%%%%%%%%%%%%%%%%%%%
\subsection{$\spinc$ structures in bordered sutured Floer homology} % (fold)
 \label{sub:_spinc_structures_in_bordered_sutured_floer_homology}
%%%%%%%%%%%%%%%%%%%%%%%%%%%%%%%%%%%%%%%%%%%%%%%%%%%%%%%

Let $Y$ be a 3-manifold (possibly with boundary), and $X \subset Y$ a subspace of $Y$.  Fix a non-zero vector field $v_0$ on the subspace $X$.  As discussed in Section~\ref{sub:knot_alex} for the case of knot complements (where $X = \partial Y = T^2$), the space of {\it relative $\spinc$-structures} $\spinc(Y,X,v_0)$, or simply $\spinc(Y,X)$ consists of non-vanishing vector fields $v$ on $Y$, such that $v|_{\partial Y} = v_0$, considered up to homology in $Y - X$.  The set $\spinc(Y,X,v_0)$ is an affine space over $\mr{H}_2(Y,X)$ whenever it is non-empty.

In the bordered setting, there are two flavors of (relative) $\spinc$-structure which depend on a chosen boundary normalization convention.  To understand the two conventions, let $\SH$ be a bordered sutured Heegaard diagram given by a boundary-compatible Morse function $f$, and let $\x \in \SG(\SH)$ be a generator of the associated bordered sutured Floer complex.  Consider the gradient vector field $\nabla f$, which vanishes only at the critical points of $f$.  Each intersection of $\x$ lies on a unique gradient trajectory connecting an index-1 and index-2 critical point of $f$.  As these intersections have opposite parity, we can alter the vector field $\nabla f$ to be non-vanishing in a neighborhood of each of these trajectories.  The few remaining critical points of $f$ are all contained in $F(\SZ) \subset \partial Y$.  It is straightforward to modify $\nabla f$ in a neighborhood of each such critical point to be nonzero.  The resulting nonzero vector field is denoted $v(\x)$.

Consider the vector fields $v_0 = v(\x)|_{\partial Y - F} = \nabla f|_{\partial Y - F}$ and $v_{o(\x)} = v(\x)|_{\partial Y}$.  The vector field $v_{o(\x)}$ depends on the collection of occupied $\alpha$-arcs $o(\x)$, while $v_0$ does not depend on any information coming from the generator $\x$.  For different choices of boundary-compatible Morse function and metric, the vector fields $v_0$ and $v_{o(\x)}$ may vary within a contractible set.  Thus, we consider the following two sets of relative $\spinc$-structures: $\spinc(Y,\partial Y - F)$ and $\spinc(Y,\partial Y, o)$, where $o \subset \{1,\dots,k\}$ lists the collection of occupied $\alpha$-arcs.

Let $\SH = \SH_1 \cup \SH_2$ be a decomposition of a (bordered) sutured Heegaard diagram into a pair of bordered sutured heegaard diagrams, glued along their common boundary.  If $\x \in \SG(\SH)$ is a generator of the (bordered) sutured Floer complex associated to $\SH$, then $\x = \x_1 \otimes \x_2$, where $\x_1 \in \SG(\SH_1)$ and $\x_2 \in \SG(\SH_2)$.  By fixing the natural vector field associated to $\s(\x) = \s(x_1 \otimes \x_2)$ along the gluing surface, we further obtain a decomposition of $\s(\x)$ into the pair of relative $\spinc$-structure: $\s(x_1) \in \spinc(Y_1,\partial Y_1, o(\x_1))$ and $\s(x_2) \in \spinc(Y_2,\partial Y_2, \overline{o}(\x_2))$.  Moreover, we have a splitting of Chern classes
\[
	c_1(\x_1 \otimes \x_2) = c_1(\x_1) \otimes c_1(\x_2).
\]

% subsection _spinc_structures_in_bordered_sutured_floer_homology (end)
%%%%%%%%%%%%%%%%%%%%%%%%%%%%%%%%%%%%%%%%%%%%%%%%%%%%%%%

%%%%%%%%%%%%%%%%%%%%%%%%%%%%%%%%%%%%%%%%%%%%%%%%%%%%%%%
\subsection{Bordered invariants and knot Floer homology} % (fold)
\label{sub:bordered_invariants_and_knot_floer_homology}
%%%%%%%%%%%%%%%%%%%%%%%%%%%%%%%%%%%%%%%%%%%%%%%%%%%%%%%

Given a null-homologous knot $K \subset Y$, one can form the bordered manifold by removing a tubular neighborhood of $K$ and parametrizing the resulting torus boundary using the meridian and (Seifert-framed) longitude to $K$.  The resulting space is a bordered $3$-manifold which is canonically associated to the knot $K$.  From this space we compute $A_\infty$ and Type-$D$ modules denoted $\CFA(K)_{\A_T}$ and ${}^{\A_T} \CFD(K)$, each over the torus algebra (see Section~\ref{sub:param_torus}).

Lipshitz, Ozsv\'ath and Thurston observed in \cite{LOT1} that the knot Floer homology groups $\HFKM(Y,K)$ and $\HFKH(Y,K)$ can be recovered form either of the bordered invariants $\CFA(K)$ or $\CFD(K)$.  Concretely, this can be obtained by considering the doubly-pointed, bordered Heegaard diagram $\SH_c = (\mscr{H},z,w)$ for the solid torus shown in Figure~\ref{fig:minusdiag}.

%%%%%%%%%%%%%%% Bordered Solid Torus %%%%%%%%%%%%%%
\begin{figure}[htbp]
	\centering
	\begin{picture}(93,97)
		\put(0,0){\includegraphics[scale=1]{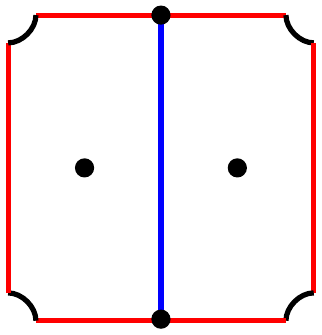}}
		\put(50,6){$x$}
		\put(27,6){\color{red}$2$}
		\put(-5,55){\color{red}$1$}
		\put(50,94){$x$}
		\put(27,94){\color{red}$2$}
		\put(93,55){\color{red}$1$}
		\put(27,52){$w$}
		\put(70,52){$z$}
		\put(-1,-1){$3$}
		\put(-1,89){$2$}
		\put(88,90){$1$}
		\put(88,-1){$0$}
	\end{picture}
		\caption{Diagram $(\mscr{H},z,w)$ yielding $\HFKM$.}
		\label{fig:minusdiag}
\end{figure}
%%%%%%%%%%%%%%% End Bordered Solid Torus %%%%%%%%%%%%%%

The doubly-pointed diagram $\SH_c$ in Figure~\ref{fig:minusdiag} specifies a solid torus $S^1 \times D^2$ together with its core curve $\gamma = S^1 \times \mr{pt}$.  The arcs $\alpha_1$ and $\alpha_2$ specify the longitude and meridian of $\gamma$ respectively.  Identifying these curves of this solid torus with their pairs on the complement $Y \backslash \nu(K)$ amounts to performing infinity-surgery and further identifies the core curve $\gamma$ of $S^1 \times D^2$ with the knot $K$.

To $\SH_c$ one associates three distinct Type-$D$ modules, which compute the knot Floer homology groups $\HFKM(Y,K)$, $\HFKP(Y,K)$ and $\HFKH(Y,K)$, respectively, when paired with $\CFA(K)$.  We denote the first by $K^- := {}^{\A_T} \CFDM(\SH_c)$.  This module is generated over $\F[U]$ by the single intersection point $x$.  It is understood that holomorphic disks passing over the second basepoint $w$ in $\SH_c$, with multiplicity $n$, contribute a factor of $U^n$ to the differential.  Thus, the Type-$D$ module $K^-$ is specified by the relation
\[
	\delta(U^i \cdot x) = \rho_{23} \otimes (U^{i+1} \cdot x),
\]
or, graphically
\begin{center}
\begin{tikzpicture}	[->,>=stealth',auto,thick]
	\node (a) at (0,0){$x$} ;
	\node (b) at (2.5,0) {$U \cdot x$} ;
	\node (c) at (5,0) {$U^2 \cdot x$} ;
	\node (d) at (7.5,0) {$U^3 \cdot x$} ;
	\node (e) at (10,0) {$\dots$} ;
	
	\draw (a) edge node[above] {$\rho_{23}$} (b);
	\draw (b) edge node[above] {$\rho_{23}$} (c);
	\draw (c) edge node[above] {$\rho_{23}$} (d);
	\draw (d) edge (e);
\end{tikzpicture}
\end{center}

We denote the second Type-$D$ module associated to $\SH_c$ by $K^+:= {}^{\A_T} \CFDP(\SH_c)$.  As an $\F[U]$-module, it is equal to $\F[U^{-1}]$.  Again, we have that holomorphic disks passing over the second basepoint $w$ in $(\SH,z,w)$, with multiplicity $n$, contribute a factor of $U^n$ to the differential.  The Type-$D$ structure on $K^+$ is specified graphically as
%%%%%%%%%%%%%%% HFK_plus diagram %%%%%%%%%%%%%%
\begin{center}
\begin{tikzpicture}	[->,>=stealth',auto,thick]
	\node (a) at (0,0){$x$} ;
	\node (b) at (2.25,0) {$U^{-1} \cdot x$} ;
	\node (c) at (5,0) {$U^{-2} \cdot x$} ;
	\node (d) at (7.75,0) {$U^{-3} \cdot x$} ;
	\node (e) at (10,0) {$\dots$.} ;
	
	\draw (b) edge node[above] {$\rho_{23}$} (a);
	\draw (c) edge node[above] {$\rho_{23}$} (b);
	\draw (d) edge node[above] {$\rho_{23}$} (c);
	\draw (e) edge node[above] {$\rho_{23}$} (d);
\end{tikzpicture}
\end{center}
%%%%%%%%%%%%%%% end HFK_plus diagram %%%%%%%%%%%%%%

Finally, there is a third Type-$D$ structure associated to $(\SH,z,w)$, which we denote $K_\infty := {}^{\A_T} \CFD(\SH_c)$.  The module $K_\infty$ is again generated by the single intersection $x$, but now over the field $\F$.  The differential on $K_\infty$ is trivial:
\[
	\delta(x) = 0.
\]

% subsection bordered_invariants_and_knot_floer_homology (end)
%%%%%%%%%%%%%%%%%%%%%%%%%%%%%%%%%%%%%%%%%%%%%%%%%%%%%%%

%%%%%%%%%%%%%%%%%%%%%%%%%%%%%%%%%%%%%%%%%%%%%%%%%%%%%%%
\subsection{Gluing maps} % (fold)
\label{sub:gluing_maps}
%%%%%%%%%%%%%%%%%%%%%%%%%%%%%%%%%%%%%%%%%%%%%%%%%%%%%%%

In \cite{Za2}, the third author defined a gluing map for sutured 3-manifolds.  Specifically, he showed the following.

\begin{theorem}[Zarev \cite{Za2}]\label{thm:rumen_gluing}
	Let $(Y_1,\Gamma_1)$ and $(Y_2,\Gamma_2)$ be balanced sutured 3-manifolds which can be glued along some surface $F$.  Then there exists a well-defined map
\[
	\Psi_F : \SFH(Y_1,\Gamma_1) \otimes \SFH(Y_2,\Gamma_2) \to \SFH(Y_1 \cup Y_2, \Gamma_1 \cup \Gamma_2),
\]
which is symmetric, associative and equals the identity for topologically trivial gluing.
\end{theorem}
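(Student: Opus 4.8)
The plan is to reduce the statement to the pairing theorem for bordered sutured Floer homology (Theorem~\ref{thm:zarev_pairing}). Note first that the gluing surface $F$ is naturally a sutured surface $\SF$: its boundary meets the suture set in a finite collection of points $\Lambda$, and the way $F$ separates the positive region from the negative region on either side equips it with a dividing set. Fix an arc diagram $\SZ$ parametrizing $\SF$. Deleting the portion of $\Gamma_1$ lying in a collar of $F$ and declaring $F$ to be parametrized boundary turns $(Y_1,\Gamma_1)$ into a bordered sutured manifold $\Y_1 = (Y_1,\Gamma_1',\SZ)$, and likewise turns $(Y_2,\Gamma_2)$ into $\Y_2 = (Y_2,\Gamma_2',-\SZ)$. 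Since reassembling $\Y_1$ and $\Y_2$ along $\SF(\SZ)$ recovers the sutured manifold $(Y_1\cup_F Y_2,\Gamma_1\cup\Gamma_2)$, Theorem~\ref{thm:zarev_pairing} supplies a homotopy equivalence $\SFC(Y_1\cup_F Y_2,\Gamma_1\cup\Gamma_2)\simeq\BSA(\Y_1)\boxtimes\BSD(\Y_2)$.

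The next step is to relate the ordinary sutured complexes $\SFC(Y_i,\Gamma_i)$ --- which carry no parametrization --- to these bordered invariants. Each $(Y_i,\Gamma_i)$ is recovered from $\Y_i$ by gluing back a standard cap: a bordered sutured manifold $\mathcal{C}$ (respectively $\mathcal{C}'$) modeled on the collar $\SF(\SZ)\times[0,1]$, with parametrized boundary $-\SZ$ (respectively $\SZ$) on one end and the reinstated sutures on the other. Theorem~\ref{thm:zarev_pairing} then gives $\SFC(Y_1,\Gamma_1)\simeq\BSA(\Y_1)\boxtimes\BSD(\mathcal{C})$ and $\SFC(Y_2,\Gamma_2)\simeq\BSA(\mathcal{C}')\boxtimes\BSD(\Y_2)$. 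Gluing the two caps to one another along $\SF(\SZ)$ produces the product sutured manifold on $\SF(\SZ)\times[-1,1]$, whose sutured Floer homology is $\F$ with a distinguished generator \cite{Ju}; this distinguished class provides a canonical contraction on $\BSD(\mathcal{C})\otimes\BSA(\mathcal{C}')$. Feeding this contraction into the box tensor products above produces a chain map
\begin{align*}
\SFC(Y_1,\Gamma_1)\otimes\SFC(Y_2,\Gamma_2)
 &\simeq \BSA(\Y_1)\boxtimes\BSD(\mathcal{C})\otimes\BSA(\mathcal{C}')\boxtimes\BSD(\Y_2)\\
 &\longrightarrow \BSA(\Y_1)\boxtimes\BSD(\Y_2)\ \simeq\ \SFC(Y_1\cup_F Y_2,\Gamma_1\cup\Gamma_2),
\end{align*}
and we define $\Psi_F$ to be the induced map on homology.

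I expect the main obstacle to be well-definedness: a priori the construction depends on the arc diagram $\SZ$ chosen to parametrize $\SF$, whereas $\SFH(Y_i,\Gamma_i)$ does not. The resolution is that any two arc diagrams for a fixed sutured surface are connected by a finite sequence of arc-slides, and --- in the spirit of Lipshitz--Ozsv\'ath--Thurston's change-of-parametrization results \cite{LOT1}, adapted to the sutured category in \cite{Za2} --- each arc-slide induces canonical homotopy equivalences of the strands algebras and of all the modules involved ($\BSA(\Y_i)$, $\BSD(\Y_i)$, and the caps $\mathcal{C}$, $\mathcal{C}'$) which intertwine box tensor products and carry the distinguished contraction class to itself. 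Hence every identification above, and therefore $\Psi_F$ on homology, is canonical. The remaining properties are then formal. Symmetry holds because interchanging $(\Y_1,\SZ)$ and $(\Y_2,-\SZ)$ leaves the construction unchanged, using the symmetry of $\boxtimes$ and of the product-region pairing. Associativity follows from associativity of $\boxtimes$ together with the fact that a stack of caps glues to a single product region, so contracting in two stages agrees with contracting at once (one sets up the self-gluing version of this discussion in exactly the same way). Finally, for a topologically trivial gluing --- say $Y_2 = F\times[0,1]$ a collar --- one may take $\Y_2=\mathcal{C}$, so that $\BSA(\mathcal{C}')\boxtimes\BSD(\Y_2)$ computes the product region, has homology $\F$, and contracting against its distinguished generator is the identity on $\BSA(\Y_1)\boxtimes\BSD(\mathcal{C})\simeq\SFC(Y_1,\Gamma_1)$; thus $\Psi_F=\mathrm{id}$.
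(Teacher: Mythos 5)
This theorem is cited to Zarev~\cite{Za2} and the present paper does not supply a proof, so there is no in-paper argument to compare against; I evaluate your proposal on its own.

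Your high-level strategy --- cap off each $(Y_i,\Gamma_i)$ into a bordered sutured piece $\Y_i$, invoke the pairing theorem three times, and then ``collapse'' the two caps against the distinguished class in $\SFH(\SF(\SZ)\times[-1,1])$ --- is in the right family of ideas, but the collapse step hides the entire technical content of the theorem. The distinguished generator lives in $H_*\bigl(\BSA(\mathcal{C}')\boxtimes\BSD(\mathcal{C})\bigr)\cong\F$, i.e.\ it is a cycle \emph{in} the pairing of the two caps; it does not, by itself, produce a morphism of $DA$-bimodules from $\BSD(\mathcal{C})\otimes\BSA(\mathcal{C}')$ (which is quasi-isomorphic to $\BSDA(\mathcal{C}\sqcup\mathcal{C}')$, a bimodule for a \emph{disconnected} bordered sutured manifold) to the identity bimodule $\BSDA(\SF\times I)$. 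Only such a bimodule morphism --- compatible with the $\A(\SZ)$-actions on both sides and natural enough to commute with the homotopy equivalences from Theorem~\ref{thm:zarev_pairing} --- can be box-tensored with $\BSA(\Y_1)$ and $\BSD(\Y_2)$ to produce the chain map
\[
\BSA(\Y_1)\boxtimes\BSD(\mathcal{C})\otimes\BSA(\mathcal{C}')\boxtimes\BSD(\Y_2)\longrightarrow\BSA(\Y_1)\boxtimes\BSD(\Y_2).
\]
Having a distinguished element of a homology group is a very different (and much weaker) piece of data than having a canonical bimodule morphism; the coherence conditions needed for the latter are not free, and producing that morphism is precisely what Zarev's construction (via an explicit interpolating bimodule for the gluing cobordism) accomplishes. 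As written, ``this distinguished class provides a canonical contraction'' is an assertion, not an argument, and it is exactly at the point where your proof would have to start doing work.

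Two secondary remarks. First, the well-definedness discussion inherits the same gap: you assert that arc-slide equivalences ``carry the distinguished contraction class to itself,'' but until the contraction is specified as a genuine bimodule morphism, there is nothing for the arc-slide equivalences to carry. Second, the formal consequences (symmetry, associativity, the unit property for trivial gluing) are indeed expected to follow formally once the bimodule morphism is in place, so I would not count those as independent gaps --- but they cannot be verified until the central construction is completed.
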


These maps are well-defined, even when the two manifolds are bordered sutured, provided the surface along which the gluing is performed is part of the sutured boundary. It has further been shown by the third author \cite{Za3} that the gluing map given by Theorem~\ref{thm:rumen_gluing} are equivalent to the contact gluing map defined by Honda, Kazez and Mati\'c in \cite{HKM3}.  

The main advantage of the bordered sutured interpretation of the Honda-Kazez-Mati\'c (HKM) gluing map is that it is defined purely algebraically.  The utility of this algebraic perspective will become apparent quickly.  Indeed, the algebraic framework surrounding bordered sutured Floer homology, combined with some standard non-vanishing results for the HKM gluing maps will propel many of the computations that follow. 

% subsection gluing_maps (end)
%%%%%%%%%%%%%%%%%%%%%%%%%%%%%%%%%%%%%%%%%%%%%%%%%%%%%%%

% section bordered_background (end)
%%%%%%%%%%%%%%%%%%%%%%%%%%%%%%%%%%%%%%%%%%%%%%%%%%%%%%%

%%%%%%%%%%%%%%%%%%%%%%%%%%%%%%%%%%%%%%%%%%%%%%%%%%%%%%%
\section{Parametrized Surfaces and Associated Algebras} % (fold)
\label{sec:algebras}
%%%%%%%%%%%%%%%%%%%%%%%%%%%%%%%%%%%%%%%%%%%%%%%%%%%%%%%

We now give explicit descriptions of the strands algebras to be encountered in subsequent sections.  There are three such surfaces and they are depicted in Figures~\ref{fig:sutured_surfaces} and~\ref{fig:sutured_surfaces2}.

%%%%%%%%%%%%%%%%%% Parametrized Surfaces %%%%%%%%%%%%%%%%%% 
\begin{figure}[htbp]
	\centering
	\begin{picture}(145,100)
		\put(0,0){\includegraphics[scale=1]{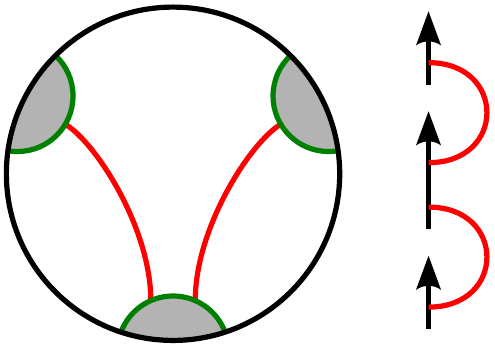}}
		\put(111,8){$a_1$}
		\put(111,37){$a_2$}
		\put(111,50){$a_3$}
		\put(111,80){$a_4$}
		\put(142,65){\color{red}$1$}
		\put(142,23){\color{red}$2$}
	\end{picture}
	\hspace{20pt}
	\begin{picture}(142,103)
		\put(2,0){\includegraphics[scale=1]{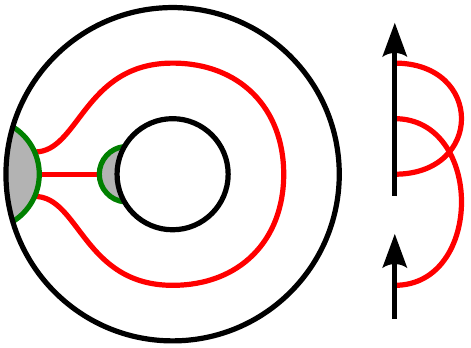}}
		\put(104,16){$a_1$}
		\put(104,47){$a_2$}
		\put(104,63){$a_3$}
		\put(104,80){$a_4$}
		\put(140,65){\color{red}$1$}
		\put(140,37){\color{red}$2$}
	\end{picture}
		\caption{On the left the sutured disk $\SF_D = (D^2,\Lambda_D)$ and corresponding arc diagram $\W_D$. On the right  sutured annulus $\SF_A = (A,\Lambda_A)$ and corresponding arc diagram $\W_A$.}
	\label{fig:sutured_surfaces}
\end{figure}
%%%%%%%%%%%%%%%%%% End Parametrized Surfaces %%%%%%%%%%%%%%%%%%

%%%%%%%%%%%%%%%%%%%%%%%%%%%%%%%%%%%%%%%%%%%%%%%%%%%%%%%
\subsubsection*{The Parametrized Sutured Disk} % (fold)
 \label{sub:param_disk}
%%%%%%%%%%%%%%%%%%%%%%%%%%%%%%%%%%%%%%%%%%%%%%%%%%%%%%%

We begin with the sutured surface $\SF_D = (D^2,\Lambda_D)$, depicted on the left in Figure~\ref{fig:sutured_surfaces}.  This surface is topologically a disk, with suture set $\Lambda_D$ consisting of six marked points along its boundary.  Figure~\ref{fig:sutured_surfaces} also depicts a parametrization of $\SF_D$ and the corresponding arc diagram $\W_D$.

From Figure~\ref{fig:sutured_surfaces}, we see that there is a single Reeb chord $\rho$ in $\W_D$ connecting $a_2$ to $a_3$.  In turn, the algebra $\A(\W_D)$ has trivial differential and decomposes as a direct sum of three subalgebras: $\A(\W_D,0)$, $\A(\W_D,1)$ and $\A(\W_D,2)$.  

The summands $\A(\W_D,0) = \langle I_\emptyset \rangle$ and $\A(\W_D,2) = \langle I_{12} \rangle$ are both trivial, while the $\A(\W_D,1)$-summand is given by
\begin{equation*}
	\A(\W_D,1) = \langle I_1,I_2,\rho' \rangle,
\end{equation*}
where $\rho' = a(\{\rho\},\emptyset)$ and the idempotent compatibilities are given by
\begin{equation*}
	I_2 \, \rho' \, I_1 = \rho'.
\end{equation*}

% subsubsection param_disk (end)
%%%%%%%%%%%%%%%%%%%%%%%%%%%%%%%%%%%%%%%%%%%%%%%%%%%%%%%

%%%%%%%%%%%%%%%%%%%%%%%%%%%%%%%%%%%%%%%%%%%%%%%%%%%%%%%
\subsubsection*{The Parametrized Sutured Annulus} % (fold)
 \label{sub:param_annulus}
%%%%%%%%%%%%%%%%%%%%%%%%%%%%%%%%%%%%%%%%%%%%%%%%%%%%%%%

Next, we turn  to the sutured surface $\SF_A = (A,\Lambda_A)$, depicted on the right in Figure~\ref{fig:sutured_surfaces}.  Topologically, this is an annulus with suture set $\Lambda_A$ consisting of a pair of marked points on each boundary component.  As before Figure~\ref{fig:sutured_surfaces} also depicts a parametrization of $\SF_A$ together with the corresponding arc diagram $\W_A$.

In $\W_A$, there are three Reeb chords --- $\rho_1$ connecting $a_2$ to $a_3$, $\rho_2$ connecting $a_3$ to $a_4$, and $\rho_{12}$ connecting $a_2$ to $a_4$.  It follows that the algebra $\A(\W_A)$ decomposes as a sum of three subalgebras: $\A(\W_A,0)$, $\A(\W_A,1)$ and $\A(\W_A,2)$.  

As before, the summand $\A(\W_A,0) = \langle I_\emptyset \rangle$ is trivial.  

The $\A(\W_A,1)$-summand, is described by
\begin{equation*}
	\A(\W_A,1) = \langle I_1,I_2, \rho_1', \rho_2', \rho_{12}' \rangle,
\end{equation*}
where $\rho_1' = a(\{\rho_1\},\emptyset)$, $\rho_2' = a(\{\rho_2\},\emptyset)$ and $\rho_{12}' = a(\{\rho_{12}\},\emptyset)$.  The idempotent compatibilities and nontrivial products in $\A(\W_A,1)$ are given by 
\begin{equation*}
I_1 \, \rho_1' \, I_2 = \rho_1', \;\;\; I_2 \, \rho_2' \, I_1 = \rho_2', \;\;\; I_2 \, \rho_{12}' \, I_2 = \rho_{12}', \;\;\; \rho_1' \rho_2' = \rho_{12}'.
\end{equation*}  

Finally, we have that
\[
	\A(\W_A,2) = \langle I_{12}, \rho_{12}'', \rho_2'' \cdot \rho_1'' \rangle,
\]
where $\rho_{12}'' = a(\{\rho_{12}\},\{2\})$ and $\rho_2'' \cdot \rho_1'' = a(\{\rho_1,\rho_2\},\emptyset)$.  The idempotent compatibilities are given by 
\[
	I_{12} \, \rho_{12}'' \, I_{12} = \rho_{12}'', \;\;\; I_{12} \, \rho_2'' \cdot \rho_1'' \, I_{12} = \rho_2'' \cdot \rho_1'',
\]  
and there are no nontrivial products.  There is a single nontrival differential in $\A(\W_A,2)$ given by $\partial \rho_{12}'' = \rho_2'' \cdot \rho_1''$.

% subsection param_annulus (end)
%%%%%%%%%%%%%%%%%%%%%%%%%%%%%%%%%%%%%%%%%%%%%%%%%%%%%%%

%%%%%%%%%%%%%%%%%%%%%%%%%%%%%%%%%%%%%%%%%%%%%%%%%%%%%%%
\subsubsection*{The Parametrized Sutured Torus} % (fold)
 \label{sub:param_torus}
%%%%%%%%%%%%%%%%%%%%%%%%%%%%%%%%%%%%%%%%%%%%%%%%%%%%%%%
\begin{figure}[htbp]
	\centering
	\begin{picture}(145,90)
		\put(0,0){\includegraphics[scale=1]{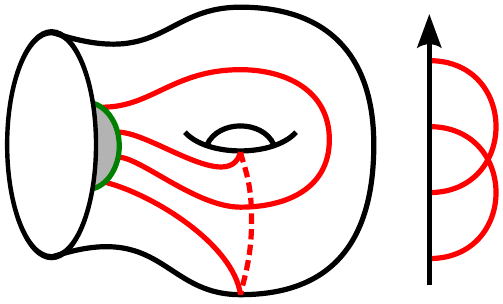}}
		\put(112,12){$a_1$}
		\put(112,30){$a_2$}
		\put(112,50){$a_3$}
		\put(112,67){$a_4$}
		\put(146,52){\color{red}$1$}
		\put(146,26){\color{red}$2$}
	\end{picture}
	\caption{The sutured torus $\SF_T = (T,\Lambda_T)$ and corresponding arc diagram $\W_T$. }
	\label{fig:sutured_surfaces2}
\end{figure}

Finally, we consider surface $\SF_T = (T,\Lambda_T)$, depicted in Figure~\ref{fig:sutured_surfaces2}.  As a sutured surface, this is a punctured torus with a suture set $\Lambda_T$ consisting of a pair of marked point along its boundary.  An explicit parametrization of $\SF_T$ by $\W_T$ is also shown in Figure~\ref{fig:sutured_surfaces2}.

In $\W_T$, there are several Reeb chords --- $\rho_1$ from $a_1$ to $a_2$, $\rho_2$ from $a_2$ to $a_3$, $\rho_3$ from $a_3$ to $a_4$, $\rho_{12}$ from $a_1$ to $a_3$, $\rho_{23}$ from $a_2$ to $a_4$ and, finally $\rho_{123}$ from $a_1$ to $a_4$.  

The algebra $\A(\W_T)$ associated to this punctured torus is equivalent to the ``torus algebra'' from \cite{LOT1}.  As above, $\A(\W_T)$ decomposes as a sum of three subalgebras: $\A(\W_T,0)$, $\A(\W_T,1)$ and $\A(\W_T,2)$.

In this case, only the $\A(\W_T,0)$-summand is trivial: $\A(\W_T,0) = \langle I_\emptyset \rangle$.  

We have that $\A(\W_T,1)$ is given by 
\begin{equation*}
	\A(\W_T,1) = \langle I_1,I_2, \rho_1', \rho_2', \rho_3', \rho_{12}', \rho_{23}', \rho_{123}' \rangle,
\end{equation*}
where $\rho_1' = a(\{\rho_1\},\emptyset)$, $\rho_2' = a(\{\rho_2\},\emptyset)$, $\rho_3' = a(\{\rho_3\},\emptyset)$, $\rho_{12}' = a(\{\rho_{12}\},\emptyset)$, $\rho_{23}' = a(\{\rho_{23}\},\emptyset)$ and $\rho_{123}' = a(\{\rho_{123}\},\emptyset)$.  The idempotent compatibilities and nontrivial products in $\A(\W_T,1)$ are given by 
\begin{align*}
\nonumber I_2 \, \rho_1' \, I_1 = \rho_1', \;\;\; I_1 \, \rho_2' \, I_2 &= \rho_2', \;\;\; I_2 \, \rho_3' \, I_1 = \rho_3',\\
I_2 \, \rho_{12}' \, I_2 = \rho_{12}', \;\;\; I_1 \, \rho_{23}' \, I_1 &= \rho_{23}', \;\;\; I_2 \, \rho_{123}' \, I_1 = \rho_{123}',\\
\nonumber \rho_1' \rho_2' = \rho_{12}', \;\;\; \rho_2' \rho_3' = \rho_{23}', \;\;\;& \rho_1' \rho_{23}' = \rho_{123}', \;\;\; \rho_{12}' \rho_3' = \rho_{123}'
\end{align*}
Since it is not strictly needed in the discussion to follow, we leave it as an interesting exercise for the reader to compute the summand $\A(\W_T,2)$.  As a hint, this subalgebra contains both nontrivial products and nontrivial differentials.

% subsection param_torus (end)
%%%%%%%%%%%%%%%%%%%%%%%%%%%%%%%%%%%%%%%%%%%%%%%%%%%%%%%

% section algebras (end)
%%%%%%%%%%%%%%%%%%%%%%%%%%%%%%%%%%%%%%%%%%%%%%%%%%%%%%%

%%%%%%%%%%%%%%%%%%%%%%%%%%%%%%%%%%%%%%%%%%%%%%%%%%%%%%%
\section{Bypass Attachment Maps} % (fold)
\label{sec:bypass_attachment_maps}
%%%%%%%%%%%%%%%%%%%%%%%%%%%%%%%%%%%%%%%%%%%%%%%%%%%%%%%

In this section, we begin discussing a general method for computing the HKM map induced on sutured Floer homology by bypass attachment.  The key result from \cite{Za3}, which propels this computation in this section, states that the HKM gluing maps extends to the bordered sutured category.  Specifically, the third author proves the following extension of Theorem~\ref{thm:hkm_gluing}.

\begin{theorem}\label{thm:hkm_za_gluing}
 	Let $\Y_1 = (Y_1,\Gamma_1,\SZ)$ and $\Y_2 = (Y_2, \Gamma_2,\SZ)$ be two bordered sutured 3--manifolds such that $Y_1 \subset Y_2$, $Y_2 \backslash \mr{int}(Y_1)$ is balanced sutured, and let $\xi$ be a contact structure on $Y_2 \backslash \mr{int}(Y_1)$ with convex boundary divided by $\Gamma_1\cup\Gamma_2$.  Then there exists a map of Type-$D$ structures induced by the contact structure $\xi$
\[
	\phi_{\xi} : \BSD(-\Y_1) \to \BSD(-\Y_2),
\]
which is natural with respect to gluings of bordered sutured 3--manifolds along subsets of the boundary which are parametrized sutured surfaces.
\end{theorem}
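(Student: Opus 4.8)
The plan is to build $\phi_\xi$ out of the sutured gluing map already in hand (Theorem~\ref{thm:rumen_gluing}, identified with the HKM map of Theorem~\ref{thm:hkm_gluing} in \cite{Za3}) together with the pairing theorem (Theorem~\ref{thm:zarev_pairing}). Write $W = \overline{Y_2 \setminus \mr{int}(Y_1)}$, a balanced sutured manifold carrying the contact structure $\xi$; by hypothesis $W$ is glued to $Y_1$ along a subsurface of $\partial Y_1$ disjoint from the parametrized surface $\SF(\SZ)$, so the bordered structure is untouched in passing from $\Y_1$ to $\Y_2$. For any bordered sutured manifold $\Y'$ whose boundary is the parametrized surface complementary to that of the $\Y_i$, gluing along $\SF(\SZ)$ produces closed-up balanced sutured manifolds $Z_i(\Y') = \Y_i \cup_\SZ \Y'$ with $Z_1(\Y') \subset Z_2(\Y')$ and $\overline{Z_2(\Y') \setminus \mr{int}\,Z_1(\Y')} = W$. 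The contact structure $\xi$ then induces, by Theorem~\ref{thm:hkm_gluing}, a map
\[
\Psi_\xi^{\Y'} : \SFH\bigl(-Z_1(\Y')\bigr) \longrightarrow \SFH\bigl(-Z_2(\Y')\bigr),
\]
and under the homotopy equivalences of Theorem~\ref{thm:zarev_pairing} this becomes a map $\BSA(-\Y') \boxtimes \BSD(-\Y_1) \longrightarrow \BSA(-\Y') \boxtimes \BSD(-\Y_2)$.

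The next step is to show that the collection $\{\Psi_\xi^{\Y'}\}$ is natural in $\Y'$. If $\Y'$ is obtained from some $\Y''$ by gluing a bordered sutured piece along a parametrized subsurface of its boundary, then associativity and symmetry of the gluing maps (Theorem~\ref{thm:rumen_gluing}), combined with the fact that $\xi$ is supported away from all of the gluing regions, show that $\Psi_\xi^{\Y''}$ and $\Psi_\xi^{\Y'}$ are intertwined by the corresponding change-of-closure maps. In particular the $\Psi_\xi^{\Y'}$ are compatible with the $A_\infty$-module structure maps of the various $\BSA(-\Y')$, so they constitute a natural transformation of the two pairing functors $\BSA(-\bullet)\boxtimes\BSD(-\Y_1)\Rightarrow\BSA(-\bullet)\boxtimes\BSD(-\Y_2)$ on the category of bordered sutured manifolds with the given boundary.

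To extract the Type-D morphism itself, evaluate this natural transformation on the identity Type-$DA$ bimodule $\I(\SZ) \simeq \BSDA(\SF(\SZ)\times I)$: since $\I(\SZ)\boxtimes\BSD(-\Y_i)\simeq\BSD(-\Y_i)$, one obtains $\phi_\xi : \BSD(-\Y_1)\to\BSD(-\Y_2)$, well-defined up to homotopy of Type-D morphisms, and a formal check — box-tensoring $\phi_\xi$ against an arbitrary $\BSA(-\Y')$ and comparing with $\Psi_\xi^{\Y'}$ — shows that $\phi_\xi$ carries all of the information of the family. Naturality of $\phi_\xi$ with respect to gluings along parametrized subsurfaces of the remaining boundary is then inherited directly from the corresponding property of the sutured gluing maps in Theorem~\ref{thm:rumen_gluing} and their identification with the HKM maps in \cite{Za3}. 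When $\xi$ is presented in arbitrary form one first isotopes it — using Giroux flexibility near the convex boundary and the classification of tight contact structures on the relevant model pieces of Section~\ref{sub:tt} (basic slices and $\Sigma\times I$ bypass layers) — into a standard form adapted to a bordered sutured Heegaard diagram, so that $\phi_\xi$ can be computed explicitly; this reduction is carried out in detail for bypass attachments in the remainder of this section.

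The main obstacle is the passage from the compatible family $\{\Psi_\xi^{\Y'}\}$ to an honest, homotopy-well-defined Type-D morphism: this requires the bordered sutured pairing to be faithful enough, which is precisely what box-tensoring with the identity bimodule $\I(\SZ)$ (or, alternatively, a Yoneda-type argument in the category of Type-D structures over $\A(-\SZ)$) supplies, together with a check that the result is independent of the auxiliary closing-up data $\Y'$. The remaining point, putting $\xi$ into diagram-adapted standard form, is comparatively routine given the model computations recalled in Section~\ref{sub:tt}.
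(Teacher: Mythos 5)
The paper does not prove this theorem; it is stated as a citation to \cite{Za3}, where the map is constructed directly at the level of bordered sutured Heegaard diagrams: one builds a diagram for $\Y_1$, extends it by a diagram adapted to the contact structure on $Y_2 \setminus \mr{int}(Y_1)$ in the style of partial open books, and shows that the resulting inclusion of generating sets defines a chain map of Type-$D$ structures. Naturality with respect to further gluings and compatibility with the pairing theorem are then consequences of the diagrammatic construction. Your proposal runs in the opposite direction — from the family of closed-up HKM maps $\Psi_\xi^{\Y'}$ back to the Type-$D$ morphism — and that passage is exactly where a genuine gap appears.

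Two concrete problems. First, Theorems~\ref{thm:hkm_gluing} and~\ref{thm:rumen_gluing} produce maps on sutured Floer \emph{homology}, not canonical chain maps, so ``the collection $\{\Psi_\xi^{\Y'}\}$ constitutes a natural transformation of pairing functors'' is not something you get for free: you would need a coherent system of chain-level lifts, compatible not merely with geometric gluings of the $\Y'$ but with arbitrary $A_\infty$-module morphisms between the $\BSA(-\Y')$. Establishing that coherence at the chain level is essentially the content of the theorem you are trying to prove, so the argument is circular at this step. Second, the object you want to ``evaluate on,'' $\I(\SZ) \simeq \BSDA(\SF(\SZ)\times I)$, is a $DA$-bimodule, not an $A_\infty$-module obtained by closing the boundary; it simply does not fit the framework in which the $\Psi_\xi^{\Y'}$ were defined, because gluing in a collar does not produce a closed balanced sutured manifold $Z_i(\Y')$. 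A Koszul-duality or Yoneda-type argument can in principle recover a Type-$D$ morphism from its effect on box-tensor products against a sufficiently rich family of $A_\infty$-modules, but that machinery is much heavier than anything this paper invokes, and it still requires the chain-level coherence you have not supplied. The direct Heegaard-diagram construction in \cite{Za3} avoids all of this, which is why it is the route taken.
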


The discussion to follow focuses on a small tubular neighborhood of a bypass attachment arc.  Within this neighborhood, there exists a natural sequence of three suture sets, each obtained from the last by a bypass attachment.  Together, this sequence is known as a ``bypass exact triangle''.  It is so-called because it represents an exact triangle in Honda's contact category (see \cite{Ho}).  The sequence of attachments and resulting dividing sets are depicted in Figure~\ref{fig:byptriang}.
%%%%%%%%%%%%%%%%%% Bypass Triangle %%%%%%%%%%%%%%%%%% 
\begin{figure}[htbp]
	\centering
	\begin{picture}(175,140)
		\put(0,0){\includegraphics[scale=1]{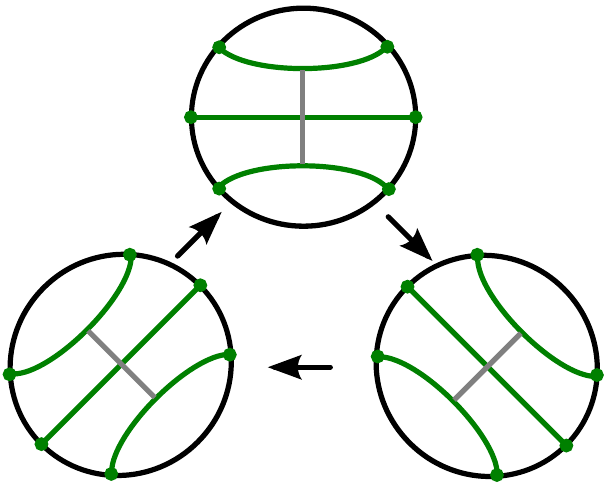}}
		\put(45, 82){$\Gamma_A$}
		\put(102, 7){$\Gamma_B$}
		\put(-5, 6){$\Gamma_C$}
	\end{picture}
	\caption{Honda's bypass exact triangle.}
	\label{fig:byptriang}
\end{figure}
%%%%%%%%%%%%%%%%%% End Bypass Triangle %%%%%%%%%%%%%%%%%%

\begin{theorem}[Honda \cite{Ho}]\label{thm:bypass_exact_triangle}
Denote the sutured manifolds in the above mentioned sequence by $(Y,\Gamma_A)$, $(Y,\Gamma_B)$ and $(Y,\Gamma_C)$ respectively.  The trio of HKM gluing maps induced by the collection of bypass attachments moving between these three manifolds together form an exact triangle in sutured Floer homology:
	\[\text{%\centering
	\begin{tikzpicture}	[->,>=stealth',auto,thick]
		\node (a) at (0,0){$\SFH(-Y,-\Gamma_C)$} ;
		\node (b) at (4.25,0) {$\SFH(-Y,-\Gamma_B)$.} ;
		\node (c) at (2.1,1.5){$\SFH(-Y,-\Gamma_A)$};
		\draw (b) edge node[above] {\small $\phi_C$} (a);
		\draw (a) edge node[left] {\small $\phi_B$} (c);
		\draw (c) edge node[right] {\small $\phi_A$} (b);
	\end{tikzpicture}}\]
\end{theorem}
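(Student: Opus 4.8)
The plan is to reduce the theorem to a purely local computation of Type-$D$ structures and then feed the result through Zarev's pairing theorem.

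\textbf{Step 1: Localization.} The three sutured manifolds $(Y,\Gamma_A)$, $(Y,\Gamma_B)$, $(Y,\Gamma_C)$ are identical outside a ball-like neighborhood $N$ of the attaching arc, and each of the three bypass-attachment maps is induced by a contact structure supported in a collar of $N$. Choose an arc diagram $\SZ$ parametrizing the part of $\partial N$ along which $N$ is glued to $Y_0 := \overline{Y\setminus N}$, so that $(Y,\Gamma_\bullet)=Y_0\cup_{\SF(\SZ)}W_\bullet$ for three bordered sutured manifolds $W_A,W_B,W_C$ carrying the boundary parametrization $-\SF(\SZ)$ and differing only in the sutures on the remaining, non-parametrized part of their boundary. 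By Theorem~\ref{thm:zarev_pairing}, $\SFC(-Y,-\Gamma_\bullet)\simeq \BSA(-Y_0)\boxtimes\BSD(-W_\bullet)$, and by the naturality clause of Theorem~\ref{thm:hkm_za_gluing} the bypass maps are, up to homotopy, $\phi_\bullet=\id_{\BSA(-Y_0)}\boxtimes\psi_\bullet$, where $\psi_A\colon\BSD(-W_A)\to\BSD(-W_B)$, $\psi_C\colon\BSD(-W_B)\to\BSD(-W_C)$, $\psi_B\colon\BSD(-W_C)\to\BSD(-W_A)$ are the Type-$D$ maps induced by the three local contact structures. It therefore suffices to prove a local statement about the $W_\bullet$.

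\textbf{Step 2: Local computation.} Using nice (or nearly nice) bordered sutured Heegaard diagrams for $W_A$, $W_B$, $W_C$ --- whose underlying three-manifold is a handlebody of small genus --- compute each $\BSD(-W_\bullet)$ explicitly over the strands algebra of $\SZ$, together with the three Type-$D$ maps $\psi_A,\psi_C,\psi_B$ using the general method for computing bypass-attachment maps developed in this section. The essential simplification is the one advertised in the introduction: the spaces of Type-$D$ maps of the appropriate degree between these small modules are themselves small, and the requirement that each $\psi_\bullet$ carry the contact class of $W_\bullet$ to the contact class of its target (which is nonzero, since a single bypass attachment creates no overtwistedness in these local models) pins each map down up to homotopy.

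\textbf{Step 3: The mapping cone, and conclusion.} Verify from the explicit description that there is a homotopy equivalence of Type-$D$ structures $\mathrm{Cone}(\psi_A)\simeq\BSD(-W_C)$ under which $\psi_C$ is identified with the canonical inclusion $\BSD(-W_B)\hookrightarrow\mathrm{Cone}(\psi_A)$ and $\psi_B$ with the canonical projection $\mathrm{Cone}(\psi_A)\twoheadrightarrow\BSD(-W_A)$ (up to the grading shift built into the cone). Since $\boxtimes$ with the fixed $A_\infty$-module $\BSA(-Y_0)$ takes mapping cones of Type-$D$ structures to mapping cones of chain complexes up to homotopy, applying $\id_{\BSA(-Y_0)}\boxtimes(-)$ yields $\SFC(-Y,-\Gamma_C)\simeq\mathrm{Cone}(\phi_A)$ with $\phi_C,\phi_B$ the inclusion and projection. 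The long exact sequence of this mapping cone is exactly the asserted three-periodic exact triangle.

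\textbf{Main obstacle.} The heart of the argument is Steps~2--3: correctly identifying the three local models, computing their $\BSD$'s and, crucially, the contact-induced maps between them, and recognizing the mapping-cone relation. Obtaining the contact maps requires either an a~priori description of the action of an elementary contact $\Sigma\times I$ (available via Zarev's identification of the contact gluing map with his algebraic gluing map) or, in the spirit of the rest of the paper, a rigidity argument: grading constraints together with the nonvanishing-on-contact-classes property should leave only one candidate up to homotopy. Checking that the resulting triangle of Type-$D$ maps genuinely arises as a mapping cone --- not merely that consecutive compositions are null-homotopic --- is what promotes the computation to the full exact triangle.
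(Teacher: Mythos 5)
Your proposal is correct and follows essentially the same strategy as the paper: decompose each $(-Y,-\Gamma_\bullet)$ along a parametrized sutured disk into a common piece $\Y$ and a local piece $\D_\bullet$ (a thickened disk, rather than a higher-genus handlebody), compute the three $\BSD(\D_\bullet)$, pin down the three Type-$D$ maps using idempotent constraints together with nonvanishing on contact classes, observe that $\BSD(\D_A)$ is the mapping cone of the map $\BSD(\D_B)\to\BSD(\D_C)$ with the other two being projection and inclusion, and then push the cone through $\BSA(\Y)\boxtimes(-)$ and take homology. The only differences are cosmetic (a cyclic relabeling of which module plays the role of the cone, and the small local pieces being $D^2\times I$ rather than a general handlebody).
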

In Subsection~\ref{sub:bord_analogue} we will reduce this theorem to a computation about simple bordered sutured manifolds and in the following section we will make the relevant computations giving a bordered sutured Floer proof of this theorem. 

%%%%%%%%%%%%%%%%%%%%%%%%%%%%%%%%%%%%%%%%%%%%%%%%%%%%%%%
\subsection{The Bordered Analogue} % (fold)
 \label{sub:bord_analogue}
%%%%%%%%%%%%%%%%%%%%%%%%%%%%%%%%%%%%%%%%%%%%%%%%%%%%%%%

We translate the above discussion into the language of bordered sutured Floer homology by decomposing the sutured manifold $(-Y,-\Gamma_A)$, as depicted in Figure~\ref{fig:byp_decomp}, into a pair of bordered sutured manifolds: $(-Y,\Gamma',\W_D)$ and $(D^2 \times I,\Gamma_A,-\W_D)$.
%%%%%%%%%%%%%%%%%% Bypass Decomposition %%%%%%%%%%%%%%%%%% 
\begin{figure}[htbp]
	\centering
	\begin{picture}(163,112)
		\put(0,0){\includegraphics[scale=1]{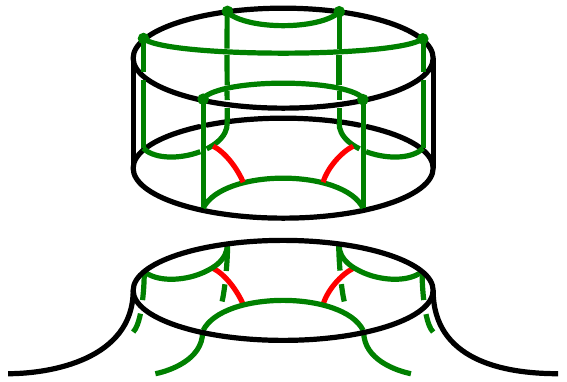}}
		\put(130,77){$(-D^2 \times I,\Gamma_A',-F_1)$}
		\put(140,15){$(-Y,\Gamma',F_1)$}
	\end{picture}
	\caption{Decomposing $(-Y,\Gamma_A)$.}
	\label{fig:byp_decomp}
\end{figure}
%%%%%%%%%%%%%%%%%% End Bypass Decomposition %%%%%%%%%%%%%%%%%%

Mirroring the discussion above, we consider the trio of bordered sutured manifolds $\D_A = (D^2 \times I,\Gamma_A',-\W_D)$, $\D_B = (D^2 \times I,\Gamma_B',-\W_D)$ and $\D_C = (D^2 \times I,\Gamma_C',-\W_D)$ depicted in Figure~\ref{fig:bordbyptriang}.  The sutured manifolds $(-Y,-\Gamma_A)$, $(-Y,-\Gamma_B)$ and $(-Y,-\Gamma_C)$ are obtained from $\Y = (-Y,\Gamma',\W_D)$ by gluing on $\D_A$, $\D_B$ or $\D_C$ respectively.
%%%%%%%%%%%%%%%%%% Bordered Bypass Triangle %%%%%%%%%%%%%%%%%% 
\begin{figure}[htbp]
	\centering
	\begin{picture}(260,155)
		\put(0,0){\includegraphics[scale=1]{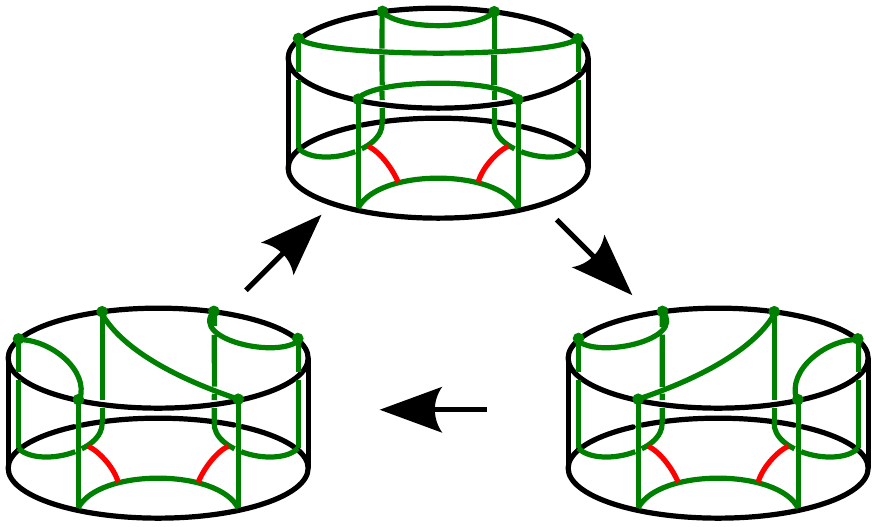}}
		\put(66,142){$\Gamma_A'$}
		\put(147,53){$\Gamma_B'$}
		\put(-14,53){$\Gamma_C'$}
	\end{picture}
	\caption{The bordered bypass exact triangle.}
	\label{fig:bordbyptriang}
\end{figure}
%%%%%%%%%%%%%%%%%% End Bordered Bypass Triangle %%%%%%%%%%%%%%%%%%

Below we compute the trio of HKM gluing maps $\phi_A'$, $\phi_B'$ and $\phi_C'$ induced by the bypass attachments in Figure~\ref{fig:byptriang} on the Type-$D$ structures ${}^{\A(\W_D)} \BSD(\D_A)$, ${}^{\A(\W_D)} \BSD(\D_B)$ and ${}^{\A(\W_D)} \BSD(\D_C)$.  We will then see that  $\BSD(\D_A)$ is the mapping cone of $\phi'_B$ with $\phi'_A$ and $\phi'_C$ being the projection and inclusion maps respectively.  It then follows from Theorem~\ref{thm:hkm_za_gluing} that the gluing maps $\phi_A$, $\phi_B$ and $\phi_C$ are equivalent to $H_*(\I \boxtimes \phi_A')$, $H_*(\I \boxtimes \phi_B')$ and $H_*(\I \boxtimes \phi_C')$, under the identifications  
\begin{align*}
	\SFH(-Y,-\Gamma_A) &\cong  H_*(\BSA(\Y) \boxtimes \BSD(\D_A))\\
	\SFH(-Y,-\Gamma_B) &\cong  H_*(\BSA(\Y) \boxtimes \BSD(\D_B))\\
	\SFH(-Y,-\Gamma_C) &\cong  H_*(\BSA(\Y) \boxtimes \BSD(\D_C)).
\end{align*} 
Properties of the derived tensor product then imply that $\SFC(-Y,\Gamma_A)\cong\BSA(\Y) \boxtimes \BSD(\D_A)$ is the mapping cone of $\I \boxtimes \phi_B$  with $\I \boxtimes \phi_A$ and $\I \boxtimes \phi_C$ being the projection and inclusion maps respectively. Theorem~\ref{thm:bypass_exact_triangle} then follows upon taking homology. 

% subsection the_bordered_analogue (end)
%%%%%%%%%%%%%%%%%%%%%%%%%%%%%%%%%%%%%%%%%%%%%%%%%%%%%%%

%%%%%%%%%%%%%%%%%%%%%%%%%%%%%%%%%%%%%%%%%%%%%%%%%%%%%%%
\subsection{Bordered Bypass Attachment Maps} % (fold)
\label{sub:bordered_bypass_attachment_maps}
%%%%%%%%%%%%%%%%%%%%%%%%%%%%%%%%%%%%%%%%%%%%%%%%%%%%%%%

Our computation of the maps $\phi_A'$, $\phi_B'$ and $\phi_C'$ proceeds as follows.  

We begin by computing the Type-$D$ structures $\BSD(\D_A)$, $\BSD(\D_B)$ and $\BSD(\D_C)$.  From the form of these modules, it follows that there exist unique nontrivial maps which connect these Type-$D$ structures in sequence and these maps form a mapping cone as discussed above. The fact that the maps $\phi_A'$, $\phi_B'$ and $\phi_C'$ are nontrivial follows from Theorem~\ref{thm:hkm_za_gluing} and the fact that there exist contact manifolds with convex boundaries which are related by a single bypass attachment and whose sutured contact invariants are nonzero. 

%%%%%%%%%%%%%%%%%%%%%%%%%%%%%%%%%%%%%%%%%%%%%%%%%%%%%%%
\subsubsection*{Computation of Modules} % (fold)
\label{sub:bypass_modules}
%%%%%%%%%%%%%%%%%%%%%%%%%%%%%%%%%%%%%%%%%%%%%%%%%%%%%%%

We now compute the Type-$D$ structures $\BSD(\D_A)$, $\BSD(\D_B)$ and $\BSD(\D_C)$.  Figure~\ref{fig:disk_diagrams} depicts the parametrized bordered sutured manifolds $\D_A$, $\D_B$ and $\D_C$.
%%%%%%%%%%%%%%%%%% Bypasses %%%%%%%%%%%%%%%%%% 
\begin{figure}[htbp]
	\centering
		\includegraphics[scale=0.9]{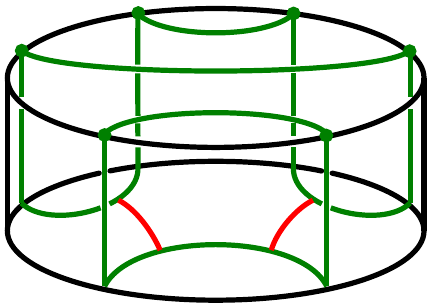}
	\hspace{5pt}
		\includegraphics[scale=0.9]{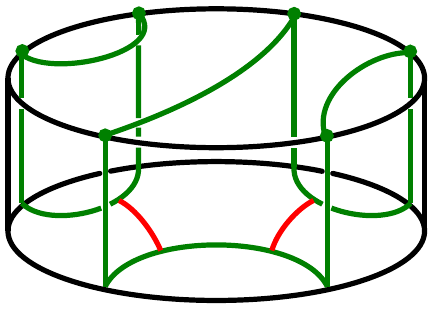}
	\hspace{5pt}
		\includegraphics[scale=0.9]{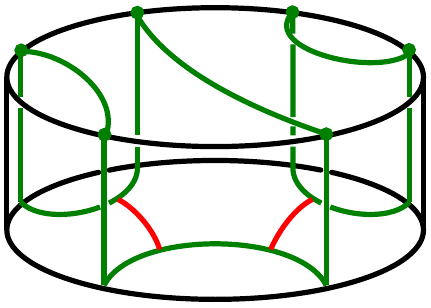}
	\caption{From left to right the parameterized bordered sutured manifolds $\D_A$, $\D_B$ and $\D_C$. }
	\label{fig:disk_diagrams}\label{fig:bord_disk_a}\label{fig:bord_disk_b}\label{fig:bord_disk_c}
\end{figure}
%%%%%%%%%%%%%%%%%%%%%%%%%%%%%%%%%%%%%%%%%%%%%%%%%%%%%%%

As we will now verify, the corresponding bordered sutured Heegaard diagrams $\SH_A$, $\SH_B$ and $\SH_C$ are shown in  Figure~\ref{fig:disk_diagrams2}. 
%%%%%%%%%%%%%%%%%% Bypasses %%%%%%%%%%%%%%%%%% 
\begin{figure}[htbp]
	\centering
	\begin{picture}(125,163)
		\put(10,0){\includegraphics[scale=1]{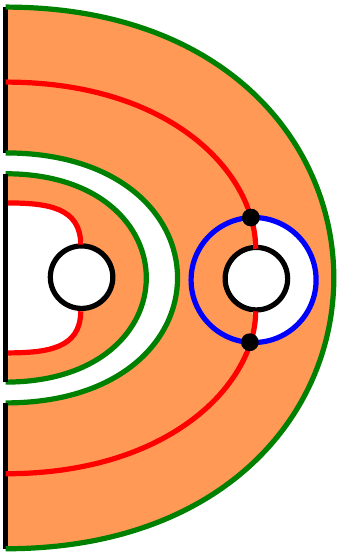}}
		\put(3,20){\color{red}$2$}
		\put(3,55){\color{red}$2$}
		\put(3,99){\color{red}$1$}
		\put(3,134){\color{red}$1$}
		\put(71,100){$y$}
		\put(71,55){$x$}
		\put(2,154){$\rho$}
	\end{picture}
	\begin{picture}(125,163)
		\put(10,0){\includegraphics[scale=1]{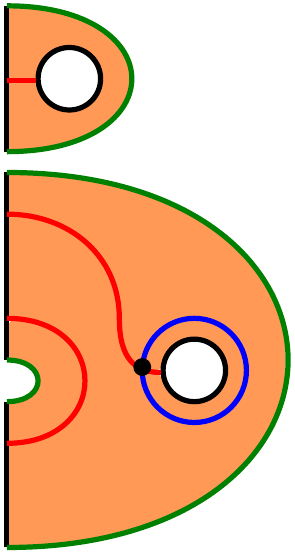}}
		\put(3,30){\color{red}$2$}
		\put(3,55){\color{red}$2$}
		\put(3,95){\color{red}$1$}
		\put(3,133){\color{red}$1$}
		\put(45,42){$z$}
		\put(2,154){$\rho$}
	\end{picture}
	\begin{picture}(125,163)
	\put(10,0){\includegraphics[scale=1]{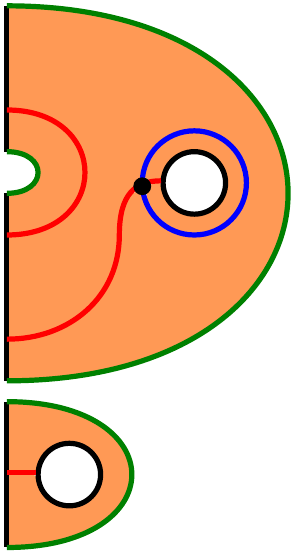}}
		\put(3,20){\color{red}$2$}
		\put(3,58){\color{red}$2$}
		\put(3,90){\color{red}$1$}
		\put(3,125){\color{red}$1$}
		\put(40,107){$w$}
		\put(2,154){$\rho$}
	\end{picture}
	\caption{From left to right he bordered sutured Heegaard diagrams  $\SH_A$, $\SH_B$ and $\SH_C$ associated to $\D_A$, $\D_B$ and $\D_C$, respectively. Each Heegaard surface is an annulus obtained by identifying the two round black circles in the diagrams. The blue circles give $\bbeta$ and the red arcs form $\balpha$. }
	\label{fig:disk_diagrams2}\label{fig:disk_a_diag}\label{fig:disk_b_diag}\label{fig:disk_c_diag}
\end{figure}
%%%%%%%%%%%%%%%%%% End Bypasses %%%%%%%%%%%%%%%%%%
We will check that $\SH_A$ does indeed give a Heegaard diagram for $\D_A$ and leave it to the reader to make the analogous arguments for the other two diagrams. We first notice that $\SH_A=(\Sigma,\balpha,\bbeta, \SZ)$ where $\SZ$ is the arc diagram for $\SF_D$ from Section~\ref{sub:param_disk} and consists of the vertical black lines and the red arcs. Now recall from Section~\ref{sub:the_bordered_invariants} how to build a bordered sutured manifold form a bordered sutured Heegaard diagram. The manifold $Y$ is obtained from the Heegaard surface $\Sigma$ by attaching 2--handles to $\Sigma\times [0,1]$ along the $\balpha$ and $\bbeta$ circles. Thus $Y$ is a solid torus (that is an annulus times interval) with one 2--handle attached along a longitude. So $Y$ is clearly a 3-ball. 
%%%%%%%%%%%%%%%%%% Disk %%%%%%%%%%%%%%%%%%
\begin{figure}[htbp]
	\centering
		{\includegraphics{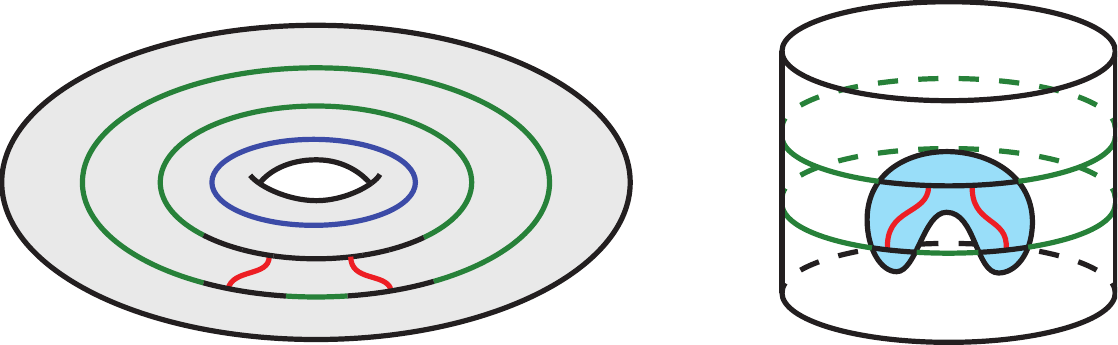}}
		\caption{On the left is $\partial (\Sigma\times[0,1])$ with $\bbeta$ shown in blue, the sutures $\Gamma$ the union of the green and black arcs and $\mathcal{G}(\SZ)$ the union of the black and red arcs. On the right is $\partial Y$ with $\SF(\SZ)$ shaded.}
	\label{fig:BuildBSM}
\end{figure}
%%%%%%%%%%%%%%%%%% End Disk %%%%%%%%%%%%%%%%%%
Now on the left in Figure~\ref{fig:BuildBSM} we see the boundary of $\Sigma\times [0,1]$ with the embedding of the graph of the arc diagram $\W_D$ for $\SF_D$, the curve $\bbeta$ and the dividing curves $\Gamma$ shown. After surgery on $\beta$ we see $\partial Y$ on the left of Figure~\ref{fig:BuildBSM} together with the embedding of $\SF_D=\SF(\W_D)$. This is clearly isotopic to the bordered sutured manifold $\D_A$ in Figure~\ref{fig:disk_diagrams}.

To the diagram $\SH_A$, depicted in Figure~\ref{fig:disk_a_diag}, we associate the Type-$D$ structure $M_A := {}^{\A(\W_D)} \BSD(\D_A)$. As a module, $M_A$ is generated by two elements, $x$ and $y$, whose idempotent compatibilities are given by
\[
	I_1 \cdot x = x, \;\;\; I_2 \cdot y = y.
\]
The diagram $\SH_A$ is a nice in the sense of Section~\ref{sub:nice_heegaard_diagrams} so we can use the algorithm there to compute the boundary map $\delta$. 
From Figure~\ref{fig:disk_a_diag}\footnote{As an aid to the reader, regions adjacent to a sutured boundary components in bordered sutured Heegaard diagrams have been lightly shaded orange.  This signals that any domain which contributes nontrivially to the corresponding differential in Floer homology must have multiplicity zero in that region.}, we see that there is a single domain contributing to the boundary map $\delta$.  It corresponds to a source $S$ which is a rectangle from $y$ to $x$ with one edge mapping to $-\rho$.  Thus, the only nontrivial term in the structure map $\delta$ is given by 
\[
	\delta(y) = \rho' \otimes x.
\]

Next, to the diagram $\SH_B$, depicted in Figure~\ref{fig:disk_b_diag}, we associate the Type-$D$ structure $M_B := {}^{\A(\W_D)} \BSD(\D_B)$.  As a module, $M_B$ is generated by a single element $z$ whose idempotent compatibility is given by
\[
	I_2 \cdot z = z.
\]
The boundary map in this case is trivial since all regions in $\SH_B$ are adjacent to sutured portions of the boundary.

Finally, to the diagram $\SH_C$ in Figure~\ref{fig:disk_c_diag}, we associate the Type-$D$ structure $M_C := {}^{\A(\W_D)} \BSD(\D_C)$.  As above, $M_C$ is generated by a single element $w$, whose idempotent compatibility is given by
\[
	I_1 \cdot w = w,
\]
with trivial boundary map owing to the fact that all regions in $\SH_C$ are adjacent to portions of the boundary which are sutured.

% subsection bypass_modules (end)
%%%%%%%%%%%%%%%%%%%%%%%%%%%%%%%%%%%%%%%%%%%%%%%%%%%%%%%

%%%%%%%%%%%%%%%%%%%%%%%%%%%%%%%%%%%%%%%%%%%%%%%%%%%%%%%
\subsubsection*{Nontrivial Maps} % (fold)
 \label{sub:bypass_gluing_maps}
%%%%%%%%%%%%%%%%%%%%%%%%%%%%%%%%%%%%%%%%%%%%%%%%%%%%%%%

Having computed the Type-$D$ structures $M_A$, $M_B$ and $M_C$, the task of computing the HKM gluing maps $\phi_A'$, $\phi_B'$ and $\phi_C'$ is essentially a triviality.  The reason is that any map of Type-$D$ structures must respect idempotent compatibilities.  Combining this with the previously observed nontriviality requirement, one quickly checks that the desired maps are determined as follows.

The only nontrivial, idempotent compatible map from $M_A$ to $\A(\W_D) \otimes M_B$ is
\begin{align*}
\nonumber	\phi_A' : M_A &\to \A(\W_D) \otimes M_B\\
	\phi_A'(y) &= I_2 \otimes z\\
\nonumber	\phi_A'(x) &= 0
\end{align*}

Similarly, the only nontrivial, idempotent compatible map from $M_B$ to $\A(\W_D) \otimes M_C$ is
\begin{align}\label{eqn:bpmap}
	\phi_B' : M_B &\to \A(\W_D) \otimes M_C\\
\nonumber \phi_B'(z) &= \rho' \otimes w
\end{align}

Finally, the only nontrivial, idempotent compatible map from $M_C$ to $\A(\W_D) \otimes M_A$ is
\begin{align*}
	\phi_C' : M_C &\to \A(\W_D) \otimes M_A\\
\nonumber	\phi_C'(w) &= I_1 \otimes x
\end{align*}

Observe that the Type-$D$ structure $M_A$ is the mapping cone of the Type-$D$ morphism $\phi_B'$, and that the maps $\phi_A'$ and $\phi_C'$ are the projection and inclusion maps respectively.

% subsection bypass_gluing_maps (end)
%%%%%%%%%%%%%%%%%%%%%%%%%%%%%%%%%%%%%%%%%%%%%%%%%%%%%%%

% subsection bordered_bypass_attachment_maps (end)
%%%%%%%%%%%%%%%%%%%%%%%%%%%%%%%%%%%%%%%%%%%%%%%%%%%%%%%

% section bypass_attachment_maps (end)
%%%%%%%%%%%%%%%%%%%%%%%%%%%%%%%%%%%%%%%%%%%%%%%%%%%%%%%

%%%%%%%%%%%%%%%%%%%%%%%%%%%%%%%%%%%%%%%%%%%%%%%%%%%%%%%
\section{Limit Invariants and Knot Floer Homology} % (fold)
\label{sec:limit_is_knot}
%%%%%%%%%%%%%%%%%%%%%%%%%%%%%%%%%%%%%%%%%%%%%%%%%%%%%%%

In this section, we prove a version of Theorem~\ref{thm:lim_to_minus}, which states that there exists an isomorphism relating the sutured limit homology of a null-homologous knot with the minus version of knot Floer homology.  Specifically, we establish an isomorphism of $\F[U]$-modules, deferring the identification of absolute Alexander and ($\Z/2$) Maslov gradings until Section~\ref{sec:gradings}.

Proving Theorem~\ref{thm:lim_to_minus} requires a precise understanding of how the gluing maps induced by positive and negative Legendrian stabilization act on the sutured Floer homology of a Legendrian knot complement.  Computation of these gluing maps, at the level necessary to prove Theorem~\ref{thm:lim_to_minus}, has not historically been possible.  These gluing maps are defined in terms of inclusions of complexes, which, themselves are generally explicitly computable in only elementary situations.  Our strategy employs bordered Floer homology, which provides a sufficiently robust background structure that circumventing the general computability problem is possible in the present situation.

We have broken this section into three main parts.  The first discusses the overall geometric setup, remarking on an appropriate method of decomposing a knot complement which leads to simplified gluing map computations.  Next, we compute the various modules and bi-modules which will be encountered in later computations.  Finally, we conclude the section with a proof of the ungraded version of Theorem~\ref{thm:lim_to_minus}.

%%%%%%%%%%%%%%%%%%%%%%%%%%%%%%%%%%%%%%%%%%%%%%%%%%%%%%%
\subsection{The Geometric Setup} % (fold)
 \label{sub:the_geometric_setup}
%%%%%%%%%%%%%%%%%%%%%%%%%%%%%%%%%%%%%%%%%%%%%%%%%%%%%%%

Let $Y$ be a 3--manifold and $K \subset Y$ a null-homologous knot with chosen Seifert surface $F$.  Consider the sutured manifold $(-Y(K),-\Gamma_\lambda) = (-Y(K),-\Gamma_0)$, where $Y(K)$ denotes the complement of an open tubular neighborhood of $K$, and $\Gamma_\lambda$ consists of two oppositely-oriented Seifert-framed sutures on the boundary of $Y(K)$.  

We decompose $(-Y(K),-\Gamma_0)$ into a pair of bordered sutured manifolds
\[
	(-Y(K),-\Gamma_0) = (-Y(K),\Gamma',\SF_T) \cup \T_0,
\]
consisting of the knot complement $(-Y(K),\Gamma',\SF_T)$ together with a thickened, punctured torus $\T_0$, as depicted in Figure~\ref{fig:bordknotdecomp}.

%%%%%%%%%%%%%%% Begin Sloped Tori %%%%%%%%%%%%%%
\begin{figure}[htbp]
	\centering
	\begin{picture}(187,187)
	\put(0,0){\includegraphics[scale=1]{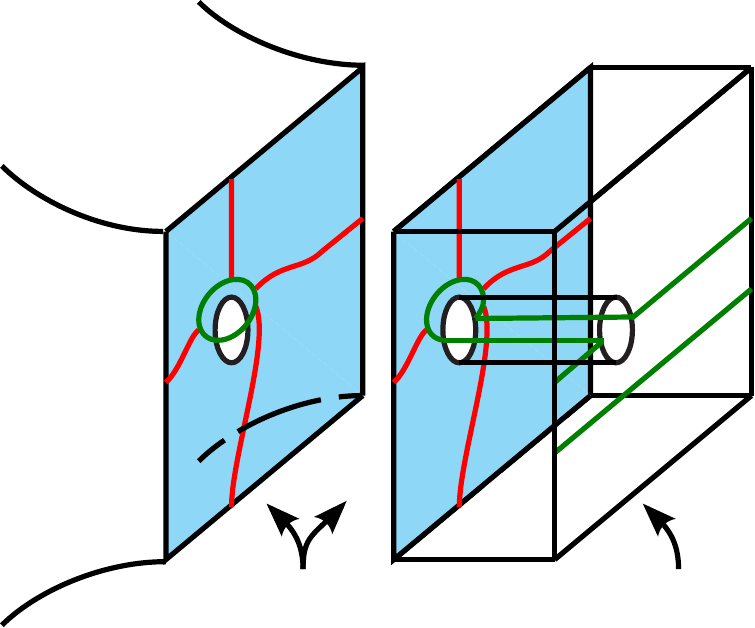}}
	\put(66,5){bordered}
	\put(185,5){sutured}
	\end{picture}
	\caption{Decomposing $(-Y(K),-\Gamma_K)$ into the union $(-Y(K),\Gamma',\SF_T) \cup (T \times I,\Gamma_K',-\SF_T)$. (The light blue regions are the parameterized surfaces $\SF_T$ as seen on the ``front" of $Y(K)$ and the ``back" of $T\times I$.)}
	\label{fig:bordknotdecomp}
\end{figure}
%%%%%%%%%%%%%%% End Sloped Tori %%%%%%%%%%%%%%

Decomposing $(-Y(K),-\Gamma_K)$ in this way, we can restrict our attention to computing the gluing maps induced on bordered sutured Floer homology by attaching positive or negative bypasses to the simpler space $\T_0 = (T \times I,\Gamma_K',-\SF_T)$, depicted in Figure~\ref{fig:torfill1}.  We denote the space resulting from $n$ such bypass attachments by $\T_n$ (see Figure~\ref{fig:torfill3}).

%%%%%%%%%%%%%%% Begin Sloped Tori %%%%%%%%%%%%%%
\begin{figure}[htbp]
	\centering
		\includegraphics[scale=1]{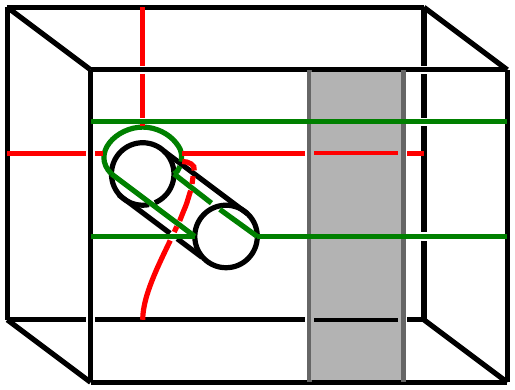}
	\hspace{20px}
		\includegraphics[scale=1]{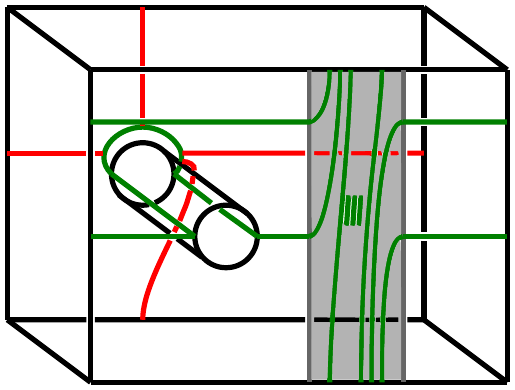}
	\caption{The  bordered sutured manifold $\T_0 = (T,-\Gamma_0',-\W_T)$ and $\T_n = (T,-\Gamma_n',-\W_T)$ on the left hand and, respectively, right hand sides. The annular strip $A$ is shaded in grey.}
	\label{fig:torfill}\label{fig:torfill3}\label{fig:torfill1}
\end{figure}
%%%%%%%%%%%%%%% End Sloped Tori %%%%%%%%%%%%%%

As described in Section~\ref{sub:tt}, the bypass attachments corresponding to positive or negative basic slice layer can be constrained to lie in the annular strip $A$, which is shaded grey in Figure~\ref{fig:torfill}.  The collection of sutures $\Gamma_{n+1}$ on the resulting space $\T_{n+1}$ are obtained from those on $\T_n$ by applying a single negative Dehn twist along a core curve of the annulus $A$ (note the orientation reversal).

This observation suggests that we consider the further decomposition of $\T_n$ into a union
\[
	\T_n = \T \cup \C_n \cup \A_0
\]
of three bordered sutured manifolds as depicted in Figure~\ref{fig:decomptor}.  Like the $\T_n$, the space $\T$ is diffeomorphic to a thickened punctured torus.  The spaces $\C_n$ and $\A_0$ are each diffeomorphic to thickened annuli $A \times I$ (see Sections \ref{sub:simp_ann} and \ref{sub:twisted_annulus} for descriptions of spaces $\A_n$ and $\C_m$).

%%%%%%%%%%%%%%% Begin Sloped Tori %%%%%%%%%%%%%%
\begin{figure}[htbp]
	\centering
		\includegraphics[scale=1]{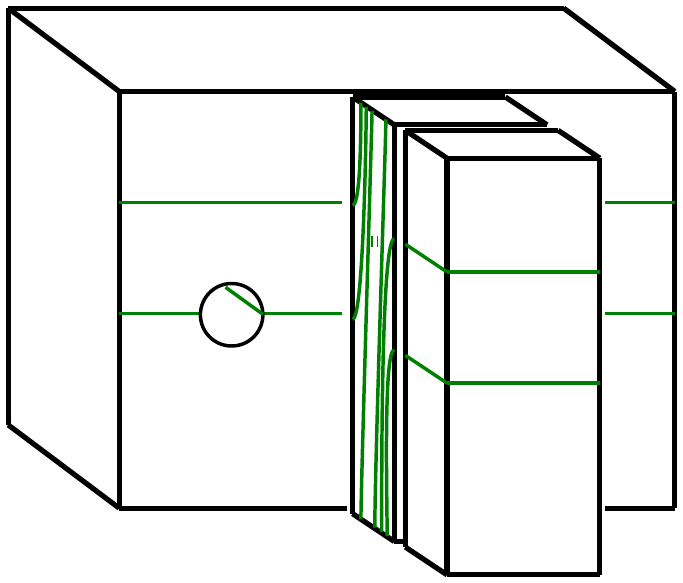}
	\caption{A decomposition of $\T_n$ into the union $\T \cup \C_n \cup \A_0$.}
	\label{fig:decomptor}
\end{figure}
%%%%%%%%%%%%%%% End Sloped Tori %%%%%%%%%%%%%%

As before, we can use this further decomposition of the $\T_n$ to systematically compute the relevant gluing maps. This is the content of Lemmas \ref{lem:basic_bpann}, \ref{lem:twist_bpann} and \ref{lem:twist_bptor}, which we wield to ultimately establish Theorem~\ref{thm:lim_to_minus}.

% subsection the_geometric_setup (end)
%%%%%%%%%%%%%%%%%%%%%%%%%%%%%%%%%%%%%%%%%%%%%%%%%%%%%%%

%%%%%%%%%%%%%%%%%%%%%%%%%%%%%%%%%%%%%%%%%%%%%%%%%%%%%%%
\subsection{Computations of Bordered Sutured Modules and Bi-Modules} % (fold)
\label{sub:comp_mod}
%%%%%%%%%%%%%%%%%%%%%%%%%%%%%%%%%%%%%%%%%%%%%%%%%%%%%%%

In this section, we compute the various modules and bi-modules which will be used below in the proof of Theorem~\ref{thm:lim_to_minus}.  

%%%%%%%%%%%%%%%%%%%%%%%%%%%%%%%%%%%%%%%%%%%%%%%%%%%%%%%
\subsubsection{Torus Modules} % (fold)
 \label{sub:torus_modules}
%%%%%%%%%%%%%%%%%%%%%%%%%%%%%%%%%%%%%%%%%%%%%%%%%%%%%%%

We begin by computing the Type-$D$ modules $\BSD(\T_0)$ and $\BSD(\T_n)$ associated to the spaces depicted in Figures \ref{fig:torfill1}.  Admissible bordered sutured Heegaard diagrams for these spaces are presented in Figure~\ref{fig:torfilldiag}.

%%%%%%%%%%%%%%% Begin Bordered Torus Diagrams %%%%%%%%%%%%%%
\begin{figure}[htbp]
	\centering
	\begin{picture}(110,118)
		\put(0,0){\includegraphics[scale=0.65]{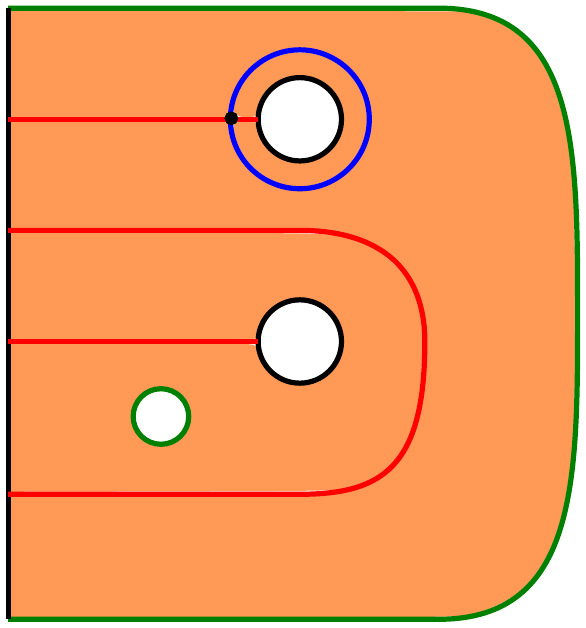}}
		\put(-7,22){\color{red}$2$}
		\put(-7,50){\color{red}$1$}
		\put(-7,70){\color{red}$2$}
		\put(-7,92){\color{red}$1$}
		\put(35,98){$a$}
		\put(-7,116){$\rho$}
	\end{picture}
	\hspace{20px}
	\begin{picture}(110,118)
		\put(0,0){\includegraphics[scale=0.65]{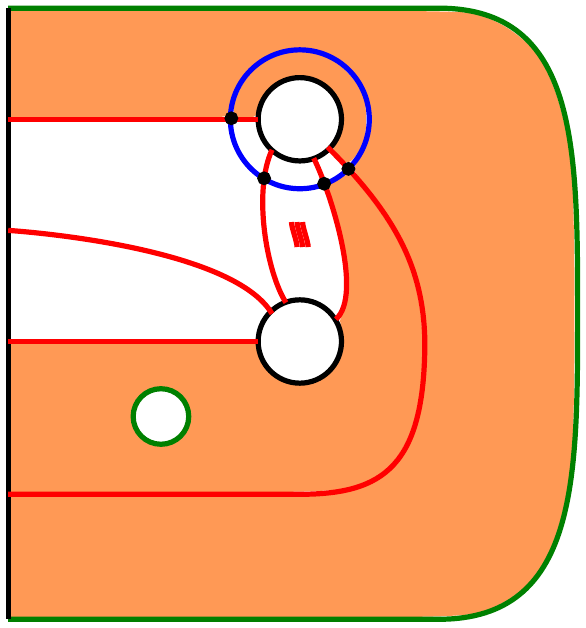}}
		\put(35,98){$a$}
		\put(39,82){$b_1$}
		\put(70,83){$b_n$}
		\put(-7,22){\color{red}$2$}
		\put(-7,50){\color{red}$1$}
		\put(-7,70){\color{red}$2$}
		\put(-7,92){\color{red}$1$}
		\put(-7,116){$\rho$}
	\end{picture}
	\caption{Bordered sutured Heegaard diagrams for the spaces $\T_0$ and $\T_n$, $n \geq 1$. (As usual the black circles are identified.)}
	\label{fig:torfilldiag}\label{fig:torfilldiag3}\label{fig:torfilldiag1}
\end{figure}
%%%%%%%%%%%%%%% End Bordered Torus Diagrams %%%%%%%%%%%%%%

To the diagram on the left in Figure~\ref{fig:torfilldiag1}, we associate the Type-$D$ module $K_0 := {}^{\A(\W_T)} \BSD(\T_1)$, defined over the strand algebra $\A(\W_T)$ from Section~\ref{sec:algebras}. The module $K_0$ is generated by a single element $a$, whose idempotent compatibility is given by
\[
	I_2 \cdot a = a.
\]

The boundary operator on $K_0$ is trivial since all of the regions in the bordered sutured Heegaard diagram are adjacent to portions of the boundary which are sutured.

Similarly, to the of diagram on the right in Figure~\ref{fig:torfilldiag3}, representing the bordered sutured manifold $\T_n$, we associate the Type-$D$ module $K_n := {}^{\A(\W_T)} \BSD(\T_n)$. For an integer $n \geq 1$, the module $K_n$ is generated by the collection of intersections $\{a,b_1,\dots,b_n\}$.  The idempotent compatibilities of these generators are given by
\[
	I_2 \cdot a = a, \;\;\; I_1 \cdot b_i = b_i.
\]

From the diagram shown in Figure~\ref{fig:torfilldiag3}, we compute the following nontrivial terms in the boundary operator on $K_n$:
\begin{align*}
	\delta(b_1) &= \rho_2' \otimes a\\
	\nonumber \delta(b_i) &= \rho_{23}' \otimes b_{i-1}, \;\;\; i = 2,\dots n.
\end{align*}

To see this, observe that, for each $i \geq 1$, there is a single domain contributing to $\delta(b_i)$.  In the case $i = 1$, it corresponds to a domain that is described in the algorithm for computing $\delta$ for nice diagrams in Section~\ref{sub:nice_heegaard_diagrams}. For $i \geq 2$, there are many non-trivial (index 1) domains emanating from $b_i$, but only one which (potentially) contributes non-trivially to the boundary operator --- the domain connecting $b_i$ to $b_{i-1}$.  The domain is a 6-gon with with 2 edges going to Reeb chords $-\rho_2$ and $-\rho_3$, in that order. The contribution to $\delta$ is analogous to the algorithm described in Section~\ref{sub:nice_heegaard_diagrams} except that the two Reeb cords are multiplied together to yield $\rho_{23}$ when determining their contribution to $\delta$. 

\begin{remark}
In the discussion to follow, we will not generally provide justification for our boundary operator computations.  Instead, we leave them as straight-forward exercises for the reader, each of which follows from a line of reasoning similar to that above.
\end{remark}

% subsection torus_modules (end)
%%%%%%%%%%%%%%%%%%%%%%%%%%%%%%%%%%%%%%%%%%%%%%%%%%%%%%%

%%%%%%%%%%%%%%%%%%%%%%%%%%%%%%%%%%%%%%%%%%%%%%%%%%%%%%%
\subsubsection{Torus Bi-module} % (fold)
 \label{sub:torus_bmodules}
%%%%%%%%%%%%%%%%%%%%%%%%%%%%%%%%%%%%%%%%%%%%%%%%%%%%%%%

We now compute the Type-$DA$ bi-module associated to the space $\T := (T \times [0,1],\Gamma,-\W_T \cup \W_A)$, depicted in Figure~\ref{fig:torusdiag}.  An admissible bordered sutured Heegaard diagram for $\T$ is also shown on the right of Figure~\ref{fig:torusdiag}.

%%%%%%%%%%%%%%% Begin Bordered Torus %%%%%%%%%%%%%%
\begin{figure}[htbp]
	\centering
			\includegraphics[scale=1]{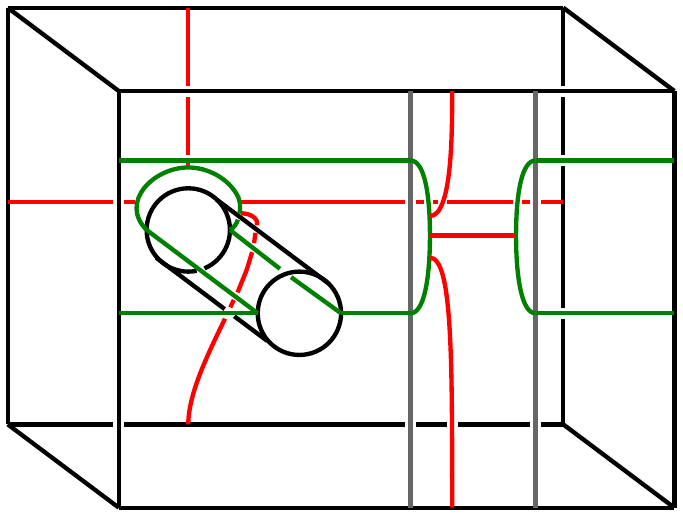}
	\hspace{10px}
	\begin{picture}(146,149)
		\put(0,0){\includegraphics[scale=1]{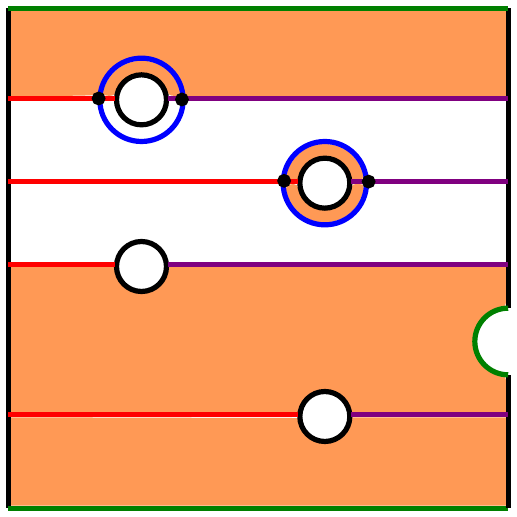}}
		\put(-7,25){\color{red}$2$}
		\put(-7,68){\color{red}$1$}
		\put(-7,93){\color{red}$2$}
		\put(-7,118){\color{red}$1$}
		\put(148,25){\color{red}$2$}
		\put(148,68){\color{red}$1$}
		\put(148,93){\color{red}$2$}
		\put(148,118){\color{red}$1$}
		\put(21,123){$x$}
		\put(53,123){$y$}
		\put(75,100){$y$}
		\put(105,100){$x$}
		\put(-7,147){$\rho$}
		\put(148,147){$\pi$}
	\end{picture}
	\caption{On the left hand side is the decorated sutured cobordism $\T$ between the sutured surfaces $F_3$ and $F_1$. On the right hand side is an admissible bordered sutured Heegaard diagram for $\T$. (As usual the black circles are identified by reflections along horizontal lines.)}
	\label{fig:borderedtorus}\label{fig:torusgeom}\label{fig:torusdiag}
\end{figure}
%%%%%%%%%%%%%%% End Bordered Torus %%%%%%%%%%%%%%

To this bordered sutured Heegaard diagram, we associate the Type-$DA$ bi-module $N := {}^{\A(\W_T)} \BSDA(\T)_{\A(W_A)}$.  

We content ourselves with computing the middle summand $\BSDA(\T,1)$, since that is all we need in our proof of Theorem~\ref{thm:lim_to_minus}.  Indeed, the modules ($A_\infty$ or Type-$D$) to be paired with $N$ have all other summands vanishing, owing to the fact that they arise from bordered sutured manifolds with a single bordered boundary component.  

From Figure~\ref{fig:torusdiag}, we see that $\BSDA(\T,1)$ is generated by elements $x$ and $y$, whose idempotent compatibilities are given by
\[
	I_2 \cdot x \cdot I_2 = x, \;\;\; I_1 \cdot y \cdot I_1 = y.
\]
(Other collections of intersection points will not lie in $\BSDA(\T,1)$.)

There are three domains which each contribute a term to $m_2$ --- two correspond to 8-gons with 2 edges mapping to Reeb chords and one which is an annulus with one boundary component having 4 edges --- one mapping to a Reeb chord --- and the other having 6 edges with 2 mapping to Reeb chords.  The nontrivial operations in $\BSDA(\T,1)$ are given by:
\begin{align*}
\nonumber	m_2(y,\pi_1') &= \rho_2' \otimes x\\
 	m_2(x,\pi_2') &= \rho_3' \otimes y\\
\nonumber 	m_2(y,\pi_{12}') &= \rho_{23}' \otimes y.
\end{align*}

% subsection torus_bimodule (end)
%%%%%%%%%%%%%%%%%%%%%%%%%%%%%%%%%%%%%%%%%%%%%%%%%%%%%%%

%%%%%%%%%%%%%%%%%%%%%%%%%%%%%%%%%%%%%%%%%%%%%%%%%%%%%%%
\subsubsection{Annular Modules} % (fold)
 \label{sub:simp_ann}
%%%%%%%%%%%%%%%%%%%%%%%%%%%%%%%%%%%%%%%%%%%%%%%%%%%%%%%

We now focus on the collection of spaces $\A_0 := (A \times [0,1],\Gamma_0,-\W_A)$ and $\A_n := (A \times [0,1],\Gamma_n,-\W_A)$ depicted in Figure~\ref{fig:annA1}.  Admissible bordered sutured Heegaard diagrams for the $\A_n$ are given in Figure~\ref{fig:annC1}.

%%%%%%%%%%%%%%% Begin Bordered Annuli %%%%%%%%%%%%%%
\begin{figure}[htbp]
	\centering
	\includegraphics[scale=1]{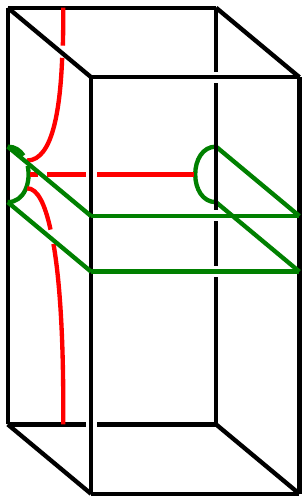}
	\hspace{60px}
	\includegraphics[scale=1]{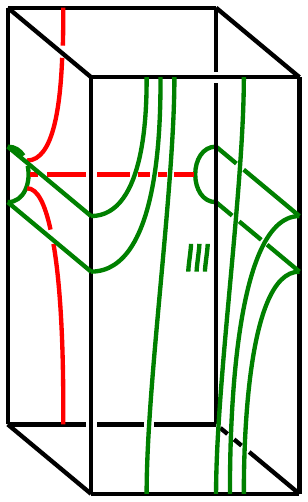}
		\caption{Thickened bordered sutured annuli representing the ``twist region''.}
	\label{fig:annA}\label{fig:annA1}\label{fig:annA3}
\end{figure}

\begin{figure}[htbp]
	\centering
	\begin{picture}(110,150)
		\put(0,0){\includegraphics[scale=1]{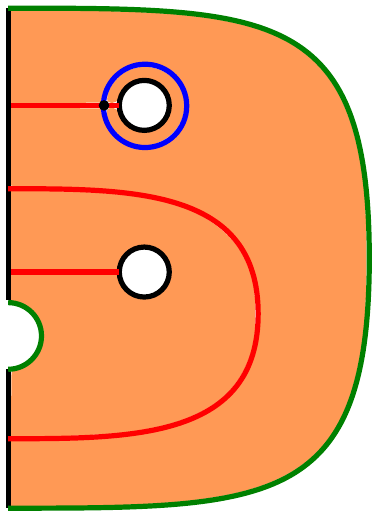}}
		\put(-7,18){\color{red}$2$}
		\put(-7,66){\color{red}$1$}
		\put(-7,90){\color{red}$2$}
		\put(-7,115){\color{red}$1$}
		\put(20,120){$a$}
		\put(-7,148){$\rho$}
	\end{picture}
	\hspace{40px}
	\begin{picture}(110,150)
		\put(0,0){\includegraphics[scale=1]{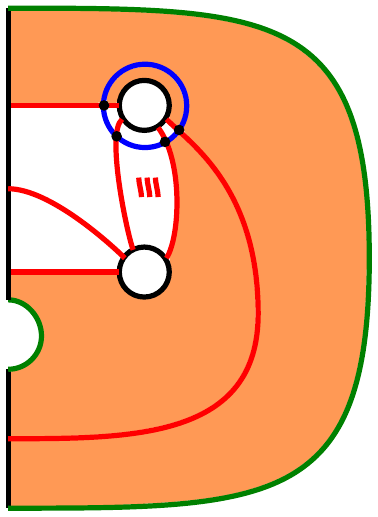}}
		\put(22,120){$a$}
		\put(22,104){$b_1$}
		\put(55,110){$b_n$}
		\put(-7,18){\color{red}$2$}
		\put(-7,66){\color{red}$1$}
		\put(-7,90){\color{red}$2$}
		\put(-7,115){\color{red}$1$}
		\put(-7,148){$\rho$}
	\end{picture}
	\caption{Bordered sutured Heegaard diagrams for the thickened bordered sutured annuli $\A_0$ and $\A_n$. (As usual the black circles are identified.)}
	\label{fig:annC}\label{fig:annC1}\label{fig:annC3}
\end{figure}
%%%%%%%%%%%%%%% End Bordered Annuli %%%%%%%%%%%%%%

To the diagram on the left hand side of Figure~\ref{fig:annC1}, we associate the Type-$D$ module $M_0 := {}^{\A(\W_A)} \BSD(\A_0)$. The module $M_0$ is generated by a single element $a$, whose idempotent compatibility is given by
\[
	I_2 \cdot a = a.
\]

The corresponding boundary operator $\delta$ is trivial since all of the regions in the corresponding bordered sutured Heegaard diagram are adjacent to portions of the boundary which are sutured.

Similarly, to the collection of diagrams depicted on the right hand side of Figure~\ref{fig:annC3}, we associate the Type-$D$ modules $M_n := {}^{\A(\W_A)} \BSD(\A_n)$. For an integer $n \geq 1$, the module $M_n$ is generated by the collection of intersections $\{a,b_1,\dots,b_n\}$.  The idempotent compatibilities of these generators are given by
\[
	I_2 \cdot a = a, \;\;\; I_1 \cdot b_i = b_i.
\]

From the diagram shown in Figure~\ref{fig:annC3}, we see that the nontrivial terms in the boundary map on $\delta$ are given by:
\begin{align}\label{eqn:AAn_boundary}
	\delta(b_1) &= \rho_1' \otimes a\\
	\nonumber \delta(b_i) &= \rho_{12}' \otimes b_{i-1}, \;\;\; i = 2,\dots n.
\end{align}

The justification for this calculation is identical to that for the torus modules $K_n$ in Section~\ref{sub:torus_modules}.

% subsection simple_annulus (end)
%%%%%%%%%%%%%%%%%%%%%%%%%%%%%%%%%%%%%%%%%%%%%%%%%%%%%%%

%%%%%%%%%%%%%%%%%%%%%%%%%%%%%%%%%%%%%%%%%%%%%%%%%%%%%%%
\subsubsection{Annular Bi-modules} % (fold)
 \label{sub:twisted_annulus}
%%%%%%%%%%%%%%%%%%%%%%%%%%%%%%%%%%%%%%%%%%%%%%%%%%%%%%%

Here, we compute the Type-$DA$ bi-modules associated to the collection of spaces $\C_n := (A \times [0,1],\Gamma_n,-\W_A \cup \W_A)$ depicted in Figure~\ref{fig:annbimod1}.  Admissible bordered sutured Heegaard diagrams for the $\C_n$ are given in Figure~\ref{fig:annbimod2}, to which we associate the Type-$DA$ bi-module $C_n := {}^{\A(\W_A)} \BSDA(\C_n)_{\A(W_A)}$.

%%%%%%%%%%%%%%% Twist Annulus Bimod %%%%%%%%%%%%%%
\begin{figure}[htbp]
	\centering
		\includegraphics[scale=1]{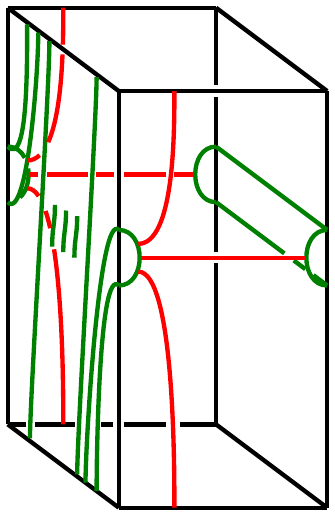}
	\hspace{30px}
	\begin{picture}(148,148)
		\put(0,0){\includegraphics[scale=1]{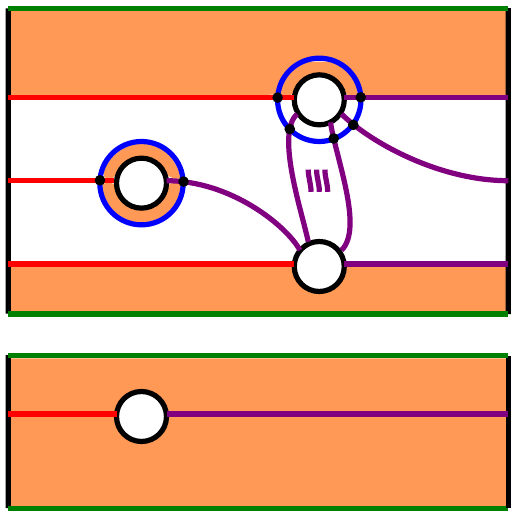}}
		\put(-5,25){\color{red}$2$}
		\put(-5,69){\color{red}$1$}
		\put(-5,93){\color{red}$2$}
		\put(-5,117){\color{red}$1$}
		\put(149,25){\color{red}$2$}
		\put(149,69){\color{red}$1$}
		\put(149,93){\color{red}$2$}
		\put(149,117){\color{red}$1$}
		\put(73,123){$a$}
		\put(73,107){$b_1$}
		\put(106,111){$b_n$}
		\put(107,123){$c$}
		\put(-5,146){$\rho$}
		\put(149,146){$\pi$}
	\end{picture}
	\caption{Doubly bordered annulus and its bordered diagram. (As usual the black circles are identified by reflections along horizontal lines.)}
	\label{fig:annbimodA}\label{fig:annbimod1}\label{fig:annbimod2}
\end{figure}
%%%%%%%%%%%%%%% End Twist Annulus Bimod %%%%%%%%%%%%%%

As was the case above for the bi-module $N$, we content ourselves with computing the middle summand $\BSDA(\C_n,1)$ since, that is all we use in our proof of Theorem~\ref{thm:lim_to_minus}.  The bi-module $\BSDA(\C_n,1)$ is generated by the intersections, $\{a,b_1,\dots,b_n,c\}$, whose idempotent compatibilities are given by
\[
	I_2 \cdot a \cdot I_2 = a, \;\;\; I_1 \cdot b_i \cdot I_2 = b_i, \;\;\; I_1 \cdot c \cdot I_1 = c.
\]
(Notice that for each intersection point in $\{a,b_1,\dots,b_n,c\}$ there is a unique second intersection between the $\balpha$ and $\bbeta$ curves that will produce an element of $\BSDA(\C_n,1)$, so we denote the generators of $\BSDA(\C_n,1)$ by the points $\{a,b_1,\dots,b_n,c\}$.)

From Figure~\ref{fig:annbimod2}, we see that the nontrivial operations in $\BSDA(\A_n,1)$ are given as follows:
\begin{align*}
\nonumber	m_1(b_1) &= \rho_1 \otimes a\\
\nonumber	m_1(b_i) &= \rho_{12} \otimes b_{i-1}\\
\nonumber	m_2(c,\pi_1) &= \rho_{12} \otimes b_n\\
	m_2(c,\pi_{12}) &= \rho_{12} \otimes c\\
\nonumber	m_{k+2}(a,\pi_2,\pi_{12},\dots,\pi_{12},\pi_1) &= \rho_2 \otimes b_k\\
\nonumber	m_{k+2}(b_i,\pi_2,\pi_{12},\dots,\pi_{12},\pi_1) &= I_1 \otimes b_{i+k}\\
\nonumber	m_{k+2}(b_{n-k},\pi_2,\pi_{12},\dots,\pi_{12}) &= I_1 \otimes c\\
\nonumber	m_{n+2}(a,\pi_2,\pi_{12},\dots,\pi_{12}) &= \rho_2 \otimes c,
\end{align*}
where $k\geq 1$. 

The above computation is straightforward but somewhat involved, and entirely analogous to the computations made in Appendix A of \cite{LOT1}. 

% subsection twisted_annulus (end)
%%%%%%%%%%%%%%%%%%%%%%%%%%%%%%%%%%%%%%%%%%%%%%%%%%%%%%%

%%%%%%%%%%%%%%%%%%%%%%%%%%%%%%%%%%%%%%%%%%%%%%%%%%%%%%%
\subsubsection{Bypass Attachment Annuli} % (fold)
 \label{sub:bypass_annuli}
%%%%%%%%%%%%%%%%%%%%%%%%%%%%%%%%%%%%%%%%%%%%%%%%%%%%%%%

We conclude by computing the Type-$DA$ bi-modules associated to the two spaces $\B_1 := (A \times [0,1],\Gamma',-\W_A \cup \W_D)$ and $\B_2 := (A \times [0,1],\Gamma'',-\W_A \cup \W_D)$ depicted in Figure~\ref{fig:annbypA1}. These spaces will be used to study the attachment of bypass  by gluing them to the bordered sutured manifolds $\D_A$, $\D_B$ or $\D_C$ from Subsection~\ref{sub:bord_analogue}.

%%%%%%%%%%%%%%% Begin Bypass Annuli Figures %%%%%%%%%%%%%%
\begin{figure}[htbp]
	\centering
	\includegraphics[scale=1]{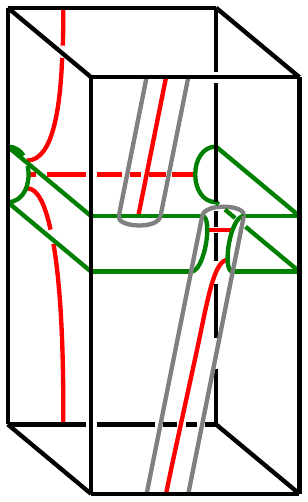}
	\hspace{85px}
	\includegraphics[scale=1]{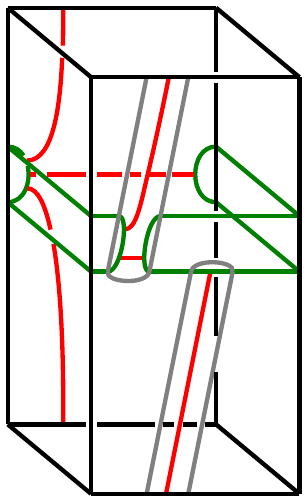}
	\caption{Bordered sutured manifolds for the two possible bypass attachments to an annulus: $\B_1 := (A \times [0,1],\Gamma',-\W_A \cup \W_D)$ and $\B_2 := (A \times [0,1],\Gamma'',-\W_A \cup \W_D)$ }
	\label{fig:annbypA1}\label{fig:annbypB1}
\end{figure}	
	
\begin{figure}[htbp]
	\centering
	\begin{picture}(147,147)
	\put(0,0){\includegraphics[scale=1]{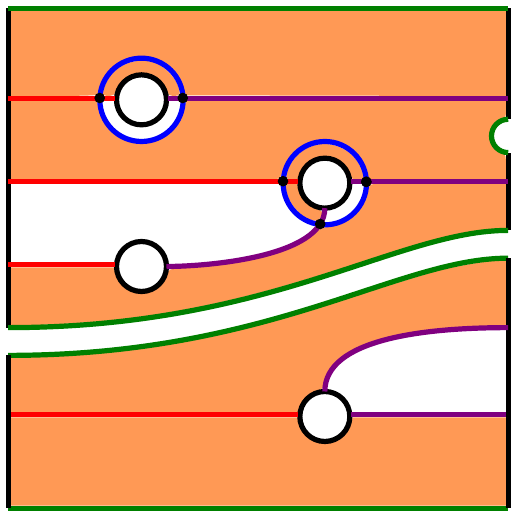}}
	\put(-7,27){\color{red}$2$}
	\put(-7,69){\color{red}$1$}
	\put(-7,93){\color{red}$2$}
	\put(-7,117){\color{red}$1$}
	\put(149,118){\color{red}$1$}
	\put(149,93){\color{red}$2$}
	\put(149,52){\color{red}$1$}
	\put(149,27){\color{red}$2$}
	\put(73,99){$d$}
	\put(92,77){$e$}
	\put(107,99){$f$}
	\put(-7,145){$\rho$}
	\put(149,145){$\pi$}
	\end{picture}
	\hspace{30px}
	\begin{picture}(147,147)
	\put(0,0){\includegraphics[scale=1]{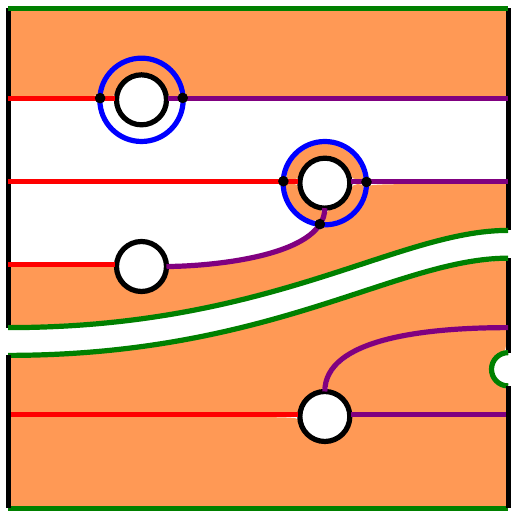}}
	\put(-7,27){\color{red}$2$}
	\put(-7,69){\color{red}$1$}
	\put(-7,93){\color{red}$2$}
	\put(-7,117){\color{red}$1$}
	\put(149,118){\color{red}$1$}
	\put(149,93){\color{red}$2$}
	\put(149,52){\color{red}$1$}
	\put(149,27){\color{red}$2$}
	\put(73,99){$d$}
	\put(92,77){$e$}
	\put(107,99){$f$}
	\put(-7,145){$\rho$}
	\put(149,145){$\pi$}
	\end{picture}
	\caption{Heegaard diagrams for the possible bypass attachments to an annulus $\B_1$ and $\B_2$. (As usual the black circles are identified by reflections along horizontal lines.)}
	\label{fig:annbypA2}\label{fig:annbypB2}
\end{figure}
%%%%%%%%%%%%%%% End Bypass Annuli Figures %%%%%%%%%%%%%%

To the bordered sutured Heegaard diagrams shown on the left and right hand side of Figure~\ref{fig:annbypA2}, we associate the Type-$DA$ bi-modules $B_1 := {}^{\A(\W_A)} \BSDA(\B_1)_{\A(W_D)}$ and $B_2 := {}^{\A(\W_A)} \BSDA(\B_2)_{\A(W_D)}$, respectively.  As before, we compute only the summand $\BSDA(\B_i,1)$, since that is all we need in our proof of Theorem~\ref{thm:lim_to_minus}.  

The bi-module $\BSDA(\B_1,1)$ is generated by the intersections, $\{d,e,f\}$, whose idempotent compatibilities are given by
\[
	I_1 \cdot d \cdot I_1 = d, \;\;\; I_2 \cdot e \cdot I_1 = e, \;\;\; I_2 \cdot f \cdot I_2 = f.
\]

There are two nontrivial domains, one contributes a term to $m_1$ and another to $m_2$.
The nontrivial operations in $\BSDA(\B_1,1)$ are given by:
\begin{align*}
	m_1(d) &= \rho_1' \otimes e\\
\nonumber 	m_2(f,\pi') &= I_2 \otimes e.
\end{align*}

The bi-module $\BSDA(\B_2,1)$ is again generated by the intersections, $\{d,e,f\}$, with idempotent compatibilities
\[
	I_1 \cdot d \cdot I_1 = d, \;\;\; I_2 \cdot e \cdot I_1 = e, \;\;\; I_2 \cdot f \cdot I_2 = f.
\]

There are again two domains which contribute terms to either $m_1$ or $m_2$.
The nontrivial operations in $\BSDA(\B_2,1)$ are given by:
\begin{align*}
	m_1(d) &= \rho_1' \otimes e\\
\nonumber 	m_2(f,\pi') &= \rho_2' \otimes d.
\end{align*}
\

% subsection bypass_annuli (end)
%%%%%%%%%%%%%%%%%%%%%%%%%%%%%%%%%%%%%%%%%%%%%%%%%%%%%%%

% subsection comp_mod (end)
%%%%%%%%%%%%%%%%%%%%%%%%%%%%%%%%%%%%%%%%%%%%%%%%%%%%%%%

%%%%%%%%%%%%%%%%%%%%%%%%%%%%%%%%%%%%%%%%%%%%%%%%%%%%%%%
\subsection{Computation of Gluing Maps} % (fold)
 \label{sub:computation_of_gluing_maps}
%%%%%%%%%%%%%%%%%%%%%%%%%%%%%%%%%%%%%%%%%%%%%%%%%%%%%%%

Having finished computing the various modules and bi-modules which are needed to prove Theorem~\ref{thm:lim_to_minus}, we now determine the various maps on Floer homology induced by either positive or negative bypass attachment.  

After reversing orientation, the lemma below describes the effect on bordered Floer homology of attaching either a positive or negative bypass to the bordered sutured annulus $\A_0$.

\begin{lemma}\label{lem:basic_bpann}
	The map on bordered sutured Floer homology induced by attaching a positive bypass to the thickened annulus $\A_0$ of slope zero is given by the following equation:
\begin{align*}
	\psi_p: M_0 &\to M_1\\
	\nonumber a &\mapsto I_2 \otimes a.
\end{align*} 
	
	Similarly, the map on bordered sutured Floer homology induced by attaching a negative bypass to the thickened annulus $\A_0$ of slope zero is given by the following equation:
\begin{align*}
	\psi_n: M_0 &\to M_1\\
\nonumber a &\mapsto \rho_2' \otimes b_1.
\end{align*}
\end{lemma}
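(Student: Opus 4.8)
The plan is to identify the two bypass-attachment maps $\psi_p$ and $\psi_n$ by realizing each as the bordered-sutured HKM gluing map $\phi_\xi \colon \BSD(\A_0) \to \BSD(\A_1)$ coming from the appropriate contact structure on the thickened annulus glued on, and then pinning the map down using two rigid pieces of data: idempotent compatibility and non-vanishing of the relevant contact invariant. First I would set up the geometry: by Theorem~\ref{thm:hkm_za_gluing} the positive (resp.\ negative) bypass attachment along the annular strip induces a Type-$D$ structure map $\BSD(-\A_0) \to \BSD(-\A_1)$, so after the orientation reversal already built into the module conventions of Section~\ref{sub:comp_mod} we get maps $M_0 \to M_1$. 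Recall from Section~\ref{sub:simp_ann} that $M_0 = \langle a \rangle$ with $I_2\cdot a = a$ and $M_1 = \langle a, b_1\rangle$ with $I_2 \cdot a = a$, $I_1 \cdot b_1 = b_1$, and the differential on $M_1$ is $\delta(b_1) = \rho_1' \otimes a$ (the $n=1$ case of \eqref{eqn:AAn_boundary}).

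Next I would enumerate the possible Type-$D$ structure maps $M_0 \to \A(\W_A) \otimes M_1$ respecting idempotents. Since $a$ carries the idempotent $I_2$, its image must lie in $I_2 \cdot (\A(\W_A) \otimes M_1)$; decomposing, the only idempotent-compatible targets built from the algebra elements listed in Section~\ref{sub:param_annulus} are $I_2 \otimes a$ and $\rho_2' \otimes b_1$ (the element $\rho_2'$ satisfies $I_2\, \rho_2'\, I_1 = \rho_2'$, so $\rho_2' \otimes b_1$ is indeed compatible). One then checks the Type-$D$ relation $(\mu_1 \otimes \id)\circ\delta_{M_1}$-style compatibility: both candidate maps are chain maps, so this does not by itself distinguish them. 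The distinction comes from which bypass — positive or negative — is attached: the two basic slices $B_1^+$ and $B_1^-$ attached to $\A_0$ are distinguished by their relative Euler class (equivalently their Alexander degree shift, cf.\ Section~\ref{ssec:convexandspin} and Section~\ref{sub:alexander}), and I would match the $+1/2$ versus $-1/2$ Alexander grading shift against the gradings of $I_2 \otimes a$ and $\rho_2' \otimes b_1$. This forces $\psi_p(a) = I_2 \otimes a$ and $\psi_n(a) = \rho_2' \otimes b_1$.

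The remaining point is to rule out the zero map, i.e.\ to confirm both $\psi_p$ and $\psi_n$ are genuinely nontrivial. Here I would invoke the standard non-vanishing input used throughout the paper: by Theorem~\ref{thm:hkm_za_gluing} these maps are natural with respect to gluing, and by Theorem~\ref{thm:gluingandctinvt} they carry contact invariants to contact invariants; choosing an ambient contact structure on a closed manifold with nonzero $\EH$ whose associated knot complement decomposition includes the relevant bypass layer (for instance a standard Legendrian unknot or any tight example, as in the discussion of Theorem~\ref{thm:Leg_stab_bs}) shows that the image of a nonzero $\EH$-class is nonzero, hence the map cannot vanish. Since a nonzero idempotent-compatible, grading-homogeneous map is unique in each case, this completes the identification.

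The main obstacle I anticipate is the grading bookkeeping that separates $\psi_p$ from $\psi_n$: one must be careful that the Alexander (relative $\spinc$) grading conventions on $\BSD(\A_0)$, $\BSD(\A_1)$, and on the algebra elements $I_2$ and $\rho_2'$ are normalized consistently with the basic-slice Euler class computations of Section~\ref{ssec:convexandspin}, so that "$+$" genuinely lands on $I_2 \otimes a$ and "$-$" on $\rho_2' \otimes b_1$ rather than the reverse. An alternative, perhaps cleaner, route that avoids delicate grading signs is to directly exhibit the two attachments as box tensor products with the bi-modules $B_1$ and $B_2$ from Section~\ref{sub:bypass_annuli} composed with the bypass-triangle maps $\phi_A'$, $\phi_B'$, $\phi_C'$ of Section~\ref{sub:bordered_bypass_attachment_maps}, and read off the answer from those explicit module maps; I would keep this as a fallback if the grading argument proves fussy.
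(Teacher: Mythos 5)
Your ``fallback'' argument is the paper's actual proof: the paper realizes $\psi_p$ and $\psi_n$ as $\id_{B_1}\boxtimes\phi_B$ and $\id_{B_2}\boxtimes\phi_B$, computes $B_1\boxtimes M_B\cong B_2\boxtimes M_B\cong M_0$ and $B_1\boxtimes M_C\cong B_2\boxtimes M_C\cong M_1$, and then reads off the answer from Definition~\ref{def:boxmap} using the explicit operations on $B_1$, $B_2$, and $\phi_B$ computed earlier. So the proposal does contain the right argument, but only as a contingency plan.

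The ``primary'' argument, as written, has a genuine gap. Idempotent compatibility alone permits three nonzero Type-$D$ structure maps $M_0 \to \A(\W_A)\otimes M_1$, not two: in addition to $a\mapsto I_2\otimes a$ and $a\mapsto \rho_2'\otimes b_1$ you must also consider $a\mapsto \rho_{12}'\otimes a$, since $\rho_{12}'$ satisfies $I_2\,\rho_{12}'\,I_2 = \rho_{12}'$ and therefore $\rho_{12}'\otimes a$ lives in the same idempotent component as the other two. All three candidates satisfy the Type-$D$ chain map relation (one checks $\mu_2(\rho_2', \rho_1')=0$ and $\mu_1(\rho_{12}')=0$ in $\A(\W_A,1)$), and a short computation shows that no two of them are homotopic, so the overlooked candidate cannot be discarded on general structural grounds. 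Ruling it out, and separately matching the sign ``$+$'' to $I_2\otimes a$ and ``$-$'' to $\rho_2'\otimes b_1$ rather than the reverse, would require developing the noncommutative grading bookkeeping on $M_0$, $M_1$, and the algebra elements $I_2$, $\rho_2'$, $\rho_{12}'$ far beyond the appeal to Euler classes and Alexander shifts you sketch; the paper avoids this entirely by computing $\psi_p$ and $\psi_n$ directly rather than characterizing them axiomatically. So the first route is not a proof as it stands, and I would recommend promoting what you label the ``fallback'' to the main argument.
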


\begin{proof}

The proof of Lemma~\ref{lem:basic_bpann} centers around the following key diagram.

\begin{center}
\begin{tikzpicture}[->,>=stealth',shorten >=1pt,auto,node distance=2.8cm,semithick]
	\node  (A) at (0,0) {$B_2 \boxtimes M_B$};
	\node  (B) at (0,2) {$B_1 \boxtimes M_B$};
	\node  (C) at (3.5,0) {$B_2 \boxtimes M_C$};
	\node  (D) at (3.5,2) {$B_1 \boxtimes M_C$};
	\node  (E) at (0,1) {$M_0$};
	\node  (F) at (3.5,1) {$M_1$};

	\path (A) edge [bend right] node[below] {\small $\id_{B_2} \boxtimes \phi_B$} (C)
		  (A) edge [-,double] (E)
		  (C) edge [-,double] (F)
		  (B) edge [-,double] (E)
		  (D) edge [-,double] (F)
		  (E) edge [bend right] node[below] {$\psi_n$} (F)
		  (E) edge [bend left] node[above] {$\psi_p$}(F)
	      (B) edge [bend left] node[above] {\small $\id_{B_1} \boxtimes \phi_B$} (D);
\end{tikzpicture}
\end{center}

The Type-$D$ module in the upper left-hand corner of this diagram correspond to gluing the bordered sutured $3$-manifolds $\B_1$ and $\D_B$ depicted in Figures \ref{fig:annbypA1} and \ref{fig:bord_disk_b} together along their bordered boundaries.  The resulting bordered sutured $3$-manifold is the thickened annulus $\A_0$ of slope zero depicted in Figure~\ref{fig:annA1}.  

Similarly, the Type-$D$ module in the lower left-hand corner of this diagram correspond to gluing the bordered sutured $3$-manifolds $\B_2$ and $\D_B$ depicted in Figures \ref{fig:annbypB1} and \ref{fig:bord_disk_b} together along their bordered boundaries.  The resulting bordered sutured $3$-manifold is, again, the thickened annulus $\A_0$ of slope zero depicted in Figure~\ref{fig:annA1}. 

The same is true for the modules on the right-hand side of the diagram. The Type-$D$ module in the upper right-hand corner of this diagram correspond to gluing the bordered sutured $3$-manifolds $\B_1$ and $\D_C$ depicted in Figures \ref{fig:annbypA1} and \ref{fig:bord_disk_c} together along their bordered boundaries, while the module in the lower right corresponds to gluing the spaces $\B_2$ and $\D_C$ in Figures \ref{fig:annbypB1} and \ref{fig:bord_disk_c}.  The resulting bordered sutured $3$-manifolds are both equal to the thickened annulus $\A_1$ of slope one depicted in Figure~\ref{fig:annC3}. 

Observe that there exist canonical identifications between the Type-$D$ modules $B_1 \boxtimes M_B$ and $B_2 \boxtimes M_B$ with $M_0$ given by:
\begin{equation}\label{eqn:annzero}
	f \otimes z = a = f' \otimes z.
\end{equation}

Similarly, by idempotent considerations, and by Equation~\eqref{eqn:AAn_boundary}, we see that there are canonical identifications of the Type-$D$ modules $B_1 \boxtimes M_C$ and $B_2 \boxtimes M_C$ with $M_1$ given by
\begin{align}\label{eqn:annone}
	d \otimes w &= b_1 = d' \otimes w \text{ and }\\
\nonumber	e \otimes w &= a = e' \otimes w.
\end{align}

We now turn to computing the associated gluing maps $\psi_p$ and $\psi_n$ induced by positive and negative bypass attachment respectively.  These maps are each defined by  
\[
	\id_{B_1} \boxtimes \phi_B: B_1 \boxtimes M_B \to B_1 \boxtimes M_C
\]
and
\[
	\id_{B_2} \boxtimes \phi_B: B_2 \boxtimes M_B \to B_2 \boxtimes M_C
\]
under the identifications given by Equations \ref{eqn:annzero} and \ref{eqn:annone}, respectively.  To see that the first map corresponds to positive stabilization and the second to negative stabilization, simply compare the associated bordered sutured manifolds shown in Figure~\ref{fig:annbypA1} with Figure~\ref{fig:slice_bp}, and recall that in the former setting, orientations are reversed.

 Applying Definition~\ref{def:boxmap} to compute the map $\id_{B_1} \boxtimes \phi_B$:
\begin{align*}
	\id_{B_1} \boxtimes \phi_B(f \otimes z) &= m_2(f,\pi') \otimes w\\
\nonumber	&= I_2 \otimes (e \otimes w)
\end{align*}

Similarly, applying Definition~\ref{def:boxmap}, to the map $\id_{B_2} \boxtimes \phi_B$, we obtain:
\begin{align*}
	\id_{B_2} \boxtimes \phi_B(f' \otimes z) &= m_2(f',\pi')\\
\nonumber	&= \rho_2' \otimes (d' \otimes w)
\end{align*} 

Therefore, under the identifications given in Equations \eqref{eqn:annzero} and \eqref{eqn:annone}, we have that
\[
	\psi_p(a) = I_2 \otimes a
\]
and
\[
	\psi_n(a) = \rho_2' \otimes b_1,
\]
completing the proof of Lemma~\ref{lem:basic_bpann}

\end{proof}

%%%%%%%%%%% End Proof of First Lemma %%%%%%%%%%%%%%%%%%%%%

\begin{lemma}\label{lem:twist_bpann}
	The map on bordered sutured Floer homology induced by attaching a positive bypass to the thickened annulus $\A_m$ of slope $m$ is given by the following equation
\begin{align*}
\nonumber \psi_{p,m}: M_m &\to M_{m+1}\\
		b_i &\mapsto I_1 \otimes b_i\\
\nonumber a &\mapsto I_2 \otimes a.
\end{align*}
	
	Similarly, the map on bordered sutured Floer homology induced by attaching a negative bypass to the thickened annulus $\A_m$ of slope $m$ is given by the following equation
\begin{align*}
	\psi_{n,m}: M_m &\to M_{m+1}\\
	\nonumber b_i &\mapsto I_1 \otimes b_{i+1}\\
	\nonumber a &\mapsto \rho_2' \otimes b_1.
\end{align*}
\end{lemma}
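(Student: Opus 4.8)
\textbf{Proof strategy for Lemma~\ref{lem:twist_bpann}.}
The plan is to mimic the proof of Lemma~\ref{lem:basic_bpann}, replacing the role of $\A_0$ there with $\A_m$ here, but now with the thickened annulus presented as a box tensor product $\A_m \simeq \C_m \cup \A_0$, i.e.\ as a composite of the twisted annulus bi-module $C_m$ with the basic annulus module $M_0$. Concretely, I would set up the analogous square of Type-$D$ morphisms
\begin{center}
\begin{tikzpicture}[->,>=stealth',shorten >=1pt,auto,node distance=2.8cm,semithick]
	\node (A) at (0,0) {$C_m \boxtimes B_2 \boxtimes M_B$};
	\node (B) at (0,2) {$C_m \boxtimes B_1 \boxtimes M_B$};
	\node (C) at (4.2,0) {$C_m \boxtimes B_2 \boxtimes M_C$};
	\node (D) at (4.2,2) {$C_m \boxtimes B_1 \boxtimes M_C$};
	\node (E) at (0,1) {$M_m$};
	\node (F) at (4.2,1) {$M_{m+1}$};
	\path (A) edge [-,double] (E)
		  (C) edge [-,double] (F)
		  (B) edge [-,double] (E)
		  (D) edge [-,double] (F)
		  (E) edge [bend right] node[below] {$\psi_{n,m}$} (F)
		  (E) edge [bend left] node[above] {$\psi_{p,m}$}(F);
\end{tikzpicture}
\end{center}
where the vertical identifications come from the pairing theorem (Theorem~\ref{thm:zarev_pairing}, in its Type-$DA$ form) identifying $C_m \boxtimes B_i \boxtimes M_B$ and $C_m \boxtimes B_i \boxtimes M_C$ with the annular modules $M_m$ and $M_{m+1}$ computed in Section~\ref{sub:simp_ann}. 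As before, the two horizontal maps are $\id_{C_m} \boxtimes \id_{B_1} \boxtimes \phi_B$ and $\id_{C_m} \boxtimes \id_{B_2} \boxtimes \phi_B$, which by Theorem~\ref{thm:hkm_za_gluing} realize positive and negative bypass attachment, respectively, since gluing on $\C_m$ converts the slope-zero bypass attachments of Lemma~\ref{lem:basic_bpann} into slope-$m$ ones (compare Figures~\ref{fig:annbypA1} and~\ref{fig:slice_bp}, keeping the orientation reversal in mind).

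First I would pin down the identifications. Using the idempotent compatibilities and boundary operators computed for $B_1$, $B_2$, $C_m$, $M_B$ and $M_C$ in Section~\ref{sub:comp_mod}, I would check that $C_m \boxtimes B_i \boxtimes M_B \cong M_m$ by matching the generator $a$ of $M_m$ with $c \otimes f \otimes z$ (and $c \otimes f' \otimes z$), and the $b_j$ with the corresponding elements $b_j \otimes e \otimes w$ coming from the $b_j$ in $C_m$; similarly $C_m \boxtimes B_i \boxtimes M_C \cong M_{m+1}$, where the extra generator $b_{m+1}$ of $M_{m+1}$ arises from pairing $d$ (resp.\ $d'$) in $B_i$ with the generator $w$ of $M_C$. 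One has to use the $m_{k+2}$ operations of $C_m$ listed in Section~\ref{sub:twisted_annulus} and Equation~\eqref{eqn:bpmap} for $\phi_B$ to compute the induced differential and confirm it agrees with Equation~\eqref{eqn:AAn_boundary}.

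The heart of the argument is then a direct application of Definition~\ref{def:boxmap} to compute $\id_{C_m} \boxtimes \id_{B_1} \boxtimes \phi_B$ and $\id_{C_m} \boxtimes \id_{B_2} \boxtimes \phi_B$ on generators. For the positive case, the $B_1$-computation from Lemma~\ref{lem:basic_bpann} gives $m_2(f,\pi') = I_2 \otimes e$, and one feeds this through the $C_m$-operations; since the $C_m$-action on the relevant elements is by the identity idempotent (the $m_{k+2}(b_i,\dots) = I_1 \otimes b_{i+k}$ family and $m_2(c,\pi_{12}) = \rho_{12}\otimes c$ type terms), the outcome is that $a \mapsto I_2 \otimes a$ and $b_i \mapsto I_1 \otimes b_i$, i.e.\ $\psi_{p,m}$ is essentially the obvious inclusion, as claimed. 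For the negative case, $m_2(f',\pi') = \rho_2'\otimes d'$; pairing $\rho_2'$ through $C_m$ shifts the annular index, producing $a \mapsto \rho_2'\otimes b_1$ and $b_i \mapsto I_1 \otimes b_{i+1}$.

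\textbf{Main obstacle.} The principal difficulty is bookkeeping in the triple box tensor product: the bi-module $C_m$ has an infinite tower of higher $A_\infty$ operations $m_{k+2}(a,\pi_2,\pi_{12},\dots,\pi_{12},\pi_1)$ and one must verify that only the expected terms survive when tensored with $B_i \boxtimes \phi_B$, and that boundedness (needed for the box tensor to be defined and for Definition~\ref{def:boxmap} to make sense) holds or can be arranged by a homotopy equivalence. This is a finite but somewhat intricate check analogous to the Appendix~A computations of \cite{LOT1}; once it is done, the non-triviality of the two maps again follows formally from Theorem~\ref{thm:hkm_za_gluing} together with the existence of tight contact structures on thickened tori related by a single basic-slice attachment and having nonvanishing $\EH$-invariants, exactly as in Lemma~\ref{lem:basic_bpann}.
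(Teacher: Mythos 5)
Your strategy matches the paper's: decompose $\A_m = \C_m \cup \A_0$ and compute the maps as box tensor products with $\id_{C_m}$, applying Definition~\ref{def:boxmap}. The paper streamlines this by treating $\psi_p,\psi_n: M_0 \to M_1$ from Lemma~\ref{lem:basic_bpann} as black boxes and simply forming $\psi_{p,m} := \id_{C_m}\boxtimes\psi_p$ and $\psi_{n,m} := \id_{C_m}\boxtimes\psi_n$, whereas you re-open the triple box tensor $C_m \boxtimes B_i \boxtimes M_{B/C}$; these are equivalent by associativity of $\boxtimes$, so this is a stylistic rather than substantive difference. Your triple-tensor route does force you to handle the full tower of higher $m_{k}$'s on $C_m$ directly, which the two-step version largely hides inside $\psi_p, \psi_n$.

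One caution: the identifications you wrote down in passing are not right as stated. For $C_m \boxtimes B_i \boxtimes M_B \cong M_m$, the generator $a$ should be matched with $a \otimes f \otimes z$ (not $c \otimes f \otimes z$ — $c$ has right idempotent $I_1$, while $f$ has left idempotent $I_2$, so $c \otimes f = 0$ in the box tensor), and the $b_j$'s with $b_j \otimes f \otimes z$ (you wrote $b_j \otimes e \otimes w$, mixing the $M_B$- and $M_C$-side notation). The correct identifications are precisely the two-step ones from the paper: first $f \otimes z = a$ and $e\otimes w = a$, $d\otimes w = b_1$ as in Lemma~\ref{lem:basic_bpann}, then $a\otimes a = a$, $b_i\otimes a = b_i$, $c\otimes b_1 = b_{m+1}$ at the $C_m$ level. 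You correctly state that $b_{m+1}$ arises from $c\otimes d\otimes w$, so this appears to be bookkeeping slippage rather than a conceptual misstep, but it would derail a literal execution of the plan.
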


\begin{proof}  The proof of Lemma~\ref{lem:twist_bpann} is very similar to that of Lemma~\ref{lem:basic_bpann}.  In this case, the proof centers around the following key diagram.

\begin{center}
\begin{tikzpicture}[->,>=stealth',shorten >=1pt,auto,node distance=2.8cm,
	                    semithick]

	\node  (A) at (-1.35,0) {$M_m = C_m \boxtimes M_0$};
	\node  (B) at (4.1,0) {$C_m \boxtimes M_1 = M_{m+1}$};

	\path (A) edge [bend left] node {\small $\psi_{p,m} :=\id_{C_m} \boxtimes \psi_p$} (B)
		  (A) edge [bend right] node[below] {\small $\psi_{n,m} := \id_{C_m} \boxtimes \psi_n$}
 (B);
\end{tikzpicture}
\end{center}

There exists a canonical identification between $C_m \boxtimes M_0$ and $M_m$ given by
\begin{align*}
	a \otimes a &= a \text{ and }\\
\nonumber	b_i \otimes a &= b_i.
\end{align*}

Similarly, there exists a canonical identification between $C_m \boxtimes M_1$ and $M_{n+1}$ is given by
\begin{align*}
\nonumber	a \otimes a &= a,\\
	b_i \otimes a &= b_i, \;\;\; i = 1,\dots n, \text{ and }\\
\nonumber	c \otimes b_1 &= b_{n+1}.
\end{align*}

As in the proof of Lemma~\ref{lem:basic_bpann}, Lemma~\ref{lem:twist_bpann} now follows by applying Definition~\ref{def:boxmap} to compute the maps $\id_{C_m} \boxtimes \psi_p$ and $\id_{C_m} \boxtimes \psi_n$.

\end{proof}
%%%%%%%%%%% End Proof of Second Lemma %%%%%%%%%%%%%%%%%%%%%

\begin{lemma}\label{lem:twist_bptor}
	The map on bordered sutured Floer homology induced by attaching a positive bypass to the thickened punctured torus $\T_m$ of slope $m$ is given by the following equation
	\begin{align*}
	\nonumber	\eta_{p,m}: K_n &\to K_{m+1}\\
		b_i &\mapsto I_1 \otimes b_i\\
	\nonumber a &\mapsto I_2 \otimes a.
	\end{align*}
	
	Similarly, the map on bordered sutured Floer homology induced by attaching a negative bypass to the thickened punctured torus $\T_m$ of slope $m$ is given by the following equation
	\begin{align*}
	\nonumber	\eta_{n,m}: K_m &\to K_{m+1}\\
		b_i &\mapsto I_1 \otimes b_{i+1}\\
	\nonumber a &\mapsto \rho_3' \otimes b_1.
	\end{align*}
\end{lemma}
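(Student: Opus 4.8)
The plan is to parallel the proofs of Lemmas~\ref{lem:basic_bpann} and~\ref{lem:twist_bpann}, reducing the computation for the punctured torus $\T_m$ to the one for the annular piece in which the bypass is supported. First I would record that, by Figure~\ref{fig:decomptor} together with the identification $\A_m\simeq\C_m\cup\A_0$ used in the proof of Lemma~\ref{lem:twist_bpann}, one has $\T_m=\T\cup\C_m\cup\A_0$, with all gluings performed along copies of the parametrized sutured surface $\SF_A$. Zarev's pairing theorem (Theorem~\ref{thm:zarev_pairing}) then yields a homotopy equivalence $K_m=\BSD(\T_m)\simeq N\boxtimes M_m$, where $N={}^{\A(\W_T)}\BSDA(\T)_{\A(\W_A)}$ and $M_m=\BSD(\A_m)\simeq C_m\boxtimes M_0$. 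Using the generators and operations computed in Sections~\ref{sub:torus_modules} and~\ref{sub:torus_bmodules}, I would check that this equivalence is the obvious identification on generators: the idempotent relations force $x$ to pair with $a$ and $y$ to pair with each $b_i$, so $a\leftrightarrow x\otimes a$ and $b_i\leftrightarrow y\otimes b_i$; and the box-tensor differential on $N\boxtimes M_m$, assembled from $\delta(b_1)=\rho_1'\otimes a$, $\delta(b_i)=\rho_{12}'\otimes b_{i-1}$ on the $M_m$ side and $m_2(y,\pi_1')=\rho_2'\otimes x$, $m_2(y,\pi_{12}')=\rho_{23}'\otimes y$ on the $N$ side, reproduces exactly the differential on $K_m$ recorded in Section~\ref{sub:torus_modules}.

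Next I would invoke naturality. As noted in Section~\ref{sub:the_geometric_setup}, the bypass attachment taking $\T_m$ to $\T_{m+1}$ can be isotoped so that its attaching disk lies in the annular strip $A$, i.e.\ inside the $\A_0$ summand. Hence the ambient contact structure realizing the attachment restricts to an invariant (product) contact structure on $\T\cup\C_m$ and to the bypass-attachment contact structure on $\A_0$. The naturality clause of Theorem~\ref{thm:hkm_za_gluing}, that the bordered HKM gluing map commutes with gluing along parametrized pieces of the boundary, then identifies the induced maps $\eta_{p,m},\eta_{n,m}\colon K_m\to K_{m+1}$ with $\id_N\boxtimes\psi_{p,m}$ and $\id_N\boxtimes\psi_{n,m}$ respectively, where $\psi_{p,m},\psi_{n,m}\colon M_m\to M_{m+1}$ are the annular bypass maps of Lemma~\ref{lem:twist_bpann}. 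Here $\id_N\boxtimes(-)$ denotes the box-tensor of the identity of the Type-$DA$ bimodule $N$ with a Type-$D$ morphism, the evident generalization of Definition~\ref{def:boxmap}.

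Finally I would carry out the short box-tensor computation. For the positive bypass, $\psi_{p,m}$ sends $a\mapsto I_2\otimes a$ and $b_i\mapsto I_1\otimes b_i$, outputting only idempotents; in forming $\id_N\boxtimes\psi_{p,m}$ the only operation of $N$ that can absorb such an input is $m_1$, which vanishes since $N$ has trivial differential, so the idempotent passes through and $\eta_{p,m}(a)=I_2\otimes a$, $\eta_{p,m}(b_i)=I_1\otimes b_i$. For the negative bypass, $\psi_{n,m}(a)=\rho_2'\otimes b_1$ feeds the chord $\pi_2'$ into $N$, and $m_2(x,\pi_2')=\rho_3'\otimes y$ gives $\eta_{n,m}(a)=\rho_3'\otimes b_1$; while $\psi_{n,m}(b_i)=I_1\otimes b_{i+1}$ again outputs only an idempotent, so $\eta_{n,m}(b_i)=I_1\otimes b_{i+1}$. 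Under the identification of the first step these are precisely the asserted formulas, and one also checks that the resulting $\eta_{p,m},\eta_{n,m}$ respect all idempotent relations, as they must for Type-$D$ morphisms.

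The main obstacle I anticipate is the naturality step: one must keep careful track of the $\SF_A$-parametrization and of orientations so that the bypass in $\T_m$ genuinely equals $\id_{\T\cup\C_m}$ glued onto the annular bypass of Lemma~\ref{lem:twist_bpann}, and so that Theorem~\ref{thm:hkm_za_gluing} applies verbatim; relatedly, one must verify that the box-tensor of a Type-$DA$ identity with a Type-$D$ morphism is set up correctly so that it computes the glued map. Once these points are nailed down, the remaining work is the bookkeeping already done in Lemmas~\ref{lem:basic_bpann} and~\ref{lem:twist_bpann}.
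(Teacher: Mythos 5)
Your proposal follows exactly the route the paper sketches and then leaves as an exercise: identify $K_m\simeq N\boxtimes M_m$ via the decomposition $\T_m=\T\cup\C_m\cup\A_0$, observe by naturality of the bordered HKM gluing maps that $\eta_{p,m}=\id_N\boxtimes\psi_{p,m}$ and $\eta_{n,m}=\id_N\boxtimes\psi_{n,m}$, and then unwind the box tensor using the operations on $N$ computed in Section~\ref{sub:torus_bmodules}. Your explicit checks (that the generator identification $a\leftrightarrow x\otimes a$, $b_i\leftrightarrow y\otimes b_i$ intertwines the differentials, that $m_2(x,\pi_2')=\rho_3'\otimes y$ produces the single nontrivial term for $\eta_{n,m}(a)$, and that the vanishing of all $m_k$ with $k\geq 3$ on $N$ kills higher $\phi_k$ contributions) are correct and supply precisely the ``exercise to the reader'' the paper elides.
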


\begin{proof}
As before, the proof of Lemma~\ref{lem:twist_bptor} centers around the following key diagram.

\begin{center}
\begin{tikzpicture}[->,>=stealth',shorten >=1pt,auto,node distance=2.8cm,
	                    semithick]

	\node  (A) at (-1.25,0) {$K_m = N \boxtimes M_m$};
	\node  (B) at (4,0) {$N \boxtimes M_{n+1} = K_{m+1}$};

	\path (A) edge [bend left] node {\small $\eta_{p,m} := \id_N \boxtimes \psi_{p,m}$} (B)
	  	(A) edge [bend right] node[below] {\small $\eta_{n,m+1} := \id_N \boxtimes \psi_{n,m}$}
	 (B);
\end{tikzpicture}
\end{center}

We leave the remainder of the proof as an exercise to the reader, noting that the argument is similar to those establishing Lemmas \ref{lem:basic_bpann} and \ref{lem:twist_bpann} above.

\end{proof}
%%%%%%%%%%% End Proof of Third Lemma %%%%%%%%%%%%%%%%%%%%%

% subsection computation_of_gluing_maps (end)
%%%%%%%%%%%%%%%%%%%%%%%%%%%%%%%%%%%%%%%%%%%%%%%%%%%%%%%

%%%%%%%%%%%%%%%%%%%%%%%%%%%%%%%%%%%%%%%%%%%%%%%%%%%%%%%
\subsubsection{Proof of Theorem~\ref{thm:lim_to_minus}} % (fold)
 \label{sub:proof_of_theorem_thm:lim_to_minus}
%%%%%%%%%%%%%%%%%%%%%%%%%%%%%%%%%%%%%%%%%%%%%%%%%%%%%%%

Having set up the necessary algebraic machinery, we now complete the proof of Theorem~\ref{thm:lim_to_minus}.

As shown in Section~\ref{sub:torus_modules}, the Type-$D$ module $K_n$ associated to the thickened, punctured, bordered sutured torus $\T_n$ of slope $n$ torus is given by
\begin{center}\label{eqn:ntormod}
\begin{tikzpicture}	[->,>=stealth',auto,thick]
	\node (a) at (0,0){$b_{n}$};
	\node (b) at (2,0) {$b_{n-1}$};
	\node (c) at (4,0) {$\dots$};
	\node (d) at (6,0) {$b_1$};
	\node (e) at (8,0) {$a.$};
	
	\draw (a) edge node[above] {\small $\rho_{23}$} (b);
	\draw (b) edge node[above] {\small $\rho_{23}$} (c);
	\draw (c) edge node[above] {\small $\rho_{23}$} (d);
	\draw (d) edge node[above] {\small $\rho_{2}$} (e);
\end{tikzpicture}
\end{center}
(Recall this diagram shows the generators of the Type-$D$ structures as vertices and the edges denote the map $\delta$.)

According to Lemma~\ref{lem:twist_bptor}, positive and negative bypass attachments induce maps
\begin{align*}
	\eta_{p,m}: K_m &\to K_{m+1}\\
	b_i &\mapsto I_1 \otimes b_i\\
	a &\mapsto I_2 \otimes a
\end{align*}
and
\begin{align*}
	\eta_{n,m}: K_m &\to K_{m+1}\\
	b_i &\mapsto I_1 \otimes b_{i+1}\\
	a &\mapsto \rho_3' \otimes b_1
\end{align*}
respectively.

The groups $K_i$ and maps $\eta_{p,j}$ and $\eta_{n,j}$ can, therefore, be organized neatly into the diagram given in Figure~\ref{fig:big_diag}. The columns of this diagram depict the Type-$D$ modules $K_n$.  The south eastern pointing (blue) arrows depict the maps $\eta_{p,m}:K_m \to K_{m+1}$, while the eastern pointing (red) arrows depict the maps $\eta_{n,m}: K_m \to K_{m+1}$.

%%%%%%%%%%%%%%% begin limit diagram %%%%%%%%%%%%%%
\begin{figure}[htbp]
\begin{tikzpicture}	[->,>=stealth',auto,thick]
	\node (a) at (0,0){$a$};
	\node (b) at (2,0) {$b_1$};
	\node (c) at (4,0) {$b_2$};
	\node (d) at (6,0) {$\dots$};
	\node (e) at (8,0) {$b_{n-1}$};
	\node (f) at (10,0) {$\dots$};
	\node (g) at (2,-1.5){$a$};
	\node (h) at (4,-1.5) {$b_1$};
	\node (i) at (6,-1.5) {$\dots$};
	\node (j) at (8,-1.5) {$b_{n-2}$};
	\node (k) at (10,-1.5) {$\dots$};
	\node (l) at (4,-3) {$a$};
	\node (m) at (6,-3) {$\dots$};
	\node (n) at (8,-3) {$b_{n-3}$};
	\node (o) at (10,-3) {$\dots$};
	\node (p) at (6,-4.5) {$\dots$};
	\node (q) at (8,-4.5) {$\dots$};
	\node (r) at (10,-4.5) {$\dots$};
	\node (s) at (8,-6) {$a$};
	\node (t) at (10,-6) {$\dots$};	
	\node (u) at (10,-7.5) {$\dots$};
		
	\draw (a) edge[red] node[black,above] {$\rho_3$} (b);
	\draw (b) edge[red] (c);
	\draw (c) edge[red] (d);
	\draw (d) edge[red] (e);
	\draw (e) edge[red] (f);
	\draw (g) edge[red] node[black,above] {$\rho_3$} (h);
	\draw (h) edge[red] (i);
	\draw (i) edge[red] (j);
	\draw (j) edge[red] (k);
	\draw (l) edge[red] node[black,above] {$\rho_3$} (m);
	\draw (m) edge[red] (n);
	\draw (n) edge[red] (o);
	\draw (s) edge[red] node[black,above] {$\rho_3$} (t);
	
	\draw (a) edge[blue] (g);
	\draw (b) edge[blue] (h);
	\draw (c) edge[blue] (i);
	\draw (d) edge[blue] (j);
	\draw (e) edge[blue] (k);
	\draw (g) edge[blue] (l);
	\draw (h) edge[blue] (m);
	\draw (i) edge[blue] (n);
	\draw (j) edge[blue] (o);
	\draw (l) edge[blue] (p);
	\draw (p) edge[blue] (s);
	\draw (n) edge[blue] (r);
	\draw (s) edge[blue] (u);

	\draw (b) edge node[right] {$\rho_2$} (g);
	\draw (c) edge node[right] {$\rho_{23}$} (h);
	\draw (e) edge node[right] {$\rho_{23}$} (j);
	\draw (h) edge node[right] {$\rho_2$} (l);
	\draw (j) edge node[right] {$\rho_{23}$} (n);
	\draw (n) edge node[right] {$\rho_{23}$} (q);
	\draw (q) edge node[right] {$\rho_2$} (s);
\end{tikzpicture}
\caption{Direct limit diagram.}
\label{fig:big_diag}
\end{figure}
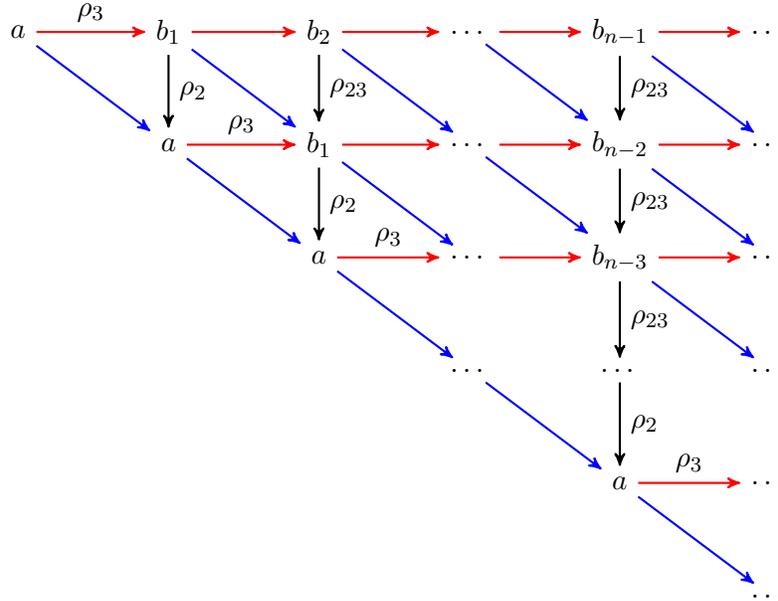
%%%%%%%%%%%%%%% end limit diagram %%%%%%%%%%%%%%

As discussed in Section~\ref{sec:limits}, the invariant $\SFHL(-Y,K)$ is obtained by taking a directed limit of sutured Floer homology groups over HKM gluing maps arising form iterated negative basic slice attachments.  In the bordered setting, this means that we are taking the directed limit of the collection of modules $\{K_m\}$ and maps $\{\eta_{n,m}\}$ connecting them.  Doing so, the resulting module is given as follows.
\begin{center}
\begin{tikzpicture}	[->,>=stealth',auto,thick]
	\node (a) at (0,0){$\delta_0$} ;
	\node (b) at (2,0) {$\delta_1$} ;
	\node (c) at (4,0) {$\delta_2$} ;
	\node (d) at (6,0) {$\delta_3$} ;
	\node (e) at (8,0) {$\dots$} ;
		
	\draw (a) edge node[above] {$\rho_{23}$} (b);
	\draw (b) edge node[above] {$\rho_{23}$} (c);
	\draw (c) edge node[above] {$\rho_{23}$} (d);
	\draw (d) edge (e);
\end{tikzpicture}
\end{center}

From Figure~\ref{fig:big_diag}, we see that the two flavors of $\eta$-maps commute in the sense that
\[
	\eta_{n,i+1} \circ \eta_{p,i} = \eta_{p,i+1} \circ \eta_{n,i} : K_i \to K_{i+2},
\]
corresponding geometrically to the fact that the associated bypasses attachments can be made along disjoint annuli. As discussed in Section~\ref{sub:leg_limits} for the limit invariant $\SFHL(-Y,K)$, this commutativity property implies that the collection of maps $\{\eta_{p,m}\}$ together yield a well-defined $U$-action on the module $\underrightarrow{K} := \underrightarrow{\mr{Lim}}_n K_n$ which sends each $\delta_i$ to $\delta_{i+1}$.

It follows that the Type-$D$ module $\underrightarrow{K}$ is isomorphic under the identification given below to the Type-$D$ module $K^-$ yielding the minus version of knot Floer homology that was discussed in Section~\ref{sub:bordered_invariants_and_knot_floer_homology}.
%%%%%%%%%%%%%%% begin diagram equating limit with minus %%%%%%%%%%%%%%
\begin{center}
		\begin{tikzpicture}	[->,>=stealth',auto,thick]
			\node (aa) at (-.75,1.45) {$\underrightarrow{K}:=$};
			\node (bb) at (-.75,0.05) {$K^-=$};
			\node (a) at (0,1.5){$\delta_0$} ;
			\node (b) at (2.5,1.5) {$\delta_1$} ;
			\node (c) at (5,1.5) {$\delta_2$} ;
			\node (d) at (7.5,1.5) {$\delta_3$} ;
			\node (e) at (10,1.5) {$\dots$} ;
			\node (f) at (0,0){$x$} ;
			\node (g) at (2.5,0) {$U \cdot x$} ;
			\node (h) at (5,0) {$U^2 \cdot x$} ;
			\node (i) at (7.5,0) {$U^3 \cdot x$} ;
			\node (j) at (10,0) {$\dots$} ;

			\draw (a) edge node[above] {$\rho_{23}$} (b);
			\draw (b) edge node[above] {$\rho_{23}$} (c);
			\draw (c) edge node[above] {$\rho_{23}$} (d);
			\draw (d) edge (e);
			\draw (f) edge node[above] {$\rho_{23}$} (g);
			\draw (g) edge node[above] {$\rho_{23}$} (h);
			\draw (h) edge node[above] {$\rho_{23}$} (i);
			\draw (i) edge (j);
			\draw (a) edge[bend left] node[above] {$U \cdot$} (b);
			\draw (b) edge[bend left] node[above] {$U \cdot$} (c);
			\draw (c) edge[bend left] node[above] {$U \cdot$} (d);
			\draw (d) edge[bend left] node[above] {$U \cdot$} (e);
			\draw (a) edge[<->] (f);
			\draw (b) edge[<->] (g);
			\draw (c) edge[<->] (h);
			\draw (d) edge[<->] (i);
		\end{tikzpicture}
\end{center}
%%%%%%%%%%%%%%% end diagram equating limit with minus %%%%%%%%%%%%%%

Finally, on the level of sutured Floer homology, we have
\begin{align*}
	\SFHL(-Y,K) &:= \varinjlim \SFH(-Y(K),-\Gamma_i)\\
		&\cong \varinjlim H_*(\BSA(-Y(K),\Gamma',\SF_T) \boxtimes \BSD(\ST_n))\\
		&\cong H_*(\varinjlim(\BSA(-Y(K),\Gamma',\SF_T) \boxtimes \BSD(\ST_n)))\\
		&\cong H_*(\BSA(-Y(K),\Gamma',\SF_T) \boxtimes \varinjlim \BSD(\ST_n))\\
		&\cong H_*(\BSA(-Y(K),\Gamma',\SF_T) \boxtimes \underrightarrow{K})\\
		&\cong H_*(\BSA(-Y(K),\Gamma',\SF_T) \boxtimes K^-)\\
		&\cong \HFKM(-Y,K).
\end{align*}
In the above, the third equality follows form work of B\"okstedt and Neeman \cite{BN}, who showed that the homology functor and direct limits commute in the homotopy category of complexes.  The forth equality follows from a standard fact asserting the commutativity of direct limits and ($A_\infty$) tensor products.

This completes the proof of Theorem~\ref{thm:lim_to_minus}. \qed

% subsection proof_of_theorem_thm:lim_to_minus (end)
%%%%%%%%%%%%%%%%%%%%%%%%%%%%%%%%%%%%%%%%%%%%%%%%%%%%%%%

% section limit_is_knot (end)
%%%%%%%%%%%%%%%%%%%%%%%%%%%%%%%%%%%%%%%%%%%%%%%%%%%%%%%

%%%%%%%%%%%%%%%%%%%%%%%%%%%%%%%%%%%%%%%%%%%%%%%%%%%%%%%
\section{Equivalence of Legendrian invariants} % (fold)
\label{sec:equiv_leg}
%%%%%%%%%%%%%%%%%%%%%%%%%%%%%%%%%%%%%%%%%%%%%%%%%%%%%%%

In this section, we prove Theorem~\ref{thm:lim_to_minus_leg}, which states that the LIMIT invariant $\EHL$ defined in Section~\ref{sub:leg_limits} agrees with the LOSS invariant $\SL$ under the identification given by Theorem~\ref{thm:lim_to_minus}.  This result, together with the main theorems of \cite{StV} and \cite{BVV}, completes a body of work which clarifies relationships between the various Legendrian and transverse invariants defined in the context of Heegaard Floer theory.

Let $K \subset (Y,\xi)$ be a given null-homologous Legendrian knot inside the contact 3-manifold $(Y,\xi)$.  Recall that the LIMIT invariant $\EHL(K)$ is defined to be the residue class of the collection of HKM invariants $\{\EH(S^i_-(K))\}$ inside the sutured limit homology group $\SFHL(-Y,K)$.  To relate the LIMIT and and LOSS invariants, we begin by constructing a bordered sutured diagram which can be simultaneously completed to compute either the HKM invariant $\EH(K)$ or the LOSS invariant $\SL(K)$.

With this in mind, recall that the LOSS invariant is defined using an open book and collection of basis arcs as depicted on the left hand side in Figure~\ref{fig:hkm_lossA1} (see Section~\ref{sub:loss_invt}).  In this figure, we see the Legendrian knot $K$ sitting on the page $S_{1/2}$ of the open book $(S,\phi)$ for the ambient contact 3-manifold $(Y,\xi)$.  Observe that $K$ is pierced by the single basis element $a_0$ transversally in a single point.  Following the process discussed in Section~\ref{sub:loss_invt}, we obtains the doubly-pointed Heegaard diagram denoted $\HD$ for the pair $(-Y,K)$.  The collection of intersections $\x := \{x_0,\dots,x_n\}$ on the page $S_{1/2}$ defines a generator of $\CFKM(-Y,K)$, and the LOSS invariant is defined as
\[
	\SL(K) := [\x] \in \HFKM(-Y,K).
\]

%%%%%%%%%%%% HKM to LOSS figures %%%%%%%%%%%%%%%%%%%%%
\begin{figure}[htbp]
	\centering
	\begin{picture}(143,75)
		\put(0,0){\includegraphics[scale=1]{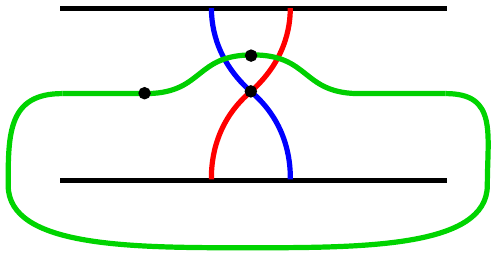}}
		\put(110,52){$L$}
	\end{picture}
	\hspace{20px}
		\includegraphics[scale=1]{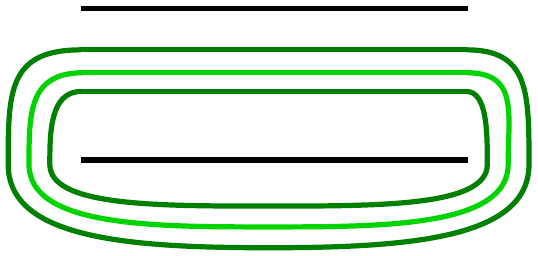}
	\caption{On the left is the doubly pointed Heegaard diagram defining the LOSS invariant. On the right is the sutured Heegaard diagram constructed by Stipsicz and V\'ertesi computing the HKM invariant.}
	\label{fig:hkm_lossA}\label{fig:hkm_lossA1}\label{fig:hkm_lossA2}
\end{figure}
%%%%%%%%%%%% End HKM to LOSS figures %%%%%%%%%%%%%%%%%%%%%

In \cite{StV}, Stipsicz and V\'ertesi showed how to slightly modify the open book decomposition $(S,\phi)$ for $(Y,\xi)$ to produce a partial open book decomposition $(S,P,\phi_P)$ for the space $(Y(K),\xi_K)$ obtained by removing an open standard neighborhood of $K$.  The result of their procedure is depicted on the right hand side of Figure~\ref{fig:hkm_lossA2}.  If $P$ denotes the result of removing an open tubular neighborhood of $K$ from $S$ and we set $\phi_P=\phi|_P$, then the modified partial open book is equal to $(S,P,\phi_P)$.  Given a basis $\{a_0,\dots,a_n\}$ for $S$ adapted to $K$, we obtain a new basis $\{a_1,\dots,a_n\}$ for $P$ by dropping the arc which previously intersected $K$.

We denote by $\HD'$ the sutured Heegaard diagram obtained from the partial open book $(S,P,\phi_P)$ and basis arcs $\{a_1,\dots,a_n\}$.  Again, the collection of intersections $\x' := \{x_1,\dots,x_n\}$ on the subsurface $P$ defines a cycle in $\SFC(\HD')$, and the HKM invariant is equal to
\[
	\EH(K) := [\x'] \in \SFH(-Y(K),-\Gamma_K).
\]

To deduce a relationship between the LIMIT and LOSS invariants, we begin by modifying the Heegaard diagrams $\HD'$ and $\HD$ as shown in Figure~\ref{fig:hkm_lossB1} to obtain new diagrams denoted by by $\THD'$ and $\THD$ respectively.  

\begin{remark}
Figure~\ref{fig:hkm_lossB1} depicts a pair of Heegaard diagrams one can use to compute the LOSS and HKM Legendrian knot invariants. These invariants each live in the homology of an appropriate manifold with reversed orientation.  This ambient orientation reversal is typically effectuated on the level of Heegaard diagrams by reversing the roles of the $\alpha$ and $\beta$-curves.  That is, exchanging the Heegaard diagram $(\Sigma,\balpha,\bbeta)$ for $(\Sigma,\bbeta,\balpha)$.  This same orientation reversal can be accomplished by exchanging all $\alpha$-curves for $\beta$-curves and vice-versa, while retaining the usual diagrammatic ordering $(\Sigma,\balpha,\bbeta)$.  This second convention conforms more naturally with preexisting conventions in bordered Floer theory --- that all decompositions occur along $\alpha$-curves --- and is what we adopt in the discussion to follow.
\end{remark}

%%%%%%%%%%%% HKM and LOSS figures %%%%%%%%%%%%%%%%%%%%%
\begin{figure}[htbp]
	\centering
	\begin{picture}(155,100)
		\put(0,0){\includegraphics[scale=1]{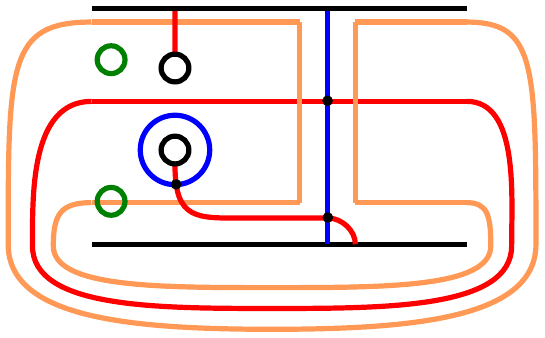}}
		\put(56,41){$a$}
		\put(96,72){$b$}
		\put(95,36){$x_0$}
		\put(12,75){$\gamma'$}
	\end{picture}
	\hspace{20px}
	\begin{picture}(155,100)
		\put(0,0){\includegraphics[scale=1]{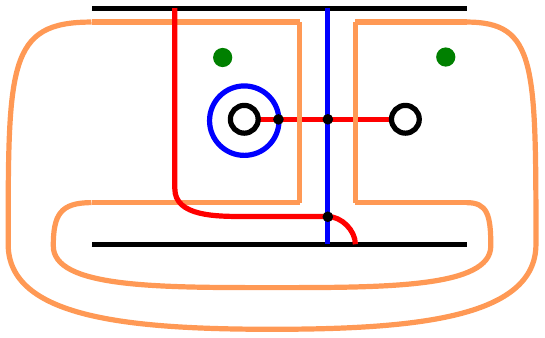}}
		\put(118,80){$z$}
		\put(68,80){$w$}
		\put(81,67){$y$}
		\put(97,66){$b$}
		\put(95,36){$x_0$}
		\put(12,75){$\gamma$}
	\end{picture}
	\caption{Modified Heegaard diagrams for the HKM diagram, left, and LOSS digram, right. (As usual the black circles are identified.)}
	\label{fig:hkm_lossB1}\label{fig:hkm_lossC1}
\end{figure}

\begin{figure}[htbp]
	\centering
	\begin{picture}(108,152)
		\put(0,0){\includegraphics[scale=1]{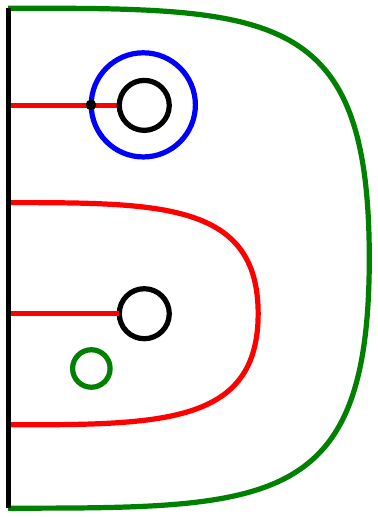}}
		\put(-7,23){\color{red}$2$}
		\put(-7,55){\color{red}$1$}
		\put(-7,85){\color{red}$2$}
		\put(-7,115){\color{red}$1$}
		\put(19,122){$a$}
		\put(-7,148){$\rho$}
	\end{picture}
	\hspace{67px}
	\begin{picture}(108,152)
		\put(0,0){\includegraphics[scale=1]{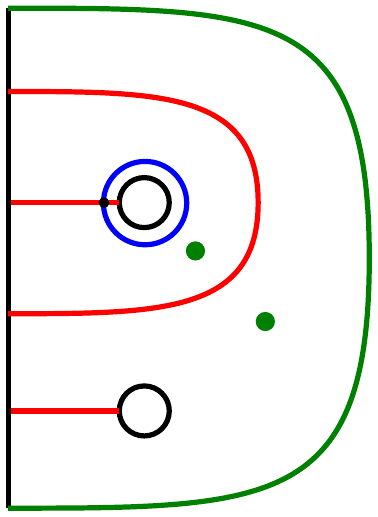}}
		\put(-7,23){\color{red}$2$}
		\put(-7,55){\color{red}$1$}
		\put(-7,87){\color{red}$2$}
		\put(-7,119){\color{red}$1$}
		\put(22,95){$y$}
		\put(57,80){$w$}
		\put(80,59){$z$}
		\put(-7,148){$\rho$}
	\end{picture}
	\caption{Bordered Heegaard diagrams for the HKM and LOSS pieces of $\HD$. (As usual the black circles are identified.)}
	\label{fig:hkm_lossB}\label{fig:hkm_lossB2}\label{fig:hkm_lossC2}
\end{figure}
%%%%%%%%%%%% End HKM figures %%%%%%%%%%%%%%%%%%%%%

On the HKM side, the Heegaard diagram $\THD'$ is obtained from $\HD'$ by performing a pair of stabilizations.  To see this we show how to destabilize $\THD'$ to get $\HD'$. First destabilize the Heegaard diagram by erasing the two back circles, the blue circle and the red circle that runs over the removed handle corresponding to the black circles. Now for the second destabilization remove the remaining blue circle and then surger along the red circle. One may easily check the resulting Heegaard diagram is isotopic to the one on the right of Figure~\ref{fig:hkm_lossA}.

There is a canonical isomorphism on homology induced by the chain map $\CF(\HD') \to \SFC(\THD')$ which acts on generators by sending $\y \in \SFC(\HD')$ to $(a,b,\y) \in \SFC(\THD')$. Correspondingly, in this new diagram, the generator representing the HKM invariant is given by the collection of intersections $(a,b,x_1,\dots,x_n)$.

On the LOSS side, the Heegaard diagram $\THD$ is obtained form $\HD$ by performing a single  stabilization.  Again, there is a canonical isomorphism on homology induced by the chain map $\CFKM(\HD) \to \CFKM(\THD)$ which acts on generators by sending $\y \in \CFKM(\HD)$ to $(y,\y)$.  In this case, the new new generator representing the LOSS invariant is given by the collection of intersections $(y,x_0,\dots,x_n)$.

Observe that the Heegaard diagrams $\THD'$ and $\THD$ differ only in the the $C$-shaped region bounded by the orange curves $\gamma'$ and $\gamma$ shown in Figure~\ref{fig:hkm_lossC1}.  We denote the common bordered Heegaard diagram lying outside the curves $\gamma'$ and $\gamma$ by $\HD_K$.

The decomposition of $\THD'$ along $\gamma'$ is the diagrammatic equivalent to the bordered sutured decomposition 
\[
	(-Y(K),-\Gamma_K) = (-Y(K),\Gamma',\SF_T) \cup \T_0
\]
used to establish Theorem~\ref{thm:lim_to_minus} and discussed in detail in Section~\ref{sub:the_geometric_setup}.  It corresponds to the removal of a $T \times I$ neighborhood of $\partial (-Y(K))$ from the sutured manifold $(-Y(K),-\Gamma_K)$. 

The bordered sutured Heegaard diagram for the portion of $\THD'$ contained within $\gamma'$ is depicted on the left hand side of Figure~\ref{fig:hkm_lossB2}.  It is identical to the bordered sutured diagram shown in Figure~\ref{fig:torfilldiag} for the space $\T_0$.

In a similar spirit, decomposing the diagram $\THD$ along the orange curve $\gamma$ corresponds to excising a tubular neighborhood $\nu(K)$ of the knot $K$ from the 3-manifold $Y$.  In this case, however, the portion of $\THD$ contained within the curve $\gamma$ forms a doubly pointed bordered Heegaard diagram for the core curve ``$K$'' of the solid torus neighborhood $\nu(K)$ (see Figure~\ref{fig:hkm_lossC2}).

As noted above, the Heegaard diagrams $\THD'$ and $\THD$ which compute the invariants HKM and LOSS invariants have been specially constructed to agree outside the curves $\gamma'$ and $\gamma$ respectively.  This construction allows us to track the image of the HKM invariant under the gluing maps induced by negative stabilization, and, ultimately, the image of $\EHL(K)$ under the isomorphism given by Theorem~\ref{thm:lim_to_minus} identifying $\SFHL(-Y,K)$ with $\HFKM(-Y,K)$.  

In Section~\ref{sub:computation_of_gluing_maps}, we performed a detailed computation of the HKM gluing maps induced by stabilization on the type $D$ modules $K_n$.  Proving Theorem~\ref{thm:lim_to_minus_leg} requires that we also understand at least part of the parallel story on the type $A$ side.  Lemma~\ref{lem:leg_neg_stab1} below computes the portion of the type $A$ module $\BSA(\HD_K)_{\A(\W_T)}$ needed to to establish Theorem~\ref{thm:lim_to_minus_leg}.

\begin{lemma}\label{lem:leg_neg_stab1}
	In the type $A$ module $\BSA(\HD_K)_{\A(\W_T)}$ we have the following operations
\begin{align*}
	m_2((b,x_1,\dots,x_n),\pi_3) &= (x_0,x_1,\dots,x_n)\\
	m_3((b,x_1,\dots,x_n),\pi_3,\pi_2) &= 0
\end{align*}
\end{lemma}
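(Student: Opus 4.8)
\textbf{Proof proposal for Lemma~\ref{lem:leg_neg_stab1}.}

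The plan is to compute the two relevant $A_\infty$-operations directly from the bordered sutured Heegaard diagram $\HD_K$, which is precisely the portion of $\THD$ (equivalently $\THD'$) lying outside the curve $\gamma$. First I would set up the diagram carefully: identify the Heegaard surface, the $\alpha$-arcs (parametrizing the punctured torus boundary $\SF_T$), the $\beta$-circles, and locate the intersection points $b$, $x_0,\dots,x_n$ and $y$. Since $\HD_K$ is obtained by gluing the core-curve piece of $\THD$ (shown on the right of Figure~\ref{fig:hkm_lossC2}) to the common diagram outside $\gamma'$ and $\gamma$, I expect it to differ from the LOSS-side diagram only in a controlled neighborhood, so that the generators $(b,x_1,\dots,x_n)$ and $(x_0,x_1,\dots,x_n)$ differ only in whether the occupied $\alpha$-arc corresponds to $b$ or to $x_0$. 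The only nontrivial domains contributing to $m_2$ and $m_3$ with inputs among the Reeb chords $\pi_2,\pi_3$ of the torus algebra $\A(\W_T)$ (from Section~\ref{sub:param_torus}) will be supported near that neighborhood, so the computation reduces to a local count analogous to those in Appendix A of \cite{LOT1}.

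The key steps, in order, are: (1) Using the description of $\W_T$ and its Reeb chords $\rho_1,\rho_2,\rho_3,\rho_{12},\rho_{23},\rho_{123}$ from Section~\ref{sub:param_torus}, determine the idempotents of the generators $(b,x_1,\dots,x_n)$ and $(x_0,x_1,\dots,x_n)$; by the placement of $b$ and $x_0$ relative to the $\alpha$-arcs, the occupied arc of $(b,\dots)$ should be the one labeled $2$ (so it absorbs $I_2$) and that of $(x_0,\dots)$ should be the one labeled $1$, making the pair $(b,\dots)\otimes\pi_3$ idempotent-compatible with output $(x_0,\dots)$ since $\pi_3 = \rho_3$ runs from $a_3$ to $a_4$, i.e. $I_2\rho_3' I_1 = \rho_3'$. (2) Exhibit the unique holomorphic domain realizing $m_2((b,x_1,\dots,x_n),\pi_3) = (x_0,x_1,\dots,x_n)$: this should be a single embedded (or at worst immersed) region — a bigon-like or rectangular source with one boundary arc on $\partial\Sigma$ mapping to the chord $\rho_3$ — connecting $b$ to $x_0$ while leaving the $x_i$ fixed, exactly of the type counted in the nice-diagram algorithm of Section~\ref{sub:nice_heegaard_diagrams}. (3) Verify vanishing of $m_3((b,x_1,\dots,x_n),\pi_3,\pi_2)$: after the domain of step (2) has been used, the resulting configuration $(x_0,\dots)$ has no domain with a boundary arc mapping to $\pi_2 = \rho_2$ (from $a_2$ to $a_3$) that returns to a generator of the required form; one checks there is simply no compatible index-one source, either by idempotent obstruction (the output idempotent would be wrong) or by the absence of a region of the correct shape and non-negative multiplicities. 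I would also note that $\rho_2\rho_3$ does not multiply to anything consumed here, which is consistent with $m_3$ vanishing rather than producing $m_2$ with a composite chord.

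The main obstacle I anticipate is the bookkeeping in step (3): ruling out \emph{all} potential higher domains (immersed annuli with corners, or compositions of elementary pieces) that could in principle contribute to $m_3$. In the diagrams of this paper the regions one must consider are bigons, rectangles, and embedded annuli with varying numbers of corners, and the relevant counts are ``entirely analogous to the computations made in Appendix A of \cite{LOT1}'' — so the honest work is to enumerate the few candidate domains emanating from $(b,x_1,\dots,x_n)$ that carry the boundary data $\pi_3$ then $\pi_2$, and show each either fails positivity, fails the Maslov-index condition, or has the wrong target idempotent. Once the diagram is drawn explicitly this is a finite check; I would present it as such, deferring the routine enumeration to the reader in the spirit of the Remark following the torus-module computation in Section~\ref{sub:torus_modules}, and highlighting only the one domain that does contribute (giving $m_2$) and the idempotent/shape reason the $\pi_2$-extension dies.
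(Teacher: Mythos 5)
Your plan for the $m_2$ computation tracks the paper's argument closely: you pin down the boundary multiplicities forced by the input chord $\pi_3$, observe that the $x_i$'s are fixed, and identify the unique rectangular domain from $b$ to $x_0$ with one edge on $\pi_3$. That part is fine, including the idempotent check.

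The $m_3$ vanishing is where your proposal is thin and where the paper does something more specific than you anticipate. Two issues. First, your framing ``after the domain of step (2) has been used, the resulting configuration $(x_0,\dots)$ has no domain with a boundary arc mapping to $\pi_2$'' treats $m_3$ as if it factored through $m_2$ followed by an extension; that is not how an $A_\infty$-operation works. One must analyze a single index-one domain that simultaneously carries both chord inputs $\pi_3$ and $\pi_2$, not concatenate two pieces. Second, you gesture at ``idempotent obstruction or failure of positivity'' without saying which one bites or how. Idempotent considerations alone do \emph{not} kill $m_3$ here: $\rho_3$ ends in $I_1$, $\rho_2$ begins in $I_1$ and ends in $I_2$, and there do exist generators of the module in idempotent $I_2$ (for instance $(b,x_1,\dots,x_n)$ itself), so there is no a priori idempotent contradiction. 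The paper's actual mechanism is a positivity argument with a specific structure: the boundary multiplicity constraints (multiplicity one at $\pi_2$ and $\pi_3$, zero at $\pi_1$ and the suture) force the four local multiplicities around $b$ to be $0,0,1,1$ reading clockwise from the northeast region, hence $b$ is \emph{fixed} by the domain; since $x_0$ and $b$ lie on the same $\beta$-curve and $b$ is fixed, the multiplicities $A,B,C,D$ around $x_0$ must satisfy $A+C=B+D$; three of these are then forced to $A=0$, $C=0$, $D=1$, giving $B=-1<0$, contradicting positivity of intersection. Your proposal never identifies the crucial intermediate step that $b$ must be fixed, nor the linear relation it imposes at $x_0$. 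Without that, ``enumerate the candidate domains'' is not yet a proof; with it, the enumeration collapses to the single displayed contradiction.
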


\begin{proof}[Proof of Lemma~\ref{lem:leg_neg_stab1}]
	In order to contribute a term to $m_2((b,x_1,\dots,x_n),\pi_3)$ a necessary condition is that the corresponding domain must have multiplicity zero in the regions bordering the Reeb chords $\pi_1$, and $\pi_2$ and multiplicity one in the region bordering the Reeb chord $\pi_3$.

This fact has two important consequences.  First, it forces the region $D$ southwest of the intersection $b$ and northwest of the intersection $x_0$ in Figure~\ref{fig:hkm_lossB1} to have multiplicity one, and that all other regions of bordering $\gamma'$ have multiplicity zero.  This, in turn implies that the intersection points $x_1,\dots,x_n$ are all fixed under the operation $m_2((b,x_1,\dots,x_n),\pi_3)$. 

It follows that
\[
	m_2((b,x_1,\dots,x_n),\pi_3) = (x_0,x_1,\dots,x_n),
\]
with the sole nontrivial term corresponding to a source $S$ which is topologically a rectangle with one edge mapping to $\pi_3$.

In a similar spirit, for a domain $D$ to contribute non-trivially to $m_3((b,x_1,\dots,x_n),\pi_3,\pi_2)$, it must be the case that $D$ have multiplicity one in the regions bordering the Reeb cords $\pi_3$ and $\pi_2$ and multiplicity zero in the region bordering the Reeb cord $\pi_1$ and in the region bordering the suture.

From this observation, we can read off the multiplicities of the regions surrounding the intersection point $b$.  Beginning with the northeast region and moving clockwise, these multiplicities are $0$, $0$, $1$, and $1$, respectively.  It follows from this multiplicity calculation that the intersection $b$ is fixed under the operation $m_3((b,x_1,\dots,x_n),\pi_3,\pi_2)$.

Denote by $A$, $B$, $C$ and $D$ the multiplicities of the regions surrounding the intersection point $x_0$, beginning with the northeast region and listed in clockwise order.  Since the intersection point $x_0$ and $b$ lie on a common $\beta$-curve, and since $b$ is must be fixed by $m_3((b,x_1,\dots,x_n),\pi_3,\pi_2)$, the multiplicities of the regions surround $x_0$ satisfy the relation
\[
	A+C = B+D,
\]
with each of $A$, $B$, $C$ and $D$ non-negative by positivity of intersection.  From the paragraph above, however, we know that $A = 0$, $C = 0$ and $D = 1$.  Thus, $B = -1$, contradicting positivity of intersection.

From the above, we conclude that no such (positive) domain exists, and that
\[
	m_3((b,x_1,\dots,x_n),\pi_3,\pi_2) = 0
\]
\end{proof}

We are now ready to proceed with the proof of Theorem~\ref{thm:lim_to_minus_leg}.  

%%%%%%%%%%%%%%%%%%%%%%%%%%%%%%%%%%%%%%%%%%%%%%%%%%%%%%%
\begin{proof}[Proof of Theorem~\ref{thm:lim_to_minus_leg}]\label{pf:lim_min_leg}
%%%%%%%%%%%%%%%%%%%%%%%%%%%%%%%%%%%%%%%%%%%%%%%%%%%%%%%

As discussed above, the HKM Legendrian invariant $\EH(L)$ is represented by the generator
\[
	(b,x_1,\dots,x_n) \otimes a \in \CFA(\mscr{H_1},{\bf \alpha}, {\bf \beta}) \boxtimes K_0.
\]

Under the map $\id \boxtimes \eta_{n,0}$ induced by the first negative stabilization, we have by Lemmas~\ref{lem:twist_bptor} and~\ref{lem:leg_neg_stab1} that
\begin{align*}
	\id \boxtimes \eta_{n,0}((b,x_1,\dots,x_n) \otimes a) &= m_2((b,x_1,\dots,x_n),\pi_3) \otimes b_1\\
	& \;\;\;\;\;\;\; + m_{3}(b,x_1,\dots,x_n),\pi_3,\pi_2) \otimes a\\
	&= (x_0,x_1,\dots,x_n) \otimes b_1.
\end{align*}

Continuing, we have that for each integer $i \geq 1$,
\begin{align*}
	\id \boxtimes \eta_{n,i}((x_0,x_1,\dots,x_n) \otimes b_i) &= \sum_{k=0}^i m_{2+k}((x_0,x_1,\dots,x_n),I_1,\pi_{23},\dots,\pi_{23}) \otimes b_{i-k+1}\\
	& \;\;\;\;\;\;\; + m_{2+i+1}((x_0,x_1,\dots,x_n),I_1,\pi_{23},\dots,\pi_{23},\pi_2) \otimes a\\
	&= m_2((x_0,x_1,\dots,x_n),I_1) \otimes b_{i+1}\\
	&= (x_0,x_1,\dots,x_n) \otimes b_{i+1}.
\end{align*}

Thus, under the isomorphism given by Theorem~\ref{thm:lim_to_minus}, the LIMIT invariant $\EHL(K)$ is identified with the class $[(x_0,\dots,x_n) \otimes y]$.  Since this is, by construction, the LOSS invariant, Theorem~\ref{thm:lim_to_minus_leg} follows. 

\end{proof}

% section equivalence_of_legendrian_invariants (end)
%%%%%%%%%%%%%%%%%%%%%%%%%%%%%%%%%%%%%%%%%%%%%%%%%%%%%%%

%%%%%%%%%%%%%%%%%%%%%%%%%%%%%%%%%%%%%%%%%%%%%%%%%%%%%%%
\section{The Stipsicz-V\'ertesi Attachment Map and Sutured Limit Homology} % (fold)
\label{sec:SV_limit}
%%%%%%%%%%%%%%%%%%%%%%%%%%%%%%%%%%%%%%%%%%%%%%%%%%%%%%%

In this section, we prove Theorem~\ref{thm:SV_map} --- that the map induced on sutured limit homology by the SV attachment is equivalent under the identification given by Theorem~\ref{thm:lim_to_minus} to the map 
\[
	\HFKM(-Y,K) \to \HFKH(-Y,K)
\]
induced on knot Floer homology by setting the formal variable $U$ equal to zero at the chain level. Our proof of Theorem~\ref{thm:SV_map} is similar to that of Theorem~\ref{thm:lim_to_minus} in the sense that local computations of the HKM gluing maps can be utilized to deduce the desired global result.

Let $K \subset (Y,\xi)$ be a null-homologous Legendrian knot.  Recall that the SV attachment is given by gluing an appropriately signed basic slice to the complement $(Y(K),\xi_K)$.  As in the case of positive or negative Legendrian stabilization, the SV attachment can be effectuated through bypass attachment, as shown in Figure~\ref{fig:sv_attach}.  

%%%%%%%%%%%%%%%%%% SV Attachment %%%%%%%%%%%%%%%%%% 
\begin{figure}[htbp]
	\centering
		\includegraphics[scale=1]{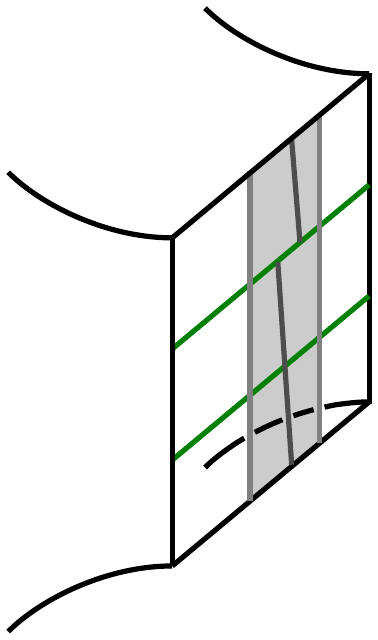}
	\hspace{20px}
		\includegraphics[scale=1]{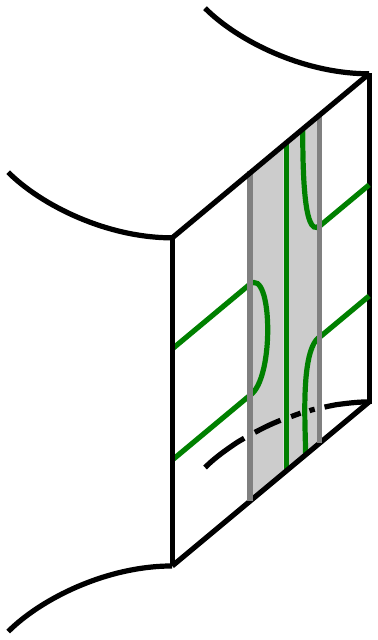}
	\caption{The bypass attachment arc that realizes the Stipsicz-V\'ertesi basic slice attachment. The arc and dividing curves before the attachment are shown on the left hand side and the result on the level of dividing curves is shown on the right hand side. }
	\label{fig:sv_attach}\label{fig:sv_attach_before}\label{fig:sv_attach_after}
\end{figure}
%%%%%%%%%%%%%%%%%% End SV Attachment %%%%%%%%%%%%%%%%%%

Reversing orientation, the left hand side of Figure~\ref{fig:sv_attach_before} depicts the sutured boundary of the complement $(-Y(K),-\Gamma_K)$.  The attaching curve for the SV bypass attachment is shown in dark grey and is lies within a vertical annulus (meridional when measured with respect to the knot $K$).  To ensure compatibility with negative Legendrian stabilization, the endpoints of the SV bypass attachment are chosen to lie on the dividing curve shown.

The contact 3-manifold which results from the SV bypass attachment is shown on the right hand side of Figure~\ref{fig:sv_attach_after}.  As a sutured manifold, this space is equal to $(-Y(K),-\Gamma_\mu)$.  It is obtained from $Y$ by removing an open tubular neighborhood of the knot $K$ and placing two parallel meridional sutures along its torus boundary.

Since the SV bypass attachment can be performed within a vertical annulus, we may apply the techniques used in Section~\ref{sec:limit_is_knot} to establish Theorem~\ref{thm:SV_map}.  First, we decompose the sutured 3-manifolds $(-Y(K),-\Gamma_n)$ as
\[
	(-Y(K),-\Gamma_n) = (-Y(K),\Gamma',\SF_T) \cup \T \cup \C_n \cup \A_0,
\]
where $\T$, $\C_n$ and $\A_0$ are discussed in Sections~\ref{sub:torus_bmodules}, \ref{sub:twisted_annulus} and~\ref{sub:simp_ann}, respectively.
Next, Lemmas \ref{lem:SV_ann_map}, \ref{lem:SV_twistann_map} and \ref{lem:SV_twisttor_map}, compute the HKM gluing maps on bordered sutured Floer homology induced by the SV attachment performed on the spaces $\A_0$, $\A_n = \C_n \cup \A_0$ and $\T_n = \T \cup \A_n$ respectively. Finally, in Lemma~\ref{lem:SV_twisttor_map} below we deduce the HKM gluing map induced on the limit module $\underrightarrow{K}$, which is then seen to agree with the map given by setting the formal variable $U$ equal to zero under the identification between $\underrightarrow{K}$ and the module $K$ which gives rise to $\HFKM(-Y,K)$.

We now recall from Section~\ref{sub:bordered_invariants_and_knot_floer_homology} the definitions of the Type-$D$ modules $K^-$ and $K_\infty$, associated to the doubly-pointed solid torus, which computes the minus and hat variants of knot Floer homology.  The module $K^-$ is given by
\begin{center}
\begin{tikzpicture}	[->,>=stealth',auto,thick]
	\node (a) at (0,0){$x$} ;
	\node (b) at (2.5,0) {$U \cdot x$} ;
	\node (c) at (5,0) {$U^2 \cdot x$} ;
	\node (d) at (7.5,0) {$U^3 \cdot x$} ;
	\node (e) at (10,0) {$\dots$,} ;
	
	\draw (a) edge node[above] {$\rho_{23}$} (b);
	\draw (b) edge node[above] {$\rho_{23}$} (c);
	\draw (c) edge node[above] {$\rho_{23}$} (d);
	\draw (d) edge node[above] {$\rho_{23}$} (e);
\end{tikzpicture}
\end{center}
where each of the $U^i \cdot x$ live in the idempotent $I_1$. The module $K_\infty$ is generated by the single element $x$, which lives in idempotent $I_1$ and satisfies $\delta(x) = 0$.  

At the level of Type-$D$ modules, the natural map $\HFKM(Y,K) \to \HFKH(Y,K)$ given by setting $U$ equal to zero at the chain level is given by
\begin{align*}
	K^- &\to K_\infty\\
	x &\mapsto x\\
	U^i \cdot x &\mapsto 0, \;\;\; i \geq 1.
\end{align*}
Equivalently, in the language of sutured limit homology, under the isomorphism identifying the Type-$D$ modules $\underrightarrow{K}$ and $K$, this map is given by
\begin{align*}
	\underrightarrow{K} &\to K_\infty\\
	\delta_0 &\mapsto x\\
	\delta_i &\mapsto 0, \;\;\; i \geq 1.
\end{align*}

Following the strategy discussed several paragraphs above, we now study the map on bordered sutured Floer homology induced by attaching a SV bypass to the space $\A_0$.  Restricted to this space, the SV bypass attachment is depicted on the left hand side of Figure~\ref{fig:sv_attach_ann}.  The arc of attachment is shown in grey.  The bordered sutured manifold which results from this attachment is denoted by $\A_\infty$ and depicted in the middle of Figure~\ref{fig:sv_attach_ann_res}.  A corresponding bordered sutured Heegaard diagram for the space $\A_\infty$ is show on the right hand side of Figure~\ref{fig:sv_attach_ann_diag}.

%%%%%%%%%%%%%%%%%% SV Attachment %%%%%%%%%%%%%%%%%% 
\begin{figure}[htbp]
	\centering
		\includegraphics[scale=1]{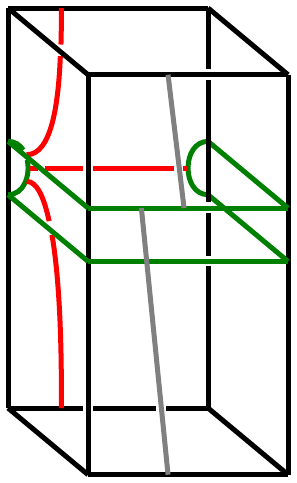}
	\hspace{20px}
		\includegraphics[scale=1]{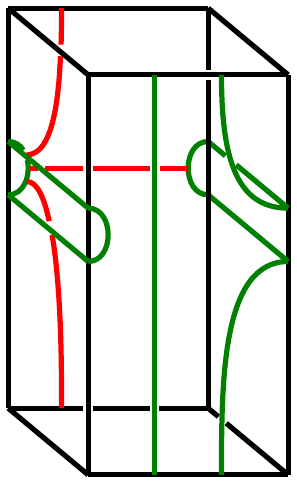}
	\hspace{20px}
	\begin{picture}(72,137)
		\put(5,0){\includegraphics[scale=1]{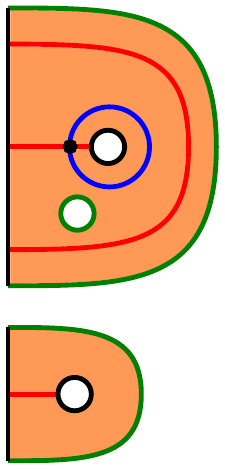}}
		\put(0,121){\color{red}$1$}
		\put(0,90){\color{red}$2$}
		\put(0,60){\color{red}$1$}
		\put(0,18){\color{red}$2$}
		\put(0,135){$\rho$}
		\put(15,96){$w$}
	\end{picture}
	\caption{On the left hand side is the SV bypass attachment viewed on $\A_0$. The middle figure is the space $\A_\infty$ which results from attaching a SV bypass to $\A_n$. On the right hand side is a bordered sutured Heegaard diagram for the space $\A_\infty$. (As usual the black circles are identified.)}
	\label{fig:sv_attach_mfd_diag}\label{fig:sv_attach_ann}\label{fig:sv_attach_ann_diag}\label{fig:sv_attach_ann_res}
\end{figure}
%%%%%%%%%%%%%%%%%% End SV Attachment %%%%%%%%%%%%%%%%%%

Using the conventions already set forth in Section~\ref{sub:comp_mod}, we associate to the diagram shown on the right hand side of Figure~\ref{fig:sv_attach_ann_diag} the Type-$D$ module $M_\infty := {}^{\A(\W_A)}\BSD(\A_\infty)$, which is defined over the strand algebra $\A(\W_A)$.  The module $M_\infty$ is generated by the single element $w$, whose idempotent compatibility is given by
\[
	I_1 \cdot w = w.
\]
The corresponding boundary map $\delta$ is trivial since all of the regions in the associated bordered sutured Heegaard diagram are adjacent to portions of the boundary which are sutured.

As discussed in Section~\ref{sub:comp_mod}, the Type-$D$ module $M_0$ associated to the space $\A_0$ is generated by a single element $a$, whose idempotent compatibility is given by $I_2 \cdot a = a$.

\begin{lemma}\label{lem:SV_ann_map}
	The map on bordered sutured Floer homology induced by the Stipsicz-V\'ertesi attachment to the thickened annulus $\A_0$ of slope zero is given by the following equation:
\begin{align*}
	\phi_{SV}: M_0 &\to M_\infty\\
	\nonumber a &\mapsto \rho_2' \otimes w
\end{align*}
\end{lemma}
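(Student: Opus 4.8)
The proof of Lemma~\ref{lem:SV_ann_map} will follow the same template used to establish Lemma~\ref{lem:basic_bpann}, exploiting the fact that the Stipsicz--V\'ertesi attachment to $\A_0$ can be realized as a bypass attachment within the vertical annulus, and hence factors through the bordered pieces $\B_1$, $\B_2$, $\D_A$, $\D_B$, $\D_C$ computed in Sections~\ref{sec:bypass_attachment_maps} and~\ref{sub:bypass_annuli}. The plan is to first identify which of the two possible bypass attachments to the annulus (i.e.\ which of $\B_1$ or $\B_2$) realizes the SV bypass, by comparing the arc of attachment shown on the left of Figure~\ref{fig:sv_attach_ann} with the attaching arcs in Figures~\ref{fig:annbypA1}; recall that because orientations are reversed in this setting, the relevant comparison is with the negative-bypass picture from Figure~\ref{fig:slice_bp}. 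Having fixed the correct annular bordered piece $\B_j$, I would write the SV-glued manifold $\A_\infty$ as $\B_j \cup_{\W_D} \D_?$ for the appropriate disk piece, and then compute the induced map as $\id_{B_j} \boxtimes \phi'_?$, where $\phi'_A$, $\phi'_B$, $\phi'_C$ are the bordered bypass maps recorded in Section~\ref{sub:bypass_gluing_maps}.

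Concretely, I expect the diagram to be
\begin{center}
\begin{tikzpicture}[->,>=stealth',shorten >=1pt,auto,node distance=2.8cm,semithick]
	\node  (A) at (0,0) {$B_j \boxtimes M_?$};
	\node  (B) at (4,0) {$B_j \boxtimes M_?$};
	\node  (E) at (0,1) {$M_0$};
	\node  (F) at (4,1) {$M_\infty$};
	\path (A) edge node[below] {\small $\id_{B_j} \boxtimes \phi'$} (B)
		  (A) edge [-,double] (E)
		  (B) edge [-,double] (F)
		  (E) edge node[above] {$\phi_{SV}$} (F);
\end{tikzpicture}
\end{center}
where the vertical identifications $B_j \boxtimes M_? \cong M_0$ and $B_j \boxtimes M_? \cong M_\infty$ are made by idempotent considerations, exactly as in Equations~\eqref{eqn:annzero} and~\eqref{eqn:annone}. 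One then computes $\id_{B_j} \boxtimes \phi'$ using Definition~\ref{def:boxmap}: the only nontrivial higher product available in $B_j$ is the $m_2$ term ($m_2(f,\pi') = \rho_2' \otimes d$ in the case of $\B_2$, or $m_2(f,\pi') = I_2 \otimes e$ in the case of $\B_1$), and the generator of the disk module passes through the $\rho'$ arrow in $\phi'_B$. Tracking $a = f \otimes z$ (or the analogous identification) through this composition and reading off the result under the canonical isomorphism to $M_\infty$ should yield $\phi_{SV}(a) = \rho_2' \otimes w$, and one checks directly that no other idempotent-compatible, grading-compatible term is possible.

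An alternative, and perhaps cleaner, route is to argue directly from nontriviality and rigidity, as in Section~\ref{sub:bypass_gluing_maps}: the target $M_\infty$ is generated by a single element $w$ in idempotent $I_1$, the source $M_0$ is generated by $a$ in idempotent $I_2$, and since any map of Type-$D$ structures must respect idempotent compatibilities, the image $\phi_{SV}(a)$ must be an $\F$-linear combination of $I_1 \cdot \A(\W_A) \cdot I_2 \otimes w$. Among the strand-algebra elements bridging these idempotents --- here $\rho_2'$ is the unique one of the correct (Alexander and Maslov) degree --- there is a single candidate, so $\phi_{SV}(a) = \rho_2' \otimes w$ up to the overall scalar, which is pinned down to be nonzero by invoking Theorem~\ref{thm:hkm_za_gluing} together with the existence of a contact manifold with nonvanishing sutured contact invariant whose image under this SV-type bypass is still nonvanishing (e.g.\ the standard neighborhood computation already used for the HKM invariant).

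The main obstacle I anticipate is bookkeeping with orientations and idempotent conventions: one must be careful that the Heegaard diagram in Figure~\ref{fig:sv_attach_ann_diag} is set up with the correct $\alpha$/$\beta$ roles and orientation reversal so that the SV bypass --- which is the \emph{negative} bypass in the orientation-reversed picture --- is matched to the correct annular piece, and so that the output Reeb chord is $\rho_2'$ rather than $\rho_1'$ or $\rho_{12}'$. Verifying this requires carefully re-examining which regions of the Heegaard diagram for $\A_\infty$ are sutured-adjacent (forcing multiplicity zero) and matching the single contributing domain. Once the correct annular building block is identified and the idempotents are lined up, the box-tensor computation is a short and routine application of Definition~\ref{def:boxmap}, entirely parallel to the proof of Lemma~\ref{lem:basic_bpann}.
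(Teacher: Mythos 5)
Your second route is exactly the paper's proof: the source $M_0$ and target $M_\infty$ are each generated by a single element in the middle idempotent summand $\A(\W_A,1)$, $a$ with $I_2\cdot a = a$ and $w$ with $I_1\cdot w = w$, so any Type-$D$ morphism $M_0 \to \A(\W_A)\otimes M_\infty$ must send $a$ to a multiple of $(I_2\,\rho\,I_1)\otimes w$, and the only algebra element with that idempotent profile in $\A(\W_A,1)$ is $\rho_2'$ (the idempotent constraint alone already pins this down --- the grading considerations you invoke are not needed here, though they are consistent). Nontriviality follows because the SV attachment carries the HKM invariant to the LOSS hat invariant, and the latter is nonzero for, say, the maximal-$\tb$ Legendrian unknot; this is the example the paper uses.

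Your first route, however, has a genuine gap. The SV bypass attaching arc shown in Figure~\ref{fig:sv_attach_ann} is \emph{not} the same arc as the one along which the stabilization bypasses of Lemma~\ref{lem:basic_bpann} are performed, so $\A_\infty$ is not obtained by regluing one of $\D_A$, $\D_B$, $\D_C$ to $\B_1$ or $\B_2$ in a way that realizes $\phi_{SV}$ as $\id_{B_j}\boxtimes\phi'_?$ for one of the three bypass morphisms. Concretely: the identifications $B_j\boxtimes M_B \cong M_0$ and $B_j\boxtimes M_C \cong M_1$ used in Lemma~\ref{lem:basic_bpann} show that $\id_{B_j}\boxtimes\phi'_B$ lands in $M_1$, not $M_\infty$. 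One can check that $B_j\boxtimes M_A$ is homotopy equivalent to $M_\infty$, so the remaining candidates are $\id_{B_j}\boxtimes\phi'_A : M_\infty \to M_0$ (wrong direction) and $\id_{B_j}\boxtimes\phi'_C : M_1 \to M_\infty$ (wrong source); the only map from $M_0$ to $M_\infty$ that the already-computed triangle produces is the composite $\id_{B_j}\boxtimes(\phi'_C\circ\phi'_B)$, which is nullhomotopic by exactness of the bypass triangle and so cannot equal the nontrivial $\phi_{SV}$. Carrying out your first plan honestly would require computing a \emph{new} annular bi-module $\B_{SV}$ adapted to the SV attaching arc (and reproving the relevant canonical identifications), which is more work than the rigidity argument and is why the paper proceeds as in your second route.
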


\begin{proof}\label{pf:ann_sv}
	Observe that the HKM map induced on Type-$D$ modules by the SV bypass attachment must be non-trivial.  This follows, for instance, from the fact that there exist Legendrian knots whose LOSS hat invariants are nontrivial --- the Legendrian unknot with maximal Thurston-Bennequin invariant is such a knot.

It is now elementary to check that the unique non-trivial map $M_0 \to M_\infty$ of Type-$D$ modules is precisely the map $\phi_{SV}$ given in the statement of Lemma~\ref{lem:SV_ann_map}.
\end{proof}

Next, we compute the HKM gluing map induced by attaching a SV bypass to the spaces $\A_n = \C_m \cup \A_0$.

\begin{lemma}\label{lem:SV_twistann_map}
	The map on bordered sutured Floer homology induced by the Stipsicz-V\'ertesi attachment to the thickened annulus $\A_m$ of slope $m$ is given by the following equation:
\begin{align*}
\nonumber	\phi_{SV}: M_m &\to M_\infty\\
	b_n &\mapsto I_1 \otimes w\\
\nonumber b_i &\mapsto 0\qquad\qquad \text{for $i<n$}\\
\nonumber a &\mapsto 0
\end{align*}
\end{lemma}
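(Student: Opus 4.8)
The plan is to reduce Lemma~\ref{lem:SV_twistann_map} to Lemma~\ref{lem:SV_ann_map} by an induction on the slope $m$, exploiting the fact that the Stipsicz-V\'ertesi attachment factors through negative stabilization. Recall from Section~\ref{sub:relationships}---equivalently, from iterating item~(5) of Proposition~\ref{prop:relations}---that the Stipsicz-V\'ertesi basic slice layer over the slope-$(m-1)$ annulus equals a single negative basic slice (carrying slope $m-1$ to slope $m$) followed by the Stipsicz-V\'ertesi layer over the slope-$m$ annulus. Writing $\phi_{SV}^{(j)}\colon M_j\to M_\infty$ for the map induced by the Stipsicz-V\'ertesi attachment to $\A_j$, and $\psi_{n,j}\colon M_j\to M_{j+1}$ for the negative bypass map of Lemma~\ref{lem:twist_bpann} (with $\psi_{n,0}$ the one from Lemma~\ref{lem:basic_bpann}), naturality of the bordered sutured gluing maps (Theorem~\ref{thm:hkm_za_gluing}) gives, up to homotopy of Type-$D$ morphisms,
\[
	\phi_{SV}^{(m-1)}\;\simeq\;\phi_{SV}^{(m)}\circ\psi_{n,m-1},\qquad m\geq 1,
\]
with the base input $\phi_{SV}^{(0)}$ supplied by Lemma~\ref{lem:SV_ann_map}.

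Granting this relation, the inductive step is a short computation with the explicit modules. Assume inductively that $\phi_{SV}^{(j)}$ is as claimed for all $j<m$ (the case $j=0$ being Lemma~\ref{lem:SV_ann_map}, i.e.\ $a\mapsto\rho_2'\otimes w$). Substituting $\psi_{n,m-1}(b_i)=I_1\otimes b_{i+1}$ and $\psi_{n,m-1}(a)=\rho_2'\otimes b_1$ into the composition formula for Type-$D$ morphisms, the relation $\phi_{SV}^{(m-1)}=\phi_{SV}^{(m)}\circ\psi_{n,m-1}$ determines $\phi_{SV}^{(m)}$ on every $b$-generator: idempotent bookkeeping (using $I_1\cdot\A(\W_A,1)\cdot I_1=\langle I_1\rangle$ and $\rho_2'=I_2\,\rho_2'\,I_1$) yields $\phi_{SV}^{(m)}(b_{i+1})=\phi_{SV}^{(m-1)}(b_i)$, while $\phi_{SV}^{(m)}(b_1)$ is the unique solution of $(\mu_2\otimes\id)(\rho_2'\otimes\phi_{SV}^{(m)}(b_1))=\phi_{SV}^{(m-1)}(a)$; since $\phi_{SV}^{(m-1)}$ is as claimed, this forces $\phi_{SV}^{(m)}(b_m)=I_1\otimes w$ and $\phi_{SV}^{(m)}(b_j)=0$ for $j<m$. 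Finally $\phi_{SV}^{(m)}(a)$ has the form $c\,\rho_2'\otimes w$ with $c\in\F$ by idempotent compatibility, and substituting $\delta_{M_m}(b_1)=\rho_1'\otimes a$ into the structure equation for a Type-$D$ morphism $M_m\to M_\infty$ (and using $\rho_1'\rho_2'=\rho_{12}'\neq0$) forces $c=0$. This is precisely the map asserted in the statement.

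The step that requires the most care is the first one: verifying that the contact-geometric factorization of the Stipsicz-V\'ertesi layer actually descends, via the naturality clause of Theorem~\ref{thm:hkm_za_gluing}, to the chain-level homotopy $\phi_{SV}^{(m-1)}\simeq\phi_{SV}^{(m)}\circ\psi_{n,m-1}$ of Type-$D$ morphisms. One must identify both sides with the bordered sutured gluing maps attached to the two ways of assembling the same minimally twisting contact structure on the relevant $T^2\times[0,1]$ layer, keeping careful track of the orientation reversals built into the bordered conventions, and note that the identifications hold only up to homotopy---which is harmless, since this map is ultimately box-tensored with $\BSA(-Y(K),\Gamma',\SF_T)$ and passed to homology, exactly as in the proof of Theorem~\ref{thm:lim_to_minus}. (One could instead attempt a direct argument, decomposing $\A_m=\C_m\cup\A_0$ and box-tensoring the Stipsicz-V\'ertesi map of Lemma~\ref{lem:SV_ann_map} with the bi-module $C_m$ as was done for the ordinary bypass maps in Lemma~\ref{lem:twist_bpann}; but the attaching arc of the Stipsicz-V\'ertesi bypass genuinely depends on the slope $m$, so that route is less transparent than the factorization argument above.)
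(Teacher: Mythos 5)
Your inductive factorization argument is correct, but the approach differs from the paper's, and your parenthetical dismissal of the paper's actual argument contains a mistaken claim worth flagging.

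The paper does exactly the ``direct argument'' you set aside at the end: it decomposes $\A_m = \C_m \cup \A_0$ and computes $\phi_{SV}: M_m \to M_\infty$ as $\id_{C_m} \boxtimes \phi_{SV}^{(0)}$, using the canonical identifications $C_m \boxtimes M_0 \cong M_m$ (via $a\otimes a = a$, $b_i\otimes a = b_i$) and $C_m \boxtimes M_\infty \cong M_\infty$ (via $c\otimes w = w$). Your stated objection---that ``the attaching arc of the Stipsicz-V\'ertesi bypass genuinely depends on the slope $m$''---is false for precisely the reason the paper's decomposition was designed: the SV attaching arc is carried entirely by the innermost slope-$0$ annular piece $\A_0$, which is the same for every $m$ (see Figure~\ref{fig:sv_attach_ann}, which shows the attaching arc on $\A_0$). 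The slope dependence is entirely captured by the twist bimodule $C_m$, which appears only as an $\id_{C_m}\boxtimes(-)$ factor. The paper's computation is then a one-step application of Definition~\ref{def:boxmap}: all terms vanish for idempotent reasons except $m_2(\,\cdot\,,\pi_2)\otimes w$, which is $I_1\otimes c$ for input $b_m$ and $0$ otherwise, matching the lemma.

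Your alternative argument works, with one small imprecision. The relation $\phi_{SV}^{(m-1)} \simeq \phi_{SV}^{(m)}\circ\psi_{n,m-1}$ is correct and follows from Proposition~\ref{prop:relations}(5) and the well-definedness/naturality of the bordered gluing maps, and your idempotent bookkeeping and structure-equation constraint (forcing $\phi(a)=0$ via $\delta(b_1)=\rho_1'\otimes a$ and $\rho_1'\rho_2'=\rho_{12}'\ne 0$) are fine. However, since $I_1\A(\W_A,1)I_1$ may contain more than just $I_1$ (the idempotent pattern of $\rho_{12}'$ means the product $\rho_1'\rho_2'$ lands there), the equation $(\mu_2\otimes\id)(\rho_2'\otimes\phi_{SV}^{(m)}(b_1))=\phi_{SV}^{(m-1)}(a)$ does not pin down $\phi_{SV}^{(m)}(b_1)$ uniquely on the nose; there is a free $\rho_{12}'$-component, which $\rho_2'$ annihilates. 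That component is killed by an explicit Type-$D$ homotopy (take $h(a) = \rho_2'\otimes w$), so your conclusion is correct up to homotopy, which is all that matters once you box-tensor with $\BSA(-Y(K),\Gamma',\SF_T)$ and pass to homology. Phrasing this as ``the unique solution'' overstates it. Overall the trade-off is: the paper's route is a single box-tensor computation, yours requires setting up the induction, justifying the chain-level factorization relation, and then solving constraints; both arrive at the same map, but the paper's is shorter and sidesteps the uniqueness-up-to-homotopy bookkeeping.
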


\begin{proof}\label{pf:SV_twistann_map}
	This follows by a computation which similar to those given in the proofs of Lemmas \ref{lem:twist_bpann} and \ref{lem:twist_bptor}.  In this case, the proof centers around the following key diagram.
\begin{center}
\begin{tikzpicture}[->,>=stealth',shorten >=1pt,auto,node distance=2.8cm, semithick]
	\node  (A) at (0,0) {$M_m = C_m \boxtimes M_0$};
	\node  (B) at (5.5,0) {$C_m \boxtimes M_\infty = M_\infty$};

	\path (A) edge node {\small $\id_{C_m} \boxtimes \phi_{SV}$} (B);
\end{tikzpicture}
\end{center}

As before, the canonical identification between $C_m \boxtimes M_0$ and $M_m$ is given by 
\begin{align*}
	a \otimes a &= a\\
	b_i \otimes a &= b_i
\end{align*}
The identification between $C_m \boxtimes M_\infty$ and $M_\infty$ is given by
\[
	c \otimes w = w.
\]

Lemma~\ref{lem:SV_twistann_map} now follows immediately from these identifications and Definition~\ref{def:boxmap} applied to the map $\I_{C_m} \boxtimes \phi_{SV}$.
\end{proof}

We now compute the HKM gluing map induced by attaching a SV bypass to the spaces $\T_n = \T \cup \A_m$.

\begin{lemma}\label{lem:SV_twisttor_map}
	The map on bordered sutured Floer homology induced by the Stipsicz-V\'ertesi attachment to the thickened punctured torus $\T_m$ of slope $m$ is given by the following equation:
\begin{align*}
\nonumber	\phi_{SV}: K_n &\to K_\infty\\
	b_n &\mapsto I_1 \otimes x\\
\nonumber b_i &\mapsto 0\\
\nonumber a &\mapsto 0
\end{align*}
\end{lemma}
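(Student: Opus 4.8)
The proof of Lemma~\ref{lem:SV_twisttor_map} will follow the same strategy used to establish Lemmas~\ref{lem:twist_bpann} and~\ref{lem:twist_bptor}, namely by box-tensoring the previously computed annular gluing map with the torus bi-module $N := {}^{\A(\W_T)} \BSDA(\T)_{\A(W_A)}$. The key diagram is
\begin{center}
\begin{tikzpicture}[->,>=stealth',shorten >=1pt,auto,node distance=2.8cm, semithick]
	\node  (A) at (-1.25,0) {$K_m = N \boxtimes M_m$};
	\node  (B) at (4.5,0) {$N \boxtimes M_\infty = K_\infty$};
	\path (A) edge node {\small $\phi_{SV} := \id_N \boxtimes \phi_{SV}$} (B);
\end{tikzpicture}
\end{center}
Here the left-hand identification $N \boxtimes M_m \cong K_m$ is the one already used in the proof of Lemma~\ref{lem:twist_bptor}, sending $x \otimes a \mapsto a$ and $y \otimes b_i \mapsto b_i$ (using the idempotent compatibilities $I_2 \cdot x \cdot I_2 = x$, $I_1 \cdot y \cdot I_1 = y$ for $N$ together with $I_2 \cdot a = a$, $I_1 \cdot b_i = b_i$ for $M_m$). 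On the right, since $M_\infty$ is generated by the single element $w$ in idempotent $I_1$ with trivial differential, and since the only operation in $N$ landing in idempotent $I_1$ out of $I_1$ is the trivial-length one, the box tensor product $N \boxtimes M_\infty$ is generated by $y \otimes w$, which we identify with the generator $x$ of $K_\infty$; one checks the differential on $N \boxtimes M_\infty$ vanishes, matching $\delta(x) = 0$ in $K_\infty$.

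First I would record the relevant operations from the computations in Sections~\ref{sub:torus_bmodules} and~\ref{sub:comp_mod}: the bi-module $N$ has $m_2(y,\pi_1') = \rho_2' \otimes x$, $m_2(x,\pi_2') = \rho_3' \otimes y$, $m_2(y,\pi_{12}') = \rho_{23}' \otimes y$, and Lemma~\ref{lem:SV_twistann_map} gives $\phi_{SV}(b_n) = I_1 \otimes w$, $\phi_{SV}(b_i) = 0$ for $i < n$, $\phi_{SV}(a) = 0$. Then I would apply Definition~\ref{def:boxmap} to compute $\id_N \boxtimes \phi_{SV}$ on each generator $a, b_1, \dots, b_n$ of $K_m$. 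For the generator $b_n$: tracing through the formula $\id_N \boxtimes \phi_{SV}(x \otimes y) = \sum_k (m_{k+1} \otimes \id)(x \otimes (\phi_{SV})_k(y))$, the term $\phi_{SV}(b_n) = I_1 \otimes w$ feeds into $m_2(y, I_1) = y$ (unital), producing $y \otimes w$, i.e. $x \in K_\infty$, giving $b_n \mapsto I_1 \otimes x$. For $b_i$ with $i < n$ and for $a$, the relevant terms $(\phi_{SV})_k$ either vanish directly (since $\phi_{SV}(b_i) = 0$, $\phi_{SV}(a) = 0$) or the higher $(\phi_{SV})_k$ require composites involving $\delta'$ of $M_m$ together with $\phi_{SV}$ in a way that still terminates at generators killed by $\phi_{SV}$; since $M_\infty$ has no differential and the only surviving image element is $w$ reached from $b_n$, all of these vanish, giving $b_i \mapsto 0$ and $a \mapsto 0$.

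The main obstacle I anticipate is bookkeeping the higher box-tensor terms $(\phi_{SV})_k$ correctly: unlike the simpler case where $\phi_{SV}$ on the annulus was a short map, here one must verify that no long sequence of $\rho_{23}$-arrows in $M_m$ (the differential $\delta(b_i) = \rho_{23}' \otimes b_{i-1}$, $\delta(b_1) = \rho_2' \otimes a$) can conspire with the bi-module operations and $\phi_{SV}$ to produce an unexpected nonzero contribution from $b_i$ ($i<n$) or $a$. This is handled by the idempotent constraint — $w$ lives in $I_1$, so any contribution must factor through $\phi_{SV}(b_n)$, and the differentials in $M_m$ only move \emph{towards} $a$, never towards $b_n$ — combined with the absence of a differential on $M_\infty$, so no "backwards" propagation can occur. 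As in Lemma~\ref{lem:twist_bptor}, once these observations are in place the calculation is routine, and I would leave the explicit verification to the reader while indicating this idempotent-and-directionality argument as the reason the formula holds.
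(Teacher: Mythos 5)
Your proposal is correct and follows exactly the same route as the paper: decompose $\T_m = \T \cup \A_m$ and compute $\id_N \boxtimes \phi_{SV}$ using Definition~\ref{def:boxmap} together with Lemma~\ref{lem:SV_twistann_map}. The paper in fact states only the key diagram and leaves the details "as an elementary exercise to the reader," and your bookkeeping of the $(\phi_{SV})_k$ terms --- using the unidirectionality of the $M_m$ differential and the vanishing of the $M_\infty$ differential to rule out contributions from $b_i$ ($i<n$) and $a$ --- is precisely that exercise carried out correctly.
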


\begin{proof}\label{pf:SV_twisttor_map}
	This follows from a computation similar to that given in the proof of Lemma~\ref{lem:SV_twistann_map}.  In this case, the computation centers around the following key diagram.
\begin{center}
\begin{tikzpicture}[->,>=stealth',shorten >=1pt,auto,node distance=2.8cm, semithick]
	\node  (A) at (0,0) {$K_m = N \boxtimes M_m$};
	\node  (B) at (5.5,0) {$N \boxtimes M_\infty = K_\infty$};

	\path (A) edge node {\small $\id_N \boxtimes \phi_{SV}$} (B);
\end{tikzpicture}
\end{center}

We leave the remaining details as an elementary exercise to the reader.
\end{proof}

We are now in position to prove Theorem~\ref{thm:SV_map}.

\begin{proof}[Proof of Theorem~\ref{thm:SV_map}]\label{pf:SV_map}
	From Lemma~\ref{lem:SV_twisttor_map}, we see that the HKM gluing map induced on the limit module $\underrightarrow{K}$ by attaching a SV bypass is given by: 
\begin{align*}
	\Phi_{SV}: \underrightarrow{K} &\to K_\infty\\
	\delta_0 &\mapsto I_1 \otimes x\\
	\delta_i &\mapsto 0, \;\;\; i \geq 1.
\end{align*}

As discussed earlier in this section, under the identification between $\SFHL(-Y,K)$ and $\HFKM(-Y,K)$ given by Theorem~\ref{thm:lim_to_minus}, this is precisely the analogue at the level of Type-$D$ modules of the natural map $\HFKM(-Y,K) \to \HFKH(-Y,K)$ induced by setting the formal variable $U$ equal to zero at the chain level.
\end{proof}

% section the_stipsicz_v'ertesi_map_and_sutured_limit_homology (end)
%%%%%%%%%%%%%%%%%%%%%%%%%%%%%%%%%%%%%%%%%%%%%%%%%%%%%%%

%%%%%%%%%%%%%%%%%%%%%%%%%%%%%%%%%%%%%%%%%%%%%%%%%%%%%%%
\section{Two handle attachments and the proof of Theorem~\ref{thm:2handle}} % (fold)
\label{sec:proof_lim_hfh}
%%%%%%%%%%%%%%%%%%%%%%%%%%%%%%%%%%%%%%%%%%%%%%%%%%%%%%%

We now prove Theorem~\ref{thm:2handle}.  Recall that this theorem states that the HKM gluing map
\[
	\Phi_{2h}: \SFHL(-Y,K) \to \HFH(-Y),
\]
which is induced by meridional 2-handle attachment is equivalent to the map
\[
	\HFKM(-Y,K) \to \HFH(-Y),
\]
which is given by setting the formal variable $U$ equal to the identity at the chain level.  

Our proof of Theorem~\ref{thm:2handle} is substantially similar to that of Theorem~\ref{thm:SV_map} given in Section~\ref{sec:SV_limit}.  As before, we begin by decomposing the sutured 3-manifolds $(-Y(K),-\Gamma_n)$ as
\[
	(-Y(K),-\Gamma_n) = (-Y(K),\Gamma',\SF_T) \cup \T \cup \C_n \cup \A_0,
\]
where $\T$, $\C_n$ and $\A_0$ are discussed in Sections~\ref{sub:torus_bmodules}, \ref{sub:twisted_annulus} and~\ref{sub:simp_ann}, respectively.
Lemmas~\ref{lem:2han_ann0}, \ref{lem:2han_ann} and~\ref{lem:2han_tor} compute the HKM gluing maps induced by performing meridional contact 2-handle attachments on the spaces $\A_0$, $\A_n = \C_n \cup \A_0$ and $\T_n = \T \cup \A_n$ respectively. From Lemma~\ref{lem:2han_tor}, we are then able to compute the gluing map induced on the limit module $\underrightarrow{K}$, which we show agrees with the map given by setting the formal variable $U$ equal to the identity under the identification between $\underrightarrow{K}$ and the module $K$ which gives rise to $\HFKM(-Y,K)$.

The result of attaching a meridional contact 2-handle to the space $(Y(K),\Gamma_K)$ and rounding edges is depicted in Figure~\ref{fig:2handle2}.  The new space has a single convex boundary component which is a 2-sphere containing a dividing curve.  This space is equal to $Y(1)$ as a sutured manifold (see \cite{Ju}).  

%%%%%%%%%%%%%%%%%%%%%%%%%%%%%%%%%%%%%%%%%%%%%%%%%%%%%%%
\begin{figure}[htbp]
	\centering
		\includegraphics[scale=1]{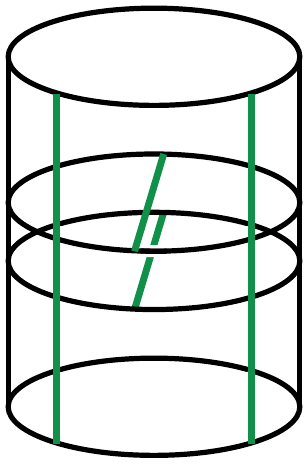}
	\hspace{20px}
		\includegraphics[scale=1]{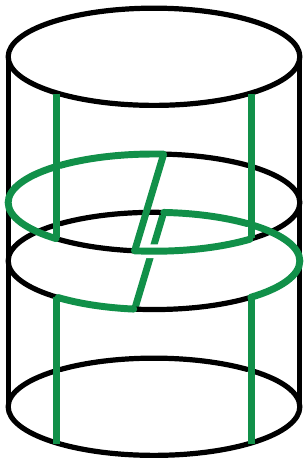}
	\caption{The contact $2$-handle attachment. The manifold $Y(K)$ is outside the torus in the figure. The 2-handle attached along a meridian is the $D^2\times [0,1]$ shown inside the torus. On the right hand side the dividing curves are shown after corners have been rounded.}
	\label{fig:2handle_attach}\label{fig:2handle1}\label{fig:2handle2}
\end{figure}
%%%%%%%%%%%%%%%%%%%%%%%%%%%%%%%%%%%%%%%%%%%%%%%%%%%%%%%

Let $\TF$ be $\T_0$ with a 2-handle attached to along a meridian (that is along the grey annulus in Figure~\ref{fig:torfill}). This is a bordered sutured solid torus and is shown on the left hand side of Figure~\ref{fig:torus_fill}. We obtain the space $Y(1)$ form the bordered sutured $(Y(K),\Gamma',\SF_T)$ by attaching $\TF$.  The Heegaard diagram for $\TF$ is shown in Figure~\ref{fig:torus_fill2} and it is equivalent to that depicted in Figure~\ref{fig:minusdiag}, but with second basepoint $w$ removed and the first basepoint $z$ incorporated into the boundary.

%%%%%%%%%%%%%%%%%%%%%%%%%%%%%%%%%%%%%%%%%%%%%%%%%%%%%%%
\begin{figure}[htbp]
	\centering
		\includegraphics[scale=1]{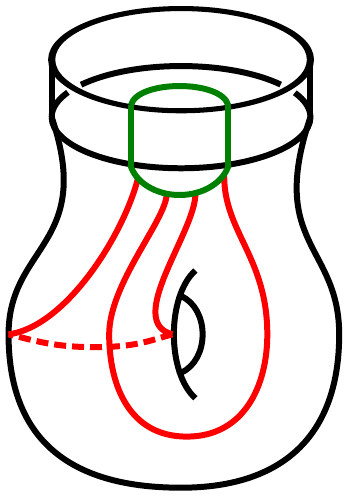}
	\hspace{25px}
	\begin{picture}(113,154)
		\put(0,0){\includegraphics[scale=1]{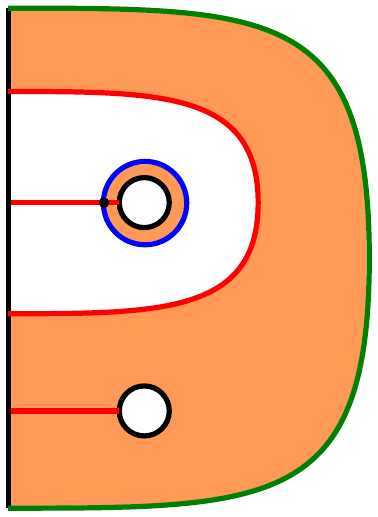}}
		\put(-7,25){\color{red}$2$}
		\put(-7,55){\color{red}$1$}
		\put(-7,87){\color{red}$2$}
		\put(-7,119){\color{red}$1$}
		\put(-7,148){$\rho$}
		\put(23,94){$x$}
	\end{picture}
	\caption{The bordered sutured manifold $\TF$, left, and its Heegaard diagram, right. (As usual the black circles are identified.)}
	\label{fig:torus_fill}\label{fig:torus_fill1}\label{fig:torus_fill2}
\end{figure}
%%%%%%%%%%%%%%%%%%%%%%%%%%%%%%%%%%%%%%%%%%%%%%%%%%%%%%%

To the diagram in Figure~\ref{fig:torus_fill}, we associate the type $D$ module $K_{\mathrm{fill}} := {}^{\A(\W_T)}\BSD(\T_{\mathrm{fill}})$, defined over the strand algebra $\A(\W_T)$.  The module $K_{\mathrm{fill}}$ is generated by the single element $x$, whose idempotent compatibility is given by
\[
	I_1 \cdot x = x.
\]
The corresponding boundary map is given by
\begin{equation*}
	\delta(x) = \rho_{23}' \otimes x.
\end{equation*}

To prove Theorem~\ref{thm:2handle}, we must understand the bordered sutured analogues of both the contact $2$-handle, and the corresponding gluing map induced on (bordered) sutured Floer homology.  The left hand side of Figure~\ref{fig:2handle4} depicts the bordered analogue $\D_{2h} = (D^2,\Gamma_{2h},-\W_A)$ of a contact 2-handle.  In the bordered world, the act of ``attaching'' a 2-handle is, locally, given by exchanging the annular bordered sutured manifold $\A_0$ for $\D_{2h}$.

%%%%%%%%%%%%%%%%%%%%%%%%%%%%%%%%%%%%%%%%%%%%%%%%%%%%%%%
\begin{figure}[htbp]
	\centering
		\includegraphics[scale=1]{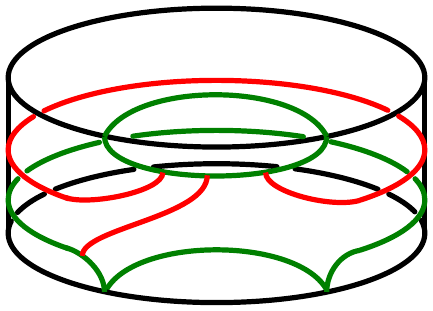}
	\hspace{30px}
	\begin{picture}(88,110)
		\put(15,0){\includegraphics[scale=1]{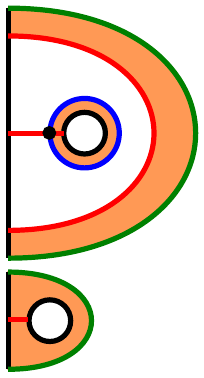}}
		\put(9,12){\color{red}$2$}
		\put(9,37){\color{red}$1$}
		\put(9,65){\color{red}$2$}
		\put(9,93){\color{red}$1$}
		\put(22,74){$q$}
		\put(9,108){$\rho$}
	\end{picture}
	\caption{A 2-handle written as a bordered sutured manifold, left, and its Heegaard diagram, right. (As usual the black circles are identified.)}
	\label{fig:2handle}\label{fig:2handle4}\label{fig:2handle5}
\end{figure}
%%%%%%%%%%%%%%%%%%%%%%%%%%%%%%%%%%%%%%%%%%%%%%%%%%%%%%%

To the diagram in Figure~\ref{fig:2handle5}, we associate the type $D$ module $K_{2h}:= {}^{\A(\W_A)} \BSD(\D_{2h})$. The module $K_{2h}$ is generated by the single intersection $q$, with idempotent compatibility given by
\[
	I_1 \cdot q = q.
\]
The corresponding boundary map is given by the equation
\begin{equation*}
	\delta(q) = \rho_{12}' \otimes q
\end{equation*}

As in the case of bypass attachment above, we can apply the third author's equivalence of gluing maps result from \cite{Za3} to compute the map induced by 2-handle attachment on bordered sutured Floer homology.  In this instance, the bordered analogue of attaching such a handle is locally given by exchanging the space $\A_0$ for $\D_{2h}$.  The third author's result, coupled with known non-vanishing results for  invariants of contact structures,  implies that the HKM gluing map must be given by \emph{some} nonzero map $\phi_{2h}$, connecting the Type-$D$ modules associated to $\A_0$ and $\D_{2h}$ respectively.  As there is a unique such non-trivial map, we have established the following lemma.

\begin{lemma}\label{lem:2han_ann0}
	The map on bordered sutured Floer homology induced by meridianal  
	 2-handle attachment to the thickened annulus $\A_0$ is given by the following equation:
\begin{align*}
	\phi_{2h}: M_0 &\to K_{2h} \\
	\nonumber a &\mapsto \rho_2' \otimes q
\end{align*}\qed
\end{lemma}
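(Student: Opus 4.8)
The plan is to mirror the argument used for the Stipsicz--V\'ertesi attachment lemma (Lemma~\ref{lem:SV_ann_map}): reduce the computation to an idempotent-and-uniqueness check, and then invoke a non-vanishing result to pin down which of the (very few) admissible Type-$D$ morphisms actually occurs. First I would recall the modules involved. From Section~\ref{sub:simp_ann} the Type-$D$ module $M_0 = {}^{\A(\W_A)}\BSD(\A_0)$ is free on a single generator $a$ with $I_2 \cdot a = a$ and trivial differential. From the diagram on the right of Figure~\ref{fig:2handle5} one reads off that $K_{2h} = {}^{\A(\W_A)}\BSD(\D_{2h})$ is generated by the single intersection point $q$ with $I_1 \cdot q = q$ and boundary $\delta(q) = \rho_{12}' \otimes q$. (I would include the short verification that the Heegaard diagram in Figure~\ref{fig:2handle5} does present $\D_{2h}$, exactly as in the analogous checks earlier, but this is routine.)

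Next I would classify the Type-$D$ structure morphisms $M_0 \to K_{2h}$ over $\A(\W_A)$. Any such map sends $a$ to some element of $\A(\W_A) \otimes K_{2h}$, and compatibility with idempotents forces this element to lie in $\A(\W_A) \cdot I_2 \otimes (I_1 \cdot q)$, i.e.\ to be an $\F$-linear combination of algebra elements $b$ with $I_1 \, b \, I_2 = b$. In $\A(\W_A,1)$ (the only summand carrying both generators) the only such algebra element is $\rho_1'$; in $\A(\W_A,2)$ and the trivial summand there are none contributing to a nontrivial map between these generators. So the only candidate nontrivial morphism is $a \mapsto \rho_1' \otimes q$, and one checks it is genuinely a $D$-structure morphism using $\partial \rho_1' = 0$, $\delta_{M_0} = 0$ and $\delta_{K_{2h}}(q) = \rho_{12}' \otimes q = \rho_1' \rho_2' \otimes q$ together with the fact that $\rho_2' \rho_1' = 0$ (wrong composition order), so the mapping-cone relation closes up. Wait---I should double-check the idempotent bookkeeping against the stated answer $a \mapsto \rho_2' \otimes q$: since in the bordered sutured setting the gluing is along $-\W_A$ and the relevant strand in the glued picture is the one labelled $\rho_2$ on $\W_A$, the algebra element is $\rho_2'$ after the orientation-reversal identification; I would track this carefully through the conventions of Section~\ref{sub:comp_mod} so the final formula reads exactly $a \mapsto \rho_2' \otimes q$, consistent with the way the SV map came out as $a \mapsto \rho_2' \otimes w$ in Lemma~\ref{lem:SV_ann_map}.

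Finally I would show the map is nonzero, hence equals this unique candidate. Here I invoke Theorem~\ref{thm:hkm_za_gluing} (the bordered sutured extension of the HKM gluing map) together with the fact, recalled in Section~\ref{sub:HKM} and used already in the proof of Lemma~\ref{lem:SV_ann_map}, that there exist contact manifolds with convex boundary related by a meridional $2$-handle attachment whose sutured contact invariants are both nonzero---e.g.\ filling in a standard neighborhood of the Legendrian unknot of maximal Thurston--Bennequin number inside $S^3$ produces the tight $S^3$, which has $\EH \neq 0$. Since the HKM gluing map is natural and sends nonzero contact class to nonzero contact class, $\phi_{2h}$ cannot be the zero map, so it must be the map displayed in the statement. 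The only real subtlety in the whole argument is the conventions/orientation-reversal bookkeeping that converts ``the unique nontrivial idempotent-compatible morphism'' into the precise formula $a \mapsto \rho_2' \otimes q$; everything else is a short uniqueness argument plus a citation. \qed
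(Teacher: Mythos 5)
Your overall strategy exactly mirrors the paper's (one-paragraph) argument: invoke the third author's equivalence between the HKM and bordered sutured gluing maps, observe that the gluing map must be nonzero by a contact-geometric non-vanishing example, and then pin down the map as the unique nontrivial idempotent-compatible Type-$D$ morphism. That part is fine and is genuinely the same proof.

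However, your idempotent bookkeeping paragraph contains a real error, and the way you ``fix'' it is spurious. For a Type-$D$ morphism $\psi\colon M_0 \to \A(\W_A)\otimes K_{2h}$ sending $a$ (with $I_2\cdot a=a$) to $b\otimes q$ (with $I_1\cdot q=q$), $\SI$-linearity on the left forces $I_2 b = b$, and the tensor product over $\SI$ forces $b\cdot I_1 = b$; so the correct constraint is $I_2\, b\, I_1 = b$, not the $I_1\, b\, I_2 = b$ that you wrote. With the constraint written correctly, the unique nontrivial candidate in $\A(\W_A,1)$ is $\rho_2'$ directly (this is exactly the same idempotent pattern as in Lemmas~\ref{lem:basic_bpann} and~\ref{lem:SV_ann_map}, both of which give $a\mapsto\rho_2'\otimes(\cdot)$), and no ``orientation-reversal identification'' is needed or relevant -- that reversal is already absorbed into the definition $\BSD$ over $\A(-\SZ)=\A(\W_A)$ and cannot turn $\rho_1'$ into $\rho_2'$. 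Your verification that the candidate is an actual $D$-structure morphism should likewise use $\partial\rho_2'=0$ and the vanishing product $\rho_2'\cdot\rho_{12}'=0$ (idempotent mismatch through $I_1$/$I_2$), rather than the quantities you cite. Once the idempotent condition is stated in the right order, the rest of your argument is correct and the post-hoc patch should simply be deleted.
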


Recall that the map $\Phi_{2h}$ is induced by the collection of gluing maps $\{\phi_{2h}\}$ which come from attaching a contact $2$-handle to a meridional curve on the convex boundary of $(Y(K),\xi_{K_n})$, as depicted in Figure~\ref{fig:2handle}. The collection of constituent gluing maps defining $\Phi_{2h}$ are computed in precisely the same manor as those defining $\Phi_{SV}$ in Section~\ref{sec:SV_limit}.  

Observe that, topologically, attaching a contact 2-handle to any of the spaces $\A_n$ results in a copy of $\D_{2h}$.  Similarly, as described above, attaching a meridional contact 2-handle to any of the spaces $\T_n$ results in the space $\TF$.  Lemmas~\ref{lem:2han_ann} and~\ref{lem:2han_tor} below compute the corresponding gluing maps and are the analogues of Lemmas~\ref{lem:SV_twistann_map} and~\ref{lem:SV_twisttor_map} from Section~\ref{sec:SV_limit} respectively.  Because the computations are so similar, we leave them as exercises to the interested reader.

\begin{lemma}\label{lem:2han_ann}
	The map on bordered sutured Floer homology induced by attaching a contact 2-handle to the thickened annulus $\A_n$ of slope $n$ is given by the following equation:	
\begin{align*}
	\phi_{2h}: M_n &\to K_{2h}\\
	\nonumber b_k &\mapsto I_1 \otimes q\\
	\nonumber a &\mapsto \rho_2' \otimes q
\end{align*}
\qed
\end{lemma}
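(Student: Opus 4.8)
The plan is to follow the pattern of the proof of Lemma~\ref{lem:SV_twistann_map}: decompose the twisted annulus as $\A_n = \C_n \cup \A_0$, so that by the bordered sutured pairing theorem we have $M_n = \BSD(\A_n) \simeq \BSDA(\C_n) \boxtimes \BSD(\A_0) = C_n \boxtimes M_0$. Attaching a meridional contact $2$-handle to $\A_n$ happens entirely in the innermost summand $\A_0$, converting it into the bordered $2$-handle $\D_{2h}$; moreover gluing $\C_n$ onto $\D_{2h}$ yields $\D_{2h}$ again, so $C_n \boxtimes K_{2h} \simeq K_{2h}$. By naturality of the HKM gluing map under gluing along parametrized sutured surfaces (Theorem~\ref{thm:hkm_za_gluing}), the desired map $\phi_{2h}\colon M_n \to K_{2h}$ is therefore identified with $\id_{C_n} \boxtimes \phi_{2h}$, where the $\phi_{2h}$ on the right is the base case $M_0 \to K_{2h}$ already computed in Lemma~\ref{lem:2han_ann0}.

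I would then record the two canonical identifications. Exactly as in the proof of Lemma~\ref{lem:SV_twistann_map}, and using \eqref{eqn:AAn_boundary}, idempotent compatibility gives $C_n \boxtimes M_0 \cong M_n$ via $a \otimes a = a$ and $b_i \otimes a = b_i$. On the target side, the generator $q$ of $K_{2h}$ lies in the idempotent $I_1$, so among the generators $\{a,b_1,\dots,b_n,c\}$ of $\BSDA(\C_n,1)$ only $c$ is idempotent-compatible with $q$; hence $C_n \boxtimes K_{2h} \cong K_{2h}$ via $c \otimes q = q$.

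With these identifications in hand, the lemma follows from a direct application of Definition~\ref{def:boxmap}, feeding in the $A_\infty$ operations of $\BSDA(\C_n,1)$ listed in Section~\ref{sub:twisted_annulus} together with the structure map $\delta(q) = \rho_{12}' \otimes q$ of $K_{2h}$. The \emph{one new feature} compared with Lemma~\ref{lem:SV_twistann_map} is precisely that $K_{2h}$ --- unlike the module $M_\infty$ appearing there --- carries a nontrivial differential; iterating it against $\phi_{2h}$ produces the chord strings $\rho_2' \otimes (\rho_{12}')^{\otimes j} \otimes q$, and these are absorbed by the long operations $m_{j+1}(b_i,\pi_2,\pi_{12},\dots,\pi_{12})$ and $m_{n+2}(a,\pi_2,\pi_{12},\dots,\pi_{12})$ of $C_n$. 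Matching the length $j$ of the $\rho_{12}'$-string to the index of the generator shows that for each $b_k$ exactly one term survives --- the one carrying $b_k$ up to the generator $c$ --- producing $b_k \mapsto I_1 \otimes q$, and similarly $a \mapsto \rho_2' \otimes q$. The only real work is the bookkeeping: checking that, for each input generator, precisely one term of the a priori infinite sum in Definition~\ref{def:boxmap} is nonzero and that the lower operations of $C_n$ contribute nothing. This is entirely parallel to the model computations in Appendix~A of \cite{LOT1} and to the proof of Lemma~\ref{lem:SV_twistann_map}, so I would present it briskly; this matching-of-lengths step is the main (and essentially only) obstacle.
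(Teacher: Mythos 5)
Your proposal is correct and is exactly the argument the paper has in mind: it mirrors the proof of Lemma~\ref{lem:SV_twistann_map} via the factorization $\A_n = \C_n \cup \A_0$ and the base case Lemma~\ref{lem:2han_ann0}, with the canonical identifications $C_n \boxtimes M_0 \cong M_n$ and $C_n \boxtimes K_{2h} \cong K_{2h}$ (via $c \otimes q = q$). You also correctly isolate the one genuine difference from the SV case --- the nontrivial $\delta$ on $K_{2h}$ feeds strings $\rho_2'\otimes(\rho_{12}')^{\otimes j}\otimes q$ into the long $m$-operations $m_{j+2}(b_{n-j},\pi_2,\pi_{12},\dots,\pi_{12})=I_1\otimes c$ and $m_{n+2}(a,\pi_2,\pi_{12},\dots,\pi_{12})=\rho_2\otimes c$ of $C_n$, each surviving exactly once --- which is precisely the bookkeeping the paper leaves to the reader.
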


\begin{lemma}\label{lem:2han_tor}
	The map on bordered sutured Floer homology induced by attaching a contact 2-handle to the thickened punctured torus $\T_n$ of slope $n$ is given by the following equation:	
\begin{align*}
	\phi_{2h}: K_n &\to \KF\\
	\nonumber b_k &\mapsto I_1 \otimes x\\
	\nonumber a &\mapsto \rho_3' \otimes x
\end{align*}
\qed
\end{lemma}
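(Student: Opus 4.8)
The proof will follow the same pattern as Lemmas~\ref{lem:SV_twistann_map} and~\ref{lem:SV_twisttor_map}, with the local input now supplied by Lemma~\ref{lem:2han_ann} (and Lemma~\ref{lem:2han_ann0} in the base case). Recall the decomposition $\T_n = \T \cup \A_n$ of Section~\ref{sub:comp_mod} (where $\A_n = \C_n \cup \A_0$), under which attaching a meridional contact $2$-handle to $\T_n$ amounts to attaching one to $\A_n$ and then gluing on the bi-module $N = {}^{\A(\W_T)}\BSDA(\T)_{\A(\W_A)}$. By Theorem~\ref{thm:hkm_za_gluing} and the bi-module generalization of the pairing theorem (Theorem~\ref{thm:zarev_pairing}), the gluing map $\phi_{2h}\colon K_n \to \KF$ is computed by the diagram
\begin{center}
\begin{tikzpicture}[->,>=stealth',shorten >=1pt,auto,node distance=2.8cm, semithick]
	\node (A) at (0,0) {$K_n = N \boxtimes M_n$};
	\node (B) at (5.5,0) {$N \boxtimes K_{2h} = \KF,$};
	\path (A) edge node {\small $\id_N \boxtimes \phi_{2h}$} (B);
\end{tikzpicture}
\end{center}
where on the right $\phi_{2h}\colon M_n \to K_{2h}$ is the map from Lemma~\ref{lem:2han_ann}.

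First I would record the two identifications. On the source, $N \boxtimes M_n \cong K_n$ is the canonical identification used in the proof of Lemma~\ref{lem:twist_bptor}, sending $x \otimes a \mapsto a$ and $y \otimes b_i \mapsto b_i$. On the target, $N \boxtimes K_{2h}$ has the single generator $y \otimes q$, and a short box-tensor computation from $\delta(q) = \rho_{12}' \otimes q$ together with the bi-module operation $m_2(y,\pi_{12}') = \rho_{23}' \otimes y$ of $N$ (and the vanishing of the other relevant operations of $N$) gives $\partial^\boxtimes(y \otimes q) = \rho_{23}' \otimes (y \otimes q)$; this is exactly the type-$D$ module $\KF$ with $y \otimes q \leftrightarrow x$.

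Next I would apply Definition~\ref{def:boxmap} to $\id_N \boxtimes \phi_{2h}$ on each generator of $N \boxtimes M_n$, using $\phi_{2h}(b_k) = I_1 \otimes q$ and $\phi_{2h}(a) = \rho_2' \otimes q$ from Lemma~\ref{lem:2han_ann} together with the operations of $N$ recorded in Section~\ref{sub:torus_bmodules}. The unital operation $m_2(y,I_1)=y$ produces the term $I_1 \otimes (y\otimes q)$ from $y \otimes b_k$, while $m_2(x,\pi_2') = \rho_3' \otimes y$ produces $\rho_3' \otimes (y \otimes q)$ from $x \otimes a$; one then checks that all higher-order $A_\infty$ corrections vanish, which follows from the fact that $N$ is supported in $m_1$ and $m_2$ and the remaining Reeb chords do not compose into the required shapes. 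Translating back through the two identifications yields $b_k \mapsto I_1 \otimes x$ and $a \mapsto \rho_3' \otimes x$, as claimed.

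The main obstacle is, as in the earlier lemmas of this type, essentially bookkeeping: one must carefully enumerate the iterated maps $(\phi_{2h})_k$ entering Definition~\ref{def:boxmap}, confirm compatibility of the identifications with the differentials $\delta$ on $M_n$ and on $\KF$, and verify that every higher term either vanishes or cancels in pairs. Since all relevant domains are bigons and rectangles (cf.\ Section~\ref{sub:nice_heegaard_diagrams}) and $N$ has no operations beyond $m_1$ and $m_2$, this is entirely parallel to the computations behind Lemmas~\ref{lem:twist_bptor}, \ref{lem:SV_twistann_map}, and~\ref{lem:SV_twisttor_map}, so in a full write-up I would only spell out the one or two genuinely new terms.
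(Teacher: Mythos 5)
Your proof is correct and carries out exactly the computation the paper defers to the reader (the paper explicitly states that Lemma~\ref{lem:2han_tor} is the analogue of Lemma~\ref{lem:SV_twisttor_map} and leaves the verification as an exercise). You use the same decomposition $\T_n = \T \cup \A_n$, the same key diagram $K_n = N \boxtimes M_n \to N \boxtimes K_{2h} = \KF$ with $\id_N \boxtimes \phi_{2h}$, the same two canonical identifications, and the same bookkeeping via Definition~\ref{def:boxmap}. One small remark: when you say the higher $A_\infty$ corrections vanish because ``$N$ is supported in $m_1$ and $m_2$'', the cleaner statement is that $N$ has no $m_k$ for $k \geq 3$ and the lone $k{=}1$ term in the box tensor formula is the only one that can be nonzero; it is also worth noting that for the $k{=}1$ term on $y \otimes b_k$ the idempotent compatibilities (rather than the absence of higher operations) are what guarantee $m_2(y, I_1) = y$ with no interference from $\pi_{12}'$-type inputs. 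These are refinements of the sentence, not gaps in the argument.
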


\begin{proof}[Proof of Theorem~\ref{thm:SV_map}]\label{pf:lim_hat_closed}
	From Lemma~\ref{lem:2han_tor}, it follows that the map induced by contact 2-handle attachment on the limit type $D$ module $\underrightarrow{K}$ is given by
\begin{align*}
	\Phi_{2h}: \underrightarrow{K} &\to \KF\\
	\nonumber \delta_i &\mapsto I_1 \otimes x
\end{align*}

It follows immediately that, under the identification 
\begin{align}
	\nonumber \underrightarrow{K} &\to K^-\\
	\nonumber \delta_i &\mapsto U^i \cdot x
\end{align}
between $\underrightarrow{K}$ and $K$, the Type-$D$ module computing $\HFKM$, the map $\Phi_{2h}$ agrees with that induced by setting the formal variable $U$ equal to the identity at the chain level.
\end{proof}

% section proof_lim_hfh (end)
%%%%%%%%%%%%%%%%%%%%%%%%%%%%%%%%%%%%%%%%%%%%%%%%%%%%%%%

%%%%%%%%%%%%%%%%%%%%%%%%%%%%%%%%%%%%%%%%%%%%%%%%%%%%%%%
\section{Inverse Limit Invariants and Knot Floer Homology} % (fold)
\label{sec:plus}
%%%%%%%%%%%%%%%%%%%%%%%%%%%%%%%%%%%%%%%%%%%%%%%%%%%%%%%

In this section, we briefly discuss methods for establishing the results detailed in Section~\ref{sub:top_inverselimits} concerning the sutured inverse limit invariants.  Generally speaking, proofs of these theorems are simple translates of the corresponding results in the direct limit setting, and we therefore leave them as straightforward exercise for the interested reader.

%%%%%%%%%%%%%%%%%%%%%%%%%%%%%%%%%%%%%%%%%%%%%%%%%%%%%%%
\subsection{Identifying Invariants} % (fold)
\label{sub:identifying_invts}
%%%%%%%%%%%%%%%%%%%%%%%%%%%%%%%%%%%%%%%%%%%%%%%%%%%%%%%

Here we present an outline of the proof of Theorem~\ref{thm:lim_to_plus}.  Recall that this theorem states that there exists an isomorphism of $\F[U]$-modules.
\[
	I_+: \SFHIL(-Y,K) \to \HFKP(-Y,K).
\]

To obtain this result, we adopt the same general strategy used to prove Theorem~\ref{thm:lim_to_minus} in Section~\ref{sec:limit_is_knot}.

Let $\{(Y(K),\Gamma^+_i)\}$ be the collection of sutured manifolds which are obtained subsets of the longitudinal completion $(Y(K),\Gamma_\lambda) = (Y(K),\Gamma_0)$, as described in Section~\ref{sub:top_inverselimits}.  Recall that for $j>i$, we have an inclusion $(Y(K),\Gamma^+_j) \subset (Y(K),\Gamma^+_i)$, and that each of the differences $\overline{(Y(K),\Gamma^+_i) \backslash (Y(K),\Gamma^+_{i+1})}$ can naturally be given the structure of a basic slice.

Consistently choosing the ``negative'' sign for each of the above basic slices gives rise to the cofinal sequence
\begin{center}
\begin{tikzpicture}	[<-,>=stealth',auto,thick]
	\node (a) at (0,0){$\SFH(-Y(K),-\Gamma^+_0)$} ;
	\node (b) at (4.25,0) {$\SFH(-Y(K),-\Gamma^+_1)$} ;
	\node (c) at (8.75,0) {$\SFH(-Y(K),-\Gamma^+_2)$} ;
	\node (d) at (11.75,0) {$\dots$,} ;
	
	\draw (a) edge node[above] {\small $\phi'_-$} (b);
	\draw (b) edge node[above] {\small $\phi'_-$} (c);
	\draw (c) edge node[above] {\small $\phi'_-$} (d);
\end{tikzpicture}
\end{center}
the inverse limit of which is define to the the sutured inverse limit homology $\SFHIL(-Y,K)$.

As was the case for sutured limit homology, one obtains a natural $U$-action on $\SFHIL(-Y,K)$ via positive basic slice attachment.

We begin by decomposing the spaces $(-Y(K),-\Gamma^+_n)$ as in Section~\ref{sub:the_geometric_setup}, to obtain
\[
	(-Y(K),-\Gamma^+_n) = (-Y(K),\Gamma',\SF_T) \cup \T^+_n,
\]
where $(-Y(K),\Gamma',\SF_T)$ is the knot complement and $\T^+_i$ is the bordered sutured manifold obtained from $\T_0$ (see Section~\ref{sub:the_geometric_setup}) by applying $i$ {\it positive} Dehn twists along the core curve of a meridional annulus $A$.

Using the same techniques employed in Section~\ref{sub:comp_mod}, we compute Type-$D$ modules $K_n^+ := {}^{\A(\W_T)} \BSD(\T^+_n)$.  The $K^+_n$ are generated by $\{a,b_1,\dots,b_n\}$, with idempotent comatibilities
\[
	I_2 \cdot a = a, \;\;\; I_1 \cdot b_i = b_i,
\]
and differential described graphically as
%%%%%%%%%%%%%%% pos twist mod %%%%%%%%%%%%%%
\begin{center}
\begin{tikzpicture}	[->,>=stealth',auto,thick]
	\node (a) at (0,0){$b_{n}$};
	\node (b) at (2,0) {$b_{n-1}$};
	\node (c) at (4,0) {$\dots$};
	\node (d) at (6,0) {$b_1$};
	\node (e) at (8,0) {$a.$};
	
	\draw (b) edge node[above] {\small $\rho_{23}$} (a);
	\draw (c) edge node[above] {\small $\rho_{23}$} (b);
	\draw (d) edge node[above] {\small $\rho_{23}$} (c);
	\draw (e) edge node[above] {\small $\rho_{3}$} (d);
\end{tikzpicture}
\end{center}
%%%%%%%%%%%%%%% end pos twist mod %%%%%%%%%%%%%%

Analogues of Lemmas \ref{lem:basic_bpann}, \ref{lem:twist_bpann} and \ref{lem:twist_bptor}, adapted to this context, again give rise to the key diagram depicted in Figure~\ref{fig:inverse_limit}.

%%%%%%%%%%%%%%% begin inverse limit comp diagram %%%%%%%%%%%%%%
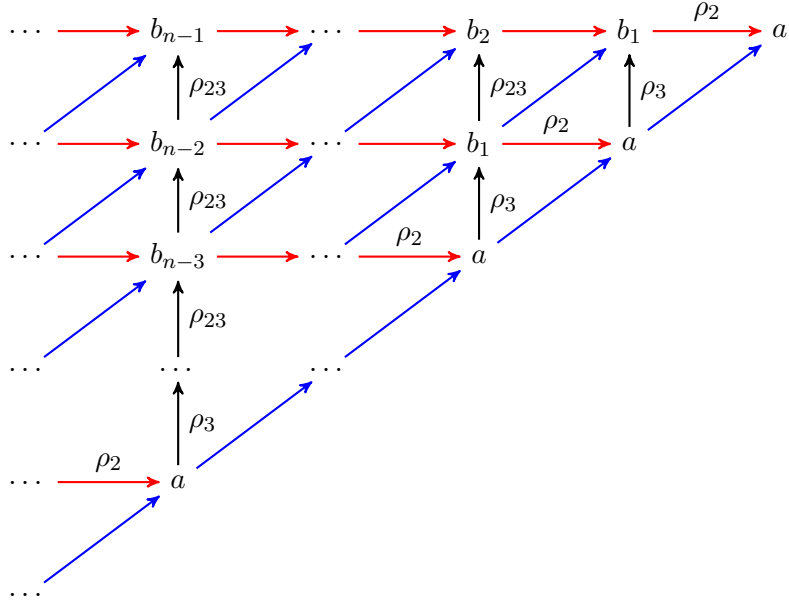
\begin{figure}[htbp]
\begin{tikzpicture}	[->,>=stealth',auto,thick]
	
	\node (a) at (10,0){$a$};
	\node (b) at (8,0) {$b_1$};
	\node (c) at (6,0) {$b_2$};
	\node (d) at (4,0) {$\dots$};
	\node (e) at (2,0) {$b_{n-1}$};
	\node (f) at (0,0) {$\dots$};
	\node (g) at (8,-1.5){$a$};
	\node (h) at (6,-1.5) {$b_1$};
	\node (i) at (4,-1.5) {$\dots$};
	\node (j) at (2,-1.5) {$b_{n-2}$};
	\node (k) at (0,-1.5) {$\dots$};
	\node (l) at (6,-3) {$a$};
	\node (m) at (4,-3) {$\dots$};
	\node (n) at (2,-3) {$b_{n-3}$};
	\node (o) at (0,-3) {$\dots$};
	\node (p) at (4,-4.5) {$\dots$};
	\node (q) at (2,-4.5) {$\dots$};
	\node (r) at (0,-4.5) {$\dots$};
	\node (s) at (2,-6) {$a$};
	\node (t) at (0,-6) {$\dots$};	
	\node (u) at (0,-7.5) {$\dots$};
		
	\draw (b) edge[red] node[black,above] {$\rho_2$} (a);
	\draw (c) edge[red] (b);
	\draw (d) edge[red] (c);
	\draw (e) edge[red] (d);
	\draw (f) edge[red] (e);
	\draw (h) edge[red] node[black,above] {$\rho_2$} (g);
	\draw (i) edge[red] (h);
	\draw (j) edge[red] (i);
	\draw (k) edge[red] (j);
	\draw (m) edge[red] node[black,above] {$\rho_2$} (l);
	\draw (n) edge[red] (m);
	\draw (o) edge[red] (n);
	\draw (t) edge[red] node[black,above] {$\rho_2$} (s);
	
	\draw (g) edge[blue] (a);
	\draw (h) edge[blue] (b);
	\draw (i) edge[blue] (c);
	\draw (j) edge[blue] (d);
	\draw (k) edge[blue] (e);
	\draw (l) edge[blue] (g);
	\draw (m) edge[blue] (h);
	\draw (n) edge[blue] (i);
	\draw (o) edge[blue] (j);
	\draw (p) edge[blue] (l);
	\draw (s) edge[blue] (p);
	\draw (r) edge[blue] (n);
	\draw (u) edge[blue] (s);

	\draw (g) edge node[right] {$\rho_3$} (b);
	\draw (h) edge node[right] {$\rho_{23}$} (c);
	\draw (j) edge node[right] {$\rho_{23}$} (e);
	\draw (l) edge node[right] {$\rho_3$} (h);
	\draw (n) edge node[right] {$\rho_{23}$} (j);
	\draw (q) edge node[right] {$\rho_{23}$} (n);
	\draw (s) edge node[right] {$\rho_3$} (q);
\end{tikzpicture}
\caption{Indirect limit diagram.}
\label{fig:inverse_limit}
\end{figure}
%%%%%%%%%%%%%%% end inverse limit comp diagram %%%%%%%%%%%%%%

Figure~\ref{fig:inverse_limit} depicts the bordered sutured analogues of the HKM gluing maps induced by positive and negative basic slice attachment.  Specifically, the eastern pointing (red) arrows depict the Type-$D$ maps induced by negative basic slice attachment, while the northeastern pointing (blue) arrows depict the Type-$D$ maps induced by positive basic slice attachment.

Taking the inverse limit over the horizontal maps which correspond to negative basic attachments, we obtain the Type-$D$ module $\underleftarrow{K} := \varprojlim_n{K^+_n}$, described graphically as
%%%%%%%%%%%%%%% begin inverse limit diagram %%%%%%%%%%%%%%
\begin{center}
\begin{tikzpicture}	[->,>=stealth',auto,thick]
	\node (a) at (0,0){$\delta_0$} ;
	\node (b) at (2,0) {$\delta_1$} ;
	\node (c) at (4,0) {$\delta_2$} ;
	\node (d) at (6,0) {$\delta_3$} ;
	\node (e) at (8,0) {$\dots$.} ;
		
	\draw (b) edge node[above] {$\rho_{23}$} (a);
	\draw (c) edge node[above] {$\rho_{23}$} (b);
	\draw (d) edge node[above] {$\rho_{23}$} (c);
	\draw (e) edge node[above] {$\rho_{23}$} (d);
\end{tikzpicture}
\end{center}
%%%%%%%%%%%%%%% end inverse limit diagram %%%%%%%%%%%%%%
We further see that the $U$-action on the Type-$D$ module $\underleftarrow{K}$ is given by left-translation, sending $\delta_i$ to $\delta_{i-1}$ for $i > 0$ and $\delta_0$ to $0$.

It immediately follows that the Type-$D$ module $\underleftarrow{K}$ is isomorphic under the below identification to the Type-$D$ module $K^+$ from Section~\ref{sub:bordered_invariants_and_knot_floer_homology} which gives rise to the plus variant of knot Floer homology.
%%%%%%%%%%%%%%% begin diagram equating limit with minus %%%%%%%%%%%%%%
\begin{center}
		\begin{tikzpicture}	[->,>=stealth',auto,thick]
			\node (aa) at (-.75,1.45) {$\underleftarrow{K}:=$};
			\node (bb) at (-.75,0.05) {$K^+=$};
			\node (a) at (0,1.5){$\delta_0$} ;
			\node (b) at (2.5,1.5) {$\delta_1$} ;
			\node (c) at (5,1.5) {$\delta_2$} ;
			\node (d) at (7.5,1.5) {$\delta_3$} ;
			\node (e) at (10,1.5) {$\dots$} ;
			\node (f) at (0,0){$x$} ;
			\node (g) at (2.5,0) {$U^{-1} \cdot x$} ;
			\node (h) at (5,0) {$U^{-2} \cdot x$} ;
			\node (i) at (7.5,0) {$U^{-3} \cdot x$} ;
			\node (j) at (10,0) {$\dots$} ;

			\draw (b) edge node[above] {$\rho_{23}$} (a);
			\draw (c) edge node[above] {$\rho_{23}$} (b);
			\draw (d) edge node[above] {$\rho_{23}$} (c);
			\draw (e) edge (d);
			\draw (g) edge node[above] {$\rho_{23}$} (f);
			\draw (h) edge node[above] {$\rho_{23}$} (g);
			\draw (i) edge node[above] {$\rho_{23}$} (h);
			\draw (j) edge (i);
			\draw (b) edge[bend right] node[above] {$U \cdot$} (a);
			\draw (c) edge[bend right] node[above] {$U \cdot$} (b);
			\draw (d) edge[bend right] node[above] {$U \cdot$} (c);
			\draw (e) edge[bend right] node[above] {$U \cdot$} (d);
			\draw (a) edge[<->] (f);
			\draw (b) edge[<->] (g);
			\draw (c) edge[<->] (h);
			\draw (d) edge[<->] (i);
		\end{tikzpicture}
\end{center}
%%%%%%%%%%%%%%% end diagram equating limit with minus %%%%%%%%%%%%%%

Finally, on the level of sutured Floer homology, we have
\begin{align*}
	\SFHIL(-Y,K) &:= \varprojlim \SFH(-Y(K),-\Gamma^+_i)\\
		&\cong \varprojlim H_*(\BSA(-Y(K),\Gamma',\SF_T) \boxtimes \BSD(\ST^+_n))\\
		&\cong H_*(\BSA(-Y(K),\Gamma',\SF_T) \boxtimes \varprojlim \BSD(\ST^+_n))\\
		&\cong H_*(\BSA(-Y(K),\Gamma',\SF_T) \boxtimes K^+)\\
		&\cong \HFKP(-Y,K),
\end{align*}
as in the proof of Theorem~\ref{thm:lim_to_minus} in Section~\ref{sub:computation_of_gluing_maps}.

This finishes our sketch of the proof of Theorem~\ref{thm:lim_to_plus}.

% subsection identifying_SFHIL (end)
%%%%%%%%%%%%%%%%%%%%%%%%%%%%%%%%%%%%%%%%%%%%%%%%%%%%%%%

%%%%%%%%%%%%%%%%%%%%%%%%%%%%%%%%%%%%%%%%%%%%%%%%%%%%%%%
\subsection{Identifying the Natural Map} % (fold)
 \label{sub:identifying_nat_map}
%%%%%%%%%%%%%%%%%%%%%%%%%%%%%%%%%%%%%%%%%%%%%%%%%%%%%%%

Here we provide a sketch of the proof of Theorem~\ref{thm:SV_map2}, which, under the isomorphism $I_+$ from Theorem~\ref{thm:lim_to_plus}, identifies the induced map
\[
	\Phi_{dSV}: \HFKH(-Y,K) \to \SFHIL(-Y,K)
\]
with the natural map on knot Floer homology
\[
	\iota_*: \HFKH(-Y,K) \to \HFKP(-Y,K)
\]
induced by the inclusion of complexes.

Recall that the map $\Phi_{dSV}$ is induced by the HKM gluing maps associated to the collection basic slice attachments $\{\widehat{A}^-_i\}$, performed along the boundary of the meridional completion $(Y(K),\Gamma_\mu)$.  

The proof of Theorem~\ref{thm:SV_map2} is similar to that of to Theorem~\ref{thm:SV_map}.  The basic idea is to decompose the relevant spaces in the usual way as unions of bordered sutured manifolds, thus localizing the associated computation to a neighborhood of the original boundary.  Specifically, we have
\[
	(-Y(K),-\Gamma_\mu) = (-Y(K),\Gamma',\SF_T) \cup \ST_\mu,
\]
and
\[
	(-Y(K),-\Gamma^+_i) = (-Y(K)\Gamma',\SF_T) \cup \ST^+_i
\]

Having decomposed the sutured manifolds as above, the goal shifts to computing the maps of Type-$D$ structures
\[
	\phi_{dSV}: K_\infty \to K^+_i
\]
induced by negative basic slice or, equivalently, negative bypass attachment.  The key lemma in proving Theorem~\ref{thm:SV_map2} is the following.

\begin{lemma}\label{lem:dSV_map}
	The map on bordered sutured Floer homology induced by the above described negative basic slice attachment $\widehat{A}^-_i$, performed along the boundary of the meridional completion, is given by the following equation:
	\begin{align*}
		\nonumber	\phi_{dSV}: K_\infty  &\to K^+_n\\
		x &\mapsto I_1 \otimes b_n\\
	\end{align*}
\end{lemma}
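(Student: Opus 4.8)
The plan is to follow the template of Lemmas~\ref{lem:SV_ann_map}, \ref{lem:SV_twistann_map} and~\ref{lem:SV_twisttor_map}, using the two ingredients that were decisive there: a non-vanishing input coming from contact geometry, via Zarev's bordered version of the HKM gluing map (Theorem~\ref{thm:hkm_za_gluing}), together with the rigidity of the relevant bordered sutured modules. Recall from Section~\ref{sub:bordered_invariants_and_knot_floer_homology} and Section~\ref{sec:plus} that $K_\infty = {}^{\A(\W_T)}\BSD(\ST_\mu)$ is generated over $\F$ by a single element $x$ lying in the idempotent $I_1$ with $\delta(x)=0$, and that $K^+_n = {}^{\A(\W_T)}\BSD(\ST^+_n)$ is generated by $\{a,b_1,\dots,b_n\}$ with $I_2\cdot a = a$, $I_1\cdot b_i = b_i$, and differential $\delta(a) = \rho'_3\otimes b_1$, $\delta(b_i) = \rho'_{23}\otimes b_{i+1}$ for $1\le i\le n-1$, and $\delta(b_n)=0$. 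The one structural fact to be exploited is that $b_n$ is the unique generator of $K^+_n$ lying in the idempotent $I_1$ on which $\delta$ vanishes.

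First I would establish non-triviality. By Theorem~\ref{thm:hkm_za_gluing} the map $\phi_{dSV}$ coincides with the contact gluing map of the negative basic slice $\widehat{A}^-_n$; as in the proof of Lemma~\ref{lem:SV_ann_map}, this map is nonzero --- for instance, applied to the maximal Thurston--Bennequin Legendrian unknot in the standard contact $3$-sphere it carries the (nonzero) contact class of the meridional completion to the (nonzero) contact class $\EH(\overline\xi_{K,n})$, the relevant contact structures being tight with nonvanishing invariant.

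Next I would pin down the map itself. Since $\delta$ is trivial on $K_\infty$, a morphism $\psi\colon K_\infty\to\A(\W_T)\otimes K^+_n$ of Type-$D$ structures satisfies
\[
	(\mu_1\otimes\id)\circ\psi + (\mu_2\otimes\id)\circ(\id\otimes\delta_{K^+_n})\circ\psi = 0,
\]
and $\psi$ is determined by $\psi(x)$, which idempotent compatibility forces into the $I_1$-part of the $(i=1)$-summand $\A(\W_T,1)\otimes K^+_n$. Enumerating the finitely many possibilities for $\psi(x)$ using the products of the torus algebra recorded in Section~\ref{sub:param_torus} (in particular $\rho'_2\rho'_3 = \rho'_{23}$ and $\rho'_{23}\rho'_{23}=0$), one checks directly that the homology of this morphism complex is one-dimensional, spanned by the class of the map $x\mapsto I_1\otimes b_n$; every other cycle, such as $x\mapsto\rho'_{23}\otimes b_i$, is a boundary. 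Combined with the previous paragraph, this forces $\phi_{dSV}\simeq(x\mapsto I_1\otimes b_n)$. Alternatively --- and this is the write-up I would probably prefer, for consistency with Lemmas~\ref{lem:SV_twistann_map} and~\ref{lem:SV_twisttor_map} --- one first computes the analogous annular map $\BSD(\A_\infty)\to\BSD(\A^+_1)$ directly from its bordered Heegaard diagram, obtaining the generator $\mapsto\rho'_2\otimes b_1$, and then uses the decompositions $\ST_\mu = \T\cup\A_\infty$ and $\ST^+_n = \T\cup\C_n\cup\A_\infty$ together with Definition~\ref{def:boxmap}, applied first to $\id_{\C_n}\boxtimes(\cdot)$ and then to $\id_N\boxtimes(\cdot)$, where $N=\BSDA(\T)$ is the torus bi-module of Section~\ref{sub:torus_bmodules}, $C_n=\BSDA(\C_n)$ is the annular bi-module of Section~\ref{sub:twisted_annulus}, and the annular modules $\A^+_m$ are defined as in Section~\ref{sub:simp_ann}.

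The main obstacle is not conceptual but bookkeeping: one must set up the ``positive-side'' annular and torus modules $\A^+_m$, $\C_n$ with their dividing curves --- and hence their Reeb chords --- oriented correctly relative to the meridional annulus along which the negative bypasses are attached, since this differs from the direct-limit situation of Section~\ref{sec:limit_is_knot}. This is precisely what arranges that the iterated box tensor product lands on the $\delta$-closed generator $b_n$ rather than on some $b_i$ with $i<n$. Once these modules are computed, the remaining steps are routine applications of the machinery already assembled in Sections~\ref{sec:limit_is_knot} and~\ref{sec:SV_limit}, and the formula propagates to the inverse limit module $\underleftarrow{K}$ exactly as in the proof of Theorem~\ref{thm:SV_map2}.
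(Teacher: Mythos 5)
Your proposal is correct, and your primary argument (the direct classification of the morphism complex) takes a genuinely different route than the paper. The paper's hint is to first pin down the base case $\phi_{dSV}\colon K_\infty \to K^+_0$ by observing that $K^+_0$ has a single generator $a$ in idempotent $I_2$, so idempotent considerations alone force $x\mapsto\rho'_2\otimes a$, and then to obtain $\phi_{dSV}\colon K_\infty\to K^+_n$ by iteratively box-tensoring with the Dehn-twist bimodule from \cite{LOT1} along the meridian. You instead compute, for each $n$ at once, the homology of the morphism complex $\mathrm{Mor}(K_\infty, K^+_n)$: the idempotent-compatible generators are $I_1\otimes b_i$, $\rho'_{23}\otimes b_i$, and $\rho'_2\otimes a$; the cycles are $I_1\otimes b_n$ and $\rho'_{23}\otimes b_i$ for $1\le i\le n$; and the boundaries are $d(\rho'_2\otimes a)=\rho'_{23}\otimes b_1$ and $d(I_1\otimes b_i)=\rho'_{23}\otimes b_{i+1}$ for $i<n$, which together sweep out all of the $\rho'_{23}\otimes b_j$. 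Hence the homology is one-dimensional, spanned by $I_1\otimes b_n$, and the non-vanishing argument then pins down $\phi_{dSV}$. Your route avoids importing the Dehn-twist bimodule entirely, at the cost of a (short) morphism-complex calculation; the paper's route is incremental and parallels the direct-limit computations more closely but relies on an external bimodule. Both exploit the same underlying rigidity.

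Two small remarks. First, your sketch of the ``alternative'' second write-up has a bookkeeping slip: $\ST^+_n$ is $\T$ glued to a \emph{positively} twisted annular region on top of $\A_0$, not $\T\cup\C_n\cup\A_\infty$ as written; the $\C_n$ from Section~\ref{sub:twisted_annulus} encodes negative twisting, so new ``positive-side'' annular bimodules would indeed need to be set up from scratch, as you note. Since your first argument is self-contained and complete, this does not affect the proof. Second, when you assert non-triviality, what the contact-geometric input (the tight solid torus complementing the maximal-$\tb$ unknot, as in Section~\ref{sub:comp_loss_limit}) actually gives you is that the induced map on full sutured Floer homology is nonzero; by the pairing theorem this forces the Type-$D$ morphism $\phi_{dSV}$ to be non-null-homotopic, which is exactly what you need to select the nonzero class in your one-dimensional homology.
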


We leave it as an exercise for the interested reader to verify Lemma~\ref{lem:dSV_map}, with the following hint.  First, focus on the (non-trivial) map $\phi_{dSV}: K_\infty \to K^+_0$, whose target is the Type-$D$ module associated to the longitudinal completion.  This map is unique, and given by $\phi_{dSV}(x) = \rho_2 \otimes a$.  From here, the Lemma follows by iteratively tensoring with the bi-module from \cite{LOT1} corresponding to a negative Dehn twist along the meridian to obtain the maps $\phi_{dSV}:K_\infty \to K^+_n$ above.

It follows from Lemma~\ref{lem:dSV_map} that, upon taking the inverse limit, the induced map $\Phi_{dSV}$ is given by.
\begin{align*}
	\Phi_{dSV}: K_\infty &\to \underleftarrow{K}\\
	x &\mapsto I_1 \otimes \delta_0.
\end{align*}
Under the identification of $\SFHIL(-Y,K)$ and $\HFKP(-Y,K)$ given by Theorem~\ref{thm:lim_to_plus}, we see that $\Phi_{dSV}$ is precisely the analogue at the level of Type-$D$ modules of the natural map $\iota_*: \HFKH(-Y,K) \to \HFKP(-Y,K)$ induced by inclusion of complexes.

This finishes our sketch of the proof of Theorem~\ref{thm:SV_map2}.

% subsection identifying_the_natural_map (end)
%%%%%%%%%%%%%%%%%%%%%%%%%%%%%%%%%%%%%%%%%%%%%%%%%%%%%%%

% section plus invariants (end)
%%%%%%%%%%%%%%%%%%%%%%%%%%%%%%%%%%%%%%%%%%%%%%%%%%%%%%%

%%%%%%%%%%%%%%%%%%%%%%%%%%%%%%%%%%%%%%%%%%%%%%%%%%%%%%%
\section{Gradings} % (fold)
\label{sec:gradings}
%%%%%%%%%%%%%%%%%%%%%%%%%%%%%%%%%%%%%%%%%%%%%%%%%%%%%%%

In this section, we show how to extend the proof of Theorem~\ref{thm:lim_to_minus}, presented in Section~\ref{sec:limit_is_knot} to include an identification of gradings.  To avoid unnecessary complications, we will assume in what follows that our ambient 3-manifold $Y$ is an integral homology sphere, though the results generalize to any three manifold and null-homologous knot.

%%%%%%%%%%%%%%%%%%%%%%%%%%%%%%%%%%%%%%%%%%%%%%%%%%%%%%%
\subsection{Alexander Grading} % (fold)
\label{sub:Alex_ident}
%%%%%%%%%%%%%%%%%%%%%%%%%%%%%%%%%%%%%%%%%%%%%%%%%%%%%%%

Let $K$ be a null-homologous knot in the 3-manifold $Y$, and let $\SH = (\Sigma,\balpha,\bbeta,z,w)$ be a doubly-pointed Heegaard diagram for the pair $(Y,K)$.  Recall from the discussion in Section~\ref{sub:knot_alex} that if $[F,\partial F]$ is a homology class of Seifert surface for the knot $K$, then one defines the Alexander grading of a generator $\x \in \SG(\SH)$ of $\CFKM(\SH)$ via the formula
\[
	A_{[F,\partial F]}(\x) = \frac{1}{2}\langle c_1(\s(\x),t_\mu), [F,\partial F] \rangle,
\]
which is then extended to all of $\CFKM(Y,K)$ via linearity and the relation
\[
	A_{[F,\partial F]}(U \cdot \x) = A_{[F,\partial F]}(\x) - 1.
\]

In Section~\ref{sub:alexander}, we extend the above Alexander grading to the sutured setting.  Namely, if $\SH_i$ is a sutured Heegaard diagram for the sutured manifold $(-Y(K),-\Gamma_i)$, obtained from the complement $Y(K)$ by and placing a pair of oppositely oriented sutures which run once longitudinally and $i$-times meridionally on the resulting boundary.  Then, to a generator $\x \in \SG(\SH_i)$ one assigns the Alexander grading
\[
	A_{[F,\partial F]}(\x) = \frac{1}{2}\langle c_1(\s(\x),t_\mu), [F,\partial F] \rangle,
\]
and extends linearly to all of $\SFC(\SH_i)$.

It was further observed in Section~\ref{sub:alexander} that, based on the relative first Chern class computations in Section~\ref{ssec:convexandspin}, the gluing maps $\phi_-$ and $\psi_+$ are homogeneous of Alexander degree plus and minus $1/2$ respectively.  In particular, the collections of maps
\[
	\phi_-: \SFH(-Y(K),-\Gamma_i)[(i - 1)/2] \to \SFH(-Y(K),-\Gamma_{i+1})[((i+1) - 1)/2],
\]
and
\[
	\psi_+: \SFH(-Y(K),-\Gamma_i)[(i - 1)/2] \to \SFH(-Y(K),-\Gamma_{i+1})[((i+1) - 1)/2],
\]
were all seen to be Alexander-homogeneous of degree $0$ and $-1$ respectively.  Taking the direct limit over the collection $\{\phi_-\}$, the sutured limit homology inherited a natural Alexander grading from the above formulae.

Our identification of these two Alexander gradings proceeds in two steps.  In the first, we show that the two gradings must agree up to an overall shift independent of $Y$ and $K$.  This is the content of Proposition~\ref{prop:Alex_shift}.  From here, it suffices to identify a single class $[\x] \in \SFHL(-Y,K)$ whose Alexander grading is preserved under the isomorphism given in Theorem~\ref{thm:lim_to_minus}.  This is accomplished in Proposition~\ref{prop:trans_alex} using Legendrian/transverse invariants.

\begin{proposition}\label{prop:Alex_shift}
Let $K$ be a null-homologous knot in a rational homology 3-sphere $Y$. Under the isomorphism given in Theorem~\ref{thm:lim_to_minus}, the Alexander gradings defined on $\SFHL(-Y,K)$ and $\HFKM(-Y,K)$ agree up to an overall shift.
\end{proposition}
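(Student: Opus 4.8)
The plan is to reduce the statement to a claim about relative first Chern classes and the way the gluing maps $\phi_-$ and $\psi_+$ interact with the $\spinc$-decompositions on the nose, rather than only ``up to an overall shift per group.'' First I would recall that both Alexander gradings in question are defined by exactly the same formula, $A_{[F,\partial F]}(\x) = \tfrac12\langle c_1(\s(\x),t_\mu),[F,\partial F]\rangle$, the only difference being the ambient sutured manifold ($(-Y(K),-\Gamma_i)$ versus the meridionally-sutured complement that computes $\HFKM$). The heart of the matter is therefore to track how these relative $\spinc$ structures behave under the bordered decomposition $(-Y(K),-\Gamma_i) = (-Y(K),\Gamma',\SF_T)\cup \T_i$ used in the proof of Theorem~\ref{thm:lim_to_minus}. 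Using the splitting of Chern classes $c_1(\x_1\otimes\x_2) = c_1(\x_1)\otimes c_1(\x_2)$ from Section~\ref{sub:_spinc_structures_in_bordered_sutured_floer_homology}, the Alexander grading of a box-tensor generator $\x_1\otimes\x_2$ is a sum of a contribution from $\BSA(-Y(K),\Gamma',\SF_T)$ — which is \emph{fixed} once $Y$, $K$, and the Seifert surface are chosen, and is independent of $i$ — and a contribution from the torus piece $\BSD(\T_i)$.

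The key step is then to compute the torus-side contribution for each generator $a, b_1,\dots,b_i$ of $K_i$, using precisely the relative Euler class computations collected at the end of Section~\ref{ssec:convexandspin}: the contact structure on $A_i^-$ evaluates to $\pm[(i+1)\mu+\lambda]$ on the relevant annulus, and the $C^-_{j,i}$ pieces contribute $\pm(i-j)\mu$. Pairing these with $[F,\partial F]$ (choosing ruling curves parallel to $\partial F$, as in the computation of the Alexander grading of the LOSS invariant leading to Equation~\eqref{eqn:LOSS_Alex}) gives the Alexander grading of each $b_k$ and of $a$ in $\SFH(-Y(K),-\Gamma_i)$, and hence — after the grading shifts $[-(i+1)/2]$ introduced in Section~\ref{sub:alexander} — the Alexander grading of the limiting generators $\delta_0,\delta_1,\delta_2,\dots$ of $\underrightarrow{K}$. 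On the other side, the generators $x, U\cdot x, U^2\cdot x,\dots$ of $K^-$ carry Alexander gradings differing by successive integers, with multiplication by $U$ dropping the grading by one. One checks that under the identification $\delta_i\leftrightarrow U^i\cdot x$ from the proof of Theorem~\ref{thm:lim_to_minus}, the two sequences of Alexander gradings differ by a constant that does not depend on $i$; since $U$-multiplication on $\SFHL(-Y,K)$ is realized by the maps $\psi_+$, which are Alexander-homogeneous of degree $-1$, this constant is genuinely global. That establishes the proposition, in fact pinning the shift down to a single universal constant (which Proposition~\ref{prop:trans_alex} will then show is zero).

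The main obstacle I anticipate is bookkeeping of orientations and of the normalization conventions for relative $\spinc$ structures: there are two boundary-normalization flavors in the bordered sutured theory (the $v_0$-normalization versus the $v_{o(\x)}$-normalization of Section~\ref{sub:_spinc_structures_in_bordered_sutured_floer_homology}), and one must make sure that the Chern-class splitting is applied with a consistent choice of section $t_\mu$ along the gluing torus $\SF_T$, so that the ``fixed'' $\BSA$-contribution really is the same for all $i$ and all the $b_k$. A secondary subtlety is that the grading shifts $[-(i+1)/2]$ were \emph{defined} precisely to make things line up, so the argument must verify that the intrinsic (unshifted) Alexander degrees of $a$ and the $b_k$ increase in the expected arithmetic progression — this is exactly where the $C^-_{j,i}$ and $A_i^-$ Euler-class formulas do the work, and any sign error there propagates into a spurious $i$-dependence. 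Once the $\spinc$-tracking is set up carefully, however, the remaining computation is routine and purely homological.
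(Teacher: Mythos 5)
Your overall framework is the same as the paper's: decompose $(-Y(K),-\Gamma_i) = (-Y(K),\Gamma',\SF_T) \cup \T_i$, use the splitting of relative Chern classes to write the Alexander grading of a box-tensor generator as a sum of a fixed $\CFA$-contribution and a torus-side contribution, and reduce to understanding the grading of the torus-side generators. Where you diverge is in the final computational step. The paper computes the $\spinc$-structure differences $\epsilon(b_j,b_\ell)$ directly from the alternate bordered Heegaard diagram for $\T_i$ (Figure~\ref{fig:TorusAltView}), showing $\s(b_j)-\s(b_\ell)=(j-\ell)\,\mathrm{PD}([\mu])$ and hence a pairing difference of $2(j-\ell)$. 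You instead propose to deduce the torus-side grading differences from the contact-geometric Euler class computations of Section~\ref{ssec:convexandspin}. That does work, and it is a genuine alternative: combining the Alexander-homogeneity degrees of $\phi_-$ and $\psi_+$ (degree $\pm 1/2$, established from those Euler classes in Section~\ref{sub:alexander}) with the explicit torus-side action of the bordered gluing maps (Lemma~\ref{lem:twist_bptor}: $\eta_{n,m}$ sends $b_j\mapsto b_{j+1}$, $a\mapsto b_1$; $\eta_{p,m}$ fixes the generators), one can inductively show that within each $K_m$ the torus-side Alexander contributions of $a,b_1,\dots,b_m$ form an arithmetic progression with common difference one, which is precisely what the paper's diagrammatic calculation yields. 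Your route re-uses earlier established facts rather than doing a fresh Heegaard-diagram analysis; the paper's route is more self-contained at this point and avoids invoking the Legendrian/contact gluing machinery.

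One imprecision worth flagging: you say that pairing the Euler classes of $A_i^-$ and $C^-_{j,i}$ with $[F,\partial F]$ ``gives the Alexander grading of each $b_k$ and of $a$.'' That is not literally correct — those Euler classes compute the grading of the contact invariant (equivalently, the homogeneity degree of the HKM gluing map), not the grading of an arbitrary generator of $K_i$. What they do give, once combined with the torus-side effect of $\eta_{n,m}$ and $\eta_{p,m}$ on generators, is the sequence of grading \emph{differences} between $a,b_1,\dots,b_m$. This suffices for the proposition, but you should phrase the derivation as an induction over the gluing maps rather than as a direct Euler-class evaluation at each generator. Also, as you note, this argument only nails the shift up to a global constant; you correctly observe that determining that constant is deferred to Proposition~\ref{prop:trans_alex}, matching the paper's organization.
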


\begin{proof}

Let $(-Y(K),-\Gamma_i)$ be as above, and consider the bordered sutured decomposition
\[
	(-Y(K),-\Gamma_i) = (-Y(K),\Gamma',\SF_T) \cup \T_i
\]
from Section~\ref{sub:the_geometric_setup}, where the parametrization on the common boundary $\SF_T$ is given by the pair of $\alpha$-arcs consisting of a meridian and a $0$-framed longitude.

To the bordered sutured manifold $(-Y(K),\Gamma',\SF_T)$, we associate the $A_\infty$-module $\CFA(-Y,K)$.  The complexes $\CFKM(-Y,K)$ and $\SFC(-Y(K),-\Gamma_i)$ can are obtained (up to homotopy) from $\CFA(-Y,K)$ by forming the box tensor products with the modules $\CFD^-(\SH_c)$ (from Section~\ref{sub:bordered_invariants_and_knot_floer_homology}) and $K_i$ (from Section~\ref{sub:torus_modules}), respectively.  In this setting, generators of $\CFKM(-Y,K)$ are all of the form $\y \otimes x$, while generators of $\SFC(-Y(K),-\Gamma_i)$ are either of the form $\y \otimes b_j$ or $\y' \otimes a$, where $\y$ and $\y'$ live in idempotents $I_1$ and $I_2$ respectively.

The Alexander grading computation can similarly be decomposed in either setting.  Namely,
\begin{align*}
	A_{[F,\partial F]}(m \otimes n) &= \frac{1}{2} \langle c_1(\s(m \otimes n)),[F, \partial F] \rangle\\
				&= \frac{1}{2} \langle c_1(\s(m)) \oplus c_1(\s(n)),[F, \partial F] \rangle\\
				&= \frac{1}{2} \langle c_1(\s(m)),[F, \partial F] \rangle + \frac{1}{2} \langle c_1(\s(n)),[A, \partial A] \rangle,
\end{align*}
where $A = \partial F \times I$ is an annular extension of the Seifert surface $F$ through $T^2 \times I$.

We recall that the Alexander grading on $\CFKM(-Y,K)$ is characterized by the above formula, together with the fact that multiplication by $U$ drops the grading by $1$.  Thus, the task of identifying the Alexander gradings, up to an overall shift is equivalent to showing that for generators $b_j$ and $b_\ell$ of $K_i$,
\[
	\langle c_1(\s(b_j)),[A, \partial A] \rangle - \langle c_1(\s(b_\ell)),[A, \partial A] \rangle = 2\cdot (j - \ell),
\]
since these elements represent the elements $U^j \cdot x$ and $U^\ell \cdot x$ in the limit.

\begin{figure}[htbp]
	\centering
	\begin{picture}(150,160)
		\put(0,8){\includegraphics[scale=1.0]{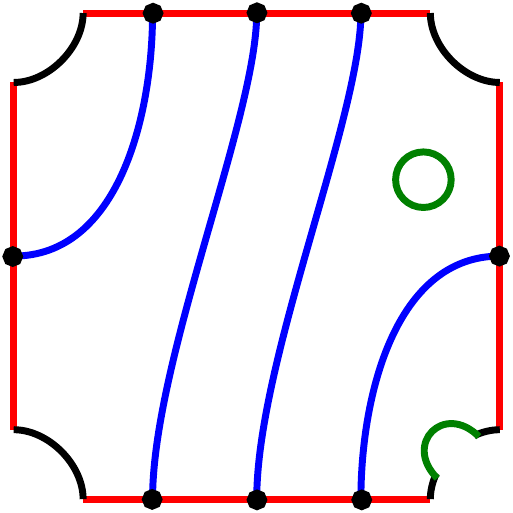}}
		\put(40,160){$b_1$}
		\put(70,160){$b_2$}
		\put(100,160){$b_n$}
		\put(40,0){$b_1$}
		\put(70,0){$b_2$}
		\put(100,0){$b_n$}
		\put(-7,80){$a$}
		\put(150,80){$a$}
		\put(7,12){$3$}
		\put(7,144){$2$}
		\put(137,144){$1$}
	\end{picture}
	\caption{An alternative view of the bordered sutured Heegaard diagram depicted in Figure~\ref{fig:torfilldiag3}.}
	\label{fig:TorusAltView}
\end{figure}

To see this, consider Figure~\ref{fig:TorusAltView}.  Here, we see an alternate view of the bordered Sutured Heegaard diagram originally depicted in Figure~\ref{fig:torfilldiag3}.  From this diagram, it immediately follows that for generators $b_j,b_\ell \in K_i$ intersecting the longitudinal $\alpha$ arc, $\epsilon(b_j,b_\ell) = (j-\ell)[\mu]$ and, in turn, that their associated $\spinc$-structures differ by $(j-\ell)\mr{PD}([\mu])$.  Therefore,
\begin{align*}
	\langle c_1(\s(b_j)),[A, \partial A] \rangle - \langle c_1(\s(b_\ell)),[A, \partial A] \rangle =& \langle c_1(\s(b_j) - \s(b_\ell))  ,[A, \partial A] \rangle\\
	=& \langle 2 \cdot \mr{PD}([\mu]), [A, \partial A] \rangle\\
	=& 2 \cdot (j - \ell).
\end{align*}
\end{proof}

We now turn to the task of showing that the Alexander gradings defined on $\SFHL(-Y,K)$ and on $\HFKM(-Y,K)$ agree on-the-nose, not just up to an overall shift.  As observed above, and in light of Proposition~\ref{prop:Alex_shift}, it suffices to demonstrate this equality for a single nontrivial element.  In fact, it suffices to prove this equality for {\it some} knot $K$ in {\it some} 3-manifold $Y$.  This is because,  as shown in the proof of Proposition~\ref{prop:Alex_shift}, any shift in Alexander grading can be computed strictly within the context of the Type-$D$ modules $K_i$ and $\CFD^-(\SH_c)$.

\begin{proposition}\label{prop:trans_alex}
There exists a knot $K$ contained in the 3-sphere $S^3$ and non-trivial elements $g \in \SFHL(-Y,K)$ and $h \in \HFKM(-Y,K)$ which are identified under the isomorphism $\Phi: \SFHL(-Y,K) \to \HFKM(-Y,K)$ such that $A(g) = A(h)$.
\end{proposition}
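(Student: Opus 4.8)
The plan is to produce the required element directly from the Legendrian invariant. Take $K$ to be the Legendrian unknot in the standard tight contact $3$-sphere $(S^3,\xi_{std})$ with maximal Thurston-Bennequin invariant, so that $tb(K) = -1$ and $r(K) = 0$. Since $(S^3,\xi_{std})$ has non-vanishing contact invariant, the non-vanishing property of the LOSS invariant recalled in Section~\ref{sub:loss_invt} gives $\SL(K) \neq 0$ in $\HFKM(-S^3,K)$. By Theorem~\ref{thm:lim_to_minus_leg}, the isomorphism $I_-$ of Theorem~\ref{thm:lim_to_minus} carries $\EHL(K) \in \SFHL(-S^3,K)$ to $\SL(K)$; in particular $\EHL(K) \neq 0$. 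I therefore set $g = \EHL(K)$ and $h = \SL(K)$, so that $g$ and $h$ are non-trivial and identified under the isomorphism by construction.

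It then remains only to compare Alexander gradings. On the knot Floer side, the map $p_* : \HFKM(-S^3,K) \to \HFKH(-S^3,K)$ preserves Alexander gradings and sends $\SL(K)$ to $\SLH(K)$, so by Equation~\eqref{eqn:LOSS_Alex} we have $A(h) = \tfrac{1}{2}(tb(K) - r(K) + 1) = 0$. On the sutured limit side, the discussion at the end of Section~\ref{sub:leg_limits} (together with the relative Euler class computations in Section~\ref{ssec:convexandspin} for the basic slices $A_n^-$ and the grading-shift conventions introduced in Section~\ref{sub:alexander}) gives $A(g) = \tfrac{1}{2}(tb(K) - r(K) + 1) = 0$. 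Hence $A(g) = A(h)$, which is the assertion of the proposition. Combined with Proposition~\ref{prop:Alex_shift}, this upgrades the Alexander-grading statement implicit in Theorem~\ref{thm:lim_to_minus} from ``agreement up to a shift'' to an exact identification.

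The only place anything could go wrong — and hence the step to carry out with care — is the consistency of the two Alexander-grading normalizations used above. Both computations ultimately evaluate $\tfrac{1}{2}\langle c_1(\s,t_\mu),[F,\partial F]\rangle$: on the knot Floer side for the relative $\spinc$ structure underlying $\EH(Y(K),\xib_K)$, and on the sutured limit side for the $\spinc$ structure underlying the collection $\{\EH(S^i_-(K))\}$, after applying the shift operators $[-(i+1)/2]$ built into the directed system. I would verify explicitly that the initial shift $[-1/2]$ together with the per-step shift exactly cancels the degree $(j-i)/2$ homogeneity of the connecting maps $\phi_{ij}$ (which comes from the relative Euler class of $C^-_{j,i}$ being Poincar\'e dual to $(i-j)\mu$), so that the Alexander grading of $\EHL(K)$ in the limit really equals $\tfrac{1}{2}(tb(K) - r(K) + 1)$ on the nose rather than that value plus a universal constant. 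Once this bookkeeping is confirmed, the proposition follows at once.
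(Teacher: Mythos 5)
Your proposal is correct and essentially reproduces the paper's own proof. The paper also works inside $(S^3,\xi_{std})$: it takes an arbitrary Legendrian representative $L$ of an arbitrary knot type, derives $A(\EHL(L)) = \tfrac{1}{2}(tb(L)-r(L)+1)$ by feeding the relative Euler class computation into the grading shifts built into the directed system, observes that this coincides with the Alexander grading of $\SL(L)$, and concludes non-vanishing from $\Phi_{2h}(\EHL(L)) = c(S^3,\xi_{std}) \neq 0$. Your specialization to the maximal--Thurston--Bennequin unknot is perfectly adequate (one witness suffices), and your non-vanishing argument through $\SL(K)$ and Theorem~\ref{thm:lim_to_minus_leg} is equivalent to the paper's route through $\Phi_{2h}$. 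Your closing caution about the consistency of the two Alexander-grading normalizations is warranted and is the one step worth carrying out explicitly: tracing the conventions shows the per-step shift that renders the connecting maps $\phi_-$ grading-preserving must be $[(i-1)/2]$, rather than the $[-(i+1)/2]$ recorded in Section~\ref{sub:alexander} (the two agree at $i=0$ but diverge for $i>0$); the sequence displayed in the paper's own proof, with shift $[(-tb-1)/2]$ at index $i=-tb(L)$, uses the correct convention and yields $-\tfrac{r(L)}{2}-\tfrac{-tb(L)-1}{2}=\tfrac{1}{2}(tb(L)-r(L)+1)$, consistent with Equation~\eqref{eqn:LOSS_Alex}.
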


\begin{proof}
Let $K$ be a given knot type in $S^3$.  Consider the standard tight contact structure $\xi_{std}$ on $S^3$, and let $L$ be a Legendrian representative of the knot type $K$.  It was shown in Section~\ref{sub:relationships} that the Alexander grading of $\EH(L) \in \SFH(-Y(K),-\Gamma_{tb(L)})$ is precisely equal to the negative rotation number
\[
	A(\EH(L)) = -\frac{r(L)}{2}.
\]
Recall that the Thurston-Bennequin number is, by definition, the difference between the contact framing on $L$ and the Seifert framing.  This, in turn, is precisely equal to the slope of the sutures on the Legendrian knot complement.  Thus, in the sequence
\begin{center}
	\begin{tikzpicture}	[->,>=stealth',auto,thick]
		\node (a) at (0,0){$\SFH(-Y(K),-\Gamma_{-tb})[(-tb-1)/2]$} ;
		\node (b) at (6.95,0) {$\SFH(-Y(K),-\Gamma_{-tb - 1})[((-tb
		-1)-1)/2]$} ;
		\node (c) at (11.5,0) {$\dots$,} ;
		\draw (a) edge (b);
		\draw (b) edge (c);
	\end{tikzpicture}
\end{center}
we see immediately that $A(\EHL(L)) = (tb(L) - r(L) + 1)/2$, agreeing the corresponding value for the Alexander grading of the LOSS invariant $\SL(L)$. 

To finish the proof, it suffices to check that either $\EHL(L)$ or $\SL(L)$ is non-zero.  This follows immediately from the fact that the invariants $\EHL(L)$ is identified with contact invariant $c(S^3,\xi_{std})$ under the map $\Phi_{2h}: \SFHL(-Y,K) \to \HFH(-Y)$, and the latter contact invariant is nontrivial. 
\end{proof}

% subsection alex_gradings (end)
%%%%%%%%%%%%%%%%%%%%%%%%%%%%%%%%%%%%%%%%%%%%%%%%%%%%%%%

%%%%%%%%%%%%%%%%%%%%%%%%%%%%%%%%%%%%%%%%%%%%%%%%%%%%%%%
\subsection{Maslov Grading} % (fold)
\label{sub:Maslov_ident}
%%%%%%%%%%%%%%%%%%%%%%%%%%%%%%%%%%%%%%%%%%%%%%%%%%%%%%%

We now turn our attention to understanding the homological or ``Maslov'' grading in sutured limit homology.  Specifically, we aim to show that the $\F[U]$-module $\SFHL(-Y,K)$ inherits an absolute $\Z/2$-grading which can be canonically identified with the usual absolute $\Z/2$-grading in knot Floer homology.

With this goal in mind, we begin by recalling the following useful fact, which characterizes the behavior of the absolute $\Z/2$-grading on sutured Floer homology under the gluing maps defined by Honda, Kazez and Mati\'c.

\begin{theorem}[Honda-Kazez-Mati\'c \cite{HKM3}, Gripp-Huang \cite{GH}]\label{thm:HKM_grad}
Let $(Y_1,\Gamma_1)$ and $(Y_2,\Gamma_2)$ be balanced sutured 3-manifolds such that $Y_1 \subset Y_2$, and let $\xi$ be a contact structure on $Y_2 \backslash \mr{int}(Y_1)$ with sutured contact invariant $\EH(\xi)$.  Then the Honda-Kazez-Mati\'c gluing map, on the chain level, is homogeneous of degree $\mr{gr}(\EH(\xi))$
\[
\phi_\xi: \SFC(-Y_1,-\Gamma_1) \to \SFC(-Y_2,-\Gamma_2)[\mr{gr}(\EH(\xi))],
\]
and descends to a degree $\mr{gr}(\EH(\xi))$ map on the homology level. 
\end{theorem}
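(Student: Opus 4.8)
The plan is to reduce the statement to the gradedness of Zarev's sutured gluing map together with the already-known grading properties of the $\EH$ class. Recall from the discussion preceding Theorem~\ref{thm:rumen_gluing} (and the identification in \cite{Za3}) that the Honda--Kazez--Mati\'c gluing map coincides with Zarev's gluing map $\Psi_F$, so that, writing $W = \overline{Y_2\setminus\mr{int}(Y_1)}$ with its induced sutures $\Gamma_1\cup\Gamma_2$ and $F = \partial Y_1$ for the gluing surface,
\[
\phi_\xi(a) \;=\; \Psi_F\bigl(a \otimes \EH(\xi)\bigr)\colon\ \SFH(-Y_1,-\Gamma_1)\longrightarrow \SFH(-Y_2,-\Gamma_2),
\]
where $\EH(\xi)\in\SFH(-W,-(\Gamma_1\cup\Gamma_2))$. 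Thus it suffices to show that, on the chain level, $\Psi_F$ is homogeneous for the absolute $\Z/2$ grading, with the output grading equal to the sum of the gradings of the two inputs; granting that, $\phi_\xi$ shifts degree by exactly $\mr{gr}(\EH(\xi))$.

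First I would realize $\Psi_F$ through bordered sutured Floer homology. Parametrize $F$ by an arc diagram $\SZ$, so that $Y_1$ and $W$ become bordered sutured manifolds glued along $\SZ$. By Theorem~\ref{thm:zarev_pairing} there is a \emph{graded} homotopy equivalence $\SFC(-Y_2,-\Gamma_2)\simeq \BSA(-Y_1)\boxtimes\BSD(-W)$, and the box tensor product of graded modules carries the sum grading: the operator $\partial^\boxtimes(x\otimes y)=\sum_k(m_{k+1}\otimes\id)(x\otimes\delta_k(y))$ is homogeneous precisely because each $m_{k+1}$ and each $\delta_k$ is homogeneous. Under this identification, pairing with the cycle representing $\EH(\xi)$ in $\BSD(-W)$ realizes $\Psi_F(-\otimes\EH(\xi))$ as the map $\id\boxtimes[\text{multiplication by the }\EH\text{ cycle}]$, and since that cycle is homogeneous of degree $\mr{gr}(\EH(\xi))$, the map manifestly shifts the $\Z/2$ grading by $\mr{gr}(\EH(\xi))$. (The same conclusion can be reached chain-level in HKM's original open-book picture, where $\phi_\xi$ sends a generator $\x$ of $(Y_1,\Gamma_1)$ to $\x\cup\x_0$ in an extended Heegaard diagram for $(Y_2,\Gamma_2)$, with $\x_0$ the $\EH$ generator; the parity computing the $\Z/2$ grading is additive over the block decomposition of that diagram.)

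The one point requiring care — and the heart of \cite{GH} — is the normalization of the absolute $\Z/2$ grading and the absence of a spurious overall constant. I would pin this down by testing against the topologically trivial gluing: when $W = \Gamma_1\times I$ with its $I$-invariant contact structure, Theorem~\ref{thm:rumen_gluing} gives $\Psi_F=\id$, while this contact structure has $\mr{gr}(\EH)=0$, so the two are consistent with \emph{zero} shift; additivity of the absolute $\Z/2$ grading under stacking of bordered sutured Heegaard diagrams then forces the constant in the general case to agree with this one. The main obstacle is therefore not the algebra above but making precise, and verifying, this additivity of the absolute $\Z/2$ grading across a decomposition $\SH=\SH_1\cup\SH_2$ — equivalently, that the grading, defined via the parity of a Maslov-type index relative to a fixed reference, splits as a sum over the two pieces. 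Invoking \cite{HKM3} and \cite{GH} for this additivity, chain-level homogeneity of $\phi_\xi$ of degree $\mr{gr}(\EH(\xi))$ is immediate, and the induced map on homology inherits the same degree, since a homogeneous chain map of degree $d$ always induces a homogeneous map of degree $d$ on homology.
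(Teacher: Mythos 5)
Your proposal matches the paper's treatment: the paper does not give a self-contained proof of this theorem either, but instead attributes it to \cite{HKM3} and \cite{GH} and explains the mechanism by exhibiting $\phi_\xi$ as the specialization $\phi_\xi(\cdot)=\phi_\Sigma(\cdot\otimes\EH(\xi))$ of a degree-zero two-variable gluing map $\phi_\Sigma$ realized as a chain-level inclusion. You reach the same reduction through Zarev's $\Psi_F$ and the box tensor product (which coincide with the HKM map by \cite{Za3}), check normalization against the trivial gluing, and likewise defer the underlying gradedness to \cite{HKM3} and \cite{GH}; the small imprecision in placing a ``cycle representing $\EH(\xi)$'' inside $\BSD(-W)$ (it lives in $\SFH(-W)$, not literally in the type-$D$ module for the gluing surface) is cosmetic and does not affect the argument.
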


Although the above result is not explicitly stated in \cite{HKM3}, the result is implicit in the proof of the main result of that paper (stated here as Theorem~\ref{thm:hkm_gluing}) --- that well-defined, natural contact gluing maps exist in the sutured category.  It's truth can be derived from results of Gripp and Huang \cite{GH} characterizing the absolute gradings in Heegaard Floer theory in terms of homotopy classes of vector fields.  

To better understand the context of the above result, consider the following variant of Honda, Kazez and Mati\'c's construction.  Start with two balanced sutured manifolds $(Y_1,\Gamma_1)$ and $(Y_2,\Gamma_2)$, each of which can be equipped with contact structures inducing the specified suture sets on their boundary.  Now suppose that $\Sigma_i$ are sutured subsurfaces (with dividing sets) of the boundaries $\partial Y_i$, which are compatible in the sense that $\Sigma \cong \Sigma_1 \cong -\Sigma_2$. Then, it is possible to glue together the two sutured manifolds along the $\Sigma_i$ to form a new balanced suture manifold
\[
	(Y,\Gamma) = (Y_1,\Gamma_1) \cup_{\Sigma} (Y_2,\Gamma_2).
\]
In this setting, Honda, Kazez, and Mati\'c's gluing theorem states that there exists a map
\[
	\phi_{\Sigma}: \SFC(-Y_1,-\Gamma_1) \otimes \SFC(-Y_2,-\Gamma_2) \to \SFC(-Y,-\Gamma)
\]
which is obtained as an inclusion of complexes, and which is homogeneous of degree zero.  Furthermore, if the sutured manifold $(Y_2,-\Gamma_2)$ is equipped with the compatible contact structure $\xi_2$, then the usual gluing map is given by
\[
	\phi_{\xi_2}(\, \cdot\, ) = \phi_{\Sigma}(\, \cdot \otimes \EH(\xi_2)),
\]
viewed as a map from $\SFC(-Y_1,-\Gamma_1)$ to $\SFC(-Y,-\Gamma)$.

With the above result in mind, we turn to the problem at hand --- determining an absolute $\Z/2$-grading on the sutured limit homology groups.  We begin with the following useful observation.

\begin{proposition}\label{prop:maslov_basic_slice}
The absolute $\Z/2$-grading of the contact invariant associated to either a positive or negative basic slice is zero.
\end{proposition}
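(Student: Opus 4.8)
\textbf{Proof proposal for Proposition~\ref{prop:maslov_basic_slice}.}

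The plan is to compute the absolute $\Z/2$-grading of $\EH(\xi^\pm_{p_0/q_0,p_1/q_1})$ by reducing, via the bordered-sutured machinery set up in Sections~\ref{sec:limit_is_knot} and~\ref{sec:SV_limit}, to an elementary model calculation in a solid torus. First I would recall that, by Theorem~\ref{thm:HKM_grad}, the $\Z/2$-degree of a basic-slice gluing map equals $\mr{gr}(\EH(\xi))$ of that basic slice; so it suffices to show the associated gluing map $\phi_\pm$ preserves the $\Z/2$-grading. Concretely, using the decomposition $(-Y(K),-\Gamma_i) = (-Y(K),\Gamma',\SF_T)\cup \T_i$ and the bordered computations of Section~\ref{sub:computation_of_gluing_maps}, the positive and negative basic-slice attachments are the maps $\eta_{p,m},\eta_{n,m}\colon K_m\to K_{m+1}$ of Lemma~\ref{lem:twist_bptor}. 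The point is that these maps, on the distinguished generators, are given (up to algebra coefficients) by $b_i\mapsto b_i$ or $b_i\mapsto b_{i+1}$ and $a\mapsto a$ or $a\mapsto b_1$, i.e.\ they carry generators to generators with at worst a shift along the ``staircase''. Since all of the $b_i$ together with $a$ sit in a single $\Z/2$-grading in the box tensor product with $\BSA(-Y(K),\Gamma',\SF_T)$ — the differential in $K_m$ being a length-one staircase, so consecutive generators differ by odd Maslov degree in $\HFKM$ but one must instead track the \emph{parity} through the tensor product — I would verify that the homological parity is preserved, hence $\mr{gr}(\EH(\xi^\pm))=0$.

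To make this airtight without a full bordered grading package, I would instead argue via a model. Both $B_i^+\cup(\text{contact }2\text{-handle})$ and $B_i^-\cup(\text{contact }2\text{-handle})$ are contactomorphic to the complement of a standard contact ball in the tight solid torus of boundary slope $i$, by exactly the argument used in the proof of Proposition~\ref{pro:lim_hat_closed_comm}. Likewise, a vertically invariant $T^2\times I$ layer capped with a $2$-handle gives the same space. Therefore the $2$-handle gluing map $\phi_{2h}$ intertwines $\phi_\pm$ with the identity map on $\SFH$ of the tight solid-torus complement $\cong\HFH(S^3)=\F$, which lives in a single $\Z/2$-grading. Since $\phi_{2h}$ respects contact invariants (Theorem~\ref{thm:gluingandctinvt}) and $\EH$ of the tight solid torus complement is the generator of $\HFH(S^3)$, whose $\Z/2$-grading is $0$ by convention, and since $\mr{gr}$ of a composite of gluing maps adds (Theorem~\ref{thm:HKM_grad}), we get
\[
	\mr{gr}(\EH(B_i^\pm)) + \mr{gr}(\EH(\text{2-handle})) = \mr{gr}(\EH(\text{v.i. layer})) + \mr{gr}(\EH(\text{2-handle})).
\]
The vertically invariant layer has a grading-preserving gluing map (it is a trivial gluing, so its map is the identity by the normalization in Theorem~\ref{thm:hkm_gluing}/Theorem~\ref{thm:rumen_gluing}), giving $\mr{gr}(\EH(\text{v.i. layer}))=0$; cancelling the common $2$-handle term yields $\mr{gr}(\EH(B_i^\pm))=0$.

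The main obstacle I anticipate is making precise the claim that the contact $2$-handle gluing map is injective (or at least nonzero on the relevant class) so that grading information can be pulled back through it — $\phi_{2h}$ is not an isomorphism in general. This is handled by working at the level of the explicit Type-$D$ modules $K_{2h}$, $\KF$ and the maps $\phi_{2h}$ from Lemma~\ref{lem:2han_ann0} and Lemma~\ref{lem:2han_tor}: those lemmas show $\phi_{2h}$ sends the generator $a$ (resp.\ $b_k$) to $\rho_2'\otimes q$ (resp.\ $I_1\otimes q$), which is nonzero, so the parity of $\EH(B_i^\pm)$ is read off directly from the parity of $q\in K_{2h}$ versus that of the corresponding generator in the model solid torus. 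A secondary, purely bookkeeping, point is to confirm the sign/orientation conventions (the $-Y$ versus $Y$ reversal) do not introduce a parity shift; this is routine given the conventions fixed in Section~\ref{sub:knot_floer_homology} and Section~\ref{sub:HKM}. With these two points settled, Proposition~\ref{prop:maslov_basic_slice} follows, and by Theorem~\ref{thm:HKM_grad} the maps $\phi_-$ and $\psi_+$ used to build $\SFHL(-Y,K)$ are $\Z/2$-grading preserving, so the $\Z/2$-grading descends to the direct limit and is identified with the one on $\HFKM(-Y,K)$ through Theorem~\ref{thm:lim_to_minus}.
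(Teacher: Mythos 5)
Your proposal is correct, but it takes a genuinely different route from the paper's proof. The paper's argument is a short self-replication trick: a basic slice $B_\pm$ contains an interior convex torus splitting it into \emph{two} basic slices of the same sign, and all basic slices are contactomorphic; so by the additivity of $\Z/2$-gradings under gluing from Theorem~\ref{thm:HKM_grad}, $\mr{gr}(\EH(B_\pm)) = 2\cdot \mr{gr}(\EH(B_\pm)) \equiv 0 \pmod 2$. Your argument instead caps $B_i^\pm$ (and a vertically invariant $T^2\times I$ layer of the matching slope) with a contact $2$-handle, identifies both capped manifolds with the unique tight solid torus whose boundary slope is a Farey neighbor of the meridian, and then cancels the $2$-handle grading contribution against the fact that the vertically invariant layer induces the identity gluing map and hence has $\mr{gr}=0$. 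Both proofs rest on the same key input (additivity of $\Z/2$-gradings for the HKM gluing maps); yours additionally routes through the solid-torus model and uniqueness of tight contact structures there, which is more machinery than is needed but is coherent. Two small points to tighten. First, your opening paragraph (about reading parities directly off the Type-$D$ modules $K_m$ and the maps $\eta_{p,m},\eta_{n,m}$) is not itself a proof --- the generators $b_i,a$ of $K_m$ do \emph{not} all sit in a single $\Z/2$-grading after pairing, precisely because the staircase differential shifts Maslov degree --- and you are right to abandon it in favor of the model argument. Second, the proof of Proposition~\ref{pro:lim_hat_closed_comm} as written only addresses the negative basic slice; your argument needs the same contactomorphism statement for $B_i^+\cup(\text{2-handle})$, and you should say explicitly that it follows because an integer boundary slope is always a Farey neighbor of the meridian $\infty$, so the tight contact structure on that solid torus is unique (hence both signs, and the vertically invariant layer, give the same contact solid torus after capping). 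With those two caveats your argument is a valid alternative to the paper's shorter self-replication proof.
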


\begin{proof}
Recall from Section~\ref{sub:tt} that if $B_\pm = (T^2 \times I,\xi_\pm)$ is either a positive or negative basic slice, then it contains a convex torus $T$ which decomposes $B_\pm$ into two basic slices of the same (original) sign.

This decomposition allows us to compute the absolute grading of $\EH(T^2 \times I,\xi)$ by applying Theorem~\ref{thm:HKM_grad}.  Specifically, we have
\begin{align*}
	\mr{gr}(\EH(B_\pm)) &= \mr{gr(\EH(B_\pm \cup_T B_\pm))}\\
	&= \mr{gr}(\EH(B_\pm)) + \mr{gr}(\EH(B_\pm))\\
	&= 0 \; (\mr{mod}\, 2).
\end{align*}
\end{proof}

It follows from Proposition~\ref{prop:maslov_basic_slice}, together with the above result of Honda, Kazez and Mati\'c, that the maps $\phi_i$ which give rise to sutured limit homology are necessarily all Maslov-homogeneous of degree 0.  In turn, we see that the sutured limit homology group $\SFHL(-Y,K)$ inherits an absolute $\Z/2$-grading, which we refer to as the {\it Maslov grading}.

\begin{theorem}\label{thm:maslov_ident}
Under the isomorphism $\Phi: \SFHL(-Y,K) \to \HFKM(-Y,K)$, given by Theorem~\ref{thm:lim_to_minus}, the absolute $\Z/2$-gradings on $\SFHL(-Y,K)$ is identified with the absolute $\Z/2$-grading on $\HFKM(-Y,K)$.
\end{theorem}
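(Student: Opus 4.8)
\textbf{Proof proposal for Theorem~\ref{thm:maslov_ident}.}

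The strategy parallels the proof of the Alexander grading identification in Proposition~\ref{prop:Alex_shift} and Proposition~\ref{prop:trans_alex}, but is in fact simpler because there is only a $\Z/2$ worth of ambiguity to pin down. First I would establish that the isomorphism $\Phi$ of Theorem~\ref{thm:lim_to_minus} respects the $\Z/2$-grading \emph{up to an overall shift}, and then I would produce a single nonzero class on which the two gradings agree, forcing the shift to be zero. As noted in the paragraph preceding the theorem, the discussion of Honda--Kazez--Mati\'c gluing maps (Theorem~\ref{thm:HKM_grad}) together with Proposition~\ref{prop:maslov_basic_slice} already shows that each map $\phi_-$ in the directed system defining $\SFHL(-Y,K)$ is $\Z/2$-homogeneous of degree $0$, so $\SFHL(-Y,K)$ indeed inherits a well-defined absolute $\Z/2$-grading; this is the content I would invoke rather than reprove.

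For the ``up to shift'' part, I would use the bordered sutured decomposition $(-Y(K),-\Gamma_i) = (-Y(K),\Gamma',\SF_T)\cup\T_i$ from Section~\ref{sub:the_geometric_setup}, exactly as in the proof of Proposition~\ref{prop:Alex_shift}. Both $\CFKM(-Y,K)$ and $\SFC(-Y(K),-\Gamma_i)$ are computed (up to homotopy) as box tensor products of the fixed $A_\infty$-module $\CFA(-Y,K)$ with $\CFD^-(\SH_c)$ and with $K_i$ respectively, and the homotopy equivalence $\underrightarrow{K}\simeq K^-$ established in Section~\ref{sec:limit_is_knot} is the one underlying $\Phi$. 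Since the bordered invariants carry gradings (by the results of \cite{Za1} cited in Section~\ref{backgroud2}), and since box tensor product, direct limits, and the homotopy equivalence $\underrightarrow{K}\simeq K^-$ are all grading-homogeneous constructions, $\Phi$ is $\Z/2$-homogeneous; the only freedom is the overall normalization of the absolute grading on the sutured side, which a priori could differ from the knot Floer normalization by a constant $\epsilon\in\Z/2$ \emph{independent of $Y$ and $K$} (again because the comparison happens entirely inside the model Type-$D$ modules $K_i$ and $\CFD^-(\SH_c)$, just as in Proposition~\ref{prop:Alex_shift}).

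To kill the constant, I would exhibit one example. Take $K$ to be the Legendrian unknot in $(S^3,\xi_{std})$ with maximal Thurston--Bennequin invariant (or any Legendrian knot with nonvanishing HKM invariant). By Theorem~\ref{thm:HKM_to_ehl} and Theorem~\ref{thm:lim_to_minus_leg}, the class $\EHL(K)\in\SFHL(-Y,K)$ is nonzero and is identified under $\Phi=I_-$ with the LOSS invariant $\SL(K)\in\HFKM(-Y,K)$. It therefore suffices to compare the $\Z/2$-Maslov gradings of $\EH(K)\in\SFH(-Y(K),-\Gamma_{-\mathrm{tb}})$ and of $\SL(K)$. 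On the sutured side, $\EH(K)$ is the contact invariant of a Stein-fillable (in fact product) contact complement, and by Theorem~\ref{thm:HKM_grad} together with Proposition~\ref{prop:maslov_basic_slice} its grading is unchanged as we push it up the directed system defining $\EHL(K)$; on the knot Floer side, the Maslov grading of $\SL$ for the maximal-$\mathrm{tb}$ Legendrian unknot is computed directly from the open-book/Heegaard-diagram description of Definition~\ref{def:loss}. Checking that these two gradings coincide $(\mathrm{mod}\,2)$ forces $\epsilon=0$, and, by the $Y$- and $K$-independence of the shift, completes the proof. The main obstacle I anticipate is the bookkeeping in the first step: verifying carefully that all of the equivalences in the chain $\SFHL(-Y,K)\cong\cdots\cong\HFKM(-Y,K)$ from Section~\ref{sec:limit_is_knot} are $\Z/2$-graded maps (in particular that the $\Z/2$ gradings on the bordered sutured modules behave correctly under box tensor product and direct limit), since the $\Z/2$-grading conventions across \cite{Za1}, \cite{LOT1}, and \cite{Ju} must be matched; once that is in place, the example computation is routine.
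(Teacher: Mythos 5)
Your proposal takes a genuinely different route from the paper.

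You adopt the ``homogeneous up to an overall shift, then kill the shift with one example'' strategy, mirroring Propositions~\ref{prop:Alex_shift} and~\ref{prop:trans_alex} for the Alexander grading. The paper does something else entirely: it invokes the chain-level version of Theorem~\ref{thm:SV_map}, which gives a commutative triangle with $I_-$ across the top, $\Phi_{SV}$ down the left, and $p_*$ down the right, both landing in $\CFKH(-Y,K)$. Each downward map is $\Z/2$-homogeneous of degree zero --- $p_*$ because setting $U=0$ preserves the Maslov grading, and $\Phi_{SV}$ because it factors as a basic-slice gluing (degree zero by Theorem~\ref{thm:HKM_grad} and Proposition~\ref{prop:maslov_basic_slice}) followed by the grading-preserving Juh\'asz isomorphism $\SFH(-Y(K),-\Gamma_\mu)\cong\HFKH(-Y,K)$. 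Since every generator of $\CFKM(-Y,K)$ is $U^n$ times a generator detected by $p_*$, and $U$ and $\Psi$ both act with degree zero mod $2$, the common anchor in $\HFKH$ pins down the $\Z/2$-grading on both sides simultaneously, with no shift to fix.

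The practical difference is where the technical weight sits. Your first step --- that box tensor product, direct limits, and the equivalence $\underrightarrow{K}\simeq K^-$ are all $\Z/2$-homogeneous, so $\Phi$ is homogeneous --- requires engaging with the noncommutative bordered sutured gradings and their $\Z/2$ reductions; there is no analogue of the clean Chern-class splitting $c_1(\s(m\otimes n))=c_1(\s(m))\oplus c_1(\s(n))$ that made Proposition~\ref{prop:Alex_shift} localize to the model modules, so the ``independent of $Y$ and $K$'' claim for the shift is genuinely harder to justify here than it was there (you flag this yourself as the main obstacle, and I agree). The paper's argument sidesteps all of this by staying entirely inside the sutured/knot Floer world, leaning on the already-proved Theorem~\ref{thm:SV_map}, at the cost of having to pre-establish that theorem. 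Your example computation is also fine in spirit --- the maximal-$\mathrm{tb}$ unknot with $\EHL(K)\ne 0$ is a good anchor --- but note it only pays off once the homogeneity step is actually nailed down. If you want to pursue your route, the cleanest way to get the homogeneity is probably to verify that the chain homotopy equivalences in the display at the end of Section~\ref{sub:proof_of_theorem_thm:lim_to_minus} are each $\Z/2$-graded, rather than appealing abstractly to ``grading-homogeneous constructions.''
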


\begin{proof}
Recall that Theorem~\ref{thm:SV_map} states that, under the isomorphism $I_-: \SFHL(-Y,K) \to \HFKM(-Y,K)$, the Stipsicz-V\'ertesi map $\Phi_{SV}$ is identified with the canonical map on knot Floer homology which is induced by setting the formal variable $U$ equal to zero at the chain level.  That is to say, the following diagram commutes:
\begin{center}
\begin{tikzpicture}	[->,>=stealth',auto,thick]
	\node (a) at (0,0){$\SFHL(-Y,K)$} ;
	\node (b) at (4,0) {$\HFKM(-Y,K)$} ;
	\node (e) at (2,-2) {$\HFKH(-Y,K)$.} ;

	\draw (a) edge node[above] {\small $I_-$} (b);
	\draw (a) edge node[left] {\small $\Phi_{\mathrm{SV}}$} (e);
	\draw (b) edge node[right] {($p_*$)} (e);
\end{tikzpicture}
\end{center}
Moreover, the maps $I_-$, $\Phi_{\mathrm{SV}}$ and $p_*$ are all defined on the chain level and on the chain level fit into the analogous commutative diagram. Recall that $\CFKM(-Y,K)$ is generated by elements that map non-trivially to $\CFKH(-Y,K)$ and their images under $U$. Since we know the effect of $U$ on grading is well understood we need to see that for elements that map non-trivially to  $\CFKH(-Y,K)$ their grading in $\CFKM(-Y,K)$ and in the chain group computing $\SFHL(-Y,K)$ are the same. 

Focusing first on the knot Floer homology side of this story, recall that the ``$U=0$ map'' $\CFKM(-Y,K) \to \CFKH(-Y,K)$ preserves the absolute $\Z/2$-grading.  

In a similar spirit, recall that the Stipsicz-V\'ertesi map is induced on $\SFHL(-Y,K)$ by via a collection of basic slice attachments to the set of sutured knot complements $\{(-Y(K),-\Gamma_i)\}$.  Topologically, each Stipsicz-V\'ertesi attachment yields the sutured manifold $(-Y(K),-\Gamma_\mu)$.  The sutured Floer homology of this space is isomorphic to $\HFKH(-Y,K)$, via an isomorphism that preserves absolute grading.  These facts, in conjunction with Proposition~\ref{prop:maslov_basic_slice}, shows that the map $\Phi_{SV}$ is necessarily homogeneous of degree zero, finishing the proof of Theorem~\ref{thm:maslov_ident}.
\end{proof}

% subsection maslov_grading (end)
%%%%%%%%%%%%%%%%%%%%%%%%%%%%%%%%%%%%%%%%%%%%%%%%%%%%%%%

% section gradings (end)
%%%%%%%%%%%%%%%%%%%%%%%%%%%%%%%%%%%%%%%%%%%%%%%%%%%%%%%

%%%%%%%%%%%%%%%%%%%%%%%%%%%%%%%%%%%%%%%%%%%%%%%%%%%%%%%
\section{Examples} % (fold)
\label{sec:examples}
%%%%%%%%%%%%%%%%%%%%%%%%%%%%%%%%%%%%%%%%%%%%%%%%%%%%%%%

In this section, we present some examples which highlight distinctions between the various Legendrian and transverse invariants defined and discussed in this paper.

Recall that there are essentially three flavors of Legendrian or transverse invariants under consideration in this paper: HKM, LOSS, and LIMIT.  Both the HKM and LIMIT invariants are defined geometrically and correspond to contact invariants naturally associated to a given Legendrian or transverse knot.  The LOSS invariants, on the other hand, are defined via compatible open book decompositions, thus obscuring obvious connections between the invariants and the ambient geometry.

Before delving into the examples, recall that we have correspondences identifying some of the Legendrian and transverse invariants discussed above.  Specifically, Theorem~\ref{thm:lim_to_minus_leg} asserts that the LOSS minus invariant is identified with the ``direct'' LIMIT invariant under the isomorphism given by Theorem~\ref{thm:lim_to_minus}.  Stipsicz and V\'ertesi also provided a geometric interpretation of the LOSS hat invariant as the contact invariant of a contact manifold (with convex boundary) canonically associated to a Legendrian or transverse knot (see Section~\ref{sub:relationships}).

There is a fourth set of invariants of Legendrian and transverse knots in the standard contact 3--sphere $(S^3,\xi_{std})$, which is defined combinatorially by Ozsv\'ath, Szab\'o and Thurston via grid diagrams, and which are referred to as the GRID invariants \cite{OSzT}.  It was shown by the second author, in joint work with Baldwin and V\'ertesi \cite{BVV}, that these invariants agree with the LOSS invariants where the two are simultaneously defined.

%%%%%%%%%%%%%%%%%%%%%%%%%%%%%%%%%%%%%%%%%%%%%%%%%%%%%%%
\subsection{Comparing the HKM and LOSS invariants} % (fold)
\label{sub:comp_hkm_loss}
%%%%%%%%%%%%%%%%%%%%%%%%%%%%%%%%%%%%%%%%%%%%%%%%%%%%%%%

Here, we compare the HKM and LOSS invariants by providing an example of a Legendrian knot whose HKM invariant is non-zero, but whose LOSS invariants vanish.

The example is a non-loose Legendrian unknot $U_{OT}$ in an overtwisted contact structure $\xi$ on $S^3$.  The Legendrian knot $U_{OT}$ is shown in Figure~\ref{fig:HKMLOSSminEx} as an essential embedded curve on the open book decomposition $(A,D_\gamma^-)$ for $S^3$.  The pages of the open book decomposition $(A,D_\gamma^-)$ are annuli, and the monodromy map is given by a single negative Dehn twist along a core curve $\gamma$.  Figure~\ref{fig:HKMLOSSminEx} depicts the associated multi-pointed Heegaard diagram associated to the Legendrian knot $U$, used to compute the LOSS invariants.

%%%%%%%%%%%%%%%%%% HKM vs LOSS %%%%%%%%%%%%%%%%%% 
\begin{figure}[htbp]
	\centering
	\begin{picture}(205,135)
		\put(0,0){\includegraphics[scale=1.25]{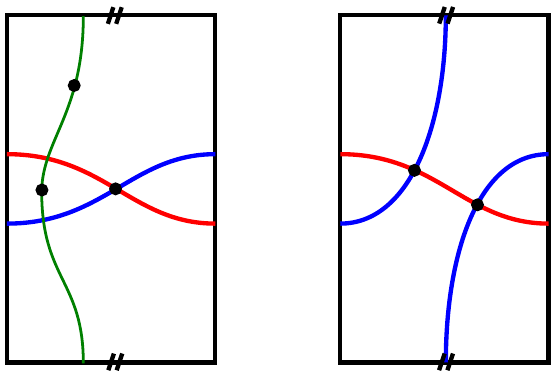}}
		\put(-20, 10){$S_{1/2}$}
		\put(100, 10){$-S_0$}
		\put(42, 74){$x$}
		\put(144, 80){$a$}
		\put(168, 67){$b$}
		\put(30, 25){$U$}
		\put(20, 65){$w$}
		\put(32, 103){$z$}
	\end{picture}
	\caption{A non-loose Legendrian unknot $U_{OT}$ with non-vanishing HKM invariant and vanishing LOSS invariant in an overtwisted contact 3--sphere.}
	\label{fig:HKMLOSSminEx}
\end{figure}
%%%%%%%%%%%%%%%%%% End HKM vs LOSS %%%%%%%%%%%%%%%%%%

We see immediately that the Legendrian $U_{OT}$ is indeed an unknot.  To see that its corresponding LOSS invariants vanish, observe that they are represented in $\HFKM(-S^3,U)$ and $\HFKH(-S^3,U)$ by the intersection point $x$ in Figure~\ref{fig:HKMLOSSminEx}.  We immediately see that $\partial b = x$ in either case.  Thus, the class $[x]$ vanishes in homology.

On the other hand, we can see that the HKM invariant of $U_{OT}$ is non-vanishing as follows.  Stipsicz and V\'ertesi showed that the HKM invariant can be computed form the open book decomposition $(A,D_\gamma^-)$ by removing an open tubular neighborhood of the curve $U_{OT}$, and deleting the $\alpha$ and $\beta$-curves it intersects.  The result is a sutured Heegaard diagram consisting of a topological disk and empty sets of $\alpha$ and $\beta$-curves.  The contact invariant is represented by the empty set of intersections, which is non-zero in homology.  If the reader prefers, this same non-vanishing result can be shown by positively stabilizing the open book decomposition $(A,D_\gamma^-)$ along a boundary-parallel arc to ensure the existence of a non-trivial set of intersections after removing a neighborhood of $U_{OT}$. (Yet another way to see this invariant is non-zero is to note the knot is a non-loose knot and so its complement is tight. Any tight contact structure on a solid torus has non-zero contact invariant.)

To show Golla's characterization of $EH(K)$ in the tight contact structure on $S^3$ discussed in Section~\ref{sub:aux_quest} does not hold in general we note that there is a loose unknot $U'$ in the same contact structure as $U_{UT}$ that has the same classical invariants as $U_{OT}$. We just argued that $\EH(U_{OT})$ is non-zero but it is clear that $\EH(U')$ is zero. Moreover from the above arguments one may easily conclude that $\SL(U_{OT})=\SL(-U_{OT})=0=\SL(U')=\SL(-U')$.

% subsection the_hkm_and_limit_invariants (end)
%%%%%%%%%%%%%%%%%%%%%%%%%%%%%%%%%%%%%%%%%%%%%%%%%%%%%%%

%%%%%%%%%%%%%%%%%%%%%%%%%%%%%%%%%%%%%%%%%%%%%%%%%%%%%%%
\subsection{Comparing the LOSS invariants} % (fold)
\label{sub:comp_loss_invts}
%%%%%%%%%%%%%%%%%%%%%%%%%%%%%%%%%%%%%%%%%%%%%%%%%%%%%%%

Relationships and differences between the LOSS minus and hat invariants are well-documented, and we refer the interested reader to the original paper \cite{LOSS} by Lisca, Ozsv\'ath, Stipsicz and Szabo for more information.

Here, we simply remark that examples of Legendrian or transverse knots for which the minus version of the LOSS invariant is non-vanishing, while the hat invariant vanishes are easy to generate.  Indeed, recall that the minus version of the LOSS invariant is identified with the contact invariant of the ambient manifold under the natural map
\[
	\pi_*: \HFKM(-Y,K) \to \HFH(-Y),
\]
induced by setting the variable $U$ equal to the identity at the chain level.  Thus, if $(Y,\xi)$ is any contact 3--manifold with non-vanishing contact invariant $c(Y,\xi) \neq 0$, then for any null-homologous Legendrian (resp. transverse) knot $K \subset (Y,\xi)$, we have that $\SL(K) \neq 0$ (resp. $\ST(K) \neq 0$).  

On the other hand, if $K' \subset (Y,\xi)$ is any positively stabilized Legendrian (resp. transverse) knot, then $\SLH(K') = 0$ (resp. $\STH(K') = 0$).

Since the standard contact 3--sphere $(S^3,\xi_{std})$ satisfies the condition that $c(S^3,\xi_{std})$ does not vanish, we see that the desired examples exist in abundance.  In fact, using the isomorphism given in \cite{BVV} relating the LOSS and GRID invariants, one can produce a multitude of non-destabilizable examples satisfying the same vanishing properties --- the Etnyre-Honda $(2,3)$-cable of the $(2,3)$-torus knot $T_{EH}$ \cite{EH3} is such an example by a result of Ng, Ozsv\'ath and Thurston \cite{NOT}.

% subsection comparing_the_loss_invariants (end)
%%%%%%%%%%%%%%%%%%%%%%%%%%%%%%%%%%%%%%%%%%%%%%%%%%%%%%%

%%%%%%%%%%%%%%%%%%%%%%%%%%%%%%%%%%%%%%%%%%%%%%%%%%%%%%%
\subsection{Comparing the LOSS and LIMIT invariants} % (fold)
\label{sub:comp_loss_limit}
%%%%%%%%%%%%%%%%%%%%%%%%%%%%%%%%%%%%%%%%%%%%%%%%%%%%%%%

We now turn to the task of comparing the LOSS and LIMIT invariants.  In light of Theorem~\ref{thm:lim_to_minus_leg}, and the discussion in Section~\ref{sub:comp_loss_invts} above, we focus on understanding differences in information content between the hat version of the LOSS invariant and the ``inverse'' version of the LIMIT invariant. 

Recall from Section~\ref{sub:definition_inverse_leg_invt} that if $K$ is a null-homologous Legendrian or transverse knot in a contact 3--manifold $(Y,\xi)$, then there exists a Legendrian (resp. transverse) invariant $\EHIL(K)$ taking values in the sutured inverse limit homology group $\SFHIL(-Y,K)$.  According to Theorem~\ref{thm:lim_to_plus}, the latter group is isomorphic to the plus version of knot Floer homology
\[
	I_+: \SFHIL(-Y,K) \to \HFKP(-Y,K).
\]
By Theorem~\ref{thm:lim_to_plus_leg}, the inverse limit invariant $\EHIL(K)$ can be identified with the image of the hat version of the LOSS invariant under the natural map
\[
	\iota_*: \HFKH(-Y,K) \to \HFKP(-Y,K),
\]
induced by inclusion.

With the above in mind, consider the following example from \cite{LOSS} of a non-loose Legendrian $(2,3)$-torus knot $T_{(2,3)}$ in the overtwisted contact structure $\xi$ on $S^3$ with Hopf invariant $d_3(\xi) = -1$.  The knot is depicted in Figure~\ref{fig:LOSSLimitEx}.

%%%%%%%%%%%%%%%%%% LOSS RHT Example %%%%%%%%%%%%%%%%%% 
\begin{figure}[htbp]
	\centering
	\begin{picture}(180,157)
		\put(0,0){\includegraphics[scale=1.0]{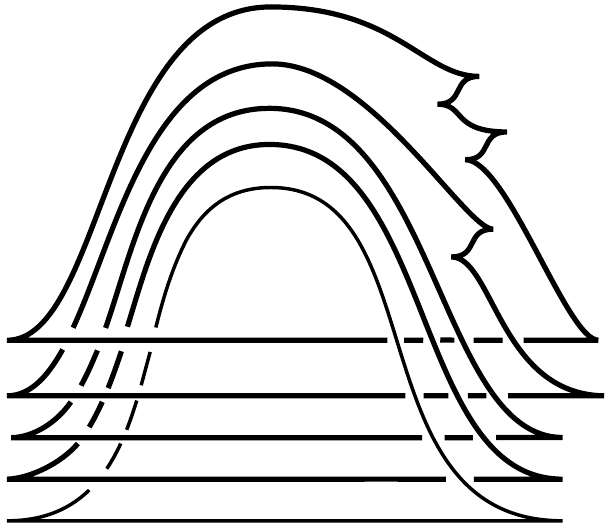}}
		\put(167, 0){$T_{(2,3)}$}
		\put(165, 12){$+1$}
		\put(165, 25){$+1$}
		\put(176, 36){$-1$}
		\put(176, 53){$-1$}
	\end{picture}
	\caption{A non-loose Legendrian right-handed trefoil $T_{(2,3)}$ in the overtwisted contact structure $\xi$ on $S^3$ with Hopf-invariant $d_3(\xi) = -1$.}
	\label{fig:LOSSLimitEx}
\end{figure}
%%%%%%%%%%%%%%%%%% End LOSS RHT Example %%%%%%%%%%%%%%%%%%

In \cite{LOSS}, Lisca, Ozsv\'ath, Stipsicz and Szab\'o show that $\SLH(T_{(2,3)}) \neq 0$, and identify the specific class in $\HFKH(-S^3,T_{(2,3)}) \cong \HFKH(S^3,T_{(2,-3)})$ representing $\SLH(T_{(2,3)})$ --- the unique non-zero class in Alexander grading zero.

Figure~\ref{fig:LHTcomplex} depicts the knot Floer chain complex $(\CFKI(T_{(2,-3)}),\partial^\infty)$ associated to the left-handed trefoil knot $T_{(2,-3)}$.  In the drawing, the vertical $j$-axis records Alexander grading, while the horizontal $i$-axis keeps track of the (negative) $U$-power.  In this context, we view $\CFKM(T_{(2,-3)})$, $\CFKP(T_{(2,-3)})$, and $\CFKH(T_{(2,-3)})$ as the sub, quotient and sub-quotient complexes $C(i \leq 0)$, $C(i \geq 0)$, and $C(i=0)$ respectively.

%%%%%%%%%%%%%%%%%% LHT Complex %%%%%%%%%%%%%%%%%% 
\begin{figure}[htbp]
	\centering
	\begin{picture}(160,160)
		\put(0,0){\includegraphics[scale=1.0]{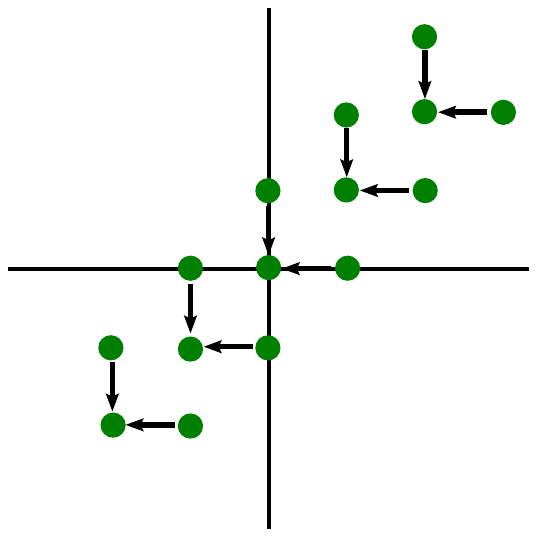}}
		\put(84, 140){$j$}
		\put(140, 84){$i$}
		\put(140,140){\rotatebox{45}{$\dots$}}
		\put(12,12){\rotatebox{45}{$\dots$}}
		\put(-10, 135){$\CFKI(T_{(2,-3)})$}
	\end{picture}
	\caption{The knot Floer complex associated to the left-handed trefoil $T_{(2,-3)}$.}
	\label{fig:LHTcomplex}
\end{figure}
%%%%%%%%%%%%%%%%%% End LHT Complex %%%%%%%%%%%%%%%%%%

After forming the associated graded objects with respect to the Alexander filtration, we see immediately from Figure~\ref{fig:LHTcomplex}, that the image of $\SLH(T_{(2,-3)})$ under the the map $\iota_*: \HFKH(T_{(2,-3)}) \to \HFKP(T_{(2,-3)})$, induced by inclusion, vanishes.  It follows, therefore, form Theorem~\ref{thm:lim_to_plus_leg} that $\EHIL(T_{(2,3)}) = 0$.

To see that the invariant $\EHIL$ is not always zero, consider the maximum Thurston-Bennequin invariant unknot in the standard contact 3--sphere $U \subset (S^3,\xi_{std})$.  To compute the inverse limit invariant, we begin by performing a Stipsicz-V\'ertesi basic slice attachment to the boundary the complement of an standard (open) tubular neighborhood of $U$.  The result is a tight contact structure on a solid torus $(S^1 \times D^2,\xi)$, inducing two longitudinal dividing curves along the boundary.  It follows from the classification of tight contact structures on solid tori \cite{Honda00a} that any negative basic slice attachment that does not achieve the meridional slope on $S^1 \times D^2$ induces a tight contact structure on the solid torus which embeds into a Stein-fillable contact structure on a lens-space.  In particular, the contact invariants of these spaces are all non-zero, which implies that the inverse limit invariant $\EHIL(U)$ is non-vanishing.

\subsection{Vanishing slope computations} 

We note that one may easily compute the vanishing slopes, discussed in Section~\ref{sub:vanishing_slopes}, for the knots considered in this section to be:
\begin{align*}
	\mr{Van}^-(U_{OT}) &= (0,0), \;\;\;\;\;\; \mr{Van}^-(T_{EH}) = (0,-\infty),\\
	\mr{Van}^-(T_{(2,3)}) &= (0,-\infty), \;\;\;\;\;\, \mr{Van}^-(U) = (-1,0).
\end{align*}

% subsection comparing_limit_invariants (end)
%%%%%%%%%%%%%%%%%%%%%%%%%%%%%%%%%%%%%%%%%%%%%%%%%%%%%%%

% section examples (end)
%%%%%%%%%%%%%%%%%%%%%%%%%%%%%%%%%%%%%%%%%%%%%%%%%%%%%%%

\bibliographystyle{plain}
\nocite{*}
\bibliography{LimitInvts}

\end{document}